\documentclass[11pt]{article}

\usepackage{amssymb,amsmath,amsfonts,amsthm}

\setlength{\textwidth}{16cm}
\setlength{\textheight}{23.5cm} \setlength{\topmargin}{-1.2cm}
\setlength{\oddsidemargin}{0cm} \setlength{\evensidemargin}{23.5cm}
\linespread{1.3}

\usepackage{caption2}

\renewcommand{\subsection}{\subsubsection}

\renewcommand\appendix{\par
  \setcounter{section}{0}
   \renewcommand\thesection{Appendix \Alph{section}.}
 }

\newtheorem{theorem}{Theorem}[section]

\newtheorem{lemma}{Lemma}[section]
\newtheorem{proposition}{Proposition}[section]
\newtheorem{definition}{Definition}[section]

\newtheorem{remark}{Remark}[section]

\newcommand{\nt}{|\hspace{-0.7pt}|\hspace{-0.7pt}|}
\newcommand{\nl}{\langle\hspace*{-2pt}\langle}
\newcommand{\nr}{\rangle\hspace*{-2pt}\rangle}

\def\nt{|\hspace{-0.7pt}|\hspace{-0.7pt}|}

\def\div{{\rm div}\, }

\begin{document}
\title{\bf Local existence of MHD contact discontinuities}
\author{{\bf Alessandro Morando}\\
DICATAM, Sezione di Matematica, Universit\`a di Brescia \\ Via Valotti, 9, 25133 Brescia, Italy\\
E-mail: alessandro.morando@unibs.it
\and
{\bf Yuri Trakhinin}\\
Sobolev Institute of Mathematics, Koptyug av. 4, 630090 Novosibirsk, Russia\\
and\\
Novosibirsk State University, Pirogova str. 2, 630090 Novosibirsk, Russia\\
E-mail: trakhin@math.nsc.ru
\and
{\bf Paola Trebeschi}\\
DICATAM, Sezione di Matematica, Universit\`a di Brescia \\ Via Valotti, 9, 25133 Brescia, Italy\\
E-mail: paola.trebeschi@unibs.it
}

\date{
}
%
%
\maketitle
\begin{abstract}
We prove the local-in-time existence of solutions with a contact discontinuity of the equations of ideal compressible magnetohydrodynamics (MHD) for 2D planar flows provided that the Rayleigh-Taylor sign condition $[\partial p/\partial N]<0$ on the jump of the normal derivative of the pressure is satisfied
at each point of the initial discontinuity. MHD contact discontinuities are characteristic discontinuities with no flow across the discontinuity for which the pressure, the magnetic field and the velocity are continuous whereas the density and the entropy may have a jump. This paper is a natural completion of
our previous analysis ({\sc Morando, Trakhinin, Trebeschi} in J Differential Equations 258:2531--2571, 2015) where the well-posedness in Sobolev spaces of the linearized problem was proved under the Rayleigh-Taylor sign condition satisfied at each point of the unperturbed discontinuity. The proof of the resolution of the nonlinear problem given in the present paper follows from a suitable tame a priori estimate in Sobolev spaces for the linearized equations and a Nash-Moser iteration.
\end{abstract}

\section{Introduction}
\label{s1}

\subsection{Free boundary problem for MHD contact discontinuities}

\label{intro}
We consider the equations of ideal compressible MHD:
\begin{equation}\label{1}
\left\{
\begin{array}{l}
 \partial_t\rho  +{\rm div}\, (\rho {v} )=0,\\[6pt]
 \partial_t(\rho {v} ) +{\rm div}\,(\rho{v}\otimes{v} -{H}\otimes{H} ) +
{\nabla}q=0, \\[6pt]
 \partial_t{H} -{\nabla}\times ({v} {\times}{H})=0,\\[6pt]
 \partial_t\bigl( \rho e +{\textstyle \frac{1}{2}}|{H}|^2\bigr)+
{\rm div}\, \bigl((\rho e +p){v} +{H}{\times}({v}{\times}{H})\bigr)=0,
\end{array}
\right.
\end{equation}
where $\rho$ denotes density, $v$ plasma velocity, $H $ magnetic field, $p=p(\rho,S )$ pressure, $q =p+\frac{1}{2}|{H} |^2$ total pressure, $S$ entropy, $e=E+\frac{1}{2}|{v}|^2$ total energy, and  $E=E(\rho,S )$ internal energy. With a state equation of gas, $\rho=\rho(p ,S)$, and the first principle of thermodynamics, \eqref{1} is a closed system for the unknown $ U =U (t, x )=(p, v,H, S)$.

Unlike \cite{Cont1}, in this paper we do not first consider the general 3D case, and from the outset we restrict ourselves to {\it 2D planar} MHD flows. This means that the flow is $x_3$-invariant ($U=U(t,x_1,x_2)$), but the velocity and the magnetic field are shearless ($v_3=H_3=0$).\footnote{It follows from the 4th and 7th scalar equations of system \eqref{1} for $x_3$-invariant flows that $v_3|_{t=0}=H_3|_{t=0}=0$ implies $v_3=H_3=0$ for all $t>0$. That is, the restriction that the velocity and the magnetic field are shearless at a first moment guarantees that 2D flows are planar.} In other words, without loss of generality we may assume that the space variables, the velocity and the magnetic field have only two components: $x=(x_1,x_2)\in \mathbb{R}^2$, $v=(v_1,v_2)\in \mathbb{R}^2$, $H=(H_1,H_2)\in \mathbb{R}^2$. Moreover, we assume that the plasma obeys the state equation of a polytropic gas
\begin{equation}
\rho(p,S)= A p^{\frac{1}{\gamma}} e^{-\frac{S}{\gamma}}, \qquad A>0,\quad \gamma>1.
\label{pg}
\end{equation}

Taking into account the divergence constraint
\begin{equation}
{\rm div}\, {H} =0
\label{2}
\end{equation}
on the initial data ${U} (0,{x} )={U}_0({x})$, we easily symmetrize the system of conservation laws \eqref{1} by rewriting it in the nonconservative form
\begin{equation}
\left\{
\begin{array}{l}
{\displaystyle\frac{1}{\gamma p}\,\frac{{\rm d} p}{{\rm d}t} +{\rm div}\,{v} =0,\qquad
\rho\, \frac{{\rm d}v}{{\rm d}t}-({H}\cdot\nabla ){H}+{\nabla}q  =0 ,}\\[9pt]
{\displaystyle\frac{{\rm d}{H}}{{\rm d}t} - ({H} \cdot\nabla ){v} +
{H}\,{\rm div}\,{v}=0},\qquad
{\displaystyle\frac{{\rm d} S}{{\rm d} t} =0},
\end{array}\right. \label{3}
\end{equation}
where ${\rm d} /{\rm d} t =\partial_t+({v} \cdot{\nabla} )$. Equations (\ref{3}) form the symmetric system
\begin{equation}
\label{4}
A_0(U )\partial_tU+A_1(U )\partial_1U+A_2(U )\partial_2U=0
\end{equation}
with $A_0= {\rm diag} (1/(\gamma p) ,\rho ,\rho ,1,1,1)$ and
\[
A_1=\left( \begin{array}{cccccc} \frac{v_1}{\gamma p}& 1 & 0 & 0 & 0 & 0\\[6pt]
1 & \rho v_1 & 0 & 0& H_2& 0 \\
0& 0& \rho v_1 & 0& -H_1& 0 \\
0& 0& 0& v_1 & 0& 0\\
0& H_2& - H_1& 0& v_1 & 0\\
0& 0& 0& 0& 0& v_1
\end{array} \right),\quad
A_2=\left( \begin{array}{cccccc} \frac{v_2}{\gamma p}& 0 & 1& 0 & 0 & 0\\[6pt]
0 & \rho v_2 & 0 & - H_2& 0& 0 \\
1& 0& \rho v_2 & H_1& 0& 0 \\
0& -H_2& H_1& v_2 & 0& 0\\
0& 0& 0& 0& v_2 & 0\\
0& 0& 0& 0& 0& v_2
\end{array} \right).
\]
System \eqref{4} is hyperbolic if   $A_0>0$, i.e.,
\begin{equation}
p >0 \label{5}
\end{equation}
(in view of \eqref{pg}, the hyperbolicity condition \eqref{5} implies $\rho >0$).

Let
\begin{equation}
\Gamma (t)=\{ x_1=\varphi (t,x_2)\}
\label{discont}
\end{equation}
be a curve of strong discontinuity for the conservation laws (\ref{1}), i.e., we are interested in solutions of (\ref{1}) that are smooth on either side of $\Gamma (t)$. To be weak solutions of (\ref{1}) such piecewise smooth solutions should satisfy the MHD Rankine-Hugoniot conditions (see, e.g., \cite{LL,Cont1}). According to the classification of strong discontinuities in MHD \cite{LL}, for {\it contact discontinuities} there is no plasma flow across the discontinuity and the magnetic field on both its sides is nowhere tangent to the discontinuity. In view of these requirements, the Rankine-Hugoniot conditions imply the boundary conditions \cite{LL}
\begin{equation}
[p]=0,\quad [v]=0,\quad [H]=0,\quad \partial_t\varphi =v^+_N\quad \mbox{on}\ \Gamma (t),
\label{bcond}
\end{equation}
where $[g]=g^+|_{\Gamma}-g^-|_{\Gamma}$ denotes the jump of $g$, with $g^{\pm}:=g$ in the domains
\[
\Omega^{\pm}(t)=\{\pm (x_1- \varphi (t,x_2))>0\},
\]
and $v^\pm_N=v_1^\pm-v_2^\pm\partial_2\varphi$. Note that the entropy and, hence, the density (see \eqref{pg}) may undergo any jump: $[S]\neq 0$, $[\rho]\neq 0$. Recall also that, according to the definition of contact discontinuities, we have the requirement
\begin{equation}
H^\pm_N\neq 0 \quad \mbox{on}\ \Gamma (t),
\label{mf}
\end{equation}
where $H^\pm_N=H_1^\pm-H_2^\pm\partial_2\varphi$.

As is noted in \cite{Goed}, the boundary conditions \eqref{bcond} are most typical for astrophysical plasmas. Contact discontinuities are usually observed in the solar wind, behind astrophysical shock waves bounding supernova remnants or due to the interaction of multiple shock waves driven by fast coronal mass ejections. However, it is not evident that the formally introduced piecewise smooth solutions to the MHD equations satisfying the boundary conditions \eqref{bcond} must necessarily exist, at least locally in time, for any initial data.  Our final goal is to find conditions on the initial data
\begin{equation}
{U}^{\pm} (0,{x})={U}_0^{\pm}({x}),\quad {x}\in \Omega^{\pm} (0),\quad \varphi (0,{x}_2)=\varphi _0({x}_2),\quad {x}_2\in\mathbb{R},\label{indat}
\end{equation}
providing the existence and uniqueness of a smooth solution $(U^+,U^-,\varphi )$ on some time interval $[0,T]$ to the free boundary problem \eqref{4}, \eqref{bcond}, \eqref{indat}. These conditions will be additional ones to \eqref{5} and \eqref{mf} satisfied at $t=0$.

The study of the linearized problem associated to the nonlinear problem \eqref{4}, \eqref{bcond}, \eqref{indat} is a necessary step towards the proof of the local-in-time existence of MHD contact discontinuities. In \cite{Cont1} we have managed to prove the well-posedness in Sobolev spaces of the linearized variable coefficients problem provided that the {\it Rayleigh-Taylor sign condition}
\begin{equation}
\left[\frac{\partial p}{\partial N}\right]\leq -\epsilon <0
\label{RT}
\end{equation}
on the jump of the normal derivative of the pressure is satisfied at each point of the unperturbed contact discontinuity. It is amazing that the classical condition \eqref{RT} naturally appeared in our energy method as the condition sufficient for the well-posedness of the linearized problem. It is worth noting that, unlike the condition $[\partial q/\partial N] <0$ considered in \cite{Tjde} for the plasma-vacuum interface problem, the magnetic field does not enter \eqref{RT}. That is, for MHD contact discontinuities condition \eqref{RT} appears in its classical (purely hydrodynamical) form as a condition for the pressure $p$ but not for the total pressure $q=p+\frac{1}{2}|{H} |^2$.

The main goal of the present paper is the proof of the local-in-time existence and uniqueness of a smooth solution $(U^+,U^-,\varphi )$ of the original nonlinear free boundary problem \eqref{4}, \eqref{bcond}, \eqref{indat} provided that the initial data \eqref{indat} satisfy the Rayleigh-Taylor sign condition \eqref{RT} together with \eqref{5} and \eqref{mf} (as well as appropriate compatibility conditions).

\subsection{Reduced problem in a fixed domain}

The function $\varphi (t,x_2)$ determining the curve $\Gamma$ of a contact discontinuity is one of the unknowns of
the free boundary problem \eqref{4}, \eqref{bcond}, \eqref{indat}. To reduce this problem to that in a fixed domain we straighten the curve $\Gamma$ by using the same simplest change of independent variables as in \cite{T09,Tcpam}. That is, the unknowns $U^+$ and $U^-$ being smooth in $\Omega^{\pm}(t)$ are replaced by the vector-functions
\begin{equation}
\widetilde{U}^{\pm}(t,x ):= {U}^{\pm}(t,\Phi^{\pm} (t,x),x_2)
\label{change}
\end{equation}
which are smooth in the half-plane $\mathbb{R}^2_+=\{x_1>0,\ x_2\in \mathbb{R}\}$,
where
\begin{equation}
\Phi^{\pm}(t,x ):= \pm x_1+\Psi^{\pm}(t,x ),\quad \Psi^{\pm}(t,x ):= \chi (\pm x_1)\varphi (t,x_2),
\label{change2}
\end{equation}
and $\chi\in C^{\infty}_0(\mathbb{R})$ equals to 1 on $[-1,1]$, and $\|\chi'\|_{L_{\infty}(\mathbb{R})}<1/2$. Here, as in \cite{Met}, we use the cut-off function $\chi$ to avoid assumptions about compact support of the initial data in our existence theorem. The change of variables \eqref{change} is admissible if $\partial_1\Phi^{\pm}\neq 0$. The latter is guaranteed, namely, the inequalities $\partial_1\Phi^+> 0$ and $\partial_1\Phi^-< 0$ are fulfilled, if we consider solutions for which
\begin{equation}
\|\varphi\|_{L_{\infty}([0,T]\times\mathbb{R})}\leq 1.
\label{fi}
\end{equation}
This holds if, without loss of generality, we consider the initial data satisfying $\|\varphi_0\|_{L_{\infty}(\mathbb{R})}\leq 1/2$, and the time $T$ in our existence theorem is sufficiently small.

Dropping for convenience tildes in $\widetilde{U}^{\pm}$, we reduce \eqref{4}, \eqref{bcond}, \eqref{indat} to the initial-boundary value problem
\begin{equation}
A_0(U^\pm )\partial_tU^\pm +\widetilde{A}_1(U^\pm ,\Psi^\pm )\partial_1U^\pm +A_2(U^\pm )\partial_2U^\pm =0\quad\mbox{in}\ [0,T]\times \mathbb{R}^2_+,\label{11}
\end{equation}
\begin{equation}
[p]=0,\quad [v]=0,\quad [H]=0,\quad \partial_t\varphi =v^+_N\quad\mbox{on}\ [0,T]\times\{x_1=0\}\times\mathbb{R},\label{12}
\end{equation}
\begin{equation}
U^+|_{t=0}=U^+_0,\quad U^-|_{t=0}=U^-_0\quad\mbox{in}\ \mathbb{R}^2_+,
\qquad \varphi |_{t=0}=\varphi_0\quad \mbox{in}\ \mathbb{R},\label{13}
\end{equation}
where
\[
\widetilde{A}_1(U^{\pm},\Psi^{\pm})=\frac{1}{\partial_1\Phi^{\pm}}\Bigl(
A_1(U^{\pm})-A_0(U^{\pm})\partial_t\Psi^{\pm}-A_2(U^{\pm})\partial_2\Psi^{\pm}\Bigr)
\]
($\partial_1\Phi^{\pm}=\pm 1 +\partial_1\Psi^{\pm}$), and in (\ref{12}) we use the notation $[g]:=g^+|_{x_1=0}-g^-|_{x_1=0}$ for any pair of values $g^+$ and $g^-$.

We are interested in smooth solutions $(U^+,U^-,\varphi )$ of problem \eqref{11}--\eqref{13}, to be exact, we are going to prove their existence under conditions \eqref{5}, \eqref{mf} and \eqref{RT} on the initial data:
\begin{equation}
p^\pm \geq \bar{p} >0,
\label{5.1}
\end{equation}
\begin{equation}
|H_N^\pm|_{x_1=0}|\geq \kappa >0
\label{mf.1}
\end{equation}
($\bar{p}$ and $\kappa$ are positive constants),
\begin{equation}
[\partial_1p]\geq \epsilon >0,
\label{RT1}
\end{equation}
where \eqref{RT1} is the Rayleigh-Taylor sign condition \eqref{RT} written for the straightened discontinuity (with the equation $x_1=0$) and due to the fact that we have transformed the domains $\Omega^\pm (t)$ into the same half-plane $\mathbb{R}^2_+$ (but not into the different half-planes $\mathbb{R}^2_+$ and $\mathbb{R}^2_-$) the jump of a normal derivative is defined as follows:
\begin{equation}
[\partial_1a]:=\partial_1a^+_{|x_1=0}+\partial_1a^-_{|x_1=0}.
\label{norm_jump}
\end{equation}
At the same time, if we want to have smooth solutions belonging to Sobolev spaces, then not $U^+$ and $U^-$ themselves but corresponding functions shifted to some smooth bounded functions $\bar{U}^+$ and $\bar{U}^-$ should belong to Sobolev spaces. Indeed, a function belonging to a Sobolev space and defined on the unbounded domain $\mathbb{R}^2_+$ must vanish at infinity, but conditions \eqref{5.1}, \eqref{mf.1} and \eqref{RT1} cannot be satisfied for $U^+$ and $U^-$ vanishing at infinity.

As the shifts $\bar{U}^\pm$  one can consider magnetohydrostatic (MHS) equilibria
\[
\bar{U}^\pm (x) =(\bar{p}^\pm (x),0,\bar{H}^\pm (x), \bar{S}^\pm (x)),
\]
which are smooth bounded solutions of the systems $\nabla^\pm \bar q^\pm =(\bar H^\pm\cdot \nabla^\pm )\bar H^\pm$, with $\nabla^\pm:= (\pm\partial_1,\partial_2)$, satisfying the physical condition \eqref{5.1} in the whole half-plane $\overline{\mathbb{R}^2_+}$ and conditions \eqref{mf.1} and \eqref{RT1} on its boundary $x_1=0$. Moreover, $\bar{p}^+(0,x_2)=\bar{p}^-(0,x_2)$,  $\bar{H}^+(0,x_2)=\bar{H}^-(0,x_2)$, and $\bar{S}^\pm (x)$ are arbitrary functions, with $\bar{S}^+(0,x_2)\neq \bar{S}^-(0,x_2)$ (the last condition, i.e., $[\bar{S}]\neq 0$ implies $[\bar{\rho}]\neq 0$). The problem of existence of such equilibria needs a separate study. Some results on the existence of general 2D MHS equilibria in the half-plane (when \eqref{RT1} is not assumed to be satisfied) can be found in \cite{AA}  and references therein where the cases with and without gravity were studied. However, if gravity is taken into account, in Appendix A we present rather simple MHS equilibria satisfying conditions \eqref{5.1}, \eqref{mf.1} and \eqref{RT1} when the contact discontinuity is located between two perfectly conducting rigid walls.

Alternatively, instead of the unbounded curve \eqref{discont} of contact discontinuity we could consider a closed curve $F(t,x)=0$ without self-intersections. In this case there appears no problem with the satisfaction of conditions \eqref{mf.1} and \eqref{RT1} by solutions belonging to Sobolev spaces in unbounded domains. However, to avoid using local coordinate charts necessary for such a geometry, and for the sake of simplicity, as in \cite{ST}, we can still consider the free boundary in the form of a graph \eqref{discont}, but we pose periodic boundary conditions in the tangential direction. More precisely, let
\[
D=\{x\in\mathbb{R}^2\,|\, x_1\in\mathbb{R},\ x_2\in \mathbb{T}\}
\]
be the original space domain occupied by plasma, where $\mathbb{T}$ denotes the 1-torus (the unit circle), which can be thought as the unit segment with periodic boundary conditions. We also set
\[
\Gamma (t)=\{x\in \mathbb{R}\times \mathbb{T},\ x_1=\varphi (t,x_2)\},\qquad t\in [0,T].
\]
Then we still make the change of variables \eqref{change} which reduces our free boundary problem to that in the fixed domain
\[
\Omega=\{x_1>0,\ x_2\in\mathbb{T}\}
\]
with the straightened contact discontinuity
\[
\partial\Omega=\{x_1=0,\ x_2\in\mathbb{T}\}.
\]

For the sake of simplicity, in this paper we prefer to consider periodic boundary conditions in the $x_2$-direction. Since $\Omega$ is an unbounded domain, for satisfying the hyperbolicity condition \eqref{5.1} as $x_1\rightarrow \infty$ we make the change of unknowns
\begin{equation}
\breve{U}^\pm =U^\pm -\bar{U}^\pm ,
\label{shift}
\end{equation}
where $\bar{U}^\pm =(\bar{p},0,0,\bar{S}^\pm )$, the constants $\bar{S}^\pm$ are such that $\bar{S}^+\neq\bar{S}^-$, and $\bar{p}$ is a positive constant from \eqref{5.1}. Under the change of unknowns \eqref{shift} the boundary conditions \eqref{12} stay unchanged whereas in the MHD systems \eqref{11} we should make the shift of the arguments $U^\pm$ of the matrix functions by the constant vectors $\bar{U}^\pm$. Dropping for convenience the breve accents in $\breve{U}^\pm$, we get the following initial-boundary value problem in the space-time domain $[0,T]\times\Omega$:
\begin{equation}
\mathbb{L}(U^+,\Psi^+)=0,\quad \mathbb{L}(U^-,\Psi^-)=0\quad\mbox{in}\ [0,T]\times \Omega,\label{11.1}
\end{equation}
\begin{equation}
\mathbb{B}(U^+,U^-,\varphi )=0\quad\mbox{on}\ [0,T]\times\partial\Omega,\label{12.1}
\end{equation}
\begin{equation}
U^+|_{t=0}=U^+_0,\quad U^-|_{t=0}=U^-_0\quad\mbox{in}\ \Omega,
\qquad \varphi |_{t=0}=\varphi_0\quad \mbox{on}\ \partial\Omega,\label{13.1}
\end{equation}
where $\mathbb{L}(U^\pm,\Psi^\pm)=L(U^\pm,\Psi^\pm)U^\pm$,
\[
L(U^\pm,\Psi^\pm)=A_0(U^\pm +\bar{U}^\pm)\partial_t +\widetilde{A}_1(U^\pm +\bar{U}^\pm,\Psi^\pm)\partial_1+A_2(U^\pm  +\bar{U}^\pm)\partial_2,
\]
and \eqref{12.1} is the compact form of the boundary conditions
\begin{equation}
[p]=0,\quad [v]=0,\quad [H_{\tau}]=0,\quad \partial_t\varphi-v_{N}^+|_{x_1=0}=0,
\label{12'}
\end{equation}
with ${H}^{\pm}_{\tau}= H_1^\pm \partial_2\Psi^{\pm}+H_2^{\pm}$. Moreover, in view of \eqref{shift}, the hyperbolicity conditions for systems \eqref{11.1} can be now written, for example, as  (cf. \eqref{5.1})
\begin{equation}
p^\pm > -\bar{p}/4
\label{5.1'}
\end{equation}
whereas conditions \eqref{mf.1} and \eqref{RT1} stay unchanged for the new (shifted) unknowns.

Note that the continuity of the magnetic field $[H]=0$ is equivalent to $[H_N]=0$ and $[H_\tau ]=0$. However, as was proved in \cite{Cont1}, the condition $[H_N]=0$ coming from the constraint equation \eqref{2} is not a real boundary condition and must be regarded as a restriction (boundary constraint) on the initial data \eqref{13.1} (and this is why we did not include it into \eqref{12.1}). More precisely, we have the following proposition.

\begin{proposition}[\cite{Cont1}]
Let the initial data \eqref{13.1} satisfy
\begin{equation}
\div h^+=0,\quad \div h^-=0
\label{14}	
\end{equation}
and the boundary condition
\begin{equation}
[H_{N}]=0,
\label{15}
\end{equation}
where $h^{\pm}=(H_{N}^{\pm},H_2^{\pm}\partial_1\Phi^{\pm})$. If problem \eqref{11.1}--\eqref{13.1} has a sufficiently smooth solution, then this solution satisfies \eqref{14} and \eqref{15} for all $t\in [0,T]$.
\label{p1}
\end{proposition}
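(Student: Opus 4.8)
\emph{Proof idea.} The plan is to show that $\div h^{\pm}$ (on $\Omega$) and $[H_N]$ (on $\partial\Omega$) satisfy homogeneous linear transport equations for which the straightened boundary $\{x_1=0\}$ is characteristic, so that no boundary data are needed there; since both quantities vanish at $t=0$ by \eqref{14}--\eqref{15}, they stay identically zero on $[0,T]$. The common tool is the remark that, under the change \eqref{change}--\eqref{change2}, the material derivative $\partial_t+(v^{\pm}\cdot\nabla)$ of the original equations \eqref{3} becomes
\[
\mathcal D^{\pm}:=\partial_t+\frac{w_1^{\pm}}{\partial_1\Phi^{\pm}}\,\partial_1+v_2^{\pm}\,\partial_2,\qquad w_1^{\pm}:=v_1^{\pm}-\partial_t\Psi^{\pm}-v_2^{\pm}\partial_2\Psi^{\pm},
\]
and that on $\{x_1=0\}$ one has $w_1^{\pm}=v_N^{\pm}-\partial_t\varphi=0$: for the $+$ side this is exactly the boundary condition in \eqref{12'}, and for the $-$ side it follows because $[v]=0$ forces $v_N^-=v_N^+$ at $x_1=0$. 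Hence $\{x_1=0\}$ is characteristic for $\mathcal D^{\pm}$ and $\mathcal D^{\pm}|_{x_1=0}=\partial_t+v_2^{\pm}\partial_2$ is purely tangential.

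First I would treat \eqref{14}. The induction equation in \eqref{3} yields, in the original variables, $({\rm d}/{\rm d}t)(\div H^{\pm})+(\div v^{\pm})(\div H^{\pm})=0$, and a direct computation (using $\partial_1\partial_2\Psi^{\pm}=\partial_2\partial_1\Psi^{\pm}$) shows that after the change \eqref{change} one has $\div h^{\pm}=\partial_1\Phi^{\pm}\cdot\div H^{\pm}$ (the divergence here being composed with the change of variables). Therefore $u^{\pm}:=\div h^{\pm}/\partial_1\Phi^{\pm}$ satisfies a homogeneous equation $\mathcal D^{\pm}u^{\pm}+c^{\pm}u^{\pm}=0$ with $c^{\pm}$ bounded. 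Multiplying by $u^{\pm}$ and integrating over $\Omega$, the boundary contribution on $\{x_1=0\}$ carries the factor $w_1^{\pm}$ and so vanishes, periodicity in $x_2$ removes the remaining one, and Gr\"onwall's inequality together with \eqref{14} gives $\div h^{\pm}\equiv 0$ on $[0,T]\times\Omega$.

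For \eqref{15} I would derive an evolution equation for $H_N^{\pm}$ directly from the straightened $H$-equations in \eqref{11.1}. Writing $H_1^{\pm}=H_N^{\pm}+H_2^{\pm}\partial_2\Psi^{\pm}$ and inserting the scalar equations for $H_1^{\pm}$ and $H_2^{\pm}$, all terms containing a normal derivative $\partial_1$ of $v^{\pm}$ or $H^{\pm}$ combine and cancel — they enter only through the single combination that constitutes $\div v^{\pm}$ — leaving
\[
\mathcal D^{\pm}H_N^{\pm}=H_2^{\pm}\partial_2v_1^{\pm}-H_1^{\pm}\partial_2v_2^{\pm}-H_2^{\pm}\,\mathcal D^{\pm}(\partial_2\Psi^{\pm}).
\]
Restricting to $\{x_1=0\}$ (where $w_1^{\pm}=0$, so the left side is tangential) and using $\partial_t\varphi=v_N^{\pm}$, valid for both signs, one finds $\mathcal D^{\pm}(\partial_2\Psi^{\pm})|_{x_1=0}=\partial_2v_1^{\pm}-(\partial_2\varphi)(\partial_2v_2^{\pm})$, whereupon the right-hand side collapses to $-H_N^{\pm}\partial_2v_2^{\pm}$, that is,
\[
\partial_tH_N^{\pm}+\partial_2\bigl(v_2^{\pm}H_N^{\pm}\bigr)=0\qquad\text{on }[0,T]\times\partial\Omega .
\]
Subtracting the $+$ and $-$ relations and using $[v]=0$ (so that $v_2^+=v_2^-=:v_2$ on $\{x_1=0\}$) yields the single scalar transport equation $\partial_t[H_N]+\partial_2(v_2[H_N])=0$ on $\partial\Omega=\R\times\mathbb{T}$; since $[H_N]|_{t=0}=0$ by \eqref{15}, uniqueness for this equation forces $[H_N]\equiv 0$.

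The main obstacle is the algebraic bookkeeping in the derivation of the $H_N^{\pm}$-equation: one has to verify that every normal-derivative contribution really cancels in the right-hand side, and that on $\{x_1=0\}$ the relation $\partial_t\varphi=v_N^{\pm}$ turns the surviving expression into a multiple of $H_N^{\pm}$ itself — so that, crucially, no jump of $H_2^{\pm}$ or of any other quantity enters the equation for $[H_N]$. It is worth stressing that, since $H_N\neq 0$ for a contact discontinuity, one cannot expect $H_N^{\pm}\equiv 0$; only the jump $[H_N]$ propagates, which is precisely why \eqref{15} must be imposed on the initial data rather than appearing as a genuine boundary condition. Finally, the two steps are logically independent: the argument for \eqref{15} uses the induction equations in their nonconservative form and never invokes the divergence constraint \eqref{14}.
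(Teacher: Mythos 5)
Your proof is correct and follows essentially the same route as \cite{Cont1} (compare also the present paper's Lemma~\ref{l4.2}, which applies the same boundary computation to the initial data): the straightened boundary is characteristic for the transformed material derivative, so $\div h^\pm$ propagates in $\Omega$, while $[H_N]$ obeys the closed boundary transport equation $\partial_t[H_N]+\partial_2(v_2[H_N])=0$. One cosmetic imprecision: the $\partial_1 H^\pm$ terms in the $H_N^\pm$-evolution are not cancelled on the right-hand side but are absorbed into the transported derivative, where they vanish on $\{x_1=0\}$ because $w_1^\pm|_{x_1=0}=0$; only the $\partial_1 v^\pm$ contributions coming from $(H^\pm\cdot\nabla)v^\pm$ and $H^\pm\,\div v^\pm$ cancel against each other in the interior.
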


Equations \eqref{14} are just the constraint equation \eqref{2} written in  the straightened variables. Using \eqref{14}, one can show that system (\ref{1}) in the straightened variables is equivalent to \eqref{11.1}. Clearly, a counterpart of Proposition \ref{p1} is true for solutions of system \eqref{1} with a curve of contact discontinuity.

As was noted in \cite{Cont1}, to prove the existence of solutions to problem \eqref{11.1}--\eqref{13.1} we need to know a certain a priori information about these solutions. This information is contained in the following proposition.

\begin{proposition}[\cite{Cont1}]
Assume that problem \eqref{11.1}--\eqref{13.1} (with the initial data satisfying \eqref{14} and \eqref{15}) has a sufficiently smooth solution $(U^+, U^-,\varphi)$ on a time interval $[0,T]$. Then the normal derivatives $\partial_1U^{\pm}$ satisfy the jump condition
\begin{equation}
[\partial_1v]=0
\label{jc2}
\end{equation}
for all $t\in [0,T]$ (recall that the jump of a normal derivative is defined in \eqref{norm_jump}).
\label{p2}
\end{proposition}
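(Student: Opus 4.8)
The plan is to read off the jump condition \eqref{jc2} from the interior equations \eqref{11.1} restricted to the fixed boundary $\{x_1=0\}$, using only the boundary conditions \eqref{12'} and the continuity $[p]=[v]=[H]=0$ they provide. Here the last relation combines $[H_\tau]=0$ from \eqref{12'} with $[H_N]=0$; the latter holds on $[0,T]\times\partial\Omega$ by Proposition \ref{p1}, since the initial data are assumed to satisfy \eqref{14} and \eqref{15}.

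First I would evaluate \eqref{11.1} on $\{x_1=0\}$. Because $\chi\equiv1$ near the origin, there $\partial_1\Phi^\pm=\pm1$, $\partial_t\Psi^\pm=\partial_t\varphi$ and $\partial_2\Psi^\pm=\partial_2\varphi$, so $\widetilde A_1(U^\pm+\bar U^\pm,\Psi^\pm)=\pm M^\pm$ with $M^\pm:=\bigl(A_1-(\partial_t\varphi)A_0-(\partial_2\varphi)A_2\bigr)(U^\pm+\bar U^\pm)\big|_{x_1=0}$. Moving the tangential terms to the right-hand side, \eqref{11.1} on $\{x_1=0\}$ becomes
\[
M^\pm\,\partial_1U^\pm=\mp F^\pm,\qquad F^\pm:=\bigl(A_0(U^\pm+\bar U^\pm)\partial_t+A_2(U^\pm+\bar U^\pm)\partial_2\bigr)U^\pm\big|_{x_1=0},
\]
where $F^\pm$ contains only the tangential derivatives $\partial_t,\partial_2$ of $U^\pm$.

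Next I would project onto the first and fifth scalar components. On $\{x_1=0\}$ the boundary conditions give $\partial_t\varphi=v_N^\pm=v_1^\pm-v_2^\pm\partial_2\varphi$ (the $-$ case using $[v]=0$); since $v_1-v_N-v_2\partial_2\varphi=0$, a direct inspection of the explicit $A_0,A_1,A_2$ shows that the first and fifth rows of $M^\pm$ equal $(0,1,-\ell,0,0,0)$ and $(0,H_2,-H_1,0,0,0)$, where $\ell:=\partial_2\varphi$ and $H_1,H_2$ are the common boundary traces of the magnetic field; in particular these rows carry no factor $\rho^\pm$ and agree for $+$ and $-$. Hence the first and fifth components of $M^\pm\partial_1U^\pm=\mp F^\pm$ read
\[
\partial_1v_1^\pm-\ell\,\partial_1v_2^\pm=\mp F_1^\pm,\qquad H_2\,\partial_1v_1^\pm-H_1\,\partial_1v_2^\pm=\mp F_5^\pm\quad\text{on }\{x_1=0\}.
\]
Adding the $+$ identity to the $-$ identity, and recalling that the jump of a normal derivative is additive (see \eqref{norm_jump}), I obtain a $2\times2$ linear system for $[\partial_1v_1]$ and $[\partial_1v_2]$ with right-hand side $\bigl(-(F_1^+-F_1^-),\,-(F_5^+-F_5^-)\bigr)$ and coefficient determinant $-(H_1-\ell H_2)=-H_N$, which is nonzero by \eqref{mf.1}. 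Thus it suffices to show $F_1^+=F_1^-$ and $F_5^+=F_5^-$ on $\{x_1=0\}$.

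Finally, from the explicit forms of $A_0$ and $A_2$ one has on $\{x_1=0\}$
\[
F_1^\pm=\frac{1}{\gamma p^\pm}\bigl(\partial_tp^\pm+v_2^\pm\,\partial_2p^\pm\bigr)+\partial_2v_2^\pm,\qquad F_5^\pm=\partial_tH_2^\pm+v_2^\pm\,\partial_2H_2^\pm,
\]
where $p^\pm$ is the pressure component of $U^\pm+\bar U^\pm$ (still with $[p]=0$, since the shift $\bar p$ is a common constant). These expressions involve only $p,v_2,H_2$ and the tangential operators $\partial_t,\partial_2$, which commute with restriction to $\{x_1=0\}$; hence $[p]=[v]=[H]=0$ on the boundary forces $F_1^+=F_1^-$ and $F_5^+=F_5^-$, and inverting the $2\times2$ system yields $[\partial_1v]=0$ for all $t\in[0,T]$, which is \eqref{jc2}. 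The only genuinely nonroutine point is the computation of the first and fifth rows of $M^\pm$ in the previous step: it rests on the characteristic relation $\partial_t\varphi=v_N$ — the defining feature of a contact discontinuity — which is what makes the relevant $2\times2$ subsystem both $\rho$-free and closed in $\partial_1v^\pm$. One must also genuinely use the full continuity $[H]=0$, not merely $[H_\tau]=0$, and this is precisely the extra information furnished by Proposition \ref{p1}; everything else is elementary linear algebra.
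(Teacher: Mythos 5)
Your proposal is correct and follows essentially the same route as the paper's (as made explicit in the analogous Lemma \ref{l4.2}, the $t=0$ instance): restrict the interior equations \eqref{11.1} to the boundary, project onto the pressure row and a magnetic-field row, and exploit the boundary conditions together with $[H_N]=0$ (from Proposition \ref{p1}) and $H_N\neq 0$. Your packaging as a single $2\times2$ linear system in $[\partial_1 v_1]$, $[\partial_1 v_2]$ with determinant $-H_N$ is a mild reorganization of the paper's two-step derivation ($[\partial_1 v_N]=0$ from the pressure row, then $H_N^+[\partial_1 v]=H^+[\partial_1 v_N]$ from the $H$-rows), with identical mathematical content.
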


\begin{remark}
{\rm
Strictly speaking, Propositions \ref{p1} and \ref{p2} were proved in \cite{Cont1} for problem \eqref{11}--\eqref{13} in the half-plane $\mathbb{R}^2_+$. But, clearly, these propositions as well as all of the results obtained in \cite{Cont1} for the linearized problem stay valid for problem \eqref{11.1}--\eqref{13.1} and its linearization. In particular, while obtaining energy equalities for the linearized problem by standard arguments of the energy method for symmetric hyperbolic systems, the boundary integrals over the opposite sides of the semi-strip $\{x_1>0\}\times\mathbb{S}$ (where $\mathbb{S}$ is a unit $x_2$-segment) cancel out due to periodic boundary conditions in the $x_2$-direction.  Below we will not comment the role of periodic boundary conditions anymore.
}
\label{r1}
\end{remark}

\subsection{Main result and discussion}

We are now in a position to state the main result of the present paper that is the local-in-time existence theorem for problem \eqref{11.1}--\eqref{13.1}. Clearly, this theorem implies a corresponding theorem for the original free boundary problem \eqref{4}, \eqref{bcond}, \eqref{indat}.

\begin{theorem} Let $m\in\mathbb{N}$ and $m\geq 6$. Suppose the initial data \eqref{13.1}, with
\[
\left((U^+_0,U^-_0),\varphi_0\right)\in H^{m+17/2}(\Omega)\times H^{m+17/2}(\partial\Omega),
\]
satisfy the hyperbolicity condition \eqref{5.1'} and the divergence constraints \eqref{14} for all $x\in\Omega$. Let the initial data satisfy requirement \eqref{mf.1}, the Rayleigh-Taylor sign condition \eqref{RT1} and the boundary constraint \eqref{15} for all $x\in \partial\Omega$. Assume also that the initial data are compatible up to order $m+8$ in the sense of Definition \ref{d1}. Then there exists a sufficiently short time $T>0$ such that problem \eqref{11.1}--\eqref{13.1} has a unique solution
\[
\left((U^+,U^-),\varphi\right)\in H^{m}([0,T]\times\Omega)\times H^{m}([0,T]\times\partial\Omega).
\]
\label{t1}
\end{theorem}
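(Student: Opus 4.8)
The plan is to prove Theorem \ref{t1} by a Nash--Moser iteration scheme, following the by-now standard strategy for characteristic free boundary problems in compressible MHD (as in the plasma--vacuum and current-vortex-sheet works). The essential ingredients are: (i) the construction of an approximate solution absorbing the initial data and reducing the problem to one with zero initial data; (ii) a tame a priori estimate in Sobolev spaces for the effective \emph{linearized} problem around a basic state close to that approximate solution; (iii) a careful accounting of the loss of derivatives in the linearized estimate, which dictates the smoothing operators and the convergence of the iteration; and (iv) the actual Nash--Moser induction, yielding a solution on a short time interval $[0,T]$, together with uniqueness.

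First I would reduce to zero initial data. Using the compatibility conditions up to order $m+8$ (Definition \ref{d1}), one constructs an approximate solution $(U^{a,\pm},\varphi^a)$ such that $\mathbb{L}(U^{a,\pm},\Psi^{a,\pm})$ vanishes to high order at $t=0$, the boundary conditions $\mathbb{B}=0$ hold on $[0,T]\times\partial\Omega$, the divergence and normal-trace constraints \eqref{14}, \eqref{15} hold, and the hyperbolicity \eqref{5.1'}, the condition \eqref{mf.1} and the Rayleigh--Taylor condition \eqref{RT1} hold on $[0,T]\times\Omega$ (resp. $\partial\Omega$) for $T$ small, with the appropriate quantitative margins $\bar p/4$, $\kappa$, $\epsilon$. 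Writing the unknown as $(U^{a,\pm}+U^\pm,\varphi^a+\varphi)$, the problem becomes $\mathbb{L}(U^{a,\pm}+U^\pm,\Psi^{a,\pm}+\Psi^\pm)=f^{a,\pm}$, $\mathbb{B}=0$, with $(U^\pm,\varphi)|_{t=0}=0$ and $f^{a,\pm}$ small and flat at $t=0$. Here Propositions \ref{p1} and \ref{p2} guarantee that the constraints \eqref{14}, \eqref{15} and the jump relation \eqref{jc2} propagate, so they may be treated as hypotheses on the basic states throughout the iteration rather than as extra equations.

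The heart of the matter is the tame estimate for the linearized operator. The linearization of \eqref{11.1}--\eqref{12.1} around a basic state $(\mathring U^\pm,\mathring\varphi)$ satisfying \eqref{5.1'}, \eqref{mf.1}, \eqref{RT1} (plus the constraints) is, after the usual "good unknown of Alinhac" substitution $\dot U^\pm\mapsto\dot U^\pm-\Psi^\pm\,\partial_1\mathring U^\pm/\partial_1\mathring\Phi^\pm$ that removes the worst $\Psi$-terms, a symmetric hyperbolic system with a linear boundary operator for which \cite{Cont1} established $L^2$ well-posedness precisely under \eqref{RT1}. One then upgrades this basic estimate to a \emph{tame} estimate of the form
\[
\nt(\dot U^+,\dot U^-)\nt_{s}+\|\dot\varphi\|_{s}\ \lesssim\ \nt f\nt_{s+C}+\|g\|_{s+C}+\big(\nt f\nt_{C}+\|g\|_{C}\big)\,\big(\nt(\mathring U^+,\mathring U^-)\nt_{s+C}+\|\mathring\varphi\|_{s+C}\big),
\]
by commuting tangential and normal derivatives through the equations, using the standard calculus (Moser-type) inequalities in the weighted conormal/Sobolev spaces, and controlling normal derivatives via the equations themselves (the characteristic part of the boundary forces one to estimate $\partial_1$ of the noncharacteristic components from the interior equations, losing one derivative, which is why the loss is fixed but nonzero). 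The Rayleigh--Taylor condition enters exactly as in \cite{Cont1}: it produces a positive boundary term controlling $[\partial_1 p]$, hence $\partial_t\varphi$ and, through the evolution equation $\partial_t\varphi=v_N^+$ on the boundary, the full $H^s$-norm of $\varphi$. I would carry this out first for the "reduced" problem with homogeneous boundary conditions, then handle the inhomogeneous boundary data $g$ in \eqref{12'} by a lifting argument, and finally prove an analogous tame estimate for the first-order linearization that is needed to quantify the error term in the Nash--Moser step.

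With the tame estimate in hand, the Nash--Moser scheme is by now routine: introduce a family of smoothing operators $S_\theta$ on the half-space (commuting suitably with traces), define iterates $(U_{n+1}^\pm,\varphi_{n+1})=(U_n^\pm,\varphi_n)+(\delta U_n^\pm,\delta\varphi_n)$ where $\delta U_n$ solves the linearized problem around the smoothed state $S_{\theta_n}(U^a+U_n,\varphi^a+\varphi_n)$ with a source given by the accumulated error plus a correction that enforces the constraints, split the error into a "quadratic" part, a "substitution" part and a "smoothing" part as in Hörmander's scheme, and verify by induction the weighted bounds $\nt\delta U_n\nt_s\lesssim \theta_n^{s-\alpha-1}\Delta_n$ for an appropriate $\alpha$ depending on the derivative loss $C$ and on $m$ (this is where the numerology $m\ge6$ and the gap $m+17/2$, $m+8$ between the regularity of the data and of the solution comes from). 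Summing the telescoping series gives convergence in $H^m([0,T]\times\Omega)\times H^m([0,T]\times\partial\Omega)$ to a solution of \eqref{11.1}--\eqref{13.1}, for $T$ chosen small enough (depending only on the norms of the data) so that the smallness hypotheses, \eqref{5.1'}, \eqref{fi}, \eqref{mf.1} and \eqref{RT1} persist along the iteration. Uniqueness follows from an $L^2$ energy estimate for the difference of two solutions, again using \eqref{RT1} via \cite{Cont1}.

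\textbf{Main obstacle.} The delicate point is the tame a priori estimate for the linearized problem: because the boundary is characteristic and the normal trace of $H$ is nonzero, one loses control of the normal derivative of the noncharacteristic unknowns near the boundary and must recover it from the interior equations, so all higher-order estimates are obtained with a fixed but strictly positive loss of derivatives relative to the data; tracking this loss cleanly through the weighted norms, through the Alinhac good-unknown substitution, and through the inhomogeneous boundary lifting — in a way compatible with the Nash--Moser counting — is the technically heaviest part, and it is where the Rayleigh--Taylor condition \eqref{RT1} must be exploited quantitatively (uniformly along the iteration) to close the boundary term for $\varphi$.
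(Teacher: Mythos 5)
Your proposal follows the same route as the paper: approximate solution via compatibility conditions to reduce to zero initial data, tame a priori estimate in $H^s$ for the effective linearization (building on the $H^1$ result of \cite{Cont1} under the Rayleigh--Taylor condition, after Alinhac's good-unknown substitution), and a Nash--Moser iteration with smoothing operators and the usual quadratic/substitution/smoothing error decomposition, closing with the inductive hypothesis $\|\delta U_n\|_{s}+\|\delta\varphi_n\|_{s}\lesssim\delta\theta_n^{s-\alpha-1}\Delta_n$. The numerology ($m\ge6$, data in $H^{m+17/2}$, compatibility to order $m+8$) and the uniqueness argument match.

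One point in your write-up is misleading and hides the part of the proof that the paper itself singles out as the technical novelty. You say that Propositions~\ref{p1} and~\ref{p2} guarantee that the constraints \eqref{14}, \eqref{15}, \eqref{jc2} "propagate, so they may be treated as hypotheses on the basic states throughout the iteration." Those propositions only apply to exact solutions of the nonlinear problem; the iterates $U^a+U_n$ are not exact solutions, so nothing a priori forces the smoothed state $S_{\theta_n}(U^a+U_n)$ to satisfy \eqref{a12'} and, more delicately, the jump condition \eqref{jc1'} on $\partial_1 v$. Precisely because of this, one must (a) prove quantitatively, from points b) and c) of the induction hypothesis, that the iterates satisfy these constraints \emph{approximately} with decay $\theta_k^{s-\alpha}$ (this is Lemma~\ref{ldv1}, which requires going back into the interior equations on the boundary, using hyperbolicity and $\widehat H_N\neq0$), and (b) build a \emph{modified state} $U_{n+1/2}$ satisfying the constraints exactly, close to $S_{\theta_n}U_n$ in the tame sense \eqref{94}. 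The construction of the velocity components in \eqref{201} and \eqref{205} — involving the cutoffs $\chi$ and $\zeta=x_1\chi$ and the correctors $\mathcal{R}_n$, $\widetilde{\mathcal{R}}_n$ — is exactly what enforces $[v_{n+1/2}]=0$ and $[\partial_1v_{n+1/2}]=0$ simultaneously and is more involved than the analogous step in \cite{Tcpam}. Without spelling out (a) and (b), the iteration cannot be closed, because the tame estimate of Theorem~\ref{t3.1} is stated only for basic states satisfying \eqref{a5}--\eqref{jc1'} and \eqref{RTL}. The rest of your scheme is sound and coincides with the paper's.
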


As usual, we will construct solutions to the nonlinear problem by considering a sequence of
linearized problems. However, since for MHD contact discontinuties the Kreiss-Lopatinski condition is satisfied only in a weak sense (see \cite{Cont1}), there appears a loss of derivatives phenomena and, therefore, the standard fixed-point argument is inapplicable for our case. As, for example, in \cite{Al,CS,ST,T09,Tcpam}, we overcome this principal difficulty by solving our nonlinear problem by a suitable Nash-Moser-type iteration scheme (see, e.g., \cite{Herm,Sec16} and references therein).

The main tool for proving the convergence of the Nash-Moser iteration scheme is a so-called tame estimate for the linearized problem (see Section \ref{s3} and, e.g.,  \cite{Al,Sec16}). In \cite{Cont1}, the  basic a priori estimate in $H^1$ for the linearized problem was obtained by the energy method provided that the Rayleigh-Taylor sign condition \eqref{RT} is satisfied at each point of the unperturbed discontinuity. This estimate is a basis for deriving the tame estimate in $H^s$  (see Section \ref{s3}) and implies \textit{uniqueness} of a solution to the nonlinear problem \eqref{11.1}--\eqref{13.1} that can be proved by standard argument (see, e.g., \cite{Ticvs}). With this short remark, we shall no longer discuss the problem of uniqueness in this paper.

As was noted in \cite{Cont1}, the  basic a priori estimate obtained there for the linearized problem for the 2D planar case cannot be directly extended to the 3D case because of a principal difficulty connected with the appearance of additional boundary terms in energy integrals. That is, the general 3D case is still an open problem for MHD contact discontinuities.

The classical condition \eqref{RT} is sufficient for the local-in-time existence of MHD contact discontinuities. Our hypothesis is that it is also necessary for the well-posedness of problem \eqref{11.1}--\eqref{13.1} and its violation leads to ill-posedness associated with Rayleigh-Taylor instability. The proof of this hypothesis is also an interesting open problem for future research. In this connection, it is worth noting that the Rayleigh-Taylor instability of contact discontinuities was earlier detected in numerical MHD simulations of astrophysical plasmas as fingers near the contact discontinuity in the contour maps of density (see \cite{FangZhang} and references therein).

The plan of the rest of the paper is the following. In Section \ref{s2}, we formulate the linearized problem and recall the well-posedness result for it obtained in \cite{Cont1}. In Section \ref{s3}, for the linearized problem we derive the a priori tame estimate mentioned above. In Section \ref{s4}, we specify compatibility conditions for the initial data and, by constructing an approximate solution, reduce problem \eqref{11.1}--\eqref{13.1} to that with zero initial data. At last, in Section \ref{s5} we solve the reduced problem by a suitable Nash-Moser-type iteration scheme.

\section{Linearized problem associated to (\ref{11.1})--(\ref{13.1})}
\label{s2}

\setcounter{subsubsection}{0}
\subsection{The basic state}

Consider
\begin{equation}
\begin{split}
& \Omega_T:= (-\infty, T]\times\Omega,\quad \partial\Omega_T:=(-\infty ,T]\times\partial\Omega ,\\
 & \Omega_T^+:= [0, T]\times\Omega,\quad \partial\Omega_T^+:=[0 ,T]\times\partial\Omega .
\end{split}
\label{OmegaT}
\end{equation}
Let the basic state
\begin{equation}
(\widehat{U}^+(t,x ),\widehat{U}^-(t,x ),\hat{\varphi}(t,{x}'))
\label{a21}
\end{equation}
upon which we perform linearization be a given sufficiently smooth vector-function with $\widehat{U}^{\pm}=(\hat{p}^{\pm},\hat{v}^{\pm},\widehat{H}^{\pm},\widehat{S}^{\pm})$ and
\begin{equation}
\|\widehat{U}^+\|_{W^2_{\infty}(\Omega_T)}+
\|\widehat{U}^-\|_{W^2_{\infty}(\Omega_T)}
+\|\hat{\varphi}\|_{W^2_{\infty}(\partial\Omega_T)} \leq K,
\label{a22}
\end{equation}
where $K>0$ is a constant, and below we will also use the notations
\[
\widehat{\Phi}^{\pm}(t,x )=\pm x_1 +\widehat{\Psi}^{\pm}(t,x ),\quad
\widehat{\Psi}^{\pm}(t,x )=\chi(\pm x_1)\hat{\varphi}(t,x'),
\]
i.e., all of the ``hat'' values are determined like corresponding values for $(U^+,U^-, \varphi)$, e.g.,
\[
\hat{v}_{N}^{\pm}=\hat{v}_1^{\pm}- \hat{v}_2^{\pm}\partial_2\widehat{\Psi}^{\pm},\quad
\widehat{H}_{N}^{\pm}=\widehat{H}_1^{\pm}- \widehat{H}_2^{\pm}\partial_2\widehat{\Psi}^{\pm},\quad
\widehat{H}^{\pm}_{\tau}= \widehat H_1^\pm \partial_2\widehat\Psi^{\pm}+\widehat H^{\pm}_2.
\]
Moreover, without loss of generality we assume that $\|\hat{\varphi}\|_{L_{\infty}(\partial\Omega_T)}<1$ (see \eqref{fi}). This implies
$\partial_1\widehat{\Phi}^+\geq 1/2$ and $\partial_1\widehat{\Phi}^-\leq -  1/2$.

\begin{remark}
{\rm
In \cite{Cont1}, unlike \eqref{a22}, we assumed that the norm $\|\hat{\varphi}\|_{W^3_{\infty}(\partial\Omega_T)}$ should be bounded. In fact, this assumption in \cite{Cont1} could be relaxed and replaced with that in \eqref{a22} because the third-order derivatives of $\widehat{\Psi}^{\pm}$ whose boundedness was necessary for the derivation of an priori estimate for the linearized problem had the form $\partial_1\partial_t^{\alpha_0}\partial_2^{\alpha_2}\widehat{\Psi}^{\pm}$ (with $\alpha_0+\alpha_2=2$). But,
$\partial_1\partial_t^{\alpha_0}\partial_2^{\alpha_2}\widehat{\Psi}^{\pm}=\pm \chi'\partial_t^{\alpha_0}\partial_2^{\alpha_2}\hat{\varphi}$ and the boundedness of the norm $\|\hat{\varphi}\|_{W^2_{\infty}(\partial\Omega_T)}$ was enough.
}
\label{r2}
\end{remark}

We assume that the basic state defined in ${\Omega_T}$ satisfies the ``relaxed'' hyperbolicity conditions \eqref{5.1'},
\begin{equation}
\hat{p}^{\pm} \geq -\bar{p}/2\quad \mbox{in}\ \Omega_T^+,  \label{a5}
\end{equation}
the boundary conditions \eqref{12'} together with the boundary constraint \eqref{15},
\begin{equation}
[\hat{p}]=0,\quad [\hat{v}]=0,\quad [\widehat{H}]=0, \quad \partial_t\hat{\varphi}-\hat{v}_{N}^+|_{x_1=0}=0\quad \mbox{on}\ \partial\Omega_T,
\label{a12'}
\end{equation}
the ``relaxed'' requirement \eqref{mf.1},
\begin{equation}
|\widehat{H}_N^{\pm}|_{x_1=0}|\geq {\kappa}/2 >0\quad \mbox{on}\ \partial\Omega_T^+,
\label{cdass}
\end{equation}
and the jump condition \eqref{jc2},
\begin{equation}
[\partial_1\hat{v}]=0\quad \mbox{on}\ \partial\Omega_T.
\label{jc1'}
\end{equation}



\begin{remark}
{\rm
In \cite{Cont1}, for the linearized problem equations for the perturbations of the magnetic fields $H^{\pm}$ associated to  \eqref{14} and \eqref{15} were deduced. Exactly as in \cite{T09}, to do this it is not enough that \eqref{14} and \eqref{15} for the basic state hold.  We need also that the equations for the unperturbed magnetic fields $\widehat{H}^{\pm}$ themselves
are fulfilled, i.e., the fourth and fifth equations of systems in \eqref{11.1} must be assumed to be satisfied for the basic state.
In this paper, unlike \cite{Cont1,T09}, we do not deduce the linear equations associated to \eqref{14}
but just directly obtain estimates for the linearized divergences $\div h^{\pm}$. Moreover, we {\it do not use} the linearized version of \eqref{15} (actually, it was also not used in \cite{Cont1}).
Therefore, here, unlike \cite{Cont1}, we do not assume that the basic state satisfies the fourth and fifth equations of systems in \eqref{11.1} as well as the divergence constraints \eqref{14} (following from \eqref{11.1} under their fulfillment at $t=0$). Note also that assumptions \eqref{a5}--\eqref{jc1'} are constraints on the basic state which are automatically satisfied if the basic state is an exact local-in-time solution of problem \eqref{11.1}--\eqref{13.1}. We used these assumptions in \cite{Cont1} and will really need them while deriving the tame a priori estimate in Section \ref{s3}, which is the main tool for proving the convergence of the Nash-Moser iteration scheme.
}
\label{r3}
\end{remark}

\subsection{The linearized equations}

The linearized equations for \eqref{11.1}, \eqref{12.1} read:
\[
\mathbb{L}'(\widehat{U}^{\pm},\widehat{\Psi}^{\pm})(\delta U^{\pm},\delta\Psi^{\pm}):=
\frac{d}{d\varepsilon}\mathbb{L}(U_{\varepsilon}^{\pm},\Psi_{\varepsilon}^{\pm})|_{\varepsilon =0}={f}^{\pm}
\quad \mbox{in}\ \Omega_T,
\]
\[
\mathbb{B}'(\widehat{U}^+,\widehat{U}^-,\hat{\varphi})(\delta U^+,\delta U^-,\delta \varphi):=
\frac{d}{d\varepsilon}\mathbb{B}(U_{\varepsilon}^+,U_{\varepsilon}^-,\varphi_{\varepsilon})|_{\varepsilon =0}={g}
\quad \mbox{on}\ \partial\Omega_T
\]
where $U_{\varepsilon}^{\pm}=\widehat{U}^{\pm}+ \varepsilon\,\delta U^{\pm}$,
$\varphi_{\varepsilon}=\hat{\varphi}+ \varepsilon\,\delta \varphi$, and
\[
\Psi_{\varepsilon}^{\pm}(t,{x} ):=\chi (\pm x_1)\varphi _{\varepsilon}(t,{x}'),\quad
\Phi_{\varepsilon}^{\pm}(t,{x} ):=\pm x_1+\Psi_{\varepsilon}^{\pm}(t,{x} ),
\]
\[
\delta\Psi^{\pm}(t,{x} ):=\chi (\pm x_1)\delta \varphi (t,{x} ).
\]
Here we introduce the source terms
\[
{f}^{\pm}(t,{x} )=(f_1^{\pm}(t,{x} ),\ldots ,f_6^{\pm}(t,{x} ))\quad \mbox{and}\quad {g}(t,{x}' )=(g_1(t,{x}' ),\ldots ,g_5(t,{x}' ))
\]
to make the interior equations and the boundary conditions inhomogeneous.

We easily compute the exact form of the linearized equations (below we drop $\delta$):
\[
\mathbb{L}'(\widehat{{U}}^{\pm},\widehat{\Psi}^{\pm})({U}^{\pm},\Psi^{\pm})\\
=
L(\widehat{{U}}^{\pm},\widehat{\Psi}^{\pm}){U}^{\pm} +{\cal C}(\widehat{{U}}^{\pm},\widehat{\Psi}^{\pm})
{U}^{\pm} -  \bigl\{L(\widehat{{U}}^{\pm},\widehat{\Psi}^{\pm})\Psi^{\pm}\bigr\}\frac{\partial_1\widehat{U}{\pm}}{\partial_1\widehat{\Phi}^{\pm}},
\]
\[
\mathbb{B}'(\widehat{{U}}^+,\widehat{{U}}^-,\hat{\varphi})({U}^+,{U}^-,\varphi)=
\left(
\begin{array}{c}
p^+-p^-\\[3pt]
v^+-v^-\\[3pt]
H_{\tau}^+-H_{\tau}^-\\[3pt]
\partial_t\varphi +\hat{v}_2^+\partial_2\varphi  -v_{N}^+
\end{array}
\right),
\]
where $v_{N}^{\pm}=v_1^{\pm}-v_2^{\pm}\partial_2\widehat{\Psi}^\pm$, ${H}^{\pm}_{\tau}= H_1^\pm \partial_{2}\widehat{\Psi}^{\pm}+H^{\pm}_{2}$ and the matrix
${\cal C}(\widehat{{U}}^{\pm},\widehat{\Psi}^{\pm})$ is determined as follows:
\[
{\cal C}(\widehat{{U}}^{\pm},\widehat{\Psi}^{\pm}){Y}
= ({Y} ,\nabla_yA_0(\widehat{{U}}^{\pm} ))\partial_t\widehat{{U}}^{\pm}
 +({Y} ,\nabla_y\widetilde{A}_1(\widehat{\bf U}^{\pm},\widehat{\Psi}^{\pm}))\partial_1\widehat{{U}}^{\pm}
+ ({Y} ,\nabla_yA_2(\widehat{{U}}^{\pm} ))\partial_2\widehat{{U}}^{\pm},
\]
\[
({Y} ,\nabla_y A(\widehat{{U}}^{\pm})):=\sum_{i=1}^6y_i\left.\left(\frac{\partial A ({Y} )}{
\partial y_i}\right|_{{Y} =\widehat{{U}}^{\pm}}\right),\quad {Y} =(y_1,\ldots ,y_6).
\]

The differential operator $\mathbb{L}'(\widehat{U}^{\pm},\widehat{\Psi}^{\pm})$ is a first order operator in
$\Psi^{\pm}$. Following \cite{Al}, we overcome this potential difficulty by introducing the ``good unknown''
\begin{equation}
\dot{U}^\pm:=U^\pm -\frac{\Psi^\pm}{\partial_1\widehat{\Phi}^\pm}\,\partial_1\widehat{U}^\pm .
\label{b23}
\end{equation}
Omitting detailed calculations, we rewrite the linearized interior equations in terms of the new unknowns $\dot{U}^+$ and $\dot{U}^-$ in \eqref{b23}:
\begin{equation}
L(\widehat{U}^{\pm},\widehat{\Psi}^{\pm})\dot{U}^{\pm} +{\cal C}(\widehat{U}^{\pm},\widehat{\Psi}^{\pm})
\dot{U}^{\pm} - \frac{\Psi^{\pm}}{\partial_1\widehat{\Phi}^{\pm}}\,\partial_1\bigl\{\mathbb{L}
(\widehat{U}^{\pm},\widehat{\Psi}^{\pm})\bigr\}={f}^{\pm}.
\label{b24}
\end{equation}
As in \cite{Al,ST,T09,Tcpam}, we drop the zeroth-order terms in $\Psi^+$  and $\Psi^-$ in \eqref{b24} and consider the effective linear operators
\begin{equation}
\begin{split}
\mathbb{L}'_e(\widehat{U}^{\pm},\widehat{\Psi}^{\pm})\dot{U}^{\pm} :=& L(\widehat{U}^{\pm},\widehat{\Psi}^{\pm}) \dot{U}^{\pm}+\mathcal{C}(\widehat{U}^{\pm},\widehat{\Psi}^{\pm})\dot{U}^{\pm}
\\
= & A_0(\widehat{U}^{\pm}+\bar{U}^{\pm})\partial_t\dot{U}^{\pm}
+\widetilde{A}_1(\widehat{U}^{\pm}+\bar{U}^{\pm},\widehat{\Psi}^{\pm})
\partial_1\dot{U}^{\pm}\\
 &+A_2(\widehat{U}^{\pm}+\bar{U}^{\pm})\partial_2\dot{U}^{\pm}+\mathcal{C}(\widehat{U}^{\pm},\widehat{\Psi}^{\pm})\dot{U}^{\pm}.
\end{split}
\label{24}
\end{equation}
In the subsequent nonlinear analysis the dropped terms in \eqref{b24} will be considered as additional error terms at each Nash-Moser iteration step (see Section \ref{s5}).

Regarding the boundary differential operator $\mathbb{B}'$, in terms of unknowns \eqref{b23} it reads
\begin{equation}
\mathbb{B}'_e(\widehat{U},\hat{\varphi})(\dot{U},\varphi ):=
\mathbb{B}'(\widehat{U},\hat{\varphi})(U^+,U^-,\varphi )=
\left(
\begin{array}{c}
\dot{p}^+-\dot{p}^- + \varphi [\partial_1\hat{p}]\\[3pt]
\dot{v}_1^+-\dot{v}_1^-\\[3pt]
\dot{v}_2^+-\dot{v}_2^-\\[3pt]
\dot{H}_{\tau}^+-\dot{H}_{\tau}^-+ \varphi [\partial_1\widehat{H}_{\tau}]\\[3pt]
\partial_t\varphi +\hat{v}_2^+\partial_2\varphi -\dot{v}_{N}^+ - \varphi \partial_1\hat{v}_N^+
\end{array}
\right),
\label{25}
\end{equation}
where
\[
\widehat{U}=(\widehat{U}^+,\widehat{U}^-),\quad \dot{U}=(\dot{U}^+,\dot{U}^-),
\quad \dot{v}_{N}^{\pm}=
\dot{v}_1^{\pm}-\dot{v}_2^{\pm}\partial_2\widehat{\Psi}^{\pm}, \quad
\dot{H}_{\tau}^{\pm}=\dot{H}_1^{\pm}\partial_2\widehat{\Psi}^{\pm} +\dot{H}_2^{\pm}.
\]
We used the important condition $[\partial_1\hat{v}]=0$ for the basic state (see \eqref{jc1'}) while writing down the second and third lines in the boundary operator in \eqref{25}.
Introducing the notation
\begin{equation}
\mathbb{L}'_e(\widehat{U},{\widehat{\Psi}})\dot{U}:=
\left(
\begin{array}{c}
\mathbb{L}'_e(\widehat{U}^+,\widehat{\Psi}^+)\dot{U}^+\\[3pt]
\mathbb{L}'_e(\widehat{U}^-,\widehat{\Psi}^-)\dot{U}^-
\end{array}
\right),
\label{27}
\end{equation}
with $\widehat{\Psi}=(\widehat{\Psi}^+,\widehat{\Psi}^-)$, we write down the linear problem for $(\dot{U},\varphi)$:
\begin{align}
\mathbb{L}'_e(\widehat{U},\widehat{{\Psi}})\dot{U}={f}\quad &\mbox{in}\ \Omega_T, \label{28}\\[3pt]
\mathbb{B}'_e(\widehat{U},\hat{\varphi})(\dot{U},\varphi)={g}\quad &\mbox{on}\ \partial\Omega_T,\label{29}\\[3pt]
(\dot{U},\varphi)=0\quad &\mbox{for}\ t<0,\label{30}
\end{align}
where $f=(f^+,f^-)$.

We are now in a position to recall the main result of \cite{Cont1} which is the well-posedness of the linearized problem \eqref{28}--\eqref{30} under the Rayleigh-Taylor sign condition satisfied for the basic state.

\begin{theorem}[\cite{Cont1}]
Let assumptions \eqref{a22}--\eqref{jc1'} be fulfilled for the basic state \eqref{a21}. Let also the basic state satisfies   the Rayleigh-Taylor sign condition \eqref{RT1}:
\begin{equation}
[\partial_1\hat{p} ]\geq \epsilon /2 >0\quad \mbox{on}\ \partial\Omega_T^+,
\label{RTL}
\end{equation}
where $[\partial_1\hat{p} ]=\partial_1\hat{p}^+_{|x_1=0} +\partial_1\hat{p}^-_{|x_1=0}$ (see \eqref{norm_jump}). Then, for all $f \in H^1(\Omega_T)$ and $g\in H^{3/2}(\partial\Omega_T)$ which vanish in the past, problem \eqref{28}--\eqref{30} has a unique solution $(\dot{U},\varphi )\in H^1(\Omega_T)\times H^1(\partial\Omega_T)$. Moreover, this solution obeys the a priori estimate
\begin{equation}
\|\dot{U} \|_{H^{1}(\Omega_T)}+\|\varphi\|_{H^1(\partial\Omega_T)} \leq C\left\{\|f \|_{H^{1}(\Omega_T)}+ \|g\|_{H^{3/2}(\partial\Omega_T)}\right\},
\label{main_estL}
\end{equation}
where $C=C(K,\bar{p},{\kappa},\epsilon,T)>0$ is a constant independent of the data $f$ and $g$.
\label{t1L}
\end{theorem}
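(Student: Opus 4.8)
\emph{Strategy.} The problem \eqref{28}--\eqref{30} is already in reduced form (good unknowns, effective operators), so the whole theorem rests on the a priori estimate \eqref{main_estL}. Granting it, uniqueness is immediate (apply \eqref{main_estL} to the difference of two solutions), and existence of an $H^1$ solution vanishing in the past follows the standard route: the adjoint of \eqref{28}--\eqref{30} satisfies an estimate of the same type — its boundary conditions are, after the reduction, of the appropriate dissipative form precisely because of \eqref{RTL} — hence a weak $L^2$ solution exists by duality, and its $H^1$ regularity and causality are recovered by running the energy argument on tangential difference quotients. I would therefore spend essentially all the effort on \eqref{main_estL}, working on the finite slab $[0,T]\times\Omega$ since the data and the solution vanish for $t<0$.

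\emph{The $L^2$ energy identity.} Take the $L^2([0,t]\times\Omega)$ scalar product of the $\pm$ equation in \eqref{28} (in the effective form \eqref{24}) with $\dot U^\pm$. Since $A_0(\widehat U^\pm+\bar U^\pm)$ is symmetric and, by \eqref{a5} and \eqref{a22}, bounded and bounded below by a fixed positive constant, integration by parts in $t,x_1,x_2$ (the $x_2$-boundary terms cancelling by periodicity, cf. Remark \ref{r1}) gives
\begin{equation*}
\|\dot U(t)\|_{L^2(\Omega)}^2+\mathcal Q(t)\leq C\int_0^t\big(\|\dot U(s)\|_{L^2(\Omega)}^2+\|f(s)\|_{L^2(\Omega)}^2\big)\,ds ,\qquad C=C(K,\bar p),
\end{equation*}
where the terms generated by $\mathcal C$ and by $\partial_tA_0,\partial_1\widetilde A_1,\partial_2A_2$ are zeroth order in $\dot U$ and have been absorbed into the Gronwall integral, and
\begin{equation*}
\mathcal Q(t)=-\sum_{\pm}\int_{[0,t]\times\partial\Omega}\big(\widetilde A_1(\widehat U^\pm+\bar U^\pm,\widehat\Psi^\pm)\dot U^\pm,\dot U^\pm\big)\,dx_2\,ds
\end{equation*}
is the boundary quadratic form, which only sees the noncharacteristic part of the trace of $\dot U^\pm$ (so it is well defined for $\dot U\in L^2$). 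Everything reduces to extracting a coercive quantity from $\mathcal Q(t)$.

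\emph{Boundary term and the Rayleigh--Taylor sign — the crux.} At $x_1=0$ one has $\partial_1\widehat\Phi^\pm=\pm1$, $\partial_2\widehat\Psi^\pm=\partial_2\hat\varphi$, $\partial_t\widehat\Psi^\pm=\partial_t\hat\varphi=\hat v_N^+|_{x_1=0}$, so $\widetilde A_1^\pm|_{x_1=0}=\pm(A_1-A_0\partial_t\hat\varphi-A_2\partial_2\hat\varphi)(\widehat U^\pm)$, a conormal symbol that is singular because $\partial_t\hat\varphi$ is a characteristic (transport) speed. Inserting the boundary conditions \eqref{29} — $[\dot p]=g_1-\varphi[\partial_1\hat p]$, $[\dot v]=(g_2,g_3)$ (whence $[\dot v_N]$ is a boundary datum), $[\dot H_\tau]=g_4-\varphi[\partial_1\widehat H_\tau]$, and $\dot v_N^+=\partial_t\varphi+\hat v_2^+\partial_2\varphi-\varphi\,\partial_1\hat v_N^+-g_5$ — and using $[\partial_1\hat v]=0$ from \eqref{jc1'}, the cross terms in $\mathcal Q$ (those coming from the $p$--$v_N$ coupling in $A_1$) produce a factor $[\dot p]\,\dot v_N\sim-[\partial_1\hat p]\,\varphi\,\partial_t\varphi$; after integrating in time and using $\varphi|_{t=0}=0$ this yields
\begin{equation*}
\mathcal Q(t)=c_0\int_{\partial\Omega}[\partial_1\hat p]\,\varphi(t)^2\,dx_2+R(t),\qquad c_0>0,
\end{equation*}
where $R(t)$ is controlled by $\int_0^t(\|\dot U(s)\|_{L^2(\Omega)}^2+\|\varphi(s)\|_{L^2(\partial\Omega)}^2)\,ds$, by the boundary data, and by the noncharacteristic traces (bounded by the standard trace estimates for symmetric hyperbolic systems). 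The relaxed requirement $|\widehat H_N^\pm|\ge\kappa/2$ of \eqref{cdass} is what keeps the magnetic contributions — in particular the $\varphi[\partial_1\widehat H_\tau]$ generated by the fourth boundary condition — absorbable in this reduction. By \eqref{RTL}, $[\partial_1\hat p]\ge\epsilon/2>0$, so $\mathcal Q(t)$ delivers the coercive bound on $\|\varphi(t)\|_{L^2(\partial\Omega)}^2$; combining with the $L^2$ identity and applying Gronwall gives the $L^2$-level estimate for $\|\dot U\|$ and $\|\varphi\|$ in terms of the data.

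\emph{From $L^2$ to $H^1$.} Apply $\partial_t$ and $\partial_2$ to \eqref{28}--\eqref{30}; the commutators with the principal part are zeroth order in the differentiated unknowns (Gronwall again), and the differentiated boundary operator has the structure of \eqref{29} plus lower-order terms, the only new ingredient being tangential derivatives of the coefficients $[\partial_1\hat p],[\partial_1\widehat H_\tau],\partial_1\hat v_N^+$, bounded by \eqref{a22}. Repeating the two previous steps for $\partial_t\dot U$ and $\partial_2\dot U$ controls all tangential first derivatives of $(\dot U,\varphi)$. Finally $\partial_1\dot U$ is recovered: on the components on which $\widetilde A_1$ is invertible near $x_1=0$ one solves \eqref{24} algebraically for $\partial_1\dot U$ in terms of $f,\dot U$ and its tangential derivatives; the normal derivatives of the remaining (characteristic) components are bounded using the divergence constraints \eqref{14} — which express $\partial_1\dot H_N$ through tangential derivatives — together with the transport structure of the entropy equation. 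This completes \eqref{main_estL}.

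\emph{Main obstacle.} The only genuinely delicate point is the boundary-term computation: isolating in $\mathcal Q(t)$ exactly the term $c_0[\partial_1\hat p]\varphi^2$ with the correct sign and verifying that every other boundary contribution, especially those carrying the magnetic jumps, is absorbable into the energy and the data. It is this algebraic identity that forces the Rayleigh--Taylor hypothesis \eqref{RTL} and uses $\widehat H_N\ne0$; the rest is routine symmetric-hyperbolic energy bookkeeping.
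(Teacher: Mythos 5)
Your architecture---$L^2$ energy identity for the symmetric system, extraction of a coercive $\varphi$-term on the boundary via the Rayleigh--Taylor sign, Gronwall, then tangential derivatives and reconstruction of the normal derivative---is indeed the correct route and is the one followed in \cite{Cont1} (and rerun at higher order in Section \ref{s3} of this paper). You also correctly use \eqref{jc1'} to see that the $v$-jump conditions carry no $\varphi[\partial_1\hat v]$ term, and the $p$-$v_N$ cross term in the boundary form is handled exactly as in the paper: $[\dot p]=-\varphi[\partial_1\hat p]+g_1$ together with $\dot v_N^+=\partial_0\varphi-\varphi\,\partial_1\hat v_N^+-g_5$ and $\varphi|_{t\le0}=0$ produce the sign-definite quantity $c_0\int_{\partial\Omega}[\partial_1\hat p]\,\varphi(t)^2\,dx_2$.

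There is, however, a genuine gap precisely where you flag ``the main obstacle.'' Using \eqref{A1tilde2D}/\eqref{c30} and the boundary conditions $[\dot v]=0$, $[\widehat H]=0$, the boundary quadratic form reduces to \emph{two} couplings, not one:
\begin{equation*}
Q \;=\; -\dot v_N^+\,[\dot p]\;+\;\bigl(\widehat H_N^+\dot v_2^+-\widehat H_2^+\dot v_N^+\bigr)\,[\dot H_\tau]\Big|_{x_1=0}.
\end{equation*}
After inserting $[\dot H_\tau]=-\varphi[\partial_1\widehat H_\tau]+g_4$, the second coupling contributes
\begin{equation*}
-[\partial_1\widehat H_\tau]\,\varphi\,\bigl(\widehat H_N^+\dot v_2^+-\widehat H_2^+\dot v_N^+\bigr)\Big|_{x_1=0},
\end{equation*}
and this is \emph{not} absorbable as you claim: $[\partial_1\widehat H_\tau]$ carries no sign, the factor $\widehat H_N^+\dot v_2^+-\widehat H_2^+\dot v_N^+$ is a trace of noncharacteristic velocity components that the $L^2(\Omega)$ energy does not directly control, and the hypothesis $|\widehat H_N^\pm|\ge\kappa/2$---which you invoke at this point---plays no role here (it is used later, to invert the boundary matrix and reconstruct the normal derivatives of $V_n$). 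The missing idea, used in \cite{Cont1} and reproduced in \eqref{eqH}--\eqref{Q2"} here, is to read the fourth and fifth interior equations of \eqref{28} restricted to $\{x_1=0\}$, contracted with $(1,-\partial_2\widehat\Psi^+)$, as the identity
\begin{equation*}
\widehat H_N^+\partial_2\dot v_2^+-\widehat H_2^+\partial_2\dot v_N^+\Big|_{x_1=0}
=-\partial_0\dot H_N^+\Big|_{x_1=0}+(\text{l.o.t. in }\dot U)+F_N^+.
\end{equation*}
When the system is differentiated tangentially (by $\partial_2$, which is where the $H^1$ estimate is proved), this trades the uncontrolled velocity-trace derivative for a material derivative $\partial_0$ of the characteristic component $\dot H_N^+$; one then integrates by parts against $[\partial_1\widehat H_\tau]\partial_2\varphi$ and converts the remaining boundary integrals to volume integrals. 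Without this identity your estimate simply does not close, and you should present it rather than label the term absorbable.

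Two smaller imprecisions. First, you invoke the divergence constraints \eqref{14} to recover $\partial_1\dot H_N$; but \eqref{14} is a constraint on the nonlinear initial data, not an identity for the linearized unknown. What one actually uses is the transport equation \eqref{xi} satisfied by the linearized divergence $\xi^\pm=\div\dot h^\pm$: once $\xi^\pm$ is estimated, $\partial_1\dot H_N^\pm$ is read off from $\xi^\pm=\partial_1\dot H_N^\pm+\partial_2(\cdots)$. Second, the paper splits tangential derivatives into $\partial_2\partial_{\mathrm{tan}}^\beta$ and $\partial_0\partial_{\mathrm{tan}}^\beta$ rather than $\partial_t$ and $\partial_2$; these are algebraically equivalent, but the $\partial_0$ grouping is the one compatible with the boundary condition $\partial_0\varphi=\dot v_N^++\cdots$ and with the integration-by-parts just described, and you will find that trying to use raw $\partial_t$ produces extra uncontrolled commutators in the boundary terms.
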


Note that for the basic state we write the half of $\epsilon$ in \eqref{RTL} just for technical convenience of subsequent arguments.\footnote{Condition \eqref{RT} being fulfilled for the approximate solution from Section \ref{s4} in the ``relaxed'' form \eqref{RTL} will be satisfied for the so-called modified state from Section \ref{s5} for a sufficiently small time interval.}

\section{Tame estimate}
\label{s3}

\setcounter{subsubsection}{0}
\subsection{Tame a priori estimate for problem (\ref{28})--(\ref{30})}

We are going to derive a tame a priori estimate in $H^s$ for problem \eqref{28}--\eqref{30}, with $s$ large enough.
This tame estimate (see Theorem \ref{t3.1} below) being, roughly speaking, linear in high norms (that are
multiplied by low norms) is with no loss of derivatives from
$f$, with the loss of one derivative from $g$, and with a fixed loss of derivatives with
respect to the coefficients, i.e., with respect to the basic state
(\ref{a21}). Although problem \eqref{28}--\eqref{30} is a hyperbolic problem with characteristic boundary that implies a natural loss of control on derivatives in the normal direction we manage to compensate this loss. This was achieved in \cite{Cont1} for $H^1$, and also here we derive higher-order estimates in usual Sobolev spaces  $H^s$ by estimating missing normal derivatives through equations satisfied by the divergences $\div\dot{h}^{\pm}$ (where $\dot{h}^{\pm}=(\dot{H}_{N}^{\pm},\dot{H}_2^{\pm}\partial_1\widehat{\Phi}^{\pm})$) and by using a ``decoupled'' character of the equations for $\dot{S}^{\pm}$ (they are connected with the rest equations only by lower-order terms).

\begin{theorem}
Let $T>0$ and $s\in \mathbb{N}$, with $s\geq 3$. Assume that the basic state $(\widehat{U} ,\hat{\varphi})\in
H^{s+3}(\Omega_T )\times H^{s+3}(\partial\Omega_T)$ satisfies assumptions \eqref{a22}--\eqref{jc1'},  the Rayleigh-Taylor sign condition \eqref{RTL} and
\begin{equation}
\|\widehat{U}\|_{H^6(\Omega_T )} +\|\hat{\varphi} \|_{H^{6}(\partial\Omega_T)}\leq \widehat{K},
\label{37}
\end{equation}
where $\widehat{K}>0$ is a constant. Let also the data $(f ,g)\in
H^{s}(\Omega_T )\times H^{s+1}(\partial\Omega_T)$ vanish in the past. Then there exists a positive constant $K_0$ that does not depend on $s$ and $T$ and there exists a constant $C(K_0) >0$ such that, if $\widehat{K}\leq K_0$, then there exists a unique solution $(\dot{U} ,\varphi)\in H^{s}(\Omega_T )\times H^{s}(\partial\Omega_T)$ to problem \eqref{28}--\eqref{30} that obeys the tame a priori  estimate
\begin{equation}
\begin{split}
\|\dot{U}\|_{H^s(\Omega_T )}+\|\varphi\|_{H^{s}(\partial\Omega_T)}\leq & C(K_0)\Bigl\{
\|f\|_{H^{s}(\Omega_T )}+ \|g \|_{H^{s+1}(\partial\Omega_T)} \\
 &+\bigl( \|f\|_{H^{3}(\Omega_T )}+ \| g\|_{H^{4}(\partial\Omega_T)} \bigr)\bigl(
\|\widehat{U}\|_{H^{s+3}(\Omega_T )}+\|\hat{\varphi}\|_{H^{s+3}(\partial\Omega_T)}\bigr)\Bigr\}
\end{split}
\label{38}
\end{equation}
for a sufficiently short time $T$ (the constant $C(K_0)$ depends also on the fixed constants $\bar{p}$, ${\kappa}$ and $\epsilon$ from \eqref{5.1}--\eqref{RT1}).
\label{t3.1}
\end{theorem}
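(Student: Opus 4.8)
The strategy is the standard one for deriving tame estimates from a basic $L^2$/$H^1$ energy estimate for a hyperbolic problem with characteristic boundary, adapted to the present MHD contact discontinuity setting. The proof proceeds by induction on $s$, the base case $s=1$ (indeed $s$ down to the minimal regularity needed) being essentially Theorem \ref{t1L} from \cite{Cont1}. For the inductive step, one differentiates problem \eqref{28}--\eqref{30} with respect to the tangential variables $t$ and $x_2$: applying $\partial^\alpha_{\rm tan}$ with $|\alpha|\le s$, the good unknown $\dot U^\pm$ satisfies a problem of the same structure as \eqref{28}--\eqref{30} with modified source terms obtained from the commutators $[\mathbb{L}'_e,\partial^\alpha_{\rm tan}]\dot U$ and $[\mathbb{B}'_e,\partial^\alpha_{\rm tan}]$. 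The crucial point is that the boundary conditions \eqref{29} are (weakly) dissipative in the sense established in \cite{Cont1} precisely because of the Rayleigh-Taylor sign condition \eqref{RTL}, so applying the basic estimate \eqref{main_estL} to the tangentially differentiated problem controls $\|\partial^\alpha_{\rm tan}\dot U\|_{H^1(\Omega_T)}+\|\partial^\alpha\varphi\|_{H^1(\partial\Omega_T)}$ (hence in particular all tangential derivatives of $\dot U$ up to order $s$, and one normal derivative thereof) by the $H^s$ norms of $f$ and $g$ plus commutator terms.

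The commutator terms are estimated by the Moser-type calculus inequalities (products and compositions in Sobolev spaces), which produce exactly the ``tame'' structure: a high-order norm of the coefficients $(\widehat U,\hat\varphi)$ multiplied by a low-order ($H^3$, $H^4$) norm of the data, plus a low-order norm of the coefficients times the high-order norm of the data. Here one uses \eqref{37} to bound the low-order norms of the basic state by the fixed constant $\widehat K\le K_0$, which is what makes the constant $C(K_0)$ uniform in $s$ and $T$. The only derivatives not yet controlled by this argument are the \emph{purely normal} derivatives $\partial_1^k\dot U^\pm$ for $k\ge 2$, since the boundary $x_1=0$ is characteristic and $\widetilde A_1$ is singular there in the sense that it does not control $\partial_1$ of the full vector. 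These are recovered exactly as in \cite{Cont1} for $H^1$: the entropy components $\dot S^\pm$ decouple (they solve transport equations coupled to the rest only through lower-order terms, so their normal derivatives are obtained directly by induction from the transport equations), and the missing normal derivatives of $\dot H^\pm$ and the divergence-type combination $\dot h^\pm=(\dot H_N^\pm,\dot H_2^\pm\partial_1\widehat\Phi^\pm)$ are estimated through the equations satisfied by $\div\dot h^\pm$ (which, as noted in Remark \ref{r3}, are obtained directly rather than via a linearized constraint); the remaining normal derivatives of $(\dot p^\pm,\dot v^\pm)$ are then read off algebraically from the interior equations \eqref{28} once enough tangential and magnetic normal derivatives are known, using the hyperbolicity \eqref{a5} and the fact that the non-characteristic part of $\widetilde A_1$ is invertible. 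Iterating this bootstrap in the number $k$ of normal derivatives, and combining with the tangential estimate, closes the induction and yields \eqref{38}.

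\textbf{Main obstacle.} The hard part is the treatment of the missing normal derivatives at the characteristic boundary in the high-order ($H^s$) setting: one must carefully organize the bootstrap so that at each stage the normal derivatives being recovered (of $\dot S^\pm$, then of $\dot h^\pm$ via the divergence equations, then of the hydrodynamic variables via the interior system) are expressed only in terms of quantities already estimated, and so that the resulting product estimates retain the tame form — i.e., never multiplying two high-order norms. Keeping track of the exact derivative count (why $s+3$ appears on the right for the coefficients, why one derivative is lost from $g$ but none from $f$) requires the careful use of the Moser inequalities and of the special block structure of the matrices $A_0,\widetilde A_1,A_2$ displayed in Section \ref{s1}; this is where essentially all the technical work lies. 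Existence of the solution in $H^s$, once the a priori estimate is in hand, follows by the usual duality/regularization argument, as in \cite{Cont1}.
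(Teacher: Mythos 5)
Your outline correctly identifies several true ingredients (the role of the Rayleigh--Taylor sign condition, splitting the argument into tangential and normal derivative estimates, Moser-type commutator estimates producing the tame structure, and recovering the characteristic normal derivatives through the entropy and divergence equations), but the central mechanism you propose --- \emph{applying the basic $H^1$ estimate \eqref{main_estL} to the tangentially differentiated problem} --- is precisely what fails here, and it is not the paper's approach.

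The issue is the loss of derivatives built into \eqref{main_estL}: one needs $g\in H^{3/2}(\partial\Omega_T)$ to obtain $(\dot U,\varphi)\in H^1$. If you differentiate \eqref{28}--\eqref{30} by $\partial^{\alpha}_{\rm tan}$ with $|\alpha|\le s$ and treat the commutators from $\mathbb{B}'_e$ as part of the new boundary datum $g_\alpha$, then $g_\alpha$ contains terms such as $\bigl[\partial^{\alpha}_{\rm tan},[\partial_1\hat p]\bigr]\varphi$ which carry up to $s-1$ derivatives of the unknown $\varphi$. Demanding $g_\alpha\in H^{3/2}(\partial\Omega_T)$ then requires control of $\varphi$ in $H^{s+1/2}(\partial\Omega_T)$, while the conclusion you can draw is only $\varphi\in H^s(\partial\Omega_T)$. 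The estimate does not close: each application of \eqref{main_estL} to the differentiated problem loses half a derivative on $\varphi$, because the Kreiss--Lopatinski condition is satisfied only weakly here. This is exactly why the standard ``treat it as a black box'' route is inapplicable, and indeed why the paper bothers to rederive the high-order energy estimate from scratch rather than quoting \eqref{main_estL}.

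What the paper does instead is perform the energy estimate \emph{directly} on the differentiated systems, keeping the boundary integrand $Q_2$ explicit and exploiting the specific sign structure. The key observation is that after substituting the differentiated boundary condition \eqref{b50b.4'} into $Q_2$, the dangerous quadratic term combines into an exact time derivative, $\tfrac12\partial_t\bigl\{[\partial_1\hat p](\mathcal D_2^{\alpha}\varphi)^2\bigr\}$ (see \eqref{Q2"}), which together with the Rayleigh--Taylor condition \eqref{RTL} gives $L^2_t L^2_{x_2}$ control on $\mathcal D_2^{\alpha}\varphi$ with \emph{no} loss. Two further structural points you omit are also essential: (i) the split of the tangential derivatives into $\mathcal D_2^{\alpha}=\partial_2\partial^{\beta}_{\rm tan}$ and $\mathcal D_0^{\beta}=\partial_0\partial^{\beta}_{\rm tan}$, because only the $\partial_2$-containing derivatives yield the $\partial_t\{\cdot\}$ structure above, while the $\partial_0$-family is closed through the kinematic condition \eqref{b50b.4'}; and (ii) the boundary integral $\mathcal K(t)$ coming from $[\partial_1\widehat H_\tau]$ in \eqref{Q2"}, which is not lower order and is estimated only after integrating by parts in time and passing to a volume integral (estimates \eqref{Ktild}--\eqref{tLt}). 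Finally, the proof is not an induction on $s$; the tame estimate \eqref{38'} is derived for a fixed $s$ in one pass, with the only induction being in the number $k$ of normal derivatives in \eqref{45^}. Without the explicit $Q_2$ computation, the $\partial_t$-structure, and the $\mathcal K(t)$ treatment, your argument does not control $\varphi$ in $H^s(\partial\Omega_T)$.
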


\begin{remark}
{\rm
Theorem \ref{t3.1} looks similar to that in \cite{Tcpam} for the free boundary problem for the compressible Euler equations with a vacuum boundary condition. As \cite{Tcpam}, and unlike \cite{Cont1} (cf. \eqref{main_estL}), here we prefer to work with integer indices of Sobolev spaces, i.e., we derive a little bit roughened version of the tame estimate where, in particular, we loose one but not ``half'' derivative from $g$. We do so just for technical convenience because an additional gain of ``half'' derivative in the local-in-time existence theorem is not really principal (e.g., from the physical point of view).
}
\label{r4}
\end{remark}

\subsection{Reduction to homogeneous boundary conditions}

Technically, it is much more convenient to derive first a tame estimate for a reduced linearized problem with homogeneous boundary conditions (with $g=0$) and then get estimate \eqref{38} as its consequence. Using the classical argument, we subtract from the solution a more regular function $\widetilde{U}=(\widetilde{U}^+,\widetilde{U}^-)\in H^{s+1}(\Omega_T)$ satisfying the boundary conditions \eqref{29}. Then, the new unknown
\begin{equation}
U^{\natural}=(U^{+\natural},U^{-\natural})=\dot{U}-\widetilde{U} ,\label{a87'}
\end{equation}
with
\begin{equation}
\|\widetilde{U} \|_{H^{s+1}(\Omega_T)}\leq C\|g \|_{H^{s+1/2}(\partial\Omega_T)},
\label{tildU}
\end{equation}
satisfies problem \eqref{28}--\eqref{30} with $f =F =(F^+,F^-)$, where
\begin{equation}
F^\pm =f^\pm-\widehat{A}_0^{\pm}\partial_t\widetilde{U}^{\pm} -\widehat{A}_1^{\pm}\partial_1\widetilde{U}^{\pm}  -\widehat{A}_2^{\pm}\partial_2\widetilde{U}^{\pm}-\widehat{\mathcal{C}}^{\pm}\widetilde{U}^\pm
\label{a87''}
\end{equation}
and
\[
\widehat{A}^{\pm}_{\alpha}:=A_{\alpha}(\widehat{U}^{\pm}+\bar{U}^{\pm}), \quad \alpha=0,2,\quad \widehat{A}^{\pm}_1:=\widetilde{A}_1(\widehat{U}^{\pm}+\bar{U}^{\pm},\widehat{\Psi}^{\pm}),\quad
\widehat{\mathcal{C}}^{\pm}:=\mathcal{C}(\widehat{U}^{\pm},\widehat{\Psi}^{\pm}).
\]
Moreover, here and later on $C$ is a positive constant that can change from line to line, and it may depend on other constants, in particular, in \eqref{tildU} the constant $C$ depends on $s$ (sometimes, as in \eqref{38}, we show the dependence of $C$ from another constants).
Below we will use the roughened version of \eqref{tildU} (see Remark \ref{r4})
\begin{equation}
\|\widetilde{U} \|_{H^{s+1}(\Omega_T)}\leq C\|g \|_{H^{s+1}(\partial\Omega_T)}.
\label{tildU'}
\end{equation}

Dropping for convenience the indices $^{\natural}$ in \eqref{a87'}, we get our reduced linearized problem:
\begin{align}
\widehat{A}^{\pm}_0\partial_t{U}^{\pm} +\widehat{A}^{\pm}_1\partial_1{U}^{\pm}+\widehat{A}^{\pm}_2\partial_2{U}^{\pm} +\widehat{\mathcal{C}}^{\pm}{U}^{\pm} =F^{\pm}\qquad &\mbox{in}\ \Omega_T,\label{b48b}\\[3pt]
 {[}p{]}=- \varphi {[}\partial_1\hat{p}{]},\qquad & \label{b50b.1}\\[3pt]
  {[}v{]}=0,\qquad & \label{b50b.2}\\[3pt]
  {[}H_{\tau}{]}=-\varphi {[}\partial_1\widehat{H}_{\tau}{]},\qquad & \label{b50b.3} \\[3pt]
{v}_{N}^+ = \partial_0\varphi - \varphi \partial_1\hat{v}_N^+   \qquad &\mbox{on}\ \partial\Omega_T
\label{b50b.4} \\[3pt]
({U},\varphi )=0\qquad &\mbox{for}\ t<0,\label{b51b}
\end{align}
where
\begin{equation}
\partial_0:=  \partial_t +\hat{v}_2^+\partial_2 \qquad \mbox{in}\ \Omega_T.
\label{d0}
\end{equation}
It is not a big mistake to call $\partial_0$ the material derivative because on the boundary $\partial_0$ coincides with the material derivative $\partial_t + (\hat{w}^\pm \cdot\nabla )$ (in the reference frame related to the discontinuity), where
\[
\hat{w}^{\pm}=\hat{u}^{\pm}-(\partial_t\widehat{\Psi}^{\pm},0), \quad \hat{u}^{\pm}=(\hat{v}_N^{\pm},\hat{v}_2^{\pm}\partial_1\widehat{\Phi}^{\pm}).
\]

From now on we concentrate on the proof of a tame estimate for the reduced problem \eqref{b48b}--\eqref{b51b}. Namely, we will prove the following lemma.

\begin{lemma}
Let $T>0$ and $s\in \mathbb{N}$, with $s\geq 3$. Assume that the basic state $(\widehat{U} ,\hat{\varphi})\in
H^{s+3}(\Omega_T )\times H^{s+3}(\partial\Omega_T)$ satisfies assumptions \eqref{a22}--\eqref{jc1'}, the Rayleigh-Taylor sign condition \eqref{RTL} and inequality \eqref{37}.
Let also that $F\in H^{s}(\Omega_T )$ vanishes in the past. Then there exists a positive constant $K_0$ that does not depend on $s$ and $T$ and there exists a constant $C(K_0) >0$ such that, if $\widehat{K}\leq K_0$, then there exists a unique solution $(U ,\varphi)\in H^{s}(\Omega_T )\times H^{s}(\partial\Omega_T)$ to problem \eqref{b48b}--\eqref{b51b} that obeys the tame a priori estimate
\begin{equation}
\begin{split}
\|{U} & \|_{H^s(\Omega_T )}+  \|\varphi\|_{H^{s}(\partial\Omega_T)}\\ &\leq  C(K_0)\Bigl\{
\|F\|_{H^{s}(\Omega_T )} +\|F\|_{H^{3}(\Omega_T )}\bigl(
\|\widehat{U}\|_{H^{s+3}(\Omega_T )}+\|\hat{\varphi}\|_{H^{s+3}(\partial\Omega_T)}\bigr)\Bigr\}
\end{split}
\label{38'}
\end{equation}
for a sufficiently short time $T$ (the constant $C(K_0)$ depends also on the fixed constants $\bar{p}$, ${\kappa}$ and $\epsilon$ from \eqref{5.1}--\eqref{RT1}).
\label{l3.1}
\end{lemma}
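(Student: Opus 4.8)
The plan is to follow the by-now standard route for tame estimates of characteristic free boundary problems (as in \cite{Al,ST,T09,Tcpam}): reduce the $H^s$ estimate to the control of purely tangential derivatives via the basic $H^1$ estimate of Theorem~\ref{t1L}, then recover the missing normal ($x_1$-)derivatives algebraically from the equations supplemented by the auxiliary evolution equations for $\div\dot{h}^{\pm}$ and the decoupled equations for $\dot{S}^{\pm}$, and finally close the argument with Moser-type (tame) product and commutator estimates together with the smallness hypotheses $\widehat{K}\leq K_0$ and $T$ small.

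First I would apply a purely tangential operator $\partial^{\alpha}:=\partial_t^{\alpha_0}\partial_2^{\alpha_2}$, with $|\alpha|=\alpha_0+\alpha_2\leq s$, to the interior system \eqref{b48b} and to the boundary conditions \eqref{b50b.1}--\eqref{b50b.4}. Since $\partial_t$ and $\partial_2$ are tangent to $\partial\Omega_T$, the commuted problem has exactly the same structure as \eqref{b48b}--\eqref{b51b}: the principal part and the boundary operator are unchanged, and the right-hand sides only pick up commutators $[\partial^{\alpha},\widehat{A}_j^{\pm}]\partial_j U^{\pm}$, $[\partial^{\alpha},\widehat{\mathcal{C}}^{\pm}]U^{\pm}$ in the interior and terms such as $\varphi\,\partial^{\alpha}[\partial_1\hat{p}]$, $\varphi\,\partial^{\alpha}[\partial_1\widehat{H}_{\tau}]$, $\varphi\,\partial^{\alpha}\partial_1\hat v_N^+$ on the boundary. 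Applying the (tangential-derivative version of the) energy estimate of Theorem~\ref{t1L} — the one obtained in \cite{Cont1} precisely under the Rayleigh--Taylor condition \eqref{RTL} and \eqref{cdass} — to this commuted problem gives
\[
\|\partial^{\alpha}U\|_{L^2(\Omega_T)}+\|\partial^{\alpha}\varphi\|_{L^2(\partial\Omega_T)}\leq C\bigl\{\|\partial^{\alpha}F\|_{L^2(\Omega_T)}+\text{(commutator terms)}\bigr\}.
\]
Summing over $|\alpha|\leq s$ and using the tame Moser calculus, the commutators are bounded by $C\bigl(\|U\|_{H^s(\Omega_T)}(\|\widehat{U}\|_{W^2_{\infty}}+\|\hat{\varphi}\|_{W^2_{\infty}})+\|U\|_{W^1_{\infty}}(\|\widehat{U}\|_{H^{s+1}(\Omega_T)}+\|\hat{\varphi}\|_{H^{s+1}(\partial\Omega_T)})\bigr)$ plus analogous boundary contributions. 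The term carrying $\|U\|_{H^s}$ is absorbed into the left-hand side after choosing $\widehat{K}\leq K_0$ and $T$ small (a Gronwall-in-$T$ argument exploiting that all norms are over $\Omega_T$ and that $U,\varphi$ vanish for $t<0$), while the low-norm factor $\|U\|_{W^1_{\infty}}\leq C\|U\|_{H^3}$ is itself controlled, again by Theorem~\ref{t1L} applied with $|\alpha|\leq 2$, by $C\|F\|_{H^3(\Omega_T)}$; this is the origin of the structural product $\|F\|_{H^3}(\|\widehat U\|_{H^{s+3}}+\|\hat\varphi\|_{H^{s+3}})$ in \eqref{38'}.

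The harder and more technical step is the recovery of the mixed tangential--normal norms $\|\partial_1^k\partial^{\alpha}U\|_{L^2(\Omega_T)}$ with $k\geq1$, which the tangential energy estimate does not control because the boundary is characteristic: $\widehat{A}_1^{\pm}$ has a nontrivial kernel, so $\partial_1 U^{\pm}$ cannot simply be solved from \eqref{b48b}. I would split $U^{\pm}$ into its ``noncharacteristic'' part, on which $\widehat{A}_1^{\pm}$ is invertible, and the characteristic unknowns — essentially $\dot{S}^{\pm}$ and the combination entering $\dot{h}^{\pm}=(\dot{H}_N^{\pm},\dot{H}_2^{\pm}\partial_1\widehat{\Phi}^{\pm})$. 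For the former, $\partial_1$ of the noncharacteristic components is expressed from the equation in terms of $\partial_t U^{\pm}$, $\partial_2 U^{\pm}$, $\widehat{\mathcal{C}}^{\pm}U^{\pm}$ and $F^{\pm}$, so that each extra normal derivative costs only one tangential derivative; iterating gives all mixed norms with $k+|\alpha|\leq s$ by downward induction on $k$, the base case $k=0$ being the tangential estimate above. For the characteristic part, one uses that the entropy equations $\partial_0\dot{S}^{\pm}=\dots$ are decoupled up to lower-order terms and that $\div\dot{h}^{\pm}$ satisfies a transport-type equation (the linearized counterpart of the divergence constraint \eqref{14}); together these supply control of the remaining normal derivatives of $\dot{S}^{\pm}$ and $\dot{H}^{\pm}$. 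Every product and commutator arising here is estimated by the tame inequalities, so high Sobolev norms of the basic state enter only linearly and multiplied by the fixed low norm of $F$.

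Finally, collecting the tangential estimate and the mixed tangential--normal estimates, summing over all multi-indices of total order $\leq s$, and absorbing as above, yields \eqref{38'}. Existence and uniqueness of the $H^s$ solution then follow from the $H^1$ well-posedness of Theorem~\ref{t1L} together with these a priori estimates by a standard regularization/approximation argument (or, equivalently, by observing that the solution already furnished by Theorem~\ref{t1L} is automatically in $H^s$ once the data are, by \eqref{38'}). The main obstacle, as indicated, is the bookkeeping of the characteristic normal-derivative recovery while keeping every estimate tame; the Rayleigh--Taylor condition \eqref{RTL} is used only — but essentially — through the basic tangential energy estimate inherited from \cite{Cont1}.
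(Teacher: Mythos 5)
Your overall architecture — tangential energy estimates first, then recovery of missing normal derivatives through the noncharacteristic block of the boundary matrix together with the decoupled entropy and divergence equations, then absorption by Moser calculus and Gronwall — is the one the paper follows, and your account of the normal-derivative recovery and of the Gronwall closure is accurate in spirit. The real gap is in the step you dispatch as ``apply $\partial^\alpha=\partial_t^{\alpha_0}\partial_2^{\alpha_2}$ and invoke the (tangential-derivative version of the) energy estimate of Theorem~\ref{t1L} for the commuted problem.'' That sentence conceals the place where the argument actually lives, and as written it would break down for the pure time derivative $\partial_t^s$. After differentiating the boundary conditions and computing the boundary quadratic form $-\tfrac12\sum_{\pm}(\mathcal{A}_1^{\pm}\partial^{\alpha}V^{\pm},\partial^{\alpha}V^{\pm})|_{x_1=0}$, the Rayleigh--Taylor structure produces a good term $\tfrac12\partial_t\{[\partial_1\hat p](\partial^{\alpha}\varphi)^2\}$ only after the factor $\partial^{\alpha}V_2^{+}$ is eliminated through the kinematic condition; but one is then left with the top-order boundary product $[\partial_1\widehat{H}_{\tau}]\,\partial^{\alpha}\varphi\,(\widehat{H}_N^{+}\partial^{\alpha}V_3^{+}-\widehat{H}_2^{+}\partial^{\alpha}V_2^{+})|_{x_1=0}$. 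There is no boundary condition determining $V_3^{+}$ alone, both factors carry $s$ derivatives, and a direct trace/Cauchy--Schwarz estimate of this boundary integral loses half a derivative, which cannot be absorbed by the left side.

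The paper's resolution is a genuinely additional idea absent from your proposal: it splits the tangential family into $\mathcal{D}_2^{\alpha}:=\partial_2\partial_{\rm tan}^{\beta}$ (carrying a $\partial_2$) and $\mathcal{D}_0^{\beta}:=\partial_0\partial_{\rm tan}^{\beta}$ (carrying the material derivative $\partial_0=\partial_t+\hat v_2^{+}\partial_2$), and treats the boundary forms for the two families with different mechanisms. For $\mathcal{D}_2^{\alpha}$, the presence of the explicit $\partial_2$ lets one write $\widehat{H}_N^{+}\mathcal{D}_2^{\alpha}V_3^{+}-\widehat{H}_2^{+}\mathcal{D}_2^{\alpha}V_2^{+}=\partial_{\rm tan}^{\beta}R+(\text{commutators})$ with $R=(\widehat{H}_N^{+}\partial_2V_3^{+}-\widehat{H}_2^{+}\partial_2V_2^{+})|_{x_1=0}$, and then the fourth and fifth \emph{interior} equations of \eqref{b48b}, evaluated at $x_1=0$ together with \eqref{b50b.4}, are used to rewrite $R=-\partial_0V_4^{+}+(\text{lower order})+F_N^{+}$; the residual boundary term $[\partial_1\widehat H_{\tau}]\mathcal{D}_2^{\alpha}\varphi\,\partial_0\partial_{\rm tan}^{\beta}V_4^{+}|_{x_1=0}$ is then handled by integration by parts and by passing from the boundary integral to a volume integral. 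For $\mathcal{D}_0^{\beta}$ no Rayleigh--Taylor term appears at all; instead the boundary condition $\mathcal{D}_0^{\beta}\varphi=\partial_{\rm tan}^{\beta}V_2^{+}+\cdots$ is used to \emph{eliminate} $\mathcal{D}_0^{\beta}\varphi$ from $Q_0$, and the remaining top-order products $\mathcal{D}_0^{\beta}V_i^{+}\,\partial_{\rm tan}^{\beta}V_j^{+}$ are once more passed to volume integrals. Pure $\partial_t$-derivatives of $\varphi$ of order $s$ are recovered only at the very end, from the kinematic boundary condition and a trace estimate, not from the boundary quadratic form. Without this split, without the use of the interior $H$-equations at the boundary, and without the systematic passage to volume integrals, ``apply the energy estimate to the commuted problem'' does not close; you should identify and carry out these manipulations explicitly.
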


Taking into account \eqref{a87'}, \eqref{a87''} and \eqref{tildU'}, we will then show that estimate \eqref{38'} implies \eqref{38}.

\subsection{An equivalent formulation of problem \eqref{b48b}--\eqref{b51b}}

While using the energy method for problem \eqref{b48b}--\eqref{b51b}, for writing down the quadratic forms with the boundary matrices $\widehat{A}_1^{\pm}$ on the boundary $x_1=0$ we will use their exact form \cite{Cont1} (following from the last condition in \eqref{a12'})
\begin{equation}
\widehat{A}_1^{\pm}|_{x_1=0} = \pm \begin{pmatrix} 0 & 1 & -\partial_2\hat{\varphi} & 0& 0 & 0\\[6pt]
1 &0 &0 &\widehat{H}_2^\pm\partial_2\hat{\varphi} & \widehat{H}_2^\pm & 0 \\[6pt]
-\partial_2\hat{\varphi} & 0 & 0 & -\widehat{H}_1^\pm\partial_2\hat{\varphi} & -\widehat{H}_1^\pm & 0 \\[6pt]
0 & \widehat{H}_2^\pm\partial_2\hat{\varphi} & -\widehat{H}_1^\pm\partial_2\hat{\varphi} & 0& 0& 0 \\[6pt]
0& \widehat{H}_2^\pm & -\widehat{H}_1^\pm & 0 & 0 & 0 \\[6pt]
0 & 0 &0 &0 &0 &0
\end{pmatrix}_{|x_1=0}.
\label{A1tilde2D}
\end{equation}
In view of \eqref{cdass}, the matrices $\widehat{A}_1^{\pm}|_{x_1=0}$ are of constant rank 4 and, more precisely, they have two positive, two negative and two zero eigenvalues \cite{Cont1}. That is, \eqref{b48b}--\eqref{b51b} is a hyperbolic problem with \textit{characteristic} boundary of constant multiplicity, and since one of the boundary conditions is needed for determining the function $\varphi $, the correct number of boundary conditions is five that is the case in \eqref{b50b.1}--\eqref{b50b.4}.

Let us set
\begin{equation}
V^{\pm}=(V_1^{\pm},\ldots ,V_6^{\pm})=({p}^{\pm},{v}_N^{\pm},{v}_2^{\pm},H_N^{\pm},H_{\tau}^{\pm},{S}^{\pm}),
\label{nun}
\end{equation}
where
\[
{v}_N^{\pm}={v}_1^{\pm}-{v}_2^{\pm}\partial_2\widehat{\Psi}^{\pm},\quad H_N^{\pm}={H}_1^{\pm}- {H}_2^{\pm}\partial_2\widehat{\Psi}^{\pm},\quad {H}_{\tau}^{\pm}={H}_1^{\pm}\partial_2\widehat{\Psi}^{\pm} +{H}_2^{\pm}.
\]
We have ${U}^{\pm}=J^{\pm}V^{\pm}$, with
\begin{equation}
J^{\pm}=
\begin{pmatrix}
1 &  0 & 0 & 0 &  0 & 0\\
0 &  1 & \partial_2\widehat{\Psi}^{\pm} & 0 &  0 & 0\\
0 &  0 & 1 & 0 &  0 &0\\[3pt]
0 & 0 & 0 & {\displaystyle\frac{1}{1+(\partial_2\widehat{\Psi}^{\pm})^2}} & \displaystyle\frac{\partial_2\widehat{\Psi}^{\pm}}{1+(\partial_2\widehat{\Psi}^{\pm})^2}& 0 \\[12pt]
0 & 0 & 0 & \displaystyle -\frac{\partial_2\widehat{\Psi}^{\pm}}{1+(\partial_2\widehat{\Psi}^{\pm})^2} & \displaystyle\frac{1}{1+(\partial_2\widehat{\Psi}^{\pm})^2}& 0 \\[9pt]
0 & 0 & 0 & 0 & 0& 1
\end{pmatrix}.
\label{J}
\end{equation}
Then, systems \eqref{b48b} are equivalently rewritten as
\begin{equation}
\mathcal{A}_0^{\pm}\partial_t{V}^{\pm}+ \mathcal{A}_1^{\pm}\partial_1{V}^{\pm} + \mathcal{A}_2^{\pm}\partial_2{V}^{\pm}+\mathcal{A}_3^{\pm}{V}^{\pm} =\mathcal{F}^{\pm} \qquad \mbox{in}\ \Omega_T, \label{c29}
\end{equation}
where
\begin{align*}
\mathcal{A}_{\alpha}^{\pm} &=(J^{\pm})^{\textsf{T}}\widehat{A}_{\alpha}^{\pm}J^{\pm}\quad (\alpha =\overline{0,2}),\qquad \mathcal{F}^{\pm}=(J^{\pm})^{\textsf{T}}F^{\pm},
\\
\mathcal{A}_3^{\pm} &=(J^{\pm})^{\textsf{T}}\bigl( \widehat{A}_0^{\pm}\partial_t{J}^{\pm}+ \widehat{A}_1^{\pm}\partial_1{J}^{\pm} + \widehat{A}_2^{\pm}\partial_2{J}^{\pm}+\widehat{\mathcal{C}}J^{\pm}\bigr).
\end{align*}
The boundary matrices $\mathcal{A}_1^{\pm}$ in systems (\ref{c29}) have the form
\begin{equation}
\mathcal{A}_1^{\pm}=\mathcal{A}^{\pm}_{(1)}+\mathcal{A}^{\pm}_{(0)},\quad
\mathcal{A}^{\pm}_{(1)}=\pm\begin{pmatrix}
0 & 1 & 0 & 0 & 0 & 0\\
1 & 0 & 0 & 0 & \widehat{H}_2^{\pm} & 0\\
0 & 0 & 0 & 0 & - \widehat{H}_N^{\pm} & 0\\
0 & 0 & 0 & 0 & 0 & 0\\
0 & \widehat{H}_2^{\pm} & - \widehat{H}_N^{\pm} & 0 & 0 & 0\\
0 & 0 & 0 & 0 & 0 & 0
\end{pmatrix},
\quad \mathcal{A}^{\pm}_{(0)}|_{x_1=0}=0,
\label{c30}
\end{equation}
The explicit form of $\mathcal{A}_{(0)}^{\pm}$ is of no interest. Since $\widehat H_N^{\pm}|_{x_1=0}\neq 0$ (see \eqref{cdass}),
\begin{equation}
V_n^{\pm} = (V_1^{\pm},V_2^{\pm},V_3^{\pm},V_5^{\pm})
\label{V_n}
\end{equation}
are the ``noncharacteristic'' parts of the vectors $V^{\pm}$. In  terms of the components of the vectors $V_n^{\pm}$ the boundary conditions \eqref{b50b.1}--\eqref{b50b.4} are rewritten as
\begin{align}
 {[}V_1{]}=- \varphi {[}\partial_1\hat{p}{]},\qquad & \label{b50b.1'}\\[3pt]
  {[}V_2{]}={[}V_3{]}=0,\qquad & \label{b50b.2'}\\[3pt]
  {[}V_5{]}=-\varphi {[}\partial_1\widehat{H}_{\tau}{]},\qquad & \label{b50b.3'} \\[3pt]
\partial_0\varphi=V_2^+ + \varphi \partial_1\hat{v}_N^+   \qquad &\mbox{on}\ \partial\Omega_T.
\label{b50b.4'}
\end{align}

\subsection{Estimation of tangential derivatives containing the $x_2$-derivative}

Since arguments of the energy method below are quite standard, we will omit detailed calculations. By applying to systems \eqref{c29} the operator $\mathcal{D}_2^{\alpha}:=\partial_t^{\alpha_0}\partial_2^{\alpha_2}$, with $\alpha_2\neq 0$ and $|\alpha |=
|(\alpha_0,\alpha_2)|\leq s$ (i.e., $\mathcal{D}_2^{\alpha}=\partial_2\partial_{\rm tan}^{\beta}$ with $\partial_{\rm tan}^{\beta}:=\partial_t^{\beta_0}\partial_2^{\beta_2}$  and $|\beta |=|(\beta_0,\beta_2)|=|(\alpha_0,\alpha_2-1)|\leq s-1$), one gets
\begin{equation}
\sum\limits_{\pm}\int\limits_{\Omega}(\mathcal{ A}_0^{\pm}\mathcal{D}_2^{\alpha}V^{\pm},\mathcal{D}_2^{\alpha}V^{\pm} ) {d}x + 2\int\limits_{\partial\Omega_t}{Q}_2{d}x_2d\tau=\mathcal{ R},\label{c39}
\end{equation}
where
\begin{align*}
{Q}_2= &-\frac{1}{2}\sum\limits_{\pm}\bigl(\mathcal{A}_1^{\pm}\mathcal{D}_2^{\alpha}V^\pm , \mathcal{D}_2^{\alpha}V^\pm \bigr)\bigr|_{x_1=0} \\ = &
\left.\left\{-\mathcal{D}_2^{\alpha}V_2^+[\mathcal{D}_2^{\alpha}V_1]+
\left(\widehat{H}_N^+\mathcal{D}_2^{\alpha}V_3^+-\widehat{H}_2^+\mathcal{D}_2^{\alpha}V_2^+\right)[\mathcal{D}_2^{\alpha}V_5]\right\}\right|_{x_1=0}  ,
\end{align*}
\begin{align*}
\mathcal{ R}= \sum\limits_{\pm}\int\limits_{\Omega_t}\Bigl(
\Bigl\{ &\left(\partial_t\mathcal{ A}_0^{\pm}+ \partial_1\mathcal{ A}_1^{\pm}+\partial_2\mathcal{ A}_2^{\pm}\right)\mathcal{D}^{\alpha}_2V^{\pm} - 2\left[\mathcal{D}^{\alpha}_2 ,\mathcal{ A}_0^{\pm}\right]\partial_tV^{\pm}
- 2\left[\mathcal{D}^{\alpha}_2 ,\mathcal{ A}_1^{\pm}\right]\partial_1V^{\pm} \\ & - 2\left[\mathcal{D}^{\alpha}_2 ,\mathcal{ A}_2^{\pm}\right]\partial_2V^{\pm}
 - 2\mathcal{D}^{\alpha}_2(\mathcal{ A}_3^{\pm}V^{\pm} )+2\mathcal{D}^{\alpha}_2\mathcal{ F}^{\pm}\Bigr\},\mathcal{D}^{\alpha}_2V^{\pm}\Bigr){d}x {d}\tau
\end{align*}
and we use the notation of commutator: $[a,b]c:=a(bc)-b(ac)$. While writing down the quadratic form ${Q}_2$ we took into account the boundary conditions \eqref{b50b.2'}. Using the Moser-type calculus inequalities
\begin{align}
\| uv\|_{H^s(\Omega_T)}\leq & C\left( \|u\|_{H^s(\Omega_T)}\|v\|_{L_{\infty}(\Omega_T)}
+\|u\|_{L_{\infty}(\Omega_T)}\|v\|_{H^s(\Omega_T)}\right),\label{c40}\\
\| b(u)\|_{H^s(\Omega_T)}\leq & C(M)\|u\|_{H^s(\Omega_T)},\label{c41}
\end{align}
where the function $b$ is a $C^{\infty}$ function of $u$ with $b(0)=0$, and $M$ is such a positive constant that
$\|u\|_{L_{\infty}(\Omega_T)}\leq M$, we estimate the right-hand side in \eqref{c39}:
\begin{equation}
\begin{split}
\mathcal{ R}\leq C(K)\Bigl\{ & \|V\|^2_{H^s(\Omega_t)}+\|F\|^2_{H^s(\Omega_T)}\\ & +\left(\|{U}\|^2_{W^1_{\infty}(\Omega_T)}+\|F\|^2_{L_{\infty}(\Omega_T)}\right)\left( 1+\|{\rm coeff}\|^2_{s+2}\right)\Bigr\},
\label{c42}
\end{split}
\end{equation}
where $V=(V^+,V^-)$, $F=(F^+,F^-)$, and $\|{\rm coeff}\|_{m}:=\|\widehat{U}\|_{H^{m}(\Omega_T)}+\|\hat{\varphi}\|_{H^{m}(\partial\Omega_T)}$. More precisely, here and below we also use the following refinement of \eqref{c40}
\begin{equation}
\sum\limits_{|\mu |+|\nu |=s}\|\partial^{\mu }u\,\partial^{\nu}v\|_{L^2(\Omega_T)}\leq C\left( \|u\|_{H^s(\Omega_T)}\|v\|_{L_{\infty}(\Omega_T)}
+\|u\|_{L_{\infty}(\Omega_T)}\|v\|_{H^s(\Omega_T)}\right)
\label{c40'}
\end{equation}
which implies
\[
\begin{split}
\bigl\|[\partial^{\mu },u]v\bigr\|_{L^2(\Omega_T)} &\leq
C\sum\limits_{|\gamma |+|\nu |=s,\ |\gamma|\neq 0}\|\partial^{\gamma }u\,\partial^{\nu}v\|_{L^2(\Omega_T)} \\ & \leq C\left( \|u\|_{H^s(\Omega_T)}\|v\|_{L_{\infty}(\Omega_T)}
+\|u\|_{W^1_{\infty}(\Omega_T)}\|v\|_{H^{s-1}(\Omega_T)}\right), \qquad |\mu |\leq s
\end{split}
\]
($\partial^{\mu}=\partial_t^{\mu_0}\partial_1^{\mu_1}\partial_2^{\mu_2}$, etc.). Note also that before the usage of \eqref{c41} we decompose the matrix functions $\mathcal{A}_{\beta}^{\pm}$ (with $\beta =\overline{0,3}$) as $\mathcal{A}_{\beta}^{\pm}(u)=\mathcal{B}_{\beta}^{\pm}(u)+\mathcal{C}_{\beta}^{\pm}$, where $\mathcal{B}_{\beta}^{\pm}(0)=0$ and  the matrix $\mathcal{C}_{\beta}^{\pm}$ is a constant matrix.

Using the boundary conditions \eqref{b50b.1'} and \eqref{b50b.3'}, we calculate ${Q}_2$:
\begin{equation}
{Q}_2=\left.\left\{[\partial_1\hat{p}]\mathcal{D}_2^{\alpha} \varphi \,\mathcal{D}_2^{\alpha}V_2^+
-[\partial_1\widehat{H}_{\tau}]\mathcal{D}_2^{\alpha} \varphi \left(\widehat{H}_N^+\mathcal{D}_2^{\alpha}V_3^+-\widehat{H}_2^+\mathcal{D}_2^{\alpha}V_2^+\right)
\right\}\right|_{x_1=0} + \widetilde{Q}_2,
\label{Q_2}
\end{equation}
where $\widetilde{Q}_2$ is, in some sense, a sum of lower-order terms containing commutators appeared after the differentiation of \eqref{b50b.1'} and \eqref{b50b.4'}:
\[
\widetilde{Q}_2=\left.\left\{\mathcal{D}_2^{\alpha}V_2^+\bigl[\mathcal{D}_2^{\alpha},[\partial_1\hat{p}]\bigr]\varphi -\left(\widehat{H}_N^+\mathcal{D}_2^{\alpha}V_3^+-\widehat{H}_2^+\mathcal{D}_2^{\alpha}V_2^+\right)
\bigl[\mathcal{D}_2^{\alpha},[\partial_1\widehat{H}_{\tau}]\bigr]\varphi \right\}\right|_{x_1=0}.
\]
Recalling that $\mathcal{D}_2^{\alpha}=\partial_2\partial_{\rm tan}^{\beta}$ with $|\beta |\leq s-1$ and using the boundary condition \eqref{b50b.4'}, we rewrite \eqref{Q_2} as
\begin{equation}
{Q}_2=
\left.\left\{
[\partial_1\hat{p}]\mathcal{D}_2^{\alpha} \varphi \,\mathcal{D}_2^{\alpha}\left(\partial_0\varphi - \varphi \partial_1\hat{v}_N^+\right)
-[\partial_1\widehat{H}_{\tau}]\mathcal{D}_2^{\alpha} \varphi \,R_{\beta}\right\}\right|_{x_1=0} +\widetilde{Q}_2,
\label{Q_2'}
\end{equation}
where
\begin{equation}
R_{\beta}=\left.\left\{\partial_{\rm tan}^{\beta}R-\bigl[\partial_{\rm tan}^{\beta},\widehat{H}_N^+\bigr]\partial_2V_3^+ +\bigl[\partial_{\rm tan}^{\beta},\widehat{H}_2^+\bigr]\partial_2V_2^+\right\}\right|_{x_1=0},
\label{Rbeta'}
\end{equation}
\begin{equation}
\begin{split}
\widetilde{Q}_2= \partial_2\Bigl\{ &\partial_{\rm tan}^{\beta}V_2^+\bigl[\mathcal{D}_2^{\alpha},[\partial_1\hat{p}]\bigr]\varphi -\left(\widehat{H}_N^+\partial_{\rm tan}^{\beta}V_3^+-\widehat{H}_2^+\partial_{\rm tan}^{\beta}V_2^+\right)
\bigl[\mathcal{D}_2^{\alpha},[\partial_1\widehat{H}_{\tau}]\bigr]\varphi \Bigr\}\Bigr|_{x_1=0}\\
 -\Bigl\{ &\partial_{\rm tan}^{\beta}V_2^+\partial_2\left(\bigl[\mathcal{D}_2^{\alpha},[\partial_1\hat{p}]\bigr]\varphi\right) -\partial_{\rm tan}^{\beta}V_3^+\partial_2\left(\widehat{H}_N^+\bigl[\mathcal{D}_2^{\alpha},[\partial_1\widehat{H}_{\tau}]\bigr]\varphi\right)  \\ &  +\partial_{\rm tan}^{\beta}V_2^+\partial_2\left(\widehat{H}_2^+\bigl[\mathcal{D}_2^{\alpha},[\partial_1\widehat{H}_{\tau}]\bigr]\varphi \right) \Bigr\}\Bigr|_{x_1=0}
\end{split}
\label{Q2tilde}
\end{equation}
and
\[
R=\left.\left\{\widehat{H}_N^+\partial_2V_3^+-\widehat{H}_2^+\partial_2V_2^+\right\}\right|_{x_1=0}.
\]

To treat the quadratic form $Q_2$ we use not only the boundary conditions but also the interior equations considered on the boundary.
Multiplying  the equations for $H^+$ contained in \eqref{b48b} (i.e., the 4th and 5th equations in \eqref{b48b} for the superscript $+$) by the vector $(1,-\partial_2\widehat{\Psi}^+)$, considering the result at $x_1=0$ and taking \eqref{b50b.4} into account, we obtain
\begin{equation}
R = \left.\left\{-\partial_0V_4^+  +\widetilde{R}+F_N^+\right\}\right|_{x_1=0},
\label{eqH}
\end{equation}
where $F_N^+=F_4^+-F_5^+\partial_2\widehat{\Psi}^+$ and $\widetilde{R}$ is a sum of lower-order terms:
\begin{equation}
\widetilde{R}=\left.\left\{\bigl(\partial_2\widehat{H}_2^+\bigr)\,V_2^+ -\bigl(\partial_2\widehat{H}_N^+\bigr)\,V_3^+ -(\partial_2\hat{v}_2^+)\,V_4^+ \right\}\right|_{x_1=0}.
\label{Rlow}
\end{equation}
Recalling the definition of the material derivative $\partial_0$ in \eqref{d0}, from \eqref{Rbeta'} and \eqref{eqH} we obtain
\begin{equation}
\begin{split}
R_{\beta}=\Bigl\{ &-\partial_0\partial_{\rm tan}^{\beta}V_4^+ -\bigl[\partial_{\rm tan}^{\beta},\hat{v}_2^+\bigr]\partial_2V_4^+ +\partial_{\rm tan}^{\beta}\widetilde{R} \\ & -\bigl[\partial_{\rm tan}^{\beta},\widehat{H}_N^+\bigr]\partial_2V_3^+ +\bigl[\partial_{\rm tan}^{\beta},\widehat{H}_2^+\bigr]\partial_2V_2^+ +\partial_{\rm tan}^{\beta}F_N^+ \Bigr\}\Bigr|_{x_1=0}.
\end{split}
\label{Rbeta}
\end{equation}

Taking into account  \eqref{d0}, it follows from \eqref{Q_2'} and  \eqref{Rbeta} that
\begin{equation}
{Q}_2=
\underbrace{\frac{1}{2}\,\partial_t\left\{[\partial_1\hat{p}](\mathcal{D}_2^{\alpha} \varphi )^2\right\}}
+\underline{[\partial_1\widehat{H}_{\tau}](\mathcal{D}_2^{\alpha} \varphi) (\partial_0\partial_{\rm tan}^{\beta}V_4^+)|_{x_1=0} }+P_2,
\label{Q2"}
\end{equation}
where the underbraced term in \eqref{Q2"} is the {\it  most important} one because under the Rayleigh-Taylor sign condition \eqref{RTL} it gives us the control on the $L^2$ norm of $\mathcal{D}_2^{\alpha} \varphi$ (see below); the underlined term in \eqref{Q2"} needs an additional treatment whereas the rest terms collected in the quadratic form $P_2$ either disappear after the integration over the domain $\partial\Omega_t$ (because they have the form $\partial_2\{\cdots\}$) or can be absorbed in the right-hand side of a future energy inequality by using the trace theorem, etc. The structure of terms of $P_2$ rather than their explicit form is important, but here for the reader's convenience we write down $P_2$ explicitly:
\begin{equation}
\begin{split}
P_2=&\widetilde{Q}_2-\frac{1}{2}\,[\partial_t\partial_1\hat{p}](\mathcal{D}_2^{\alpha} \varphi )^2+\frac{1}{2}\,\partial_2\left\{[\partial_1\hat{p}]\hat{v}^+_{2}|_{x_1=0}\,
(\mathcal{D}_2^{\alpha} \varphi )^2 \right\}\\ &
-\frac{1}{2}\,\partial_2\left.\left([\partial_1\hat{p}]\hat{v}^+_{2}\right)\right|_{x_1=0}(\mathcal{D}_2^{\alpha} \varphi )^2
+[\partial_1\hat{p}]\mathcal{D}_2^{\alpha}\varphi \left.\left([\mathcal{D}_2^{\alpha},\hat{v}_2^+]\partial_2\varphi
-\mathcal{D}_2^{\alpha}(\varphi\partial_1\hat{v}_N^+)\right)\right|_{x_1=0}+\\
& +[\partial_1\widehat{H}_{\tau}]\mathcal{D}_2^{\alpha} \varphi \Bigl( \bigl[\partial_{\rm tan}^{\beta},\hat{v}_2^+\bigr]\partial_2V_4^+ -\partial_{\rm tan}^{\beta}\widetilde{R} +\bigl[\partial_{\rm tan}^{\beta},\widehat{H}_N^+\bigr]\partial_2V_3^+ \\ & \qquad\qquad\qquad\quad -\bigl[\partial_{\rm tan}^{\beta},\widehat{H}_2^+\bigr]\partial_2V_2^+ -\partial_{\rm tan}^{\beta}F_N^+ \Bigr)\Bigr|_{x_1=0},
\end{split}
\label{P2}
\end{equation}
where $\widetilde{Q}_2$ and $\widetilde{R}$ are given by \eqref{Q2tilde} and \eqref{Rlow} respectively.

We first estimate the integral of $P_2$ containing, in some sense, lower-order terms. As an example, we estimate the last term in the right-hand side of \eqref{Q2tilde} which is just a typical one giving a biggest loss of derivatives from the coefficients in the final a priori estimate \eqref{38'}:
\[
\begin{split}
\int\limits_{\partial\Omega_t} &
\partial_{\rm tan}^{\beta}   V_{2|x_1=0}^+\,\partial_2\left(\widehat{H}_{2|x_1=0}^+\bigl[\mathcal{D}_2^{\alpha},[\partial_1\widehat{H}_{\tau}]\bigr]\varphi \right) {d}x_2d\tau  \\ & \leq \|V_{|x_1=0}\|^2_{H^{s-1}(\partial\Omega_t)}+C(K)\sum_{|\omega|=1}\sum_{|\mu|+|\nu|\leq s }
\left\|\partial_{\rm tan}^{\mu}\bigl( \partial_{\rm tan}^{\omega}[\partial_1\widehat{H}_{\tau}]\bigr)\,\partial_{\rm tan}^{\nu}\varphi\right\|^2_{L^2(\partial\Omega_t)}\\
& \leq C(K)\left\{ \|V\|^2_{H^{s}(\Omega_t)}+\|\varphi\|^2_{H^s(\partial\Omega_t)}+\|\varphi\|^2_{L_{\infty}(\partial\Omega_T)}\left(
\|\hat{\varphi}\|^2_{H^{s+2}(\partial\Omega_T)}
+\|\partial_1\widehat{U}|_{x_1=0}\|^2_{H^{s+1}(\partial\Omega_T)}\right)\right\}
\\
& \leq C(K)\left\{ \|V\|^2_{H^{s}(\Omega_t)}+\|\varphi\|^2_{H^s(\partial\Omega_t)}+\|\varphi\|^2_{L_{\infty}(\partial\Omega_T)}
\|{\rm coeff}\|^2_{s+3}\right\}.
\end{split}
\]
Here we used the trace theorem and the counterpart of the calculus inequality \eqref{c40'} for the domain $\partial\Omega_t$. Estimating analogously the rest terms in \eqref{P2}, we obtain
\begin{equation}
-2\int\limits_{\partial\Omega_t}P_2{d}x_2d\tau \leq C(K)\mathcal{ M}(t),
\label{P2est}
\end{equation}
where
\begin{equation}
\begin{split}
\mathcal{ M}(t) &=  \mathcal{ N}(T)+\int\limits_0^t\mathcal{ I}(\tau )\,{d}\tau,\qquad
\mathcal{ I}(t)=\nt V(t)\nt^2_{H^s(\Omega )}+
\nt \varphi (t)\nt^2_{H^s(\partial\Omega )},
\\
\mathcal{ N}(T) & = \|F\|^2_{H^s(\Omega_T)}
+\left(\|{U}\|^2_{W^1_{\infty}(\Omega_T)}+\|\varphi\|^2_{W^1_{\infty}(\partial\Omega_T)}+\|F\|^2_{L_{\infty}(\Omega_T)}\right)\left( 1+\|{\rm coeff}\|^2_{s+3}\right),
\end{split}
\label{It}
\end{equation}
with
\[
\nt u(t)\nt^2_{H^m(D)}:=
\sum\limits_{j=0}^m\|\partial_t^ju(t)\|^2_{H^{m-j}(D)}\qquad (D=\Omega\quad\mbox{or}\quad D=\partial\Omega ).
\]
Since only the biggest loss of derivatives from the coefficients will play the role for obtaining the final tame estimate, we have roughened inequality \eqref{P2est} by choosing the biggest loss.

Using the Rayleigh-Taylor sign condition \eqref{RTL}, it follows from \eqref{c39}, \eqref{c42}, \eqref{Q2"} and \eqref{P2est} that
\begin{equation}
\sum\limits_{\pm}\int\limits_{\Omega}(\mathcal{ A}_0^{\pm}\mathcal{D}_2^{\alpha}V^{\pm},\mathcal{D}_2^{\alpha}V^{\pm} ) {d}x + \frac{\epsilon}{2} \| \mathcal{D}_2^{\alpha} \varphi (t) \|^2_{L^2(\partial\Omega )}\leq \mathcal{K}(t)+C(K)\mathcal{ M}(t),\label{c39'}
\end{equation}
where
\[
\mathcal{K}(t)=-2\int\limits_{\partial\Omega_t}[\partial_1\widehat{H}_{\tau}](\mathcal{D}_2^{\alpha} \varphi) (\partial_0\partial_{\rm tan}^{\beta}V_4^+)|_{x_1=0}\,{d}x_2d\tau .
\]
We now need to estimate the boundary integral $\mathcal K(t)$ connected with the underlined term in \eqref{Q2"}. To this end we first integrate by parts and use the boundary condition \eqref{b50b.4'}, and then we apply the same arguments as those towards the proof of \eqref{P2est}:
\[
\begin{split}
\mathcal{K}(t)=&2\int\limits_{\partial\Omega_t}\Bigl\{\Bigl(\partial_0[\partial_1\widehat{H}_{\tau}]\,\mathcal{D}_2^{\alpha}
\varphi-[\partial_1\widehat{H}_{\tau}]\,[\mathcal{D}_2^{\alpha} ,\hat{v}_2^+]\partial_2\varphi \\ &\quad\qquad\; +[\partial_1\widehat{H}_{\tau}]\left\{\mathcal{D}_2^{\alpha} (\partial_0\varphi)+(\partial_2\hat{v}_2^+)\mathcal{D}_2^{\alpha}\varphi\right\}\Bigr)\partial_{\rm tan}^{\beta}V_4^+ \Bigr\}\Bigr|_{x_1=0}dx_2d\tau  +\mathcal{L}(t)\\ =& 2\int\limits_{\partial\Omega_t}\Bigl\{\Bigl(\partial_0[\partial_1\widehat{H}_{\tau}]\,\mathcal{D}_2^{\alpha}
\varphi-[\partial_1\widehat{H}_{\tau}] \left\{[\mathcal{D}_2^{\alpha} ,\hat{v}_2^+]\partial_2\varphi -\mathcal{D}_2^{\alpha}(\varphi\partial_1\hat{v}_N^+)\right\}\\
&
\quad\qquad\; +[\partial_1\widehat{H}_{\tau}](\partial_2\hat{v}_2^+)\mathcal{D}_2^{\alpha}\varphi\Bigr) \partial_{\rm tan}^{\beta}V_4^+\Bigr\}\Bigr|_{x_1=0}dx_2d\tau
 +\widetilde{\mathcal{K}}(t)+\mathcal{L}(t)\\
\leq & C(K)\mathcal{ M}(t)+\widetilde{\mathcal{K}}(t)+\mathcal{L}(t),
\end{split}
\]
where
\[
\widetilde{\mathcal{K}}(t)= 2\int\limits_{\partial\Omega_t}[\partial_1\widehat{H}_{\tau}](\mathcal{D}_2^{\alpha} V_2^+\partial_{\rm tan}^{\beta}V_4^+)|_{x_1=0}\,dx_2d\tau
\]
and
\[
\mathcal{L}(t)=-2\int\limits_{\partial\Omega}[\partial_1\widehat{H}_{\tau}]\mathcal{D}_2^{\alpha} \varphi\, \partial_{\rm tan}^{\beta}V^+_{4|x_1=0}\,{d}x_2.
\]

For estimating the boundary integral $\widetilde{\mathcal{K}}(t)$ we pass to a volume integral and then integrate by parts:
\begin{equation}
\begin{split}
\widetilde{\mathcal{K}}(t)= &\int\limits_{\Omega_t}\partial_1\bigl\{\hat{c}\,\mathcal{D}_2^{\alpha} V_2^+\partial_{\rm tan}^{\beta}V_4^+\bigr\}\,dxd\tau
=\int\limits_{\Omega_t}\partial_1\bigl\{\hat{c}\,\partial_2(\partial_{\rm tan}^{\beta}V_2^+)\,\partial_{\rm tan}^{\beta}V_4^+\bigr\}\,dxd\tau \\
= &\int\limits_{\Omega_t}\bigl\{\partial_1\hat{c}\;\mathcal{D}_2^{\alpha}V_2^+\partial_{\rm tan}^{\beta}V_4^+
+\hat{c}\;\mathcal{D}_2^{\alpha}V_2^+\,\partial_{\rm tan}^{\beta}\partial_1V_4^+ \\
&\qquad -\partial_2\hat{c}\;\partial_{\rm tan}^{\beta} \partial_1V_2^+\;\partial_{\rm tan}^{\beta}V_4^+
-\hat{c}\; \partial_{\rm tan}^{\beta}\partial_1V_2^+\;\mathcal{D}_2^{\alpha}V_4^+\bigr\}\,dx d\tau\\
\leq & C(K)\|V\|^2_{H^s(\Omega_t)}\leq  C(K)\mathcal{ M}(t),
\end{split}
\label{Ktild}
\end{equation}
where $\hat{c}=-2(\partial_1\widehat{H}_{\tau}^++\partial_1\widehat{H}_{\tau}^-)$. This yields
\begin{equation}
\mathcal{K}(t)\leq C(K)\mathcal{ M}(t)+\mathcal{L}(t).
\label{tKt}
\end{equation}

By using the Young inequality and the passage to a volume integral we now estimate the boundary integral $\mathcal L(t)$:
\begin{equation}
\begin{split}
\mathcal L(t) \leq & \tilde{\varepsilon}C(K)\| \mathcal{D}_2^{\alpha} \varphi (t) \|^2_{L^2(\partial\Omega )}+\frac{1}{\tilde{\varepsilon}}
\int\limits_{\partial\Omega} (\partial_{\rm tan}^{\beta}V^+_4)^2_{|x_1=0}\,{d}x_2\\
= &\tilde{\varepsilon}C(K)\| \mathcal{D}_2^{\alpha} \varphi (t) \|^2_{L^2(\partial\Omega )}-\frac{2}{\tilde{\varepsilon}}
\int\limits_{\Omega} \partial_{\rm tan}^{\beta}V^+_4\,\partial_{\rm tan}^{\beta}\partial_1V^+_4{d}x \\
 \leq &\tilde{\varepsilon}C(K)\| \mathcal{D}_2^{\alpha} \varphi (t) \|^2_{L^2(\partial\Omega )}+\frac{1}{\tilde{\varepsilon}}
 \left\{ \frac{1}{\tilde{\varepsilon}^2} \nt V_4^+(t)\nt^2_{H^{s-1}(\Omega)}+\tilde{\varepsilon}^2\nt V_4^+(t)\nt^2_{H^{s}(\Omega)}\right\},
\end{split}
\label{tLt}
\end{equation}
where the constant $C=C(K)$ does not depend on a small positive constant $\tilde{\varepsilon}$ whose choice will be made below.
Taking into account the elementary inequality
\begin{equation}
\nt u(t)\nt^2_{H^{s-1}(D )}\leq \int\limits_{0}^{t}\nt u(\tau )\nt^2_{H^s(D )}d\tau =\|u\|^2_{H^s([0,t]\times D)}
\label{elin}
\end{equation}
($D=\Omega$ or $D=\partial\Omega $), from \eqref{c39'}, \eqref{tKt} and \eqref{tLt} we deduce
\begin{equation}
\begin{split}
\sum\limits_{\pm}\int\limits_{\Omega} &(\mathcal{ A}_0^{\pm}\mathcal{D}_2^{\alpha}V^{\pm},\mathcal{D}_2^{\alpha}V^{\pm} ) {d}x + \frac{\epsilon}{2} \| \mathcal{D}_2^{\alpha} \varphi (t) \|^2_{L^2(\partial\Omega )} \\  & \leq C(K)\left\{\mathcal{ M}(t) +
\tilde{\varepsilon}\| \mathcal{D}_2^{\alpha} \varphi (t) \|^2_{L^2(\partial\Omega )}\right\}+\tilde{\varepsilon}\nt V(t)\nt^2_{H^{s}(\Omega)}+\frac{1}{\tilde{\varepsilon}^3} \|V\|^2_{H^{s}(\Omega_t)}.
\end{split}
\label{c39"}
\end{equation}

Assuming for the present moment that $\tilde{\varepsilon}=\epsilon\hat{\varepsilon}$ and $\hat{\varepsilon}\leq \hat{\varepsilon}_1=1 /(2C_1)$, where $C_1:=C(K)$ is the  constant from \eqref{c39"}, but leaving the final choice of  $\hat{\varepsilon}$ for the future, from \eqref{c39"} we get
\[
\begin{split}
\sum\limits_{\pm}\int\limits_{\Omega} &(\mathcal{ A}_0^{\pm}\mathcal{D}_2^{\alpha}V^{\pm},\mathcal{D}_2^{\alpha}V^{\pm} ) {d}x + \frac{\epsilon}{2} \| \mathcal{D}_2^{\alpha} \varphi (t) \|^2_{L^2(\partial\Omega )} \\  & \leq C(K)\mathcal{ M}(t) +\tilde{\varepsilon}\nt V(t)\nt^2_{H^{s}(\Omega)}+\frac{1}{\tilde{\varepsilon}^3} \,\|V\|^2_{H^{s}(\Omega_t)}
\end{split}
\]
that implies
\begin{equation}
\begin{split}
\sum\limits_{\pm}\int\limits_{\Omega} &(\mathcal{ A}_0^{\pm}\mathcal{D}_2^{\alpha}V^{\pm},\mathcal{D}_2^{\alpha}V^{\pm} ) {d}x + \| \mathcal{D}_2^{\alpha} \varphi (t) \|^2_{L^2(\partial\Omega )} \\  & \leq C(K)\mathcal{ M}(t) +2\hat{\varepsilon}\nt V(t)\nt^2_{H^{s}(\Omega)}+\frac{2}{\epsilon^4\hat{\varepsilon}^3} \,\|V\|^2_{H^{s}(\Omega_t)},
\end{split}
\label{c39^}
\end{equation}
where the constant $C(K)=C(K,\epsilon )$ depends on $\epsilon$ and without loss of generality we suppose that $\epsilon \leq 4$.

\subsection{Estimation of tangential derivatives containing the  material derivative}

Applying to systems \eqref{c29} the operator $\mathcal{D}_0^{\beta}:=\partial_0\partial_{\rm tan}^{\beta}$, with
$|\beta |\leq s-1$, and using, as in \eqref{c39}, standard arguments of the energy method, we obtain
\begin{equation}
\sum\limits_{\pm}\int\limits_{\Omega}(\mathcal{ A}_0^{\pm}\mathcal{D}_0^{\beta}V^{\pm},\mathcal{D}_0^{\beta}V^{\pm} ) {d}x + 2\int\limits_{\partial\Omega_t}{Q}_0{d}x_2d\tau=\widetilde{\mathcal{R}},\label{c39.0}
\end{equation}
\begin{equation}
\begin{split}
{Q}_0= &-\frac{1}{2}\sum\limits_{\pm}\bigl(\mathcal{A}_1^{\pm}\mathcal{D}_0^{\beta}V^\pm , \mathcal{D}_0^{\beta}V^\pm \bigr)\bigr|_{x_1=0} \\ = &
\left.\left\{-\mathcal{D}_0^{\beta}V_2^+[\mathcal{D}_0^{\beta}V_1]+
\left(\widehat{H}_N^+\mathcal{D}_0^{\beta}V_3^+-\widehat{H}_2^+\mathcal{D}_0^{\beta}V_2^+\right)[\mathcal{D}_0^{\beta}V_5]\right\}\right|_{x_1=0}  ,
\end{split}
\label{Q0}
\end{equation}
\begin{align*}
\widetilde{\mathcal{R}}= \sum\limits_{\pm}\int\limits_{\Omega_t}\Bigl(
\Bigl\{ &\left(\partial_t\mathcal{ A}_0^{\pm}+ \partial_1\mathcal{ A}_1^{\pm}+\partial_2\mathcal{ A}_2^{\pm}\right)\mathcal{D}^{\beta}_0V^{\pm} - 2\left[\mathcal{D}^{\beta}_0 ,\mathcal{ A}_0^{\pm}\right]\partial_tV^{\pm}
- 2\left[\mathcal{D}^{\beta}_0 ,\mathcal{ A}_1^{\pm}\right]\partial_1V^{\pm} \\ & - 2\left[\mathcal{D}^{\beta}_0 ,\mathcal{ A}_2^{\pm}\right]\partial_2V^{\pm}
 - 2\mathcal{D}^{\beta}_0(\mathcal{ A}_3^{\pm}V^{\pm} )+2\mathcal{D}^{\beta}_0\mathcal{ F}^{\pm} \\
 & +2\left(\partial_t\hat{v}_2^+\,\mathcal{ A}_0^{\pm}+ \partial_1\hat{v}_2^+\,\mathcal{ A}_1^{\pm}+\partial_2\hat{v}_2^+\,\mathcal{ A}_2^{\pm}\right)\partial_2\partial^{\beta}_{\rm tan}V^{\pm}\Bigr\},\mathcal{D}^{\beta}_0V^{\pm}\Bigr){d}x {d}\tau .
\end{align*}
Using, as in \eqref{c42}, the Moser-type calculus inequalities we estimate $\widetilde{\mathcal{R}}$ and roughening the result, from \eqref{c39.0} we obtain
\begin{equation}
\sum\limits_{\pm}\int\limits_{\Omega}(\mathcal{ A}_0^{\pm}\mathcal{D}_0^{\beta}V^{\pm},\mathcal{D}_0^{\beta}V^{\pm} ) {d}x + 2\int\limits_{\partial\Omega_t}{Q}_0{d}x_2d\tau\leq C(K)\mathcal{ M}(t).\label{c39.0'}
\end{equation}

By applying the differential operator $\mathcal{D}_0^{\beta}$ to the boundary conditions \eqref{b50b.1'} and \eqref{b50b.3'} and the differential operator $\partial_{\rm tan}^{\beta}$ to the boundary condition \eqref{b50b.4'}, one gets
\begin{align}
{[} \mathcal{D}_0^{\beta}V_1{]}=- {[}\partial_1\hat{p}{]}\mathcal{D}_0^{\beta}\varphi-\bigl[\mathcal{D}_0^{\beta},{[}\partial_1\hat{p}{]}\bigr]\varphi,\qquad & \label{b50b.1^}\\[3pt]
    {[} \mathcal{D}_0^{\beta}V_5{]}=- {[}\partial_1\widehat{H}_{\tau}{]}\mathcal{D}_0^{\beta}\varphi -\bigl[\mathcal{D}_0^{\beta},{[}\partial_1\widehat{H}_{\tau}{]}\bigr]\varphi,\qquad & \label{b50b.3^} \\[3pt]
\mathcal{D}_0^{\beta}\varphi=\partial_{\rm tan}^{\beta}V_2^+ - [\partial_{\rm tan}^{\beta},\hat{v}_2^+]\partial_2\varphi +\partial_{\rm tan}^{\beta}(\varphi \partial_1\hat{v}_N^+ )  \qquad &\mbox{on}\ \partial\Omega_T.
\label{b50b.4^}
\end{align}
Substituting \eqref{b50b.4^} into \eqref{b50b.1^} and \eqref{b50b.3^}, it follows from \eqref{Q0} that
\begin{equation}
\begin{split}
{Q}_0=
\biggl\{&\left(\partial_{\rm tan}^{\beta}V_2^+ - [\partial_{\rm tan}^{\beta},\hat{v}_2^+]\partial_2\varphi +\partial_{\rm tan}^{\beta}(\varphi \partial_1\hat{v}_N^+ ) \right) \Bigl({[}\partial_1\hat{p}{]}\mathcal{D}_0^{\beta}V_2^+\\ &
-{[}\partial_1\widehat{H}_{\tau}{]}
\widehat{H}_N^+\mathcal{D}_0^{\beta}V_3^+
+{[}\partial_1\widehat{H}_{\tau}{]}\widehat{H}_2^+\mathcal{D}_0^{\beta}V_2^+\Bigr)
+\mathcal{D}_0^{\beta}V_2^+\bigl[\mathcal{D}_0^{\beta},{[}\partial_1\hat{p}{]}\bigr]\varphi \\ &-
\left(\widehat{H}_N^+\mathcal{D}_0^{\beta}V_3^+-\widehat{H}_2^+\mathcal{D}_0^{\beta}V_2^+\right)
\bigl[\mathcal{D}_0^{\beta},{[}\partial_1\widehat{H}_{\tau}{]}\bigr]\varphi
\biggr\}\biggr|_{x_1=0}  .
\end{split}
\label{Q0'}
\end{equation}

The terms of the quadratic form $Q_0$ in \eqref{Q0'} are of two types:
\begin{equation}
{\rm coeff}\,\mathcal{D}_0^{\beta}V_i^+\partial_{\rm tan}^{\beta}V_j^+\qquad\mbox{and}\qquad {\rm coeff}\,\mathcal{D}_0^{\beta}V_i^+\partial_{\rm tan}^{\mu}\varphi,
\label{types}
\end{equation}
where coeff is a coefficient depending on the basic state, $i,j=2,3$ and $|\mu|\leq s-1$. We estimate the terms of the first type in \eqref{types} by passing to a volume integral and integrating by parts. As an example, we consider the term $\bigl(\hat{c}\mathcal{D}_0^{\beta}V_3^+\partial_{\rm tan}^{\beta}V_2^+\bigr)\bigr|_{x_1=0}$, where $\hat{c}=-2\widehat{H}_N^+(\partial_1\widehat{H}_{\tau}^++\partial_1\widehat{H}_{\tau}^-) $. Recalling the definition of $\mathcal{D}_0^{\beta}$ and the material derivative $\partial_0$, we decompose it as
\[
\bigl(\hat{c}\mathcal{D}_0^{\beta}V_3^+\partial_{\rm tan}^{\beta}V_2^+\bigr)\bigr|_{x_1=0}=\bigl(\hat{c}\partial_t(\partial_{\rm tan}^{\beta}V_3^+)\,\partial_{\rm tan}^{\beta}V_2^+\bigr)\bigr|_{x_1=0} +\bigl(\hat{c}\hat{v}_2^+\partial_2(\partial_{\rm tan}^{\beta}V_3^+)\,\partial_{\rm tan}^{\beta}V_2^+\bigr)\bigr|_{x_1=0},
\]
where the second term in the right-hand side is treated exactly as in \eqref{Ktild}. Regarding the first term, an additional integral connected with the integration over the time interval $[0,t]$ appears for it when we integrate by parts:
\[
\begin{split}
 -\int\limits_{\partial\Omega_t}\bigl(\hat{c} & \partial_t(\partial_{\rm tan}^{\beta}V_3^+)\,\partial_{\rm tan}^{\beta}V_2^+\bigr)\bigr|_{x_1=0}\,dx_2d\tau \\ &
= \int\limits_{\Omega_t}\partial_1\bigl\{\hat{c}\,\partial_t(\partial_{\rm tan}^{\beta}V_3^+)\,\partial_{\rm tan}^{\beta}V_2^+\bigr\}\,dxd\tau  = \int\limits_{\Omega_t}\bigl\{\partial_1\hat{c}\,\partial_{\rm tan}^{\beta}\partial_tV_3^+
\,\partial_{\rm tan}^{\beta}V_2^+
 \\
&\qquad +\hat{c}\,\partial_{\rm tan}^{\beta}\partial_tV_3^+\,\partial_{\rm tan}^{\beta}\partial_1V_2^+
-\partial_t\hat{c}\;\partial_{\rm tan}^{\beta} \partial_1V_3^+\;\partial_{\rm tan}^{\beta}V_2^+ \\[6pt]
&\qquad
-\hat{c}\; \partial_{\rm tan}^{\beta}\partial_1V_3^+\,\partial_{\rm tan}^{\beta}\partial_tV_2^+\bigr\}\,dx d\tau +\int\limits_{\Omega}\hat{c}\,\partial_{\rm tan}^{\beta}\partial_1V_3^+\,\partial_{\rm tan}^{\beta}V_2^+\,dx \\
&  \leq C(K)\mathcal{ M}(t)+\int\limits_{\Omega}\hat{c}\,\partial_{\rm tan}^{\beta}\partial_1V_3^+\,\partial_{\rm tan}^{\beta}V_2^+\,dx
\\
&  \leq C(K)\left\{\mathcal{M}(t) +\hat{\varepsilon}\nt V(t)\nt^2_{H^{s}(\Omega)}+\frac{1}{\hat{\varepsilon}} \,\nt V(t)\nt^2_{H^{s-1}(\Omega)} \right\}
\\
&  \leq C(K)\left\{\mathcal{M}(t) +\hat{\varepsilon}\nt V(t)\nt^2_{H^{s}(\Omega)}+\frac{1}{\hat{\varepsilon}} \,\|V\|^2_{H^{s}(\Omega_t)} \right\}.
\end{split}
\]
For estimating the additional integral over $\Omega$ we exploited the Young inequality and the elementary inequality \eqref{elin}. Moreover, here we use the same positive constant $\hat{\varepsilon}$ that was defined just after \eqref{c39"}.

Regarding the terms of the second type in \eqref{types} we omit detailed calculations but just notice that they are estimated by integrating by parts and using the trace theorem, the Young inequality and  inequality \eqref{elin}. We thus obtain the estimate
\[
-2\int\limits_{\partial\Omega_t}{Q}_0{d}x_2d\tau \leq C(K)\left\{\mathcal{M}(t) +\hat{\varepsilon}\nt V(t)\nt^2_{H^{s}(\Omega)}+\frac{1}{\hat{\varepsilon}} \left(\|V\|^2_{H^{s}(\Omega_t)} +\|\varphi \|^2_{H^{s}(\partial\Omega_t)}\right)\right\}
\]
giving together with \eqref{c39.0'} the estimate
\begin{equation}
\begin{split}
\sum\limits_{\pm}\int\limits_{\Omega}& (\mathcal{ A}_0^{\pm}\mathcal{D}_0^{\beta}V^{\pm},  \mathcal{D}_0^{\beta}V^{\pm} ) {d}x \\  & \leq C(K)\left\{\mathcal{M}(t) +\hat{\varepsilon}\nt V(t)\nt^2_{H^{s}(\Omega)}+\frac{1}{\hat{\varepsilon}} \left(\|V\|^2_{H^{s}(\Omega_t)} +\|\varphi \|^2_{H^{s}(\partial\Omega_t)}\right)
\right\}.
\label{c39.0^}
\end{split}
\end{equation}
Taking into account that
\[
\|\partial_tu (t)\|^2_{L^2(\Omega )} \leq  \|\partial_0u (t)\|^2_{L^2(\Omega)} +C(K)\|\partial_2u (t)\|^2_{L^2(\Omega )}
\]
and $\mathcal{A}_0^{\pm}>0$, the combination of \eqref{c39^} and \eqref{c39.0^} yields (we omit detailed simple arguments)
\begin{equation}
\begin{split}
\nt V(t)\nt^2_{{\rm tan},s} & +\nt \partial_2\varphi (t)\nt^2_{H^{s-1}(\partial\Omega)} \\ & \leq
C(K)\left\{\mathcal{M}(t) +\hat{\varepsilon}\nt V(t)\nt^2_{H^{s}(\Omega)}+\frac{1}{\hat{\varepsilon}^3} \,\|V\|^2_{H^{s}(\Omega_t)}
+\frac{1}{\hat{\varepsilon}}\|\varphi \|^2_{H^{s}(\partial\Omega_t)}
\right\},
\label{Vtan}
\end{split}
\end{equation}
where
\[
\nt u(t)\nt^2_{{\rm tan},m}:=\sum_{|\alpha|\leq m} \| \partial^{\alpha}_{\rm tan} u (t)\|^2_{L^2(\Omega )},
\]
and $C(K)=C(K,\bar{p},\epsilon )$ depends on the fixed constants $\bar{p}$ and $\epsilon$ from \eqref{5.1} and \eqref{RT1} respectively but does not depend on the constant $\hat{\varepsilon}$ which will be chosen after estimating normal derivatives of $V$. Moreover, adding to \eqref{Vtan}  inequality \eqref{elin} for $u=\varphi$ and $D=\partial\Omega$, we get
\begin{equation}
\begin{split}
\nt V(t)\nt^2_{{\rm tan},s}+& \nt \varphi (t)\nt^2_{H^{s-1}(\partial\Omega)}+\nt \partial_2\varphi (t)\nt^2_{H^{s-1}(\partial\Omega)} \\[6pt]  &\leq
C(K)\left\{\mathcal{M}(t) +\hat{\varepsilon}\nt V(t)\nt^2_{H^{s}(\Omega)}+\frac{1}{\hat{\varepsilon}^3} \,\|V\|^2_{H^{s}(\Omega_t)}
+\frac{1}{\hat{\varepsilon}}\|\varphi \|^2_{H^{s}(\partial\Omega_t)}
\right\}.
\end{split}
\label{Vtan'}
\end{equation}

\subsection{Estimation of normal derivatives through tangential ones}

In view of assumption \eqref{cdass} and the continuity of the basic state \eqref{a21}, there exists such a small but fixed constant $\delta >0$ depending on $\kappa$ that
\begin{equation}
|\widehat{H}_N^{\pm}|\geq \frac{\kappa}{4}>0 \quad\mbox{in}\ \Omega_t^{\delta},
\label{cdass"}
\end{equation}
where $\Omega_t^{\delta}=[0 ,t]\times \Omega_{\delta}$  and $\Omega_{\delta} =\{x_1\in(0,\delta),\ x_2\in\mathbb{T}\}$ is the $\delta$-neighbourhood of the boundary $x_1=0$. In $\Omega_t^{\delta}$ we may consider the matrices
\[
\mathcal{B}_1^{\pm }= \pm\begin{pmatrix}
0 & 1 & \frac{\widehat{H}_2^{\pm}}{\widehat{H}_N^{\pm}} & 0 & 0 & 0\\
1 & 0 & 0 & 0 & 0 & 0\\
\frac{\widehat{H}_2^{\pm}}{\widehat{H}_N^{\pm}} & 0 & 0 & 0 & - \frac{1}{\widehat{H}_N^{\pm}} & 0\\
0 & 0 & 0 & 0 & 0 & 0\\
0 & 0 & - \frac{1}{\widehat{H}_N^{\pm}} & 0 & 0 & 0\\
0 & 0 & 0 & 0 & 0 & 0
\end{pmatrix}.
\]
Then, it follow from \eqref{c29} and \eqref{c30} that
\begin{equation}
\begin{split}
(\partial_1V_1^{\pm}& ,\partial_1 V_2^{\pm}, \partial_1V_3^{\pm},0,\partial_1V_5^{\pm},0) \\  & =
\mathcal{B}_1^{\pm }\left( \mathcal{F}^{\pm} -\mathcal{A}_0^{\pm}\partial_t{V}^{\pm}- \mathcal{A}_2^{\pm}\partial_2{V}^{\pm}-\mathcal{A}_3^{\pm}{V}^{\pm}-\mathcal{A}^{\pm}_{(0)}\partial_1V^{\pm}\right)
 \qquad \mbox{in}\ \Omega_t^{\delta}.
\end{split}
\label{c29^}
\end{equation}

Applying to \eqref{c29^} the operator $\partial_{\rm tan}^{\beta}$ with $|\beta |\leq s-1$,
using standard decompositions like
\[
\partial_{\rm tan}^{\beta}(B\partial_i V)=B\partial_{\rm tan}^{\beta}\partial_i V +
[\partial_{\rm tan}^{\beta},B]\partial_i V,
\]
taking into account the fact that $\mathcal{ A}^{\pm}_{(0)}|_{x_1=0}=0$,  and employing counterparts of the calculus inequalities \eqref{c40}, \eqref{c41} and \eqref{c40'} for the ``layerwise'' norms $\nt (\cdot )(t)\nt$ (see \cite{Sch}) as well as inequality \eqref{elin}, we estimate the normal derivative of $\partial_{\rm tan}^{\beta}V_n^{\pm}$ (see \eqref{V_n}):
\begin{equation}
\begin{split}
\|\partial_1\partial_{\rm tan}^{\beta}V_n (t) & \|^2_{L^2(\Omega_{\delta})} \\ \leq C(K)\Bigl\{ & \nt V(t)\nt^2_{{\rm tan},s}
+\|\sigma \partial_1\partial_{\rm tan}^{\beta} V(t)\|^2_{L^2(\Omega )} \\ &+ \nt V(t) \nt^2_{H^{s-1}(\Omega )}
+ \nt F (t)\nt^2_{H^{s-1}(\Omega )} \\ &+
\left(\|{U}\|^2_{W^1_{\infty}(\Omega_T)}+\|F\|^2_{L_{\infty}(\Omega_T)}\right)\left( 1+\nt{\rm coeff} (t) \nt^2_{s}\right)\Bigr\}
\\ \leq C(K)\Bigl\{ & \nt V(t)\nt^2_{{\rm tan},s}
+\|\sigma \partial_1\partial_{\rm tan}^{\beta} V(t)\|^2_{L^2(\Omega )} +\mathcal{M}(t)\Bigr\} ,
\end{split}
\label{45}
\end{equation}
where $V_n=(V_n^+,V_n^-)$, the constant $C(K)=C(K,\kappa )$ depends on the fixed constant $\kappa$ from \eqref{cdass}, and $\sigma=\sigma (x_1)\in C^{\infty}(\mathbb{R}_+)$ is a monotone increasing function such that $\sigma (x_1)=x_1$ in a neighborhood of the origin and $\sigma (x_1)=1$ for $x_1$ large enough. Since $\sigma |_{x_1=0}=0$, we do not need to use boundary conditions to estimate $\sigma\partial_1^j\partial_{\rm tan}^{\gamma}V$, with $j+|\gamma |\leq s$,  and we easily get the inequality
\begin{equation}
\| \sigma\partial_1^j\partial_{\rm tan}^{\gamma}V(t)\|^2_{L^2(\Omega )}\leq C(K)\mathcal{M}(t).
\label{46}
\end{equation}
It follows from \eqref{45} and \eqref{46} for $j=1$ that
\begin{equation}
\|\partial_1\partial_{\rm tan}^{\beta}V_n (t)  \|^2_{L^2(\Omega_{\delta})}  \leq C(K)\left\{  \nt V(t)\nt^2_{{\rm tan},s} +\mathcal{M}(t)\right\} .
\label{45'}
\end{equation}

Since the weight $\sigma$ in \eqref{46} is not zero outside the boundary, it follows from estimate \eqref{46} that
\begin{equation}
\| \partial_1^j\partial_{\rm tan}^{\gamma}V(t)\|^2_{L^2(\Omega \setminus \Omega_{\delta} )}\leq C(K)\mathcal{M}(t),
\label{46'}
\end{equation}
where the constant $C(K)$ depends, in particular, on $\delta$ and so on $\kappa$. Combining \eqref{45'} and \eqref{46'} for $j=1$, we get
\begin{equation}
\sum_{i=1}^{k}\sum_{|\alpha|\leq s-i}\|\partial_1^i\partial_{\rm tan}^{\alpha}V_n (t)\|^2_{L_2(\Omega )}\leq C(K)\left\{  \nt V(t)\nt^2_{{\rm tan},s} +\mathcal{M}(t)\right\},
\label{45^}
\end{equation}
with $k=1$. Estimate \eqref{45^} for $k=s$ is easily proved by finite induction. The combination of \eqref{Vtan'} and \eqref{45^} for $k=s$ yields
\begin{equation}
\begin{split}
\nt V(t)\nt^2_{{\rm tan},s} & + \nt V_n (t)\nt^2_{H^{s}(\Omega)} +\nt \varphi (t)\nt^2_{H^{s-1}(\partial\Omega)}+\nt \partial_2\varphi (t)\nt^2_{H^{s-1}(\partial\Omega)} \\[6pt]  & \leq
C(K)\left\{\mathcal{M}(t) +\hat{\varepsilon}\nt V(t)\nt^2_{H^{s}(\Omega)}+\frac{1}{\hat{\varepsilon}^3} \,\|V\|^2_{H^{s}(\Omega_t)}
+\frac{1}{\hat{\varepsilon}}\|\varphi \|^2_{H^{s}(\partial\Omega_t)}
\right\},
\end{split}
\label{Vtan^}
\end{equation}
where the constant $C=C(K)$ depends also on the fixed constants $\epsilon$ and $\kappa$ from \eqref{cdass} and \eqref{RTL}: $C=C(K,\epsilon ,\kappa )$.

Missing normal derivatives in \eqref{Vtan^} for the ``characteristic'' parts $(V_4^{\pm},V_6^{\pm})=(H_N^{\pm},S^{\pm})$ of the unknowns $V^{\pm}$ can be estimated from the last equations in \eqref{c29} (or \eqref{b48b}),
\begin{equation}
\partial_tV_6^{\pm}+\frac{1}{\partial_1\widehat{\Phi}^{\pm}} \left((\hat{w}^{\pm} ,\nabla V_6^{\pm} ) +\bigl(\partial_1\widehat{S}^{\pm}\bigr)V_2^{\pm}+ \bigl(\partial_1\widehat{\Phi}^{\pm}\partial_2\widehat{S}^{\pm}\bigr)V_3^{\pm} \right) =F_6^\pm \quad\mbox{in}\ \Omega_T,
\label{entropy}
\end{equation}
and the equations for the linearized divergences of the magnetic fields
\[
\xi^{\pm}=\div h^{\pm} =\partial_1V_4^{\pm} +\partial_2\left(\frac{\partial_1\widehat{\Phi}^{\pm}}{1+(\partial_2\widehat{\Psi}^{\pm})^2}\bigl( V_5^{\pm}- V_4^{\pm}\partial_2\widehat{\Psi}^{\pm}\bigr)\right)
\]
obtained by applying the div operator to the systems for $h^{\pm}$ following from \eqref{b48b} (see \cite{T09}),
\begin{equation}
\partial_t \left(\frac{\xi^{\pm}}{\partial_1\widehat{\Phi}^{\pm}}\right)+ \frac{1}{\partial_1\widehat{\Phi}^{\pm}}\left\{ \left(\hat{w}^{\pm} ,\nabla \left(\frac{\xi^{\pm}}{\partial_1\widehat{\Phi}^{\pm}}\right)\right) + \frac{{\rm div}\,\hat{u}^{\pm}}{\partial_1\widehat{\Phi}^{\pm}}\,\xi^{\pm}\right\}=\frac{\div F_h^{\pm}}{\partial_1\widehat{\Phi}^{\pm}}+r^{\pm}\quad \mbox{in}\ \Omega_T,
\label{xi}
\end{equation}
where ${h}^{\pm}=({H}_{N}^{\pm},{H}_2^{\pm}\partial_1\widehat{\Phi}^{\pm})$, ${F}_{h}^{\pm}=(F_4^{\pm}-F_5^{\pm}\partial_2\widehat{\Psi}^{\pm} ,\partial_1\widehat{\Phi}^{\pm}F_5^{\pm})$, and $r^{\pm}$ are sums of lower-order terms which are ${\rm coeff}V_k^{\pm}$ or ${\rm coeff}\partial_jV_k^{\pm}$ ($k=2,3$, $j=1,2$) with coeff being proportional to $\div \hat{h}^\pm$.\footnote{Clearly,  $r^{\pm}\equiv 0$ if, as in \cite{Cont1}, we assume that the basic state satisfies the fourth and fifth equations of systems in \eqref{11.1} as well as the divergence constraints \eqref{14} at $t=0$, to be exact, $\div \hat{h}|_{t\leq 0}=0$ (see Remark \ref{r3}).}
Both equations \eqref{entropy} and \eqref{xi} do not need boundary conditions because, in view of \eqref{a12'}, the first component of the vector $\hat{w}$ is zero on the boundary $x_1=0$. Therefore, omitting detailed simple arguments of the energy method supplemented  with the application of the calculus inequality \eqref{c40'}, we deduce the estimate
\[
\nt (V_4^{\pm},V_6^{\pm}) (t)\nt^2_{H^{s}(\Omega)}\leq C(K)\left\{  \nt V(t)\nt^2_{{\rm tan},s} +\mathcal{M}(t)\right\}
\]
whose combination with \eqref{Vtan^} implies
\begin{equation}
\begin{split}
\nt V (t)\nt^2_{H^{s}(\Omega)} & +\nt \varphi (t)\nt^2_{H^{s-1}(\partial\Omega)}+\nt \partial_2\varphi (t)\nt^2_{H^{s-1}(\partial\Omega)} \\[6pt]  & \leq
C(K)\left\{\mathcal{M}(t) +\hat{\varepsilon}\nt V(t)\nt^2_{H^{s}(\Omega)}+\frac{1}{\hat{\varepsilon}^3} \,\|V\|^2_{H^{s}(\Omega_t)}
+\frac{1}{\hat{\varepsilon}}\|\varphi \|^2_{H^{s}(\partial\Omega_t)}
\right\}.
\end{split}
\label{Vfull}
\end{equation}

By applying the differential operator $\partial_{\rm tan}^{\beta}$ with $|\beta |\leq s-1$ to the boundary condition \eqref{b50b.4'} and using the trace theorem as well as the calculus inequality \eqref{c40'} and inequality \eqref{elin} (for $u=\varphi$ and $D=\partial\Omega$), one gets
\[
\nt \partial_t\varphi (t)\nt^2_{H^{s-1}(\partial\Omega)}\leq C(K)\left\{  \nt V(t)\nt^2_{H^{s}(\Omega)} + \nt \partial_2\varphi (t)\nt^2_{H^{s-1}(\partial\Omega)} +\mathcal{M}(t)\right\}.
\]
The last estimate and \eqref{Vfull} yield
\begin{equation}
\begin{split}
\nt V (t)\nt^2_{H^{s}(\Omega)} & +\nt \varphi (t)\nt^2_{H^{s}(\partial\Omega)} \\[6pt]  & \leq
C(K)\left\{\mathcal{M}(t) +\hat{\varepsilon}\nt V(t)\nt^2_{H^{s}(\Omega)}+\frac{1}{\hat{\varepsilon}^3} \,\|V\|^2_{H^{s}(\Omega_t)}
+\frac{1}{\hat{\varepsilon}}\|\varphi \|^2_{H^{s}(\partial\Omega_t)}
\right\}.
\end{split}
\label{Vfull'}
\end{equation}
At last, choosing $\hat{\varepsilon}$ to be small enough, we get the ``closed'' energy inequality
\[
\mathcal{ I}(t)\leq C(K)\mathcal{M}(t)=C(K)\biggl\{\mathcal{ N}(T)+\int\limits_0^t\mathcal{ I}(\tau )\,{d}\tau \biggr\}.
\]
Applying then Gronwall's lemma, we obtain the energy a priori estimate
\begin{equation}
\mathcal{ I}(t)\leq C(K)\,e^{C(K)T}\mathcal{ N}(T).
\label{enaprest}
\end{equation}

\subsection{Proof of the tame estimate (\ref{38'})}

Integrating \eqref{enaprest} over the interval $[0,T]$, we come to the estimate
\begin{equation}
\|V\|^2_{H^s(\Omega_T )}+\|\varphi\|^2_{H^{s}(\partial\Omega_T)}\leq C(K)Te^{C(K)T}\mathcal{ N}(T).
\label{c51}
\end{equation}
Recall that ${U}^{\pm}=J^{\pm}V^{\pm}$ (see \eqref{J}). Taking into account the decompositions $J^{\pm}(\hat{\varphi})= I +J_0^{\pm}(\hat{\varphi})$ and $J_0^{\pm}(0)=0$, using \eqref{c40} and \eqref{c41},
and the inequality
\[
\|u\|^2_{H^m([0,T]\times D)}\leq  T\|u\|^2_{H^{m+1}([0,T]\times D)}
\]
following from the integration of \eqref{elin} over the time interval $[0,T]$,
we obtain
\begin{equation}
\begin{split}
\|{U}\|^2_{H^s(\Omega_T )} & = \sum_{\pm}\|V^{\pm} + J_0^{\pm}V^{\pm}\|^2_{H^s(\Omega_T )} \\
 & \leq C(K)\bigl(
\|V\|^2_{H^s(\Omega_T )} +\|{U}\|^2_{L_{\infty}(\Omega_T)}\|{\rm coeff}\|_{s+1}^2\bigr) \\ &
\leq C(K)\|V\|^2_{H^s(\Omega_T )}+
TC(K)\|{U}\|^2_{L_{\infty}(\Omega_T)}\|{\rm coeff}\|_{s+2}^2.
\end{split}
\label{c52}
\end{equation}
Inequalities \eqref{c51} and \eqref{c52} imply
\begin{equation}
\|{U}\|^2_{H^s(\Omega_T )}+\|\varphi\|^2_{H^{s}(\partial\Omega_T)}\leq C(K)Te^{C(K)T}\mathcal{ N}(T).
\label{c53}
\end{equation}

Applying Sobolev's embeddings, from (\ref{c53}) with $s\geq 3$, we get
\begin{equation}
\begin{split}
\|{U} & \|_{H^s(\Omega_T )}+\|\varphi\|_{H^{s}(\partial\Omega_T)} \\
&
\begin{split}
\leq C(K)T^{1/2}e^{C(K)T}\Bigl\{ \|F\|_{H^s(\Omega_T)}
+&\left( \|{U}\|_{H^3(\Omega_T)}+\|\varphi\|_{H^3(\partial\Omega_T)} +\|F\|_{H^3(\Omega_T)}\right) \\ &\,\cdot
\bigl(
\|\widehat{U}\|_{H^{s+3}(\Omega_T )}+\|\hat{\varphi}\|_{H^{s+3}(\partial\Omega_T)}\bigr)\Bigr\},
\end{split}
\end{split}
\label{c54}
\end{equation}
where we have absorbed some norms $\|{U}\|_{H^3(\Omega_T)}$ and $\| \varphi\|_{H^3(\partial\Omega_T)}$ in the left-hand side by choosing $T$ small enough. Considering \eqref{c54} for $s=3$ and using (\ref{37}), we obtain for $T$ small enough that
\begin{equation}
\|{U}\|_{H^3(\Omega_T )}+\|\varphi\|_{H^{3}(\partial\Omega_T)}\leq C(K_0)\|F\|_{H^3(\Omega_T)}.
\label{c55}
\end{equation}
It is natural to assume that $T<1$ and, hence, we can suppose that the constant $C(K_0)$ does not depend on $T$.
Inequalities \eqref{c54} and \eqref{c55} imply the tame estimate \eqref{38'}.

\subsection{Proof of Theorem \ref{t3.1}}

Using the Moser-type calculus inequalities, inequality \eqref{tildU'} and Sobolev's embeddings, from \eqref{a87''} we get the estimate
\[
\begin{split}
\|F & \|_{H^s(\Omega_T)}\\ &\leq C(K_0)\left\{\|f\|_{H^s(\Omega_T)} +\|g\|_{H^{s+1}(\partial\Omega_T)}+
\| g\|_{H^{4}(\partial\Omega_T)} \bigl(
\|\widehat{U}\|_{H^{s+1}(\Omega_T )}+\|\hat{\varphi}\|_{H^{s+1}(\partial\Omega_T)}\bigr)\right\},
\end{split}
\]
for $F$ which together with \eqref{38'} and \eqref{a87'} (recall that the indices $^{\natural}$ were dropped) gives the desired tame a priori  estimate \eqref{38}.

Formally, the existence of solutions $(\dot{U},\varphi )\in H^1(\Omega_T)\times H^1(\partial\Omega_T)$ to problem \eqref{28}--\eqref{30} was proved in \cite{Cont1} (see Theorem \ref{t1L}). However, we can here omit a formal proof of the existence of solutions having an arbitrary degree of smoothness and suppose that the existence result of Theorem \ref{t1L} is also valid for the function spaces $H^s(\Omega_T)\times H^s(\partial\Omega_T)$ with $s\geq 1$ because the arguments in \cite{Cont1} towards the proof of existence are easily extended to these function spaces under the same assumptions about the regularity of the basic state $(\widehat{U},\hat{\varphi})$ as in Theorem \ref{t3.1}. The proof of Theorem \ref{t3.1} is thus complete.

\section{Compatibility conditions and approximate solution}
\label{s4}

\setcounter{subsubsection}{0}

\subsection{The compatibility conditions for the initial data (\ref{13.1})}

To use the tame estimate \eqref{38} for the proof of convergence of the Nash-Moser iteration, we should reduce our nonlinear problem \eqref{11.1}--\eqref{13.1} on $\Omega_T^+$ to that on $\Omega_T$ (see \eqref{OmegaT}) whose solutions vanish in the past. This is achieved by the classical argument suggesting to absorb the initial data into the interior equations by constructing a so-called \textit{approximate solution}. Before constructing the approximate solution we have to define \textit{compatibility conditions} for the initial data \eqref{13.1}.

Suppose we are given initial data $((U_0^+,U_0^-),\varphi_0)$ that satisfy all of the assumptions of Theorem \ref{t1}.
Let
\[
U^{\pm (0)}=(p^{\pm (0)},v_1^{\pm (0)},v_2^{\pm (0)},H_1^{\pm (0)},H_2^{\pm (0)},S^{\pm (0)}):=U_0^{\pm}\quad\mbox{and}\quad \varphi^{(0)}:=\varphi_0.
\]
Let also $\Psi^{\pm (i )}:=\chi (\pm x_1)\varphi^{(i )}(t,x_2)$ and $v^{\pm (i )}:=\bigl(v_1^{\pm (i )},v_2^{\pm (i )}\bigr)$, $H^{\pm (i)}:=\bigl(H_1^{\pm (i )},H_2^{\pm (i )}\bigr)$, where $i =0$ but below these notations will be used with indices $i\geq 0$. Assuming that the hyperbolicity condition \eqref{5.1'} is satisfied, we rewrite systems \eqref{11.1} in the form
\begin{equation}
\partial_t U^{\pm} = -\left(A_0(U^{\pm}+ \bar{U}^{\pm})\right)^{-1}\left( \widetilde{A}_1(U^{\pm}+ \bar{U}^{\pm} , {\Psi}^{\pm})\partial_1U^{\pm} +
A_2(U^{\pm}+ \bar{U}^{\pm} )\partial_2U^{\pm} \right).
\label{56}
\end{equation}
The traces
\[
U^{\pm (j)}=(p^{\pm (j)},v_1^{\pm (j)},v_2^{\pm (j)},H_1^{\pm (j)},H_2^{\pm (j)},S^{\pm (j)})=\partial_t^jU^{\pm}|_{t=0}\quad \mbox{and}\quad \varphi^{(j)}=\partial_t^j\varphi|_{t=0},
\]
with $j\geq 1$, are recursively defined by the formal application of the differential operator $\partial_t^{j-1}$
to the boundary condition
\begin{equation}
\partial_t\varphi =\left.\left(v_1^+-v_2^+\partial_2\varphi \right)\right|_{x_1=0}
\label{57}
\end{equation}
and \eqref{56} and evaluating $\partial_t^j\varphi$ and $\partial_t^jU$ at $t=0$. Moreover,
\[
\Psi^{\pm (j)}=\chi (\pm x_1)\varphi^{(j)}(t,x_2),\quad H_N^{\pm (j)}=\partial_t^jH_N^{\pm}|_{t=0}
=H_1^{\pm (j)}-\sum\limits_{i=0}^{j}C_j^iH_2^{\pm (j-i)}\partial_2\Psi^{\pm (i)},
\]
\[
H_{\tau}^{\pm (j)}=\partial_t^jH_{\tau}^{\pm}|_{t=0}
=\sum\limits_{i=0}^{j}C_j^iH_1^{\pm (j-i)}\partial_2\Psi^{\pm (i)}+H_2^{\pm (j)}.
\]

We naturally define the zeroth-order compatibility conditions as
$\bigl[p^{(0)}\bigr]=0$, $\bigl[v^{(0)}\bigr]=0$, $\bigl[H_{\tau}^{(0)}\bigr]=0$. Evaluating \eqref{57} at $t=0$, we get
\begin{equation}
\varphi^{(1)} =\bigl(v_1^{+(0)}-v_2^{+(0)}\partial_2\varphi^{(0)} \bigr)\bigr|_{x_1=0},
\label{58}
\end{equation}
and then, with $\partial_t\Psi^{\pm}|_{t=0}=\chi(\pm x_1)\varphi^{(1)}(x_2)$, from \eqref{56} evaluated at $t=0$ we define $U^{\pm(1)}$. The first-order compatibility conditions $\bigl[p^{(1)}\bigr]=0$, $\bigl[v^{(1)}\bigr]=0$, $\bigl[H_{\tau}^{(1)}\bigr]=0$ depend on $\varphi^{(0)}$ and $\varphi^{(1)}$. Knowing $\varphi^{(1)}$ and $U^{\pm(1)}$ we can then find $\varphi^{(2)}$, $U^{\pm(2)}$, etc.
The following lemma is the analogue of lemma 4.2.1 in \cite{Met}, lemma 19 in \cite{ST} and lemma 4.1 in \cite{Tcpam}.

\begin{lemma}
Let $\mu\in\mathbb{N}$, $\mu \geq 3$, and $((U_0^+,U_0^-),\varphi_0)\in H^{\mu +1/2}(\Omega)\times H^{\mu +1/2}(\partial\Omega )$. Then, the procedure described above determines $U^{\pm(j)}\in H^{\mu +1/2 -j}(\Omega )$ and $\varphi^{(j)}\in H^{\mu +1/2-j}(\partial\Omega)$ for $j= 1,\ldots ,\mu$. Moreover,
\begin{equation}
\sum_{j=1}^{\mu}\left( \bigl\|U^{+(j)}\bigr\|_{H^{\mu +1/2-j}(\Omega )}+\bigl\|U^{-(j)}\bigr\|_{H^{\mu +1/2-j}(\Omega )}+\bigl\|\varphi^{(j)}\bigr\|_{H^{\mu +1/2-j}(\partial\Omega)} \right)
\leq CM_0,
\label{59}
\end{equation}
where
\begin{equation}
M_0=\|U_0^+\|_{H^{\mu +1/2}(\Omega)}+\|U_0^-\|_{H^{\mu +1/2}(\Omega)}+\|\varphi_0\|_{H^{\mu +1/2}(\partial\Omega)},
\label{60}
\end{equation}
the constant $C>0$ depends only on $\mu$, $\|U_0^{\pm}\|_{W^{1}_{\infty}(\Omega)}$, and
$\|\varphi_0\|_{W^1_{\infty}(\partial\Omega)}$.
\label{l4.1}
\end{lemma}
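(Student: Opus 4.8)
The plan is to argue by induction on $j$. For $j=0$ the conclusion is the hypothesis $U^{\pm(0)}=U_0^\pm\in H^{\mu+1/2}(\Omega)$, $\varphi^{(0)}=\varphi_0\in H^{\mu+1/2}(\partial\Omega)$. Suppose now that for some $j$ with $1\le j\le\mu$ one already has $U^{\pm(i)}\in H^{\mu+1/2-i}(\Omega)$ and $\varphi^{(i)}\in H^{\mu+1/2-i}(\partial\Omega)$ for all $i<j$, together with the corresponding norm bounds. To produce $\varphi^{(j)}$ I would apply $\partial_t^{j-1}$ to the kinematic boundary condition \eqref{57} and set $t=0$; by the Leibniz rule this writes $\varphi^{(j)}$ as a finite sum of boundary traces of terms $v_1^{+(i)}$ and $v_2^{+(i_1)}\partial_2\varphi^{(i_2)}$ with $i\le j-1$ and $i_1+i_2\le j-1$. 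By the trace theorem the trace of $U^{\pm(i)}$ lies in $H^{\mu-i}(\partial\Omega)$ and $\partial_2\varphi^{(i_2)}\in H^{\mu-1/2-i_2}(\partial\Omega)$; since $\mu\ge3$ and $i_1+i_2\le j-1$, in each product one factor has Sobolev index strictly above $1/2$ (half the dimension of $\partial\Omega$), so the Sobolev multiplication estimate on $\mathbb T$ places the product in $H^{\mu+1/2-j}(\partial\Omega)$. With $\varphi^{(j)}$ in hand, and noting that $\Psi^{\pm(i)}=\chi(\pm x_1)\varphi^{(i)}$ belongs to $H^{\mu+1/2-i}(\Omega)$ because $\chi\in C_0^\infty(\mathbb R)$, I would define $U^{\pm(j)}$ by applying $\partial_t^{j-1}$ to the resolved interior system \eqref{56} and evaluating at $t=0$ --- which is legitimate since $A_0(U^{\pm(0)}+\bar U^\pm)$ is invertible by the hyperbolicity assumption \eqref{5.1'}.

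Expanding $\partial_t^{j-1}$ of the right-hand side of \eqref{56} by the Leibniz rule and the chain rule for the matrix functions $A_0^{-1}$, $\widetilde A_1$, $A_2$ (which depend on $U^\pm$ pointwise and on $\varphi$ through $\Psi^\pm,\partial_t\Psi^\pm,\partial_2\Psi^\pm$) yields a finite sum of terms, each a product of: a smooth function of $U^{\pm(0)}+\bar U^\pm$; several factors of the form $U^{\pm(a)}$, $\partial_2\varphi^{(a)}$ or $\varphi^{(a+1)}$ with $a\le j-1$; and exactly one factor $\partial_1U^{\pm(b)}$ or $\partial_2U^{\pm(b)}$ with $b\le j-1$. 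The only operation that spends a spatial derivative is this last one, so that factor lies in $H^{\mu-1/2-b}(\Omega)\subset H^{\mu+1/2-j}(\Omega)$, while every other factor lies in $H^{\mu+1/2-a}(\Omega)$ with $a\le j-1$; a multi-factor Sobolev multiplication estimate together with the Moser composition inequality (see \eqref{c40}, \eqref{c41}) then shows that the whole product lies in $H^{\mu+1/2-j}(\Omega)$. The check that the regularity budget closes is precisely where $\mu\ge3$ enters, so that $\mu+1/2-j\ge1/2\ge0$ and the smooth coefficient function embeds into $W^1_\infty(\Omega)$. This proves $U^{\pm(j)}\in H^{\mu+1/2-j}(\Omega)$ and completes the induction.

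For the quantitative bound \eqref{59} I would run the same expansions keeping track of norms: the Moser-type inequalities distribute the norms so that each term is controlled by a product of one ``high'' Sobolev norm $\|U^{\pm(i)}\|_{H^{\mu+1/2-i}(\Omega)}$ or $\|\varphi^{(i)}\|_{H^{\mu+1/2-i}(\partial\Omega)}$ ($i<j$) and $L_\infty$-type norms of the remaining factors, the latter ultimately bounded by $\|U_0^\pm\|_{W^1_\infty(\Omega)}$ and $\|\varphi_0\|_{W^1_\infty(\partial\Omega)}$ and powers thereof through the same induction. Hence $\|U^{\pm(j)}\|_{H^{\mu+1/2-j}}+\|\varphi^{(j)}\|_{H^{\mu+1/2-j}}$ is bounded, linearly, by the sum of the lower-order norms, with a constant depending only on $\mu$, $\|U_0^\pm\|_{W^1_\infty}$ and $\|\varphi_0\|_{W^1_\infty}$; summing these inequalities over $j=1,\dots,\mu$ and absorbing the lower-order terms yields \eqref{59} with $M_0$ as in \eqref{60}.

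The routine but delicate part is the derivative counting in the second paragraph: one must verify that in every term of the Leibniz--chain-rule expansion the combined order of the time and (single) space derivatives falling on the unknowns never exceeds the budget allowed by the target space $H^{\mu+1/2-j}(\Omega)$. This works because the coefficient matrices $A_0^{-1}$, $\widetilde A_1$, $A_2$ depend on $U^\pm$ through its values only, not its derivatives, so each time-differentiation costs exactly one derivative and is matched against the corresponding drop in the Sobolev index; this is also why it suffices to carry a single spatial derivative on a single factor of each term. The lemma is the analogue of lemma~4.2.1 in \cite{Met}, lemma~19 in \cite{ST} and lemma~4.1 in \cite{Tcpam}, and the argument parallels theirs.
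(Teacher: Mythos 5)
Your proof is correct and takes the same route the paper intends: the paper dismisses this lemma as ``almost evident and based on the multiplicative properties of Sobolev spaces,'' and your induction on $j$ --- computing $\varphi^{(j)}$ from the differentiated kinematic condition, then $U^{\pm(j)}$ from the differentiated resolved system, with the trace theorem and Moser/bilinear Sobolev product estimates closing the derivative count --- is precisely the elaboration of that remark.
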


The proof is almost evident and based on the multiplicative properties of Sobolev spaces.

\begin{definition}
Let $\mu\in\mathbb{N}$, $\mu \geq 3$. The initial data $((U_0^+,U_0^-),\varphi_0)\in H^{\mu +1/2}(\Omega)\times H^{\mu +1/2}(\partial\Omega )$ are said to be compatible up to order $\mu$ when $((U^{+(j)},U^{-(j)}), \varphi^{(j)})$ satisfy
\begin{equation}
\bigl[p^{(j)}\bigr]=0,\quad \bigl[v^{(j)}\bigr]=0, \quad\bigl[H_{\tau}^{(j)}\bigr]=0
\label{61}
\end{equation}
for $j=0,\ldots ,\mu$.
\label{d1}
\end{definition}

\begin{lemma}
The compatibility conditions \eqref{61} imply
\begin{equation}
\bigl[H_N^{ (j)}\bigr]=0,
\label{61'}
\end{equation}
i.e.,
\begin{equation}
\bigl[H^{(j)}\bigr]=0
\label{61"}
\end{equation}
for $j=0,\ldots ,\mu$. Moreover, \eqref{61} imply that the initial data satisfy the jump condition \eqref{jc2}:
\begin{equation}
\bigl[\partial_1{v}^{(0)}\bigr]=0.
\label{jc1a}
\end{equation}
\label{l4.2}
\end{lemma}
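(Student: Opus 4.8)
We prove the two statements separately, each being the formal ``at $t=0$'' counterpart of an identity that holds for genuine solutions (Propositions~\ref{p1} and~\ref{p2}). Recall that the traces $U^{\pm(j)},\varphi^{(j)}$ are produced by formally differentiating \eqref{56} and \eqref{57}; hence any identity that is an algebraic consequence of the interior system \eqref{11.1} and of the boundary relation \eqref{57} holds formally at $t=0$ up to order $\mu$. We use freely that $\partial_2\Psi^{\pm(i)}|_{x_1=0}=\partial_2\varphi^{(i)}$, that \eqref{57} gives $v_N^{\pm}=\partial_t\Psi^{\pm}$ on $x_1=0$ to all orders at $t=0$ (so in particular $v_N^{+(j)}|_{x_1=0}=v_N^{-(j)}|_{x_1=0}=\varphi^{(j+1)}$ and $[v_N^{(j)}]=[v_2^{(j)}]=0$), and that the initial data of Theorem~\ref{t1} satisfy \eqref{15}, which is \eqref{61'} at $j=0$.

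\textbf{Proof of \eqref{61'} and \eqref{61"}.} We argue by induction on $j$, the base case $j=0$ being \eqref{15}. Dotting the fourth and fifth equations of \eqref{11.1} with $(1,-\partial_2\Psi^\pm)$ and restricting to $x_1=0$, one obtains --- exactly as \eqref{eqH} was derived for the linearized problem, the $\partial_1$-terms dropping out by the structure \eqref{A1tilde2D} (which holds for the traces because $v_N^\pm=\partial_t\Psi^\pm$ on $x_1=0$ to all orders at $t=0$) --- a transport relation $\partial_0 H_N^\pm|_{x_1=0}=\mathcal G^\pm$, in which $\mathcal G^\pm$ depends only on $v^\pm,H_N^\pm,H_\tau^\pm,\varphi$ and their $\partial_2$-derivatives on $x_1=0$ (the magnetic equations contain neither $p$ nor $S$). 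Subtracting the two signs and using $[v]=[H_\tau]=0$ on $x_1=0$ --- so that $[H_2]$ is a $\varphi$-dependent multiple of $[H_N]$ and all remaining coefficients agree on the two sides --- we get a homogeneous relation $\partial_t[H_N]=\tilde a\,[H_N]+\tilde b\,\partial_2[H_N]$ with coefficients built from the common boundary data. Differentiating $j-1$ times in $t$, evaluating at $t=0$ and using the Leibniz rule, $[H_N^{(j)}]$ becomes a linear combination of $[H_N^{(i)}]$ and $\partial_2[H_N^{(i)}]$ with $i\le j-1$, all zero by the inductive hypothesis; hence $[H_N^{(j)}]=0$. Finally \eqref{61"} follows: for each $j\le\mu$ the identities (recalled just before the lemma) relating $([H_1^{(j)}],[H_2^{(j)}])$ to $([H_N^{(j)}],[H_\tau^{(j)}])$ form, modulo lower-order terms that vanish by the same induction, an invertible $2\times2$ system of determinant $1+(\partial_2\varphi^{(0)})^2$, so $[H_1^{(j)}]=[H_2^{(j)}]=0$.

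\textbf{Proof of \eqref{jc1a}.} On $x_1=0$ one has $\div v^\pm=\pm\,\partial_1 v_N^\pm+\partial_2 v_2^\pm$, and the material derivative coincides with $\partial_0=\partial_t+v_2^+\partial_2$ (using $v_N^+|_{x_1=0}=\partial_t\varphi$); hence the pressure equation of \eqref{11.1}, restricted to $x_1=0$, reads $\partial_0 p^\pm=-\gamma(p^\pm+\bar p)(\pm\,\partial_1 v_N^\pm+\partial_2 v_2^\pm)$. Evaluating at $t=0$ and subtracting the two signs, the left side gives $[p^{(1)}]+v_2^{+(0)}\partial_2[p^{(0)}]=0$, while on the right $p^{+(0)}+\bar p=p^{-(0)}+\bar p=:P>0$ by hyperbolicity and $\partial_2[v_2^{(0)}]=0$, so $0=-\gamma P\,[\partial_1 v_N^{(0)}]$, i.e.\ $[\partial_1 v_N^{(0)}]=0$. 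For the remaining component, write \eqref{11.1} in the variables $V^\pm$ of \eqref{nun} as in \eqref{c29} (with zero source). By \eqref{c30} the boundary matrix $\mathcal A_1^\pm$ contributes to the fifth ($H_\tau$-)equation, on $x_1=0$, only the term $\pm(\widehat H_2^\pm\partial_1 v_N^\pm-\widehat H_N^\pm\partial_1 v_2^\pm)$; moreover the block structure of $A_0,A_1,A_2$ shows that this equation carries no $S$-derivative and no $S$ at zeroth order. Evaluating it at $t=0$ and subtracting the signs, all terms cancel by the zeroth- and first-order compatibility conditions together with $[H_N^{(0)}]=0$ (hence $[H_2^{(0)}]=0$), except $H_2^{(0)}[\partial_1 v_N^{(0)}]-H_N^{(0)}[\partial_1 v_2^{(0)}]$, which is thus zero. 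Since $[\partial_1 v_N^{(0)}]=0$ and $|H_N^{(0)}|\ge\kappa>0$ by \eqref{mf.1}, we conclude $[\partial_1 v_2^{(0)}]=0$, and together with $[\partial_1 v_N^{(0)}]=0$ this is precisely $[\partial_1 v^{(0)}]=0$.

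\textbf{Main difficulty.} The only delicate point is verifying that each recursion is \emph{closed}: the boundary restriction of the $H_N$-equation must be free of normal derivatives, and the $H_\tau$-equation must be free of the entropy (whose jump does not vanish), so that after taking jumps only already-vanishing quantities remain. This is exactly where the explicit forms \eqref{A1tilde2D}, \eqref{c30} and the polytropic/block structure of the coefficient matrices enter, along with careful bookkeeping of the Leibniz expansions of the formal traces $U^{\pm(j)}$, $\varphi^{(j)}$.
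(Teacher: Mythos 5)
Your proposal is correct and follows essentially the same route as the paper. The induction for $[H_N^{(j)}]=0$ starting from the boundary restriction of the magnetic equations is the paper's argument (the paper writes the discrete Leibniz identity \eqref{A2a'} directly, you derive a homogeneous transport ODE for $[H_N]$ and differentiate, which is the same thing); your explicit $2\times 2$ inversion for \eqref{61"} is a slightly more careful version of the paper's one-line deduction; and for \eqref{jc1a} both proofs first get $[\partial_1 v_N^{(0)}]=0$ from the jump of the pressure equation and then close through the boundary restriction of the magnetic equations (you use the scalar $H_\tau$ component in the $V$-variables together with the chain rule for $v_1=v_N+v_2\,\partial_2\Psi$, the paper writes the equivalent vector identity $H_N^{+(0)}[\partial_1 v^{(0)}]=H^{+(0)}[\partial_1 v_N^{(0)}]$ and reads off both components). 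One minor caveat: when invoking \eqref{c29}--\eqref{c30} you are borrowing the linearized boundary-matrix computation as a proxy for the nonlinear system written in the $V$-variables; this is legitimate here because \eqref{A1tilde2D} (and hence \eqref{c30}) depends only on the boundary condition $\partial_t\varphi=v_N^+$, which the formal traces satisfy, but it deserves the explicit remark you append at the end.
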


\begin{proof}
We prove \eqref{61'} by finite induction. Condition \eqref{61'} for $j=0$ holds because it is just one of the assumptions of Theorem \ref{t1} for the initial data. Considering systems \eqref{11.1} on the boundary $x_1=0$, using the last boundary condition in \eqref{12'}, and omitting detailed calculations (see also appendix A in \cite{T09}), we obtain
\[
\partial_t H_{N}^{\pm}|_{x_1=0}= -\left.\left(v_2^{\pm}\partial_2H_{N}^{\pm}+H_{\rm N}^{\pm}\partial_2v_2^{\pm}\right)\right|_{x_1=0} ,
\]
and then
\begin{equation}
\partial_t^{j+1} H_{N}^{\pm}|_{x_1=0}= -\sum\limits_{i=0}^{j}C_j^i\left.\left( (\partial_t^{j-i}v_2^{\pm})\,\partial_2\partial_t^iH_{N}^{\pm}
+(\partial_2\partial_t^{j-i}v_2^{\pm})\,\partial_t^{i}H_{\rm N}^{\pm}\right)\right|_{x_1=0}.
\label{A2a}
\end{equation}

That is, for $U^{\pm(j)}$ we can deduce the following counterpart of \eqref{A2a}:
\begin{equation}
H_{N}^{\pm (j+1)}\bigl|_{x_1=0}= -\sum\limits_{i=0}^{j}C_j^i\left.\left( v_2^{\pm (j-i)}\partial_2H_{N}^{\pm (i)}
+H_{\rm N}^{\pm (i)}\partial_2v_2^{\pm (j-i)}\right)\right|_{x_1=0}.
\label{A2a'}
\end{equation}
Let \eqref{61'} be satisfied for  $j=0,\ldots ,k$. Then, using $\bigl[H_{N}^{\pm (i)}\bigr]=0$ and the compatibility condition $\bigl[v_2^{(i)}\bigr]=0$ for $i=0,\ldots ,k$, from \eqref{A2a'} for $j=k$ we get $\bigl[H_{N}^{\pm (k+1)}\bigr]=0$. This completes the proof of \eqref{61'} which, together with the last compatibility condition in \eqref{61}, implies \eqref{61"}.

As in the proof of Proposition \ref{p1} (see \cite{Cont1}), we consider the equations for $p^+$ and $p^-$ contained in \eqref{11.1}, but now we consider them not only on the boundary $x_1=0$ but also at $t=0$. Taking into account \eqref{58}, we obtain
\begin{equation}
\frac{1}{\gamma (\bar{p}+p^{\pm (0)})}\bigl( p^{\pm (1)} +v_2^{\pm (0)}\partial_2p^{\pm (0)}\bigr)\pm \partial_1v_N^{\pm (0)} +\partial_2v_2^{\pm (0)}=0\quad\mbox{on}\ x_1=0,
\label{p(1)}
\end{equation}
where $v_N^{\pm (0)}=v_1^{\pm (0)}-v_2^{\pm (0)}\partial_2\Psi^{\pm (0)}$.
It follows from \eqref{61} and \eqref{p(1)} that
\begin{equation}
\bigl[ \partial_1v_N^{(0)} \bigr]=0
\label{vN(1)}
\end{equation}
(recall that the jump of a normal derivative is defined in \eqref{norm_jump}).

Considering the fourth and fifth equations  contained in \eqref{11.1} at $x_1=0$ and $t=0$, using \eqref{61} and \eqref{61"}, and passing then to the jump, we get
\begin{equation}
H_N^{+(0)}\bigl[\partial_1v^{(0)}\bigr]=H^{+(0)}\bigl[\partial_1v_N^{(0)}\bigr] \quad\mbox{on}\ x_1=0.
\label{2017}
\end{equation}
In view of \eqref{vN(1)} and assumption \eqref{mf.1} on the initial data from Theorem \ref{t1} ($H_{N|x_1=0}^{+ (0)}\neq 0$), this gives \eqref{jc1a}.
\end{proof}

\subsection{Construction of the approximate solution to problem (\ref{11.1})--(\ref{13.1})}

\begin{lemma}
Suppose the initial data \eqref{13.1} are compatible up to order $\mu$ and satisfy all of the assumptions of Theorem \ref{t1} (i.e.,
\eqref{5.1'} and \eqref{14} for all $x\in\Omega$ and \eqref{mf.1}, \eqref{RT1} and \eqref{15} for all $x\in \partial\Omega$). Then there exists a vector-function $((U^{a+},U^{a-}),\varphi^a)\in H^{\mu +1}(\Omega_T)\times H^{\mu +1}(\partial\Omega_T)$
that is further called the approximate solution to problem \eqref{11.1}--\eqref{13.1} such that
\begin{equation}
\partial_t^j\mathbb{L}(U^{a\pm},\Psi^{a\pm} )|_{t=0}=0 \quad\mbox{in}\ \Omega\ \mbox{for}\ j=0,\ldots , \mu -1,
\label{62}
\end{equation}
and it satisfies the boundary conditions \eqref{12.1} and the initial data \eqref{13.1}:
\begin{equation}
\mathbb{B}(U^{a+},U^{a-},\varphi^a )=0\quad\mbox{on}\ \partial\Omega_T,\label{12.1"}
\end{equation}
\begin{equation}
U^{a+}|_{t=0}=U^+_0,\quad U^{a-}|_{t=0}=U^-_0\quad\mbox{in}\ \Omega,
\qquad \varphi^a |_{t=0}=\varphi_0\quad \mbox{on}\ \partial\Omega,\label{13.1"}
\end{equation}
where ${\Psi}^{a\pm} =\chi (\pm x_1)\varphi^a$. The approximate solution  obeys the estimate
\begin{equation}
\|U^{a+}\|_{H^{\mu +1}(\Omega_T)}+\|U^{a-}\|_{H^{\mu +1}(\Omega_T)}+\|\varphi^a\|_{H^{\mu +1}(\partial\Omega_T)}\leq C_1(M_0)
\label{63}
\end{equation}
where $C_1=C_1(M_0)>0$ is a constant depending on $M_0$ (see \eqref{60}). Moreover, the approximate solution satisfies the boundary constraint \eqref{15} and the jump condition \eqref{jc2}. It also satisfies the hyperbolicity conditions \eqref{5.1'} in $\Omega_T^+$ and requirement \eqref{mf.1} together with the Rayleigh-Taylor sign condition \eqref{RT1} on $\partial\Omega_T^+$.
\label{l2}
\end{lemma}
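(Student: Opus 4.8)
The plan is to construct $((U^{a+},U^{a-}),\varphi^a)$ by lifting the formal Taylor data at $t=0$ and then correcting it near $x_1=0$ so that the boundary relations hold exactly. First I apply Lemma \ref{l4.1} with the $\mu$ of the hypotheses: this produces the traces $U^{\pm(j)}\in H^{\mu+1/2-j}(\Omega)$ and $\varphi^{(j)}\in H^{\mu+1/2-j}(\partial\Omega)$, $j=0,\dots,\mu$, defined recursively from \eqref{56}--\eqref{57}, together with the bound \eqref{59} in terms of $M_0$ from \eqref{60}. By the inverse trace theorem I choose $\varphi^a\in H^{\mu+1}(\partial\Omega_T)$ and $U^{a\pm}\in H^{\mu+1}(\Omega_T)$, vanishing for large negative $t$, with $\partial_t^j\varphi^a|_{t=0}=\varphi^{(j)}$ and $\partial_t^jU^{a\pm}|_{t=0}=U^{\pm(j)}$ for $j=0,\dots,\mu$, whose $H^{\mu+1}$-norms are controlled, via \eqref{59}, by a constant depending only on $M_0$. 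Setting $\Psi^{a\pm}=\chi(\pm x_1)\varphi^a$, the prescribed $t=0$ traces give \eqref{13.1"}; and since the $U^{\pm(j)},\varphi^{(j)}$ were defined precisely so as to annihilate the time-differentiated interior equations, we obtain $\partial_t^j\mathbb{L}(U^{a\pm},\Psi^{a\pm})|_{t=0}=0$ for $j=0,\dots,\mu-1$, i.e.\ \eqref{62}.

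The next, and main, step is to make the boundary relations exact without spoiling \eqref{13.1"} or \eqref{62}. Introduce the defects $D_p=[p^a]|_{x_1=0}$, $D_H=[H^a]|_{x_1=0}$, $D_v=[v^a]|_{x_1=0}$, $D_\varphi=(\partial_t\varphi^a-v_N^{a+})|_{x_1=0}$, $D_n=[\partial_1v^a]|_{x_1=0}$ (the last jump understood in the sense of \eqref{norm_jump}), measuring the failure of the boundary conditions \eqref{12'}, of $[H_N]=0$, and of the jump condition \eqref{jc2}. By the compatibility conditions \eqref{61}, Lemma \ref{l4.2}, and its higher-order analogue — obtained by time-differentiating the relevant equations of \eqref{11.1} at $x_1=0,\,t=0$ and using \eqref{p(1)} exactly as in the proof of Lemma \ref{l4.2} — each defect is flat at $t=0$ to the order needed so that its product with $\chi(x_1)$, resp.\ with $x_1\chi(x_1)$, has vanishing $t$-traces to the order preserving the Cauchy data and \eqref{62}. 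Performing successively (each step recomputing the relevant defect) the corrections $p^{a\pm}\mapsto p^{a\pm}\mp\tfrac12\chi(x_1)D_p$, $H^{a\pm}\mapsto H^{a\pm}\mp\tfrac12\chi(x_1)D_H$, $v^{a\pm}\mapsto v^{a\pm}\mp\tfrac12\chi(x_1)D_v$, $v_1^{a\pm}\mapsto v_1^{a\pm}+\chi(x_1)D_\varphi$, and $v^{a\pm}\mapsto v^{a\pm}\mp\tfrac12 x_1\chi(x_1)D_n$ removes all defects, each later correction leaving the previously fixed ones untouched since it vanishes at $x_1=0$ in value, resp.\ in $\partial_1$-value. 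Afterwards \eqref{12.1"} holds (hence \eqref{12'}); since $\partial_2\Psi^{a+}=\partial_2\Psi^{a-}$ at $x_1=0$, $D_H=0$ is equivalent to $[H_\tau^a]=[H_N^a]=0$, giving the boundary constraint \eqref{15}; and $D_n=0$ is the jump condition \eqref{jc2} on $\partial\Omega_T$. All corrections are bounded in $H^{\mu+1}$ by the Moser/multiplicative inequalities, so with the lifting bounds we get \eqref{63}.

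Finally, the strict constraints survive for short time. Since $\mu\ge3$, Sobolev embedding gives $U^{a\pm}\in C([0,T];L_\infty(\Omega))$, $\partial_1U^{a\pm}|_{x_1=0}\in C([0,T];L_\infty(\partial\Omega))$ and $\varphi^a\in C([0,T];L_\infty(\partial\Omega))$. At $t=0$ the data obey \eqref{5.1'}, \eqref{mf.1}, \eqref{RT1} with the strict constants assumed in Theorem \ref{t1} (and $\|\varphi_0\|_{L_\infty}\le1/2$), so by continuity in $t$ these inequalities persist on $\Omega_T^+$, resp.\ $\partial\Omega_T^+$ (with the constants at worst halved, and $\|\varphi^a\|_{L_\infty(\partial\Omega_T)}<1$) provided $T$ is small enough; this is harmless for the Nash-Moser scheme.

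I expect the genuine obstacle to be the bookkeeping of the second step: one must verify that each of $D_p,D_H,D_v,D_\varphi,D_n$ vanishes at $t=0$ together with exactly enough of its time-derivatives — which is precisely what compatibility up to order $\mu$, Lemma \ref{l4.2} and its iterated version supply — so that the $\chi(x_1)$-corrections near the boundary destroy neither the prescribed Cauchy data \eqref{13.1"} nor the flatness \eqref{62} of $\mathbb{L}$ at $t=0$. The lifting itself, the Moser estimates behind \eqref{63}, and the short-time continuity argument are routine.
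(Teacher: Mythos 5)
Your proof is correct and takes essentially the same route as the paper: extract the Taylor traces via Lemma \ref{l4.1}, lift them into $H^{\mu+1}(\Omega_T)\times H^{\mu+1}(\partial\Omega_T)$ and correct near $x_1=0$ so that \eqref{12.1a} holds exactly, deduce \eqref{63} from \eqref{59} and the continuity of the lifting, and obtain \eqref{5.1'}, \eqref{mf.1}, \eqref{RT1} by continuity for small $T$. The paper delegates the boundary corrections to the reference \cite{Maj} and records only the zeroth-order flatness \eqref{jc1a}, whereas you spell out the cut-off corrections explicitly and, somewhat more carefully than the paper, flag that preserving \eqref{13.1"} and \eqref{62} also requires the higher-order vanishing $[\partial_1 v^{(j)}]=0$ for $j\geq 1$, which indeed follows by time-differentiating the interior equations at $x_1=0$, $t=0$ exactly as in the proof of Lemma \ref{l4.2}.
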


\begin{proof}
Given the initial data, let us take $U^{\pm (j)}$ and $\varphi^{(j)}$ as in Lemma \ref{l4.1}. We first take $U^{a\pm}=(p^{a\pm}, v^{a\pm},H^{a\pm},S^{a\pm})\in H^{\mu +1}(\mathbb{R}\times\Omega)$ and $\varphi^a \in H^{\mu +1}(\mathbb{R}\times\partial\Omega)$ such that
\begin{align*}
\partial_t^jU^{a\pm}|_{t=0} &=U^{\pm (j)}\in H^{\mu +1/2-j}(\Omega),   \\
\partial_t^j\varphi^a|_{t=0} &=\varphi^{(j)}\in H^{\mu +1/2-j}(\partial\Omega )
\end{align*}
for $j=0,\ldots ,\mu $.  We can choose $U^{a\pm}$ and $\varphi^a$ that they satisfy the boundary conditions \eqref{12.1}/\eqref{12.1"} together with the boundary constraint \eqref{15} and the jump condition \eqref{jc2}:
\begin{equation}
\bigl[p^{a}\bigr]=0,\quad \bigl[v^{a}\bigr]=0,\quad \bigl[H^{a}\bigr]=0, \quad \partial_t\varphi^a=v_{N}^{a+}, \quad
\bigl[\partial_1v^{a}\bigr]=0 \quad \mbox{on}\ \mathbb{R}\times\partial\Omega ,
\label{12.1a}
\end{equation}
where $v_{N}^{a\pm}=v_1^{a\pm}-v_2^{a\pm}\partial_2\Psi^{a\pm}$. Such a lifting is possible thanks to the compatibility conditions  \eqref{61} (see also \eqref{58}) and their consequence \eqref{jc1a} (see, e.g., \cite{Maj}). Clearly, we can restrict such constructed functions to the time interval $(-\infty, T]$, i.e., $U^{a\pm}\in H^{\mu +1}(\Omega_T)$ and $\varphi^a \in H^{\mu +1}(\partial\Omega_T)$

That is, we have already proved that the approximate solution satisfies \eqref{12.1"}, \eqref{13.1"}, \eqref{15} and \eqref{jc2}. Applying Sobolev's embeddings, we rewrite estimate \eqref{59} as
\begin{equation}
\sum_{j=1}^{\mu}\left( \bigl\|U^{+(j)}\bigr\|_{H^{\mu +1/2 -j}(\Omega )}+\bigl\|U^{-(j)}\bigr\|_{H^{\mu +1/2-j}(\Omega )}+\bigl\|\varphi^{(j)}\bigr\|_{H^{\mu +1/2-j}(\partial\Omega)} \right)
\leq C(M_0),
\label{64}
\end{equation}
where $C=C(M_0)>0$ is a constant depending on $M_0$. Estimate \eqref{63} follows from \eqref{64} and the continuity of the lifting operators from the hyperplane $t=0$ to $\mathbb{R}\times\Omega$. Conditions \eqref{62} hold thanks to the properties of $U^{\pm (j)}$ and $\varphi^{(j)}$ given by Lemma \ref{l4.1}. At last, since, thanks to the assumptions on the initial data, $U^{a\pm}$, $\varphi^a$ satisfy
the hyperbolicity conditions  \eqref{5.1'}, requirement \eqref{mf.1} and  the Rayleigh-Taylor sign condition \eqref{RT1} at the initial time $t=0$, in the above procedure we can choose $U^{a\pm}$, $\varphi^a$ that satisfy \eqref{5.1'}, \eqref{mf.1} and   \eqref{RT1} for all $t\in [0,T]$.
\end{proof}

Without loss of generality we can suppose that
\begin{equation}
\|U_0^+\|_{H^{\mu +1/2}(\Omega)}+\|U_0^-\|_{H^{\mu +1/2}(\Omega)}+\|\varphi_0\|_{H^{\mu +1/2}(\partial\Omega)}\leq 1,\quad
\|\varphi_0\|_{H^{\mu +1/2}(\partial\Omega )}\leq 1/2.
\label{65}
\end{equation}
Then for a sufficiently short time interval $[0,T]$ the smooth solution whose existence we are going to prove satisfies
$\|\varphi\|_{L_{\infty}([0,T]\times\partial\Omega)}\leq 1$, which implies $\partial_1\Phi^+\geq 1/2$ and $\partial_1\Phi^-\leq -1/2$
(recall that $\|\chi'\|_{L_{\infty}(\mathbb{R})}<1/2$, see Section \ref{s1}). Let $\mu$ is an integer number that will appear in the regularity assumption for the initial data in the existence theorem for problem \eqref{11.1}--\eqref{13.1}. Running ahead, we take $\mu=m+8$, with $m\geq 6$ (see Theorem \ref{t1}). In the end of the next section  we will see that this choice is suitable. Taking into account \eqref{65}, we rewrite \eqref{63} as
\begin{equation}
\|U^{a+}\|_{H^{m +9}(\Omega_T)}+\|U^{a-}\|_{H^{m +9}(\Omega_T)}+\|\varphi^a\|_{H^{m +9}(\partial\Omega_T)}\leq C_*,
\label{66}
\end{equation}
where $C_*=C_1(1)$.

Let us introduce
\begin{equation}
f^{a\pm}:=\left\{ \begin{array}{lr}
- \mathbb{L}(U^{a\pm},{\Psi}^{a\pm} ) & \quad \mbox{for}\ t>0,\\
0 & \ \mbox{for}\ t<0.\end{array}\right.
\label{67}
\end{equation}
Since $((U^{a+},U^{a-}),\varphi^a)\in H^{m +9}(\Omega_T)\times H^{m +9}(\partial\Omega_T )$, taking into account \eqref{62}, we get $f^{a\pm} \in H^{m +8}(\Omega_T)$ and
\begin{equation}
\|f^{a\pm}\|_{H^{m + 8}(\Omega_T)}\leq \delta_0 (T),
\label{68}
\end{equation}
where the constant $\delta_0(T)\rightarrow 0$ as $T\rightarrow 0$. To prove estimate \eqref{68} we use the Moser-type and embedding inequalities and the fact that $f^{a+}$ and $f^{a-}$ vanish in the past.

Given the approximate solution defined in Lemma \ref{l2}, $((U^+,U^-) ,\varphi)= ((U^{a+},U^{a-}),\varphi^a)+ ((\widetilde{U}^+, \widetilde{U}^-) ,\tilde{\varphi})$ is a solution of the original problem \eqref{11.1}--\eqref{13.1}
on $[0,T]\times\Omega$ if $((\widetilde{U}^+, \widetilde{U}^-) ,\tilde{\varphi})$ satisfies the following problem on $\Omega_T$ (tildes are dropped):
\begin{align}
 \mathcal{ L}(U^{\pm} ,{\Psi}^{\pm})=f^{a\pm} &\quad\mbox{in}\ \Omega_T, \label{69'}\\[3pt]
 \mathcal{ B}(U^+,U^- ,\varphi )=0 &\quad\mbox{on}\ \partial\Omega_T,\label{70'}
\\[3pt]
 ((U^+,U^-),\varphi )=0 &\quad  \mbox{for}\ t<0,\label{71'}
\end{align}
where
\begin{align*}
 &\mathcal{ L}(U^{\pm} ,{\Psi}^{\pm} ):=\mathbb{L}(U^{a\pm} +U^{\pm}  ,{\Psi}^{a\pm}+{\Psi}^{\pm} ) -
\mathbb{L}(U^{a\pm} ,{\Psi}^{a\pm}),\\
 &\mathcal{ B}(U^+,U^- ,\varphi):=\mathbb{B}(U^{a+} +U^+ ,U^{a-} +U^-,\varphi^a+\varphi ).
\end{align*}
Below it will be convenient to use the  notations
\begin{equation}
U:=\begin{pmatrix} U^+ \\ U^- \end{pmatrix},\quad \mathcal{ L}(U ,{\Psi}):= \begin{pmatrix} \mathcal{ L}(U^+ ,{\Psi}^+) \\ \mathcal{ L}(U^- ,{\Psi}^-)\end{pmatrix},\quad f^a:=\begin{pmatrix} f^{a+} \\ f^{a-} \end{pmatrix}.
\label{notations}
\end{equation}
With these notations problem \eqref{69'}--\eqref{71'} reads:
\begin{align}
 \mathcal{ L}(U ,{\Psi})=f^{a} &\quad\mbox{in}\ \Omega_T, \label{69}\\[3pt]
\mathcal{ B}(U ,\varphi )=0 &\quad\mbox{on}\ \partial\Omega_T,\label{70}
\\[3pt]
(U,\varphi )=0 &\quad  \mbox{for}\ t<0.\label{71}
\end{align}
From now on we concentrate on the proof of the existence of solutions to problem \eqref{69}--\eqref{71}.

\section{Nash-Moser iteration}
\label{s5}

\setcounter{subsubsection}{0}

\subsection{Iteration scheme for solving problem (\ref{69})--(\ref{71})}

We solve problem \eqref{69}--\eqref{71} by a suitable Nash-Moser-type iteration scheme. In short, this scheme is a modified Newton's scheme, and at each Nash-Moser iteration step we smooth the coefficient $u_n$ of a corresponding linear problem for $\delta u_n =u_{n+1}-u_n$. Errors of a classical Nash-Moser iteration are the ``quadratic'' error of Newton's scheme and the ``substitution'' error caused by the application of smoothing operators $S_{\theta}$  (see, e.g., \cite{Al,Herm,Sec16} and references therein). As, for example, in \cite{ST,T09,Tcpam}, in our case the Nash-Moser procedure is not completely standard and we have the additional error caused by the introduction of an intermediate (or modified) state $u_{n+1/2}$ satisfying some constraints. In our case, these constraints are  \eqref{a5}--\eqref{jc1'}, and the Rayleigh-Taylor sign condition \eqref{RTL}, which were required to be fulfilled for the basic state \eqref{a21}. Also, the additional error is caused by dropping the zeroth-order terms in $\Psi^{\pm}$ in the  linearized interior equations written in terms of the ``good unknown'' (see \eqref{24}).

The tame a priori estimate is the main tool for proving the convergence of the Nash-Moser iteration scheme and so it specifies main features of the scheme for a concrete problem. Since our tame a priori estimate \eqref{38} for the linearized problem is internally the same as that in \cite{Tcpam}, the realization of the Nash-Moser procedure for problem \eqref{69}--\eqref{71} below is almost the same as in \cite{Tcpam} for the free boundary problem for the compressible Euler equations with a vacuum boundary condition. Therefore, we may be very brief here and just refer to \cite{Tcpam} where it is possible and where it is convenient for the reader. The only place which requires special attention is the construction of the modified state $(U_{n+1/2},\varphi_{n+1/2})$ because our constraints on it (especially, \eqref{jc1'}) are more involved as those in  \cite{Tcpam}.

Now, following \cite{Tcpam}, we describe the iteration scheme for problem \eqref{69}--\eqref{71}. We first list the important properties of smoothing operators \cite{Al,Herm,Sec16}.

\begin{proposition}
There exists such a family $\{S_{\theta}\}_{\theta\geq 1}$ of smoothing operators in $H^s(\Omega_T)$ acting on the class of functions vanishing in the past that
\begin{align}
  \|   S_{\theta}u\|_{H^{\beta}(\Omega_T) }\leq C\theta^{(\beta-\alpha )_+}\|u\|_{H^{\alpha}(\Omega_T) },\quad &    \alpha ,\beta \geq 0,
\label{72}\\
  \|  S_{\theta}u-u\|_{H^{\beta}(\Omega_T) }\leq C\theta^{\beta-\alpha }\|u\|_{H^{\alpha}(\Omega_T) },\quad & 0 \leq \beta \leq \alpha ,  \label{73}\\
  \Bigl\|  \frac{d}{d\theta}S_{\theta}u\Bigr\|_{H^{\beta}(\Omega_T) }\leq C\theta^{\beta-\alpha -1}\|u\|_{H^{\alpha}(\Omega_T) },\quad    & \alpha  ,\beta \geq 0, \label{74}
\end{align}
where $C>0$ is a constant, and $(\beta-\alpha )_+:=\max (0,\beta -\alpha )$.  Moreover, there is another family of smoothing operators (still denoted
$S_{\theta}$) acting on functions defined on the boundary $\partial\Omega_T$ and meeting properties \eqref{72}--\eqref{74} with the norms $\|\cdot \|_{H^{\alpha}(\partial\Omega_T)}$.
\label{p1a}
\end{proposition}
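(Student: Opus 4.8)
\emph{Plan.} This is a standard fact in the Nash--Moser framework, and I would only sketch the construction, following \cite{Al,Herm,Sec16}. The plan has three steps: (i) reduce to a product space without spatial boundary by a continuous extension operator that respects the past-vanishing condition; (ii) define the smoothing family as a Fourier multiplier (equivalently, a Littlewood--Paley truncation) and read off \eqref{72}--\eqref{74} from Plancherel's identity; (iii) modify the construction in the $t$-variable so that the support condition $t\ge 0$ is preserved. The boundary case is identical, with $\Omega$ replaced by $\mathbb{T}$.

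First, since the $x_2$-variable ranges over the torus $\mathbb{T}$ (no boundary) and $\Omega=\{x_1>0\}\times\mathbb{T}$, fix a continuous extension operator $E\colon H^s(\Omega_T)\to H^s(\mathbb{R}_t\times\mathbb{R}_{x_1}\times\mathbb{T})$, bounded for every $s\ge 0$, mapping functions that vanish for $t<0$ to functions that vanish for $t<0$: one takes Stein's (or Seeley's) half-space extension in $x_1$, which does not involve $t$, composed with an extension in $t$ from $\{t\le T\}$ carried out entirely to the right of $t=T$. Setting $S_\theta:=R\,\Sigma_\theta\,E$, with $R$ the restriction to $\Omega_T$ and $\{\Sigma_\theta\}_{\theta\ge1}$ a smoothing family on $\mathbb{R}^2\times\mathbb{T}$ that acts on functions vanishing for $t<0$ and satisfies \eqref{72}--\eqref{74}, the operators $S_\theta$ inherit all the required properties, since $E$, $R$ are bounded of every order and neither enlarges the set $\{t<0\}$ where the function vanishes.

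To construct $\Sigma_\theta$, write it as the composition of a multiplier acting only in $(x_1,x_2)$ and a smoothing in $t$. Pick $\chi\in\mathcal{S}$ with $\widehat\chi\in C_0^\infty$, $0\le\widehat\chi\le1$, $\widehat\chi\equiv1$ near the origin, and let the spatial factor act as the Fourier multiplier $\widehat\chi(\xi'/\theta)$ in $(x_1,x_2)$ (Fourier transform in $x_1$, Fourier series in $x_2$). For the time factor, convolve with the one-sided kernel $\theta\,\rho(\theta\,\cdot)$, where $\operatorname{supp}\rho\subset(0,\infty)$ and $\rho$ is chosen, by a Carleman-type construction, so that $\widehat\rho(0)=1$ and $\widehat\rho(\tau)-1$ vanishes to infinite order at $\tau=0$; this keeps $\operatorname{supp}_t(\Sigma_\theta u)\subset\operatorname{supp}_t u$. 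On the support of the composite symbol both $|\xi'|$ and $|\tau|$ are $\lesssim\theta$, whence $(1+|\tau|^2+|\xi'|^2)^{(\beta-\alpha)/2}\le C\theta^{(\beta-\alpha)_+}$; this gives \eqref{72} by Plancherel. For \eqref{73} one uses $1-\widehat\chi(\xi'/\theta)\widehat\rho(\tau/\theta)=(1-\widehat\chi(\xi'/\theta))+\widehat\chi(\xi'/\theta)(1-\widehat\rho(\tau/\theta))$: on the support of the first term $|\xi'|\ge c\theta$ gives the loss-free bound when $\beta\le\alpha$, while for the second term, splitting into $|\tau|\le\theta$ and $|\tau|>\theta$ and exploiting the infinite-order flatness of $\widehat\rho-1$ at $0$ on the first region gives $|\tau|^{\beta}|1-\widehat\rho(\tau/\theta)|\le C\theta^{\beta-\alpha}|\tau|^{\alpha}$. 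Finally \eqref{74} follows because $\tfrac{d}{d\theta}$ applied to either $\widehat\chi(\xi'/\theta)$ or $\widehat\rho(\tau/\theta)$ produces a symbol supported in an annulus of scale $\theta$ with an extra factor $\theta^{-1}$.

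The main obstacle—the only genuinely non-routine point—is the compatibility between the \emph{loss-free} estimate \eqref{73} and the preservation of vanishing in the past: this is exactly what forces the one-sided, infinitely-flat-at-the-origin kernel in the time variable, and it is the step I would treat most carefully; all the remaining estimates, as well as the boundary version (repeat with $\Omega$ replaced by $\mathbb{T}$: no spatial extension is needed, only the extension past $t=T$ together with the same causal time-smoothing and the Fourier-series truncation in $x_2$, yielding \eqref{72}--\eqref{74} with the norms $\|\cdot\|_{H^\alpha(\partial\Omega_T)}$), are routine.
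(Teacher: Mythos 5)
The paper gives no proof of Proposition~\ref{p1a}: it merely records the properties \eqref{72}--\eqref{74} and refers to \cite{Al,Herm,Sec16}, so there is no ``paper's proof'' to compare against. Your sketch is a sound reconstruction of the standard argument: extend from $\Omega_T$ to a boundaryless full-space setting with operators that commute with the past-vanishing condition (Stein/Seeley in $x_1$, a one-sided extension from $\{t\le T\}$), smooth by a Fourier multiplier, restrict back. You also correctly isolate the only delicate point, namely reconciling the loss-free error estimate \eqref{73} with causality in $t$.

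Two remarks on that delicate point and one small inaccuracy. First, your requirement---a Schwartz kernel $\rho$ with $\operatorname{supp}\rho\subset(0,\infty)$ whose Fourier transform has infinite-order contact with $1$ at $\tau=0$---is genuinely nontrivial and cannot be dismissed: by Paley--Wiener, a causal $\rho\ne0$ cannot have $\widehat\rho$ compactly supported, so $\widehat\rho$ cannot equal $1$ on a neighbourhood of $0$, and one is forced into a construction where all moments $\int t^k\rho\,dt$ vanish for $k\ge1$ while $\int\rho=1$; you rightly flag this as the step to treat most carefully. If one only needs \eqref{72}--\eqref{74} for $\alpha,\beta$ in a bounded range---which is all the paper ever uses---the difficulty disappears: a $\rho\in C_0^\infty(0,\infty)$ with finitely many vanishing higher moments suffices, obtained by solving a finite linear moment system with bump functions. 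Second, the sentence claiming that $\tfrac{d}{d\theta}$ applied to $\widehat\rho(\tau/\theta)$ gives ``a symbol supported in an annulus of scale $\theta$'' is literally false: since $\widehat\rho$ is not compactly supported, neither is $\widehat\rho'$, so the symbol $-\tau\theta^{-2}\widehat\rho'(\tau/\theta)$ is supported everywhere. The estimate \eqref{74} nevertheless holds, but one must argue from the combination of rapid decay of $\widehat\rho'$ at infinity and its infinite-order vanishing at $0$, splitting $|\tau|\lessgtr\theta$; it is not the compact-support argument that one uses for the $\widehat\chi$ factor. With these points acknowledged, the proposal is correct in substance.
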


We choose
\[
U_0 =0,\quad \varphi_0=0
\]
and assume that $(U_k, \varphi_k)$ are already given for $k=0,\ldots ,n$. Moreover, let $(U_k,\varphi_k)$ vanish in the past, i.e., they satisfy \eqref{71}.  We define
\[
U_{n+1}=U_n+\delta U_n,\quad \varphi_{n+1}=\varphi_n+\delta \varphi_n,
\]
where the differences $\delta{U}_n$ and $\delta \varphi_n$ solve the linear problem
\begin{equation}
\left\{\begin{array}{lr}
\mathbb{L}'_e({U}^a +{U}_{n+1/2} ,{\Psi}^a+{\Psi}_{n+1/2})\delta\dot{{U}}_n={f}_n &\quad\mbox{in}\ \Omega_T, \\[6pt]
\mathbb{B}'_{n+1/2}(\delta\dot{{U}}_n,\delta {\varphi}_n)={g}_n  & \quad\mbox{on}\ \partial\Omega_T,\\[6pt]
(\delta\dot{{U}}_n,\delta \varphi_n)=0 &\quad \mbox{for}\ t<0.
\end{array}\right.
\label{75}
\end{equation}
Here
\begin{equation}
\delta\dot{U}_n:= \delta{U}_n-\frac{\delta\Psi_n}{\partial_1(\Phi^a+\Psi_{n+1/2})}\,\partial_1({U}^a+{U}_{n+1/2})
\label{76}
\end{equation}
is the ``good unknown'' (cf. \eqref{b23}),
\[
\mathbb{B}'_{n+1/2}:=\mathbb{B}'_e(({U}^a +{U}_{n+1/2})|_{x_1=0} ,\varphi^a+\varphi_{n+1/2}),
\]
the operators $\mathbb{L}'_e$ and $\mathbb{B}'_e$ are defined in \eqref{24}--\eqref{27}, and $({U}_{n+1/2},\varphi_{n+1/2})$ is a smooth modified state such that $({U}^a +{U}_{n+1/2},\varphi^a+\varphi_{n+1/2})$ satisfies constraints  \eqref{a5}--\eqref{jc1'}, \eqref{RTL} ($\Psi_n$, ${\Psi}_{n+1/2}$, and $\delta\Psi_n$ are associated to $\varphi_n$, $\varphi_{n+1/2}$, and $\delta\varphi_n$ like ${\Psi}$ is associated to $\varphi$; $\Psi =(\Psi^+,\Psi^-)$, $\Psi_n=(\Psi_n^+,\Psi_n^-)$, $U^a=(U^{a+},U^{a-})$,  $U_n=(U_n^{+},U_n^{-})$, etc.). Moreover, here (see \eqref{76}) and below for short we use the notations like
\[
\frac{a}{\partial_1(\Phi^a+\Psi_{n+1/2})}\,b:=\begin{pmatrix} \displaystyle \frac{a^+}{\partial_1(\Phi^{a+}+\Psi^+_{n+1/2})}\,b^+\\ \displaystyle\frac{a^-}{\partial_1(\Phi^{a-}+\Psi^-_{n+1/2})}\,b^-\end{pmatrix}
\]
for some vectors $a=(a^+,a^-)$ and $b=(b^+,b^+)$ (we hope that such ``strange'' notations are clear from the context).
The right-hand sides ${f}_n$ and ${g}_n$  are defined through the accumulated errors at the step $n$.

The errors of the iteration scheme are defined from the following chains of decompositions:
\begin{align*}
& \mathcal{ L}({U}_{n+1} ,{\Psi}_{n+1})-\mathcal{ L}({U}_{n} ,{\Psi}_{n})\\
& \quad =\mathbb{L}'({U}^a +{U}_{n} ,{\Psi}^a+{\Psi}_{n})(\delta{{U}}_n,\delta{\Psi}_{n})+{e}'_n\\
& \quad =\mathbb{L}'({U}^a +S_{\theta_n}{U}_{n} ,{\Psi}^a+S_{\theta_n}{\Psi}_{n})(\delta{{U}}_n,\delta{\Psi}_{n})+{e}'_n+{e}''_n
\\
&\quad= \mathbb{L}'({U}^a +{U}_{n+1/2} ,{\Psi}^a+{\Psi}_{n+1/2})(\delta{{U}}_n,\delta{\Psi}_{n})+{e}'_n+{e}''_n+{e}'''_n\\
& \quad =\mathbb{L}'_e({U}^a +{U}_{n+1/2},{\Psi}^a+{\Psi}_{n+1/2})\delta\dot{{U}}_n+{e}'_n+{e}''_n+{e}'''_n+{D}_{n+1/2}\delta{\Psi}_{n}
\end{align*}
and
\begin{align*}
&\mathcal{ B}({U}_{n+1}|_{x_1=0},\varphi_{n+1})-\mathcal{ B}({U}_{n}|_{x_1=0},\varphi_{n})
\\ &\quad =\mathbb{B}'(({U}^a +{U}_{n})|_{x_1=0},\varphi^a+\varphi_{n})(\delta{{U}}_n|_{x_1=0},\delta \varphi_{n})+\tilde{e}'_n
\\ &\quad =\mathbb{B}'(({U}^a +S_{\theta_n}{U}_{n})|_{x_1=0},\varphi^a+S_{\theta_n}\varphi_{n})(\delta{{U}}_n|_{x_1=0},\delta \varphi_{n})+\tilde{e}'_n+\tilde{e}''_n
\\ &\quad =\mathbb{B}'_{n+1/2}(\delta\dot{{U}}_n,\delta\varphi_n)+\tilde{e}'_n+\tilde{e}''_n+\tilde{e}'''_n,
\end{align*}
where $S_{\theta_n}$ are smoothing operators enjoying the properties of Proposition \ref{p1a}, with the sequence $(\theta_n)$ defined by
\[
\theta_0\geq 1,\quad \theta_n=\sqrt{\theta_0+n} ,
\]
and we use the notation
\[
{D}_{n+1/2}:= \frac{1}{\partial_1(\Phi^a+\Psi_{n+1/2})}\,\partial_1\left\{ \mathbb{L}(U^a +U_{n+1/2} ,{\Psi}^a+{\Psi}_{n+1/2})\right\}
\]
as well as the notations like
\[
\mathbb{L}(U ,{\Psi}):=
\left(
\begin{array}{c}
\mathbb{L}({U}^{+} ,{\Psi}^{+})\\[3pt]
\mathbb{L}({U}^{-} ,{\Psi}^{-})
\end{array}
\right),
\]
\[
\mathbb{L}'({U}^a +{U}_{n} ,{\Psi}^a+{\Psi}_{n})(\delta{{U}}_n,\delta{\Psi}_{n}):=
\left(
\begin{array}{c}
\mathbb{L}'({U}^{a+} +{U}^+_{n} ,{\Psi}^{a+}+{\Psi}^+_{n})(\delta{{U}}^+_n,\delta{\Psi}^+_{n})\\[3pt]
\mathbb{L}'({U}^{a-} +{U}^-_{n} ,{\Psi}^{a-}+{\Psi}^-_{n})(\delta{{U}}^-_n,\delta{\Psi}^-_{n})
\end{array}
\right),
\]
etc.
The errors ${e}'_n$ and  $\tilde{e}'_n$ are the usual quadratic errors of Newton's method, and ${e}''_n$,
$\tilde{ e}''_n$ and ${e}'''_n$, $\tilde{e}'''_n$ are the first and the second substitution errors
respectively.

Let
\begin{equation}
{e}_n:={e}'_n+{e}''_n+{e}'''_n+{D}_{n+1/2}\delta{\Psi}_{n}, \quad
\tilde{e}_n:= \tilde{e}'_n+\tilde{e}''_n+\tilde{e}'''_n,
\label{77}
\end{equation}
then the accumulated errors at the step $n\geq 1$ are
\begin{equation}
{E}_n=\sum_{k=0}^{n-1}{e}_k,\quad \widetilde{E}_n=\sum_{k=0}^{n-1}\tilde{e}_k,
\label{78}
\end{equation}
with ${E}_0:=0$ and $\widetilde{E}_0:=0$. The right-hand sides ${f}_n$ and ${g}_n$ are recursively computed from the equations
\begin{equation}
\sum_{k=0}^{n}{f}_k+S_{\theta_n}{E}_n=S_{\theta_n}{f}^a,\quad
\sum_{k=0}^{n}{g}_k+S_{\theta_n}\widetilde{E}_n=0,
\label{79}
\end{equation}
where ${f}_0:=S_{\theta_0}{f}^a$ and ${g}_0:=0$. Since $S_{\theta_N}\rightarrow I$ as $N\rightarrow \infty$, one can show that we formally obtain
the solution to problem \eqref{69}--\eqref{71} from $\mathcal{ L}(U_{N} ,{\Psi}_{N})\rightarrow {f}^a$ and
$\mathcal{ B}(U_{N}|_{x_1=0},\varphi_{N})\rightarrow 0$, provided that $({e}_N,\tilde{e}_N)\rightarrow 0$.

Below we closely follow the plan of \cite{Tcpam}. Let us first formulate our inductive hypothesis, which is actually the same as in \cite{Tcpam}.

\subsection{Inductive hypothesis}

Given a small number $\delta >0$,\footnote{We use the same Greek letter $\delta $ as in the differences $\delta{U}_n$ and $\delta \varphi_n$ above. But we hope that this will not lead to confusion because from the context it is always clear that $\delta $ written before ${U}_n$ or $\varphi_n$ is not a multiplier.} the integer $\alpha :=m+1$, and an integer $\tilde{\alpha}$, our inductive hypothesis reads:
\[
(H_{n-1})\quad \left\{
\begin{array}{ll}
{\rm a})\;& \forall\, k=0,\ldots , n-1,\quad \forall s\in [3,\tilde{\alpha}]\cap\mathbb{N},\\[3pt]
 & \|\delta U_k\|_{H^s(\Omega_T)} +\|\delta \varphi_k\|_{H^s(\partial\Omega_T)}\leq \delta\theta_k^{s-\alpha -1}\Delta_k,\\[6pt]
{\rm b}) & \forall\, k=0,\ldots , n-1,\quad \forall s\in [3,\tilde{\alpha}-2]\cap\mathbb{N},\\[3pt]
 & \|\mathcal{ L}(U_k,{\Psi}_k)-f^a\|_{H^s(\Omega_T)}\leq 2\delta\theta_k^{s-\alpha -1},\\[6pt]
{\rm c}) & \forall\, k=0,\ldots , n-1,\quad \forall s\in [4,\alpha ]\cap\mathbb{N},\\[3pt]
 & \|\mathcal{ B}(U_k|_{x_1=0},\varphi_k)\|_{H^s(\partial\Omega_T)}\leq \delta\theta_k^{s-\alpha -1},
\end{array}\right.
\]
where $\Delta_k= \theta_{k+1}-\theta_k$. Note that the sequence $(\Delta_n)$ is decreasing and tends to zero, and
\[
\forall\, n\in\mathbb{N},\quad \frac{1}{3\theta_n}\leq\Delta_n=\sqrt{\theta_n^2+1} -\theta_n\leq \frac{1}{2\theta_n}.
\]
Recall that $(U_k,\varphi_k)$ for $k=0,\ldots ,n$ are also assumed to satisfy (\ref{71}). Looking a few steps forward, we observe that we will need to use inequalities (\ref{66}) and (\ref{68}) with $m=\tilde{\alpha}-5$. That is, as in \cite{Tcpam}, we now choose $\tilde{\alpha}=m+5$, i.e., $\tilde{\alpha}=\alpha +4$.\footnote{The reader going inside \cite{Tcpam} should take into account that there is a misprint there: it is written that $\tilde{\alpha}=m+4$ but not $\tilde{\alpha}=m+5$ (whereas the formula $\tilde{\alpha}=\alpha +4$ is correct in \cite{Tcpam}).}
Our goal is to prove that ($H_{n-1}$) implies ($H_n$) for a suitable choice of parameters $\theta_0\geq 1$ and $\delta >0$, and for a sufficiently short time $T>0$. After that we shall prove ($H_0$). From now on we assume that ($H_{n-1}$) holds.  As in \cite{Tcpam}, we have the following consequences of ($H_{n-1}$).

\begin{lemma}
If $\theta_0$ is big enough, then for every $k=0,\ldots ,n$ and for every integer $s\in [3,\tilde{\alpha}]$ we have
\begin{align}
\|{U}_k \|_{H^s(\Omega_T)}+\| \varphi_k\|_{H^{s}(\partial\Omega_T)}\leq\delta\theta_k^{(s-\alpha )_+}, &\quad \alpha\neq s,\label{80}\\[3pt]
\|{U}_k \|_{H^{\alpha}(\Omega_T)}+\| \varphi_k\|_{H^{\alpha}(\partial\Omega_T)}\leq\delta\log \theta_k, & \label{81}
\end{align}
\begin{equation}
\|(I-S_{\theta_k}){U}_k \|_{H^s(\Omega_T)}+\|(1-S_{\theta_k}) \varphi_k\|_{H^{s}(\partial\Omega_T)}\leq C\delta\theta_k^{s-\alpha }.\label{82}
\end{equation}
For every $k=0,\ldots ,n$ and for every integer $s\in [3,\tilde{\alpha}+4]$ we have
\begin{align}
\|S_{\theta_k}{U}_k \|_{H^s(\Omega_T)}+\| S_{\theta_k} \varphi_k\|_{H^{s}(\partial\Omega_T)}\leq C\delta\theta_k^{(s-\alpha )_+}, &\quad \alpha\neq s,\label{83}\\[3pt]
\|S_{\theta_k}{U}_k \|_{H^{\alpha}(\Omega_T)}+\| S_{\theta_k}\varphi_k\|_{H^{\alpha}(\partial\Omega_T)}\leq C\delta\log \theta_k. &  \label{84}
\end{align}
Moreover, \eqref{82} and \eqref{83} hold for every integer $s\geq 3$.
\label{l3}
\end{lemma}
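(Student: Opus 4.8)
The assertion is the standard list of consequences of the inductive hypothesis, and the plan is to argue exactly as in \cite{Al} and \cite{Tcpam}. Since $U_0=0$ and $\varphi_0=0$, we have the telescoping identities $U_k=\sum_{j=0}^{k-1}\delta U_j$ and $\varphi_k=\sum_{j=0}^{k-1}\delta\varphi_j$, so by the triangle inequality and $(H_{n-1})$(a)
\[
\|U_k\|_{H^s(\Omega_T)}+\|\varphi_k\|_{H^s(\partial\Omega_T)}\le\delta\sum_{j=0}^{k-1}\theta_j^{\,s-\alpha-1}\Delta_j,\qquad 3\le s\le\tilde\alpha .
\]
The sum is estimated by comparison with $\int_{\theta_0}^{\theta_k}t^{\,s-\alpha-1}\,dt$, using that $j\mapsto\theta_j$ is increasing with increments $\Delta_j=\theta_{j+1}-\theta_j\le\frac1{2\theta_j}$ and that $t\mapsto t^{\,s-\alpha-1}$ is monotone: for integer $s>\alpha$ the integrand is nondecreasing, the left Riemann sum is $\le\int_{\theta_0}^{\theta_k}t^{\,s-\alpha-1}\,dt\le\frac1{s-\alpha}\theta_k^{\,s-\alpha}\le\theta_k^{\,s-\alpha}$, which is \eqref{80}; for $s=\alpha$ it gives $\log\theta_k$ up to a harmless $\theta_0$-dependent additive constant, which is \eqref{81}; and for $s<\alpha$ the decreasing integrand yields $\sum_j\theta_j^{\,s-\alpha-1}\Delta_j\le C\theta_0^{\,s-\alpha}$, which can be made $\le1$ by taking $\theta_0$ large enough, giving \eqref{80} in the form $\|U_k\|_{H^s}+\|\varphi_k\|_{H^s}\le\delta=\delta\theta_k^{(s-\alpha)_+}$. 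The boundary norms of $\varphi_k$ are handled by the identical computation.

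Next, \eqref{82}--\eqref{84} follow by feeding \eqref{80}--\eqref{81} into the smoothing estimates of Proposition \ref{p1a}. For \eqref{83}--\eqref{84}, apply \eqref{72} with second index $\min(s,\tilde\alpha)$ and read off \eqref{80}--\eqref{81}; this covers all integers $s\ge3$ at once (for $s\le\tilde\alpha$ the exponent is $0$ and one uses \eqref{80}--\eqref{81} at level $s$, for $s>\tilde\alpha$ one uses \eqref{80} at level $\tilde\alpha$). For \eqref{82} I would split cases: if $s\le\alpha$, apply \eqref{73} with first index $s$ and second index $\alpha+1$ (using the higher level $\alpha+1$ rather than $\alpha$ precisely to avoid a spurious $\log\theta_k$), obtaining $\|(I-S_{\theta_k})U_k\|_{H^s}\le C\theta_k^{\,s-\alpha-1}\|U_k\|_{H^{\alpha+1}}\le C\delta\theta_k^{\,s-\alpha}$ by \eqref{80} at level $\alpha+1$; if $\alpha<s\le\tilde\alpha$, bound $\|(I-S_{\theta_k})U_k\|_{H^s}\le\|U_k\|_{H^s}+\|S_{\theta_k}U_k\|_{H^s}$ and invoke \eqref{80} together with \eqref{72} at second index $s$; and if $s>\tilde\alpha$, use \eqref{73} at second index $\tilde\alpha$ together with \eqref{80} at level $\tilde\alpha$, exactly as in \cite{Tcpam}. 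Again, the $\varphi_k$ contribution is estimated verbatim with the boundary norms.

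There is no genuine obstacle: once $(H_{n-1})$ is granted, Lemma \ref{l3} is pure bookkeeping of the Alinhac type. The only two points requiring a little care are (i) the choice of $\theta_0$ large enough, which is needed precisely to turn the convergent tail $\sum_j\theta_j^{\,s-\alpha-1}\Delta_j$ (the case $s<\alpha$) into the clean bound $\delta$ in \eqref{80}, and to dominate the additive constant in the logarithmic case \eqref{81}; and (ii) for the extended ranges $s\ge3$ in \eqref{82}--\eqref{83}, the implicit fact that the iterates $(U_k,\varphi_k)$ actually lie in $H^s$ for those $s$, which holds because the data $f_k,g_k$ of the linear problems \eqref{75} are built from images of smoothing operators and hence, together with the smooth modified state, yield solutions $\delta\dot U_k$ in $H^s$ for every $s$ via Proposition \ref{p1a} and the solvability part of Theorem \ref{t3.1}. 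All constants are independent of $n$ and $k$.
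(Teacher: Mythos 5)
Your argument follows exactly the route the paper uses (telescoping via $(H_{n-1})$(a) to get \eqref{80}--\eqref{81}, then Proposition \ref{p1a} to deduce \eqref{82}--\eqref{84}); the paper itself devotes a single sentence to the latter step. Two small points are worth flagging.

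First, the ``left Riemann sum $\leq$ integral'' comparison for $\sum_j\theta_j^{s-\alpha-1}\Delta_j$ is only valid when the integrand $t^{s-\alpha-1}$ is nondecreasing, i.e., for $s\geq\alpha+1$. For $s\leq\alpha$ the integrand is decreasing and the left Riemann sum \emph{dominates} the integral, so one cannot compare in the direction you state; the correct route (which you in fact allude to) is to use $\Delta_j\leq\frac{1}{2\theta_j}$ to pass to $\frac12\sum_j\theta_j^{\,s-\alpha-2}=\frac12\sum_j(\theta_0^2+j)^{(s-\alpha-2)/2}$ and compare that sum with an integral in $j$. Your conclusions (the constants $\log\theta_k$ and $C\theta_0^{s-\alpha}$, with $\theta_0$ large absorbing the constants) are nevertheless correct, so this is only an imprecision in exposition.

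Second, for \eqref{82} at $s>\tilde\alpha$ you invoke \eqref{73} with first index $s$ and second index $\tilde\alpha$, but \eqref{73} is stated only for $\beta\leq\alpha$ (first index $\leq$ second index), so this step is outside its range of validity. Indeed, \eqref{73} alone cannot produce a bound on $\|(I-S_{\theta})u\|_{H^{\beta}}$ for $\beta$ strictly above the highest Sobolev level at which $\|u\|$ is controlled; one would need a bound on $\|U_k\|_{H^s}$ itself for $s>\tilde\alpha$, which is not part of $(H_{n-1})$. The lemma as stated does assert \eqref{82} for all integers $s\geq3$, and the paper gives no justification for the range $s>\tilde\alpha$; but inspection of the subsequent lemmas shows that \eqref{82} is only ever invoked for $s\leq\tilde\alpha-1$, so the overstatement (yours and, implicitly, the paper's) is harmless. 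For $s\leq\tilde\alpha$ your case split (use \eqref{73} at level $\alpha+1$ when $s\leq\alpha$, and a triangle inequality together with \eqref{72} when $\alpha<s\leq\tilde\alpha$) is correct, and the derivation of \eqref{83}--\eqref{84} for all $s\geq3$ via \eqref{72} at the index $\min(s,\tilde\alpha)$ is also correct, since \eqref{72} has no upper restriction on $\beta$.
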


Note that estimates \eqref{82}--\eqref{84} follow from \eqref{80}, \eqref{81}, and Proposition \ref{p1a}. We also have the following important consequence of ($H_{n-1}$) connected with the jump condition \eqref{jc2}.

\begin{lemma}
Let $\alpha \geq 6$. There exist $\delta >0$ and $T>0$ sufficiently small, and $\theta_0 \geq 1$ sufficiently large such that
for every $k=0,\ldots ,n-1$, and for every integer $s\in [3,{\alpha}-1]$, we have
\begin{equation}
\left\|[\partial_1{\bf v}_k]\right\|_{H^s(\partial\Omega_T)}\leq C\delta\theta_k^{s-\alpha},
\label{Hn-1dv1}
\end{equation}
\begin{equation}
\left\|[H_{N,k}]\right\|_{H^s(\partial\Omega_T)}\leq C\delta\theta_k^{s-\alpha},
\label{Hnk}
\end{equation}
where ${\bf v}_{k}^{\pm}= (v_{1,k}^{\pm},v_{2,k}^{\pm})$, $[\partial_1{\bf v}_{k}]= \partial_1{\bf v}_{k}^+|_{x_1=0}+\partial_1{\bf v}_{k}^-|_{x_1=0}$ (cf. \eqref{norm_jump}), $H_{N,k}^{\pm}:=H_{1,k}^{\pm}+H_1^{a\pm}-(H_{2,k}^{\pm}+H_2^{a\pm})\partial_2(\Psi_k^{\pm}+\Psi^{a\pm})$,\footnote{In view of \eqref{12.1a}, $[ H_{N,k}]= [H_{1,k}]-[H_{2,k}]\partial_2(\varphi_k+\varphi^a)$.} and $v_{j,k}^{\pm}$ and $H_{j,k}^{\pm}$ ($j=1,2$) are the components of the velocities and the magnetic fields at the step $k$ respectively, i.e., $U_k^{\pm}=(p_k^{\pm},v_{1,k}^{\pm},v_{2,k}^{\pm},H_{1,k}^{\pm},H_{2,k}^{\pm},S_{k}^{\pm})$.
\label{ldv1}
\end{lemma}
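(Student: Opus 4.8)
I would work with the \emph{full} state at step $k$, namely $\mathcal{U}_k:=U^a+U_k$ and $\Phi_k:=\Psi^a+\Psi_k$, whose normal magnetic component is exactly $H_{N,k}^\pm$; since by Lemma~\ref{l2} the approximate solution satisfies $[\partial_1 v^a]=0$ and $[H_N^a]=0$ on all of $\partial\Omega_T$ (see \eqref{jc2}, \eqref{15}), the quantities $[\partial_1{\bf v}_k]$ and $[H_{N,k}]$ coincide with the corresponding jumps of $\mathcal{U}_k$. By construction,
\[
\mathbb{L}(\mathcal{U}_k,\Phi_k)=\mathcal{L}(U_k,\Psi_k)-f^a=:R_k,\qquad \mathbb{B}(\mathcal{U}_k|_{x_1=0},\varphi^a+\varphi_k)=\mathcal{B}(U_k|_{x_1=0},\varphi_k)=:b_k,
\]
and parts (b), (c) of $(H_{n-1})$ give $\|R_k\|_{H^s(\Omega_T)}\le 2\delta\theta_k^{s-\alpha-1}$ for $3\le s\le\tilde{\alpha}-2$ and $\|b_k\|_{H^s(\partial\Omega_T)}\le\delta\theta_k^{s-\alpha-1}$ for $4\le s\le\alpha$; hence, by the trace theorem, $\|R_k|_{x_1=0}\|_{H^s(\partial\Omega_T)}\le C\delta\theta_k^{s-\alpha}$ for $s\le\alpha-1$, which is exactly the rate appearing in \eqref{Hn-1dv1}--\eqref{Hnk}. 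Finally, both $[H_{N,k}]$ and $[\partial_1{\bf v}_k]$ vanish for $t<0$ (there $U_k\equiv0$, so they reduce to $[H_N^a]=0$ and $[\partial_1 v^a]=0$), so it suffices to produce relations with sources of the above size and integrate from the past.

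For \eqref{Hnk} I would repeat the computation of Lemma~\ref{l4.2} (leading to \eqref{A2a}--\eqref{A2a'}) now for the iterate. Contracting the fourth and fifth components of $R_k^\pm|_{x_1=0}$ with $(1,-\partial_2\Phi_k^\pm)$ and using the residuals in $b_k$ of the conditions $[v]=0$ and $\partial_t\varphi=v_N^+$, one obtains for each sign $\partial_t H_{N,k}^\pm|_{x_1=0}=-(v_2^\pm\partial_2 H_{N,k}^\pm+H_{N,k}^\pm\partial_2 v_2^\pm)|_{x_1=0}+\rho_k^\pm$, with $\|\rho_k^\pm\|_{H^s(\partial\Omega_T)}\le C(\|R_k|_{x_1=0}\|_{H^s(\partial\Omega_T)}+\|b_k\|_{H^{s+1}(\partial\Omega_T)})\le C\delta\theta_k^{s-\alpha}$ for $s\le\alpha-1$. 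Subtracting the two signs yields a linear transport equation for $[H_{N,k}]$ along the material derivative $\partial_0$ (cf.\ \eqref{d0}) on $\partial\Omega_T$, with a zeroth-order coefficient multiplying $[H_{N,k}]$ and a right-hand side built from $\rho_k^\pm$ and from $[v_{2,k}]\,\partial_2 H_{N,k}^+$ (with $[v_{2,k}]$ again controlled by $b_k$). A standard $H^s$ energy estimate for this transport equation ($3\le s\le\alpha-1$, by finite induction on $s$), using \eqref{c40}--\eqref{c41} for the products and commutators, the bounds on $\mathcal{U}_k$ from Lemma~\ref{l3} and \eqref{66}, Gronwall's lemma, and the vanishing in the past, gives \eqref{Hnk}.

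For \eqref{Hn-1dv1} I would mimic \eqref{p(1)}--\eqref{2017}. Restricting the first (pressure) component of $R_k^\pm$ to $x_1=0$ and using the $[p]=0$ and $[v]=0$ residuals of $b_k$ gives, as in \eqref{vN(1)}, $\|[\partial_1 v_{N,k}]\|_{H^s(\partial\Omega_T)}\le C(\|R_k|_{x_1=0}\|_{H^s(\partial\Omega_T)}+\|b_k\|_{H^{s+1}(\partial\Omega_T)})\le C\delta\theta_k^{s-\alpha}$. Restricting the fourth and fifth components of $R_k^\pm$ to $x_1=0$, using $[v_k]=0$ and $[H_{\tau,k}]=0$ up to $b_k$, and passing to the jump as in \eqref{2017}, one obtains $H_{N,k}^+|_{x_1=0}\,[\partial_1{\bf v}_k]=H_k^+|_{x_1=0}\,[\partial_1 v_{N,k}]+(\text{terms linear in }[H_{N,k}])+(\text{terms bounded by }R_k|_{x_1=0}\text{ and }b_k)$. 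Since $U^a$ satisfies \eqref{mf.1} and $\|U_k\|_{L^\infty(\Omega_T)}$ is small for $T,\delta$ small and $\theta_0$ large (by \eqref{81} and Sobolev embedding), $|H_{N,k}^+|_{x_1=0}|\ge\kappa/C>0$, so one may divide; together with \eqref{Hnk} and the bound on $[\partial_1 v_{N,k}]$, the Moser inequalities yield \eqref{Hn-1dv1}.

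The step I expect to be the main obstacle is the derivation and the $H^s$ energy analysis of the transport equation for $[H_{N,k}]$: one must verify that every term of its right-hand side is either bounded in $H^s$ ($3\le s\le\alpha-1$) by $\delta\theta_k^{s-\alpha}$ — carefully tracking the one-derivative loss of the trace theorem, the tangential derivatives falling on $R_k$ and $b_k$, and the mild $\theta_k$-growth of the high coefficient norms — or is of zeroth order in $[H_{N,k}]$ and hence absorbable by Gronwall. The hypothesis $\alpha\ge6$, together with the restriction $s\le\alpha-1$ (rather than $s\le\alpha$), is precisely what makes this bookkeeping close. A secondary point is securing the uniform lower bound on $|H_{N,k}^+|_{x_1=0}|$ used to invert the algebraic relation for $[\partial_1{\bf v}_k]$.
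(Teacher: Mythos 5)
Your proposal follows essentially the same route as the paper: bound the interior and boundary residuals $R_k$ and $b_k$ using points (b), (c) of $(H_{n-1})$ and the trace theorem, extract the $[\partial_1 v_{N,k}]$ bound from the pressure component of $\mathbb{L}$ on the boundary (mirroring \eqref{p(1)}--\eqref{vN(1)}), derive a linear transport equation along $\partial_0$ for $b_k=[H_{N,k}]$ from the $\mathbb{H}$-lines and estimate it by Gronwall, and finally invert the \eqref{2017}-type algebraic relation using the lower bound on $|H_{N,k}^+|_{x_1=0}|$ to pass to $[\partial_1{\bf v}_k]$. The only difference is organizational: you perform that last inversion step explicitly (after \eqref{Hnk}), whereas the paper records only \eqref{Hn-1dv1'} in detail and leaves the passage to \eqref{Hn-1dv1} implicit in its remarks on $\mathbb{H}$ and \eqref{2017}; your reordering is logically cleaner and matches the paper's intent.
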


\begin{proof}
Let $\mathbb{P}(U^{\pm},\Psi^{\pm})$ be the first lines of $\mathbb{L}(U^{\pm},\Psi^{\pm})$. We need them on the boundary:
\[
\mathbb{P}(U^{\pm},\Psi^{\pm})|_{x_1=0}=\left.\left(
\frac{1}{\gamma (\bar{p}+p^{\pm })}\left\{ \partial_tp^{\pm} \pm (v_N^{\pm}- \partial_t\varphi )\partial_1p^{\pm} +v_2^{\pm }\partial_2p^{\pm }\right\}\pm \partial_1v_N^{\pm } +\partial_2v_2^{\pm }\right)\right|_{x_1=0}.
\]
Let $r_k^{\pm}:=\mathbb{P}(U^{a\pm}+U_k^{\pm},\Psi^{a\pm}+\Psi_k^{\pm})|_{x_1=0}$. Point b) of ($H_{n-1}$) implies
\begin{equation}
\|r_k^{\pm}\|_{H^s(\partial\Omega_T)}\leq C \|\mathbb{P}(U^{a\pm}+U_k^{\pm},\Psi^{a\pm}+\Psi_k^{\pm})\|_{H^{s+1}(\Omega_T)}\leq C\delta\theta_k^{s-\alpha},\quad \forall s\in [3,{\alpha}+1]\cap\mathbb{N}
\label{rk}
\end{equation}
for all $k=0,\ldots ,n-1$ (recall that $\tilde{\alpha}=\alpha +4$). Taking into account \eqref{12.1a}, we have
\begin{equation}
\begin{split}
[ \partial_1v_{N,k}] = &[  r_k]-\partial_2[v_{2,k}]
-\biggl[
\frac{1}{\gamma (\bar{p}+p^a+p_k)}\bigl\{
\partial_t(p^a+p_k) \pm \bigl(v_{1,k}- (v_2^a+v_{2,k})\partial_2\varphi_k \\  & -v_{2,k}\partial_2\varphi^a-\partial_t\varphi_k \bigr)\partial_1(p^a+p_k)  +(v_2^a+v_{2,k})\partial_2(p^a+p_k)\bigr\}\biggr],
\end{split}
\label{p1vNk}
\end{equation}
where $v_{N,k}^{\pm}:=v_{1,k}^{\pm}+v_1^{a\pm}-(v_{2,k}^{\pm}+v_2^{a\pm})\partial_2(\Psi_k^{\pm}  +\Psi^{a\pm}$) (i.e., in view of \eqref{12.1a}, $[ \partial_1v_{N,k}]= [\partial_1v_{1,k}]-[\partial_1v_{2,k}]\partial_2(\varphi_k+\varphi^a)$). Using point c) of ($H_{n-1}$), we easily estimate the second term in the right-hand side of \eqref{p1vNk}:
\begin{equation}
\|\partial_2[v_{2,k}] \|_{H^s(\partial\Omega_T)}\leq  \|\mathcal{ B}(U_k|_{x_1=0},\varphi_k)\|_{H^{s+1}(\partial\Omega_T)}\leq \delta\theta_k^{s-\alpha},\quad \forall s\in [3,{\alpha}-1]\cap\mathbb{N}.
\label{pv2k}
\end{equation}
Note that in point b) of ($H_{n-1}$) it is supposed by default that the hyperbolicity conditions $\bar{p}+p^{a\pm}+p_k^{\pm} \geq\bar{p}/2>0$ are satisfied (cf. \eqref{a5}). This is true for a sufficiently short time $T$ thanks to the conditions $p^{a\pm}>-\bar{p}/4$ (see Lemma \ref{l2}) and the fact that $U_k$ vanish in the past. That is, the denominator $\bar{p}+p^a+p_k$ appearing in \eqref{p1vNk} is strictly positive for a sufficiently short time $T$.

Consider, for example,  the last term in the right-hand side of \eqref{p1vNk}. Using \eqref{12.1a}, we can decompose it as follows:
\begin{equation}
-\biggl[
\frac{v_2^a+v_{2,k}}{\gamma (\bar{p}+p^a+p_k)} \partial_2(p_k+p^a)\biggr]= b_1(y_k,z_k)[v_{2,k}]+b_2(y_k,z_k)[p_k]+b_3(y_k)\partial_2[p_k],
\label{p1vNk'}
\end{equation}
where $y_k=(U^a+U_k)|_{x_1=0}$, $z_k=\partial_2(U^a+U_k)|_{x_1=0}$, and $b_i$ ($i=1,2,3$) are rational functions which can be easily written down. Note that the graphs of these functions pass through the origin ($b_1(0,0)=b_2(0,0)=b_3(0)=0$) and, hence, we can use the calculus inequality \eqref{c41} when estimating their Sobolev's norms. Using \eqref{66}, point c) of ($H_{n-1}$), inequalities \eqref{80} and  \eqref{81}, the calculus inequalities \eqref{c40} and \eqref{c41}, Sobolev's embeddings and the trace theorem, we now estimate the first term in the right-hand side of \eqref{p1vNk'} (in comparison with other terms, for its estimation we need the most restrictive requirement on $\alpha$):
\[
\begin{split}
\bigl\| & b_1(y_k,z_k)[v_{2,k}]\bigr\|_{H^s(\partial\Omega_T)}  \\ & \leq
CC_0(M)\left\{ \|[v_{2,k}]\|_{H^4(\partial\Omega_T)}\|U_k +U^a\|_{H^{s+2}(\Omega_T)} +
\|[v_{2,k}]\|_{H^{s+1}(\partial\Omega_T)}\| U_k +U^a\|_{H^{3}(\Omega_T)}\right\} \\
& \leq C\left\{ \delta \theta_k^{3-\alpha} \big(C_*+\delta \theta_k^{\ell (s,\alpha )}\big) +\delta\theta_k^{s-\alpha }(C_*+\delta )\right\}
\leq C\delta \theta_k^{s-\alpha}
\end{split}
\]
for $\alpha \geq 6$ and integer $s\in [3,{\alpha}-1]$, where $\ell (s,\alpha )=(s+2-\alpha)_+$ for $s\neq \alpha -2$ and $\ell (\alpha -2,\alpha )=1$, and the constant $C_0=C_0(M)$ depends on $M=\|U^a+U_k \|_{H^{3}(\Omega_T)}\leq C_*+\delta$. Note that the requirement $\alpha \geq 6$ is in agreement with
$\alpha = m+1 $ and the assumption $m\geq 6$ of Theorem \ref{t1}. We can similarly estimate the remaining terms in the right-hand side of \eqref{p1vNk'} as well as the remaining terms in the right-hand side of  \eqref{p1vNk}. Then, from the obtained estimates and \eqref{rk}--\eqref{pv2k} we get the estimate
\begin{equation}
\left\|[\partial_1v_{N,k}]\right\|_{H^s(\partial\Omega_T)}\leq C\delta\theta_k^{s-\alpha}
\label{Hn-1dv1'}
\end{equation}
for integer $s\in [3,{\alpha}-1]$.

Let $\mathbb{H}(v^{\pm},H^{\pm},\Psi^{\pm})$ be the fourth and fifth lines of $\mathbb{L}(U^{\pm},\Psi^{\pm})$ respectively.
For deriving the desired estimate \eqref{Hnk} we consider $\mathbb{H}(v^{a\pm}+v_k^{\pm},H^{a\pm}+H_k^{\pm},\Psi^{a\pm}+\Psi_k^{\pm})|_{x_1=0}$  and exploit the obtained estimate \eqref{Hn-1dv1'} together with points b) and c) of  ($H_{n-1}$). We prefer here to omit detailed arguments and calculations because they are really similar to those towards the proof of estimate \eqref{Hn-1dv1'}. We only note that, as in the proof of Proposition \ref{p2} in \cite{Cont1} (see also \eqref{2017}), it is important that the normal components of the magnetic fields ($H^{a\pm}+H_k^{\pm}$ in our case) do not vanish at the boundary $x_1=0$. This is so for a sufficiently short time $T$,
\[
\begin{split}
\bigl|(H_1^a& +H_{1,k}^{\pm})|_{x_1=0}-(H_2^a+H_{2,k}^{\pm})|_{x_1=0}\partial_2(\varphi^a+\varphi_k )\bigr| \\ & =\bigl|H_N^{a\pm}|_{x_1=0}+H_{1,k}^{\pm}|_{x_1=0} -
(H_2^a+H_{2,k}^{\pm})|_{x_1=0}\partial_2\varphi_k-H_{2,k}^{\pm}|_{x_1=0}\partial_2\varphi^a\bigr| \geq\kappa/2 >0,
\end{split}
\]
thanks to the requirements $|H_N^{a\pm}|_{x_1=0}|\geq\kappa >0$ (see \eqref{mf.1} and Lemma \ref{l2}) satisfied for the approximate solution and the fact that $(U_k,\varphi_k)$ vanish in the past.

As in the proof of Proposition \ref{p1} in \cite{Cont1}, we consider $\mathbb{H}(v^{\pm},H^{\pm},\Psi^{\pm})$ on the boundary:
\[
\mathbb{H}(v^{\pm},H^{\pm},\Psi^{\pm})|_{x_1=0}=
\left.\left(\partial_t H_{N}^{\pm} \pm (v_N^{\pm}- \partial_t\varphi )\partial_1H_{N}^{\pm}+v_2^{\pm}\partial_2H_{N}^{\pm}+
H_{N}^{\pm}\partial_2v_2^{\pm} \right)\right|_{x_1=0}.
\]
Similarly to \eqref{rk}, we estimate $\tilde{r}_k^{\pm}:=\mathbb{H}(v^{a\pm}+v_k^{\pm},H^{a\pm}+H_k^{\pm},\Psi^{a\pm}+\Psi_k^{\pm})|_{x_1=0}$:
\[
\|\tilde{r}_k^{\pm}\|_{H^s(\partial\Omega_T)}\leq C\delta\theta_k^{s-\alpha}
\]
for  $s\in [3,{\alpha}+1]$. We obtain that the jumps $b_k=[H_{N,k}]$ for $k=0,\ldots ,n-1$  satisfy the linear equations
\[
\partial_tb_k+ (v_2^{a+}+v_{2,k}^+)\partial_2b_k +b_k\partial_2(v_2^{a+}+v_{2,k}^+)=z_k
\]
with the right-hand sides
\[
z_k=[\tilde{r}_k]-[v_{2,k}]\partial_2H_{N,k}^- - H_{N,k}^-\partial_2 [v_{2,k}]-\bigl[\bigl(v_1^a+v_{1,k} - (v_2^a+v_{2,k})\partial_2(\varphi_k +\varphi^a)-\partial_t(\varphi_k+\varphi^a) \bigr) \partial_1H_{N,k}\bigr]
\]
and vanish in the past (we recall that $H_{N,k}^{\pm}:=H_{1,k}^{\pm}+H_1^{a\pm}-(H_{2,k}^{\pm}+H_2^{a\pm})\partial_2(\Psi_k^{\pm}+\Psi^{a\pm})$).
Using point c) of ($H_{n-1}$) and arguments similar to those towards the proof of estimate \eqref{Hn-1dv1}, we first estimate the right-hand sides $z_k$ and then get  a priori estimates of the solutions $b_k$ of the above linear equations with zero initial data. These a priori estimates are the desired estimates \eqref{Hnk}.
\end{proof}

\subsection{Estimate of the quadratic errors}

The quadratic errors
\[
\begin{split}
{e}'_k &= \mathcal{ L}(U_{k+1} ,{\Psi}_{k+1})-\mathcal{ L}(U_{k} ,{\Psi}_{k})
-\mathcal{ L}'(U_{k} ,{\Psi}_{k})(\delta{U}_k,\delta{\Psi}_{k}),\\
\tilde{e}'_k & = \bigl(\mathcal{ B}(U_{k+1},\varphi_{k+1})-\mathcal{ B}(U_{k},\varphi_{k})
-\mathcal{ B}'(U_{k},\varphi_{k})(\delta{U}_k,\delta \varphi_{k})\bigr)|_{x_1=0}
\end{split}
\]
can be rewritten as
\begin{align}
&{e}'_k=\int_0^1(1-\tau )\mathbb{L}''({U}^a+{U}_k +\tau\delta{U}_k, {\Psi}^a+{\Psi}_k
+\tau\delta{\Psi}_k)\bigl((\delta{U}_k,\delta{\Psi}_k),
(\delta{U}_k,\delta{\Psi}_k)\bigr) d\tau ,
\label{85} \\
&\tilde{e}'_k=\frac{1}{2}\,\mathbb{B}''\bigl(
(\delta{U}_k|_{x_1=0},\delta \varphi_k),(\delta{U}_k|_{x_1=0},\delta \varphi_k)\bigr)
\label{86}
\end{align}
by using the second derivatives of the operators $\mathbb{L}$ and $\mathbb{B}$:
\[
\begin{split}
\mathbb{L}''(\widehat{U},\widehat{{\Psi}})((U',{\Psi}'),(U'',{\Psi}'')):= &
\frac{\rm d}{{\rm d}\varepsilon}\mathbb{L}'(U_{\varepsilon},{\Psi}_{\varepsilon})(U',{\Psi}')|_{\varepsilon =0},\\
\mathbb{B}''((W',\varphi'),(W'',\varphi'')):=&
\frac{\rm d}{{\rm d}\varepsilon}\mathbb{B}'(W_{\varepsilon},\varphi_{\varepsilon})(W',\varphi')|_{\varepsilon =0},
\end{split}
\]
where $U_{\varepsilon}=\widehat{U}+\varepsilon U''$, $W_{\varepsilon}=\widehat{U}|_{x_1=0}+\varepsilon W''$,
$\varphi_{\varepsilon}=\hat{\varphi}+{\varepsilon}\varphi''$,
\[
\mathbb{L}'(\widehat{U},\widehat{{\Psi}})(U'',{\Psi}'')=\frac{\rm d}{{\rm d}\varepsilon}\mathbb{L}
(U_{\varepsilon},{\Psi}_{\varepsilon}),\quad
\mathbb{B}'(\widehat{U}|_{x_1=0},\hat{\varphi})(W'',{\varphi}'')=\frac{\rm d}{{\rm d}\varepsilon}\mathbb{B}(W_{\varepsilon},{\varphi}_{\varepsilon}) ,
\]
and ${\Psi}'$ and ${\Psi}''$ are associated to
$\varphi'$ and $\varphi''$ respectively like ${\Psi}$ is associated to $\varphi$. Moreover, $U'=(U'^+,U'^-)$, $U''=(U''^+,U''^-)$, etc. We easily compute the explicit form of $\mathbb{B}''$, that do not depend on the state $(\widehat{U},\hat{\varphi})$:
\begin{equation}
\mathbb{B}''((W',\varphi'),(W'',\varphi''))=
\begin{pmatrix}
0\\
0\\
0\\
[H'_1]\partial_2\varphi''+[H''_1]\partial_2\varphi'\\
v'^+_2\partial_2\varphi''+v''^+_2\partial_2\varphi'
\end{pmatrix}.
\label{B"}
\end{equation}
To estimate the quadratic errors by utilizing representations (\ref{85}) and (\ref{86}) we need estimates for
$\mathbb{L}''$ and $\mathbb{B}''$. They can easily be obtained from the explicit forms of $\mathbb{L}''$ and $\mathbb{B}''$ by applying the Moser-type and embedding inequalities. Omitting detailed calculations, as in \cite{Tcpam}, we get the following result:

\begin{proposition}
Let $T>0$ and $s\in\mathbb{N}$, with $s\geq 3$. Assume that $(\widehat{U} ,\hat{\varphi})\in
H^{s+1}(\Omega_T )\times H^{s+1}(\partial\Omega_T)$ and
\[
\|\widehat{U}\|_{H^3((\Omega_T)} +\|\hat{\varphi} \|_{H^{3}(\partial\Omega_T)}\leq \widetilde{K}.
\]
Then there exists a positive constant $\widetilde{K}_0$, that does not depend on $s$ and $T$, and there exists a constant $C(\widetilde{K}_0) >0$ such that, if $\widetilde{K}\leq \widetilde{K}_0$ and
$(U' ,\varphi'),\, (U'' ,\varphi'')\in H^{s+1}(\Omega_T )\times H^{s+1}(\partial\Omega_T)$, then
\[
\begin{split}
\|\mathbb{L}'' & (\widehat{U},\widehat{{\Psi}})((U',{\Psi}'),(U'',{\Psi}''))\|_{H^s(\Omega_T)}\\
&
\begin{split}
\leq C(\widetilde{K}_0)\bigl\{ &{\nl(\widehat{{U}},\hat{\varphi})\nr}_{s+1}{\nl({U}',\varphi')\nr}_{3}{\nl({U}'',\varphi'')\nr}_{3}\\
&
+{\nl({U}',\varphi')\nr}_{s+1}{\nl({U}'',\varphi'')\nr}_{3}
+ {\nl({U}'',\varphi'')\nr}_{s+1}{\nl({U}',\varphi')\nr}_{3}\bigr\},
\end{split}
\end{split}
\]
where
\[
{\nl({U} ,\varphi )\nr}_{\ell}:=\|{U} \|_{H^{\ell}(\Omega_T)} +\| \varphi\|_{H^{\ell}(\partial\Omega_T)}=
\|{U}^+ \|_{H^{\ell}(\Omega_T)} +\|{U}^- \|_{H^{\ell}(\Omega_T)} +\| \varphi\|_{H^{\ell}(\partial\Omega_T)}.
\]

If $({W}' ,\varphi'),\, ({W}'' ,\varphi'')\in H^{s}(\partial\Omega_T )\times H^{s+1}(\partial\Omega_T)$, then
\[
\begin{split}
\|\mathbb{B}'' & (({W}',\varphi'),({W}'',\varphi''))\|_{H^s(\partial\Omega_T)} \\
 &
\begin{split}
 \leq C(\widetilde{K}_0)\bigl\{ &
\|{W}'\|_{H^s(\partial\Omega_T)}\|\varphi''\|_{H^3(\partial\Omega_T)}
+\|{W}'\|_{H^3(\partial\Omega_T)}\|\varphi''\|_{H^{s+1}(\partial\Omega_T)} \\
 &+\|{W}''\|_{H^s(\partial\Omega_T)}\|\varphi'\|_{H^{3}(\partial\Omega_T)} +\|{W}''\|_{H^3(\partial\Omega_T)}\|\varphi'\|_{H^{s+1}(\partial\Omega_T)}\bigr\}.
\end{split}
\end{split}
\]
\label{p2a}
\end{proposition}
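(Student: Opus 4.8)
The plan is to deduce both bounds from the Moser-type calculus inequalities \eqref{c40}, \eqref{c41} (and their obvious counterparts on $\partial\Omega_T$), together with the Sobolev embeddings $H^3(\Omega_T)\hookrightarrow W^1_{\infty}(\Omega_T)$ and $H^3(\partial\Omega_T)\hookrightarrow W^1_{\infty}(\partial\Omega_T)$, after writing $\mathbb{L}''$ and $\mathbb{B}''$ in a convenient explicit form. For $\mathbb{B}''$ this is immediate: since $\mathbb{B}(U^+,U^-,\varphi)$ is affine-linear in $(U^+,U^-)$ and depends quadratically on $(U^{\pm},\varphi)$ only through the boundary products $H_1^{\pm}\partial_2\varphi$ and $v_2^+\partial_2\varphi$ (recall $\partial_2\Psi^{\pm}|_{x_1=0}=\partial_2\varphi$), its second derivative is the \emph{constant} bilinear form \eqref{B"}. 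Hence $\|\mathbb{B}''((W',\varphi'),(W'',\varphi''))\|_{H^s(\partial\Omega_T)}$ is dominated by a finite sum of terms of the form $\|(W')_i\,\partial_2\varphi''\|_{H^s(\partial\Omega_T)}$ and those obtained by exchanging primes; applying \eqref{c40} on $\partial\Omega_T$ and bounding $\|\partial_2\varphi''\|_{H^s(\partial\Omega_T)}\le\|\varphi''\|_{H^{s+1}(\partial\Omega_T)}$ and $\|\partial_2\varphi''\|_{L_{\infty}(\partial\Omega_T)}\le C\|\varphi''\|_{H^3(\partial\Omega_T)}$ (and symmetrically for $W'$, $W''$), one obtains exactly the asserted inequality for $\mathbb{B}''$.

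For $\mathbb{L}''$ I would first record the structure of $\mathbb{L}'$: the operator $\mathbb{L}'(\widehat{U}^{\pm},\widehat{\Psi}^{\pm})(\cdot,\cdot)$ is linear in its second argument $(U^{\pm},\Psi^{\pm})$, the operator $\mathbb{L}$ is first order in $U^{\pm}$ and linear in the first-order derivatives of $\varphi$, and the coefficients of $\mathbb{L}'$ are $C^{\infty}$ functions of $\widehat{U}^{\pm}$ and of the first-order derivatives of $\widehat{\Psi}^{\pm}$ (the denominators $1/\partial_1\widehat{\Phi}^{\pm}$ being smooth and bounded once $\widetilde{K}_0$ is small enough, so that $\partial_1\widehat{\Phi}^{\pm}$ stays away from zero), multiplied by at most one first-order derivative of $\widehat{U}^{\pm}$ (coming from the matrix $\mathcal{C}(\widehat{U}^{\pm},\widehat{\Psi}^{\pm})$ and from the factor $\partial_1\widehat{U}^{\pm}$ in $\mathbb{L}'$). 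Differentiating once more in the direction $(U''^{\pm},\Psi''^{\pm})$ and regrouping, every term of $\mathbb{L}''((U'^{\pm},\Psi'^{\pm}),(U''^{\pm},\Psi''^{\pm}))$ can be written as $\mathcal{G}^{\pm}\,\Xi'^{\pm}\,\Xi''^{\pm}$, where $\mathcal{G}^{\pm}$ is a $C^{\infty}$ function of $\widehat{U}^{\pm}$, $\hat{\varphi}^{\pm}$ carrying \emph{at most one} derivative of the basic state (hence $\|\mathcal{G}^{\pm}\|_{L_{\infty}(\Omega_T)}\le C(\widetilde{K}_0)$ and $\|\mathcal{G}^{\pm}\|_{H^s(\Omega_T)}\le C(\widetilde{K}_0)\,\nl(\widehat{U},\hat{\varphi})\nr_{s+1}$ by \eqref{c40}, \eqref{c41} and $H^3\hookrightarrow W^1_{\infty}$), and $\Xi'^{\pm}\in\{U'^{\pm},\partial U'^{\pm},\Psi'^{\pm},\partial\Psi'^{\pm}\}$, $\Xi''^{\pm}\in\{U''^{\pm},\partial U''^{\pm},\Psi''^{\pm},\partial\Psi''^{\pm}\}$; in particular no term carries a second derivative of $(U',\Psi')$ or $(U'',\Psi'')$, nor more than one derivative of the basic state. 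Using $\Psi^{\pm}=\chi(\pm x_1)\varphi$, which yields $\|\Xi'^{\pm}\|_{L_{\infty}(\Omega_T)}\le C\,\nl(U',\varphi')\nr_3$ and $\|\Xi'^{\pm}\|_{H^s(\Omega_T)}\le C\,\nl(U',\varphi')\nr_{s+1}$ (and likewise with double primes), and applying \eqref{c40} twice to put the $H^s$-norm on one factor of $\mathcal{G}^{\pm}\Xi'^{\pm}\Xi''^{\pm}$ while keeping the others in $L_{\infty}$, one gets for each such term
\[
\begin{split}
\|\mathcal{G}^{\pm}\Xi'^{\pm}\Xi''^{\pm}\|_{H^s(\Omega_T)}\le C(\widetilde{K}_0)\Bigl\{ & \nl(\widehat{U},\hat{\varphi})\nr_{s+1}\nl(U',\varphi')\nr_3\nl(U'',\varphi'')\nr_3 \\ & +\nl(U',\varphi')\nr_{s+1}\nl(U'',\varphi'')\nr_3 +\nl(U'',\varphi'')\nr_{s+1}\nl(U',\varphi')\nr_3\Bigr\}.
\end{split}
\]
Summing over the finitely many terms and over $\pm$ gives the asserted estimate for $\mathbb{L}''$.

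\textbf{Main obstacle.} The difficulty is not analytical but organizational: the work is in writing the second derivative $\mathbb{L}''$ explicitly enough to see that it is, up to $C^{\infty}$ basic-state coefficients, a bilinear form whose coefficients lose \emph{exactly one} derivative of $(\widehat{U},\hat{\varphi})$ (not two), and whose two arguments each carry \emph{at most one} derivative of $(U',\Psi')$ respectively $(U'',\Psi'')$ — the single extra derivative in $\nl(U',\varphi')\nr_{s+1}$ being accounted for by the relation $\Psi^{\pm}=\chi(\pm x_1)\varphi$. Once this structure is laid bare, the rest is a routine iteration of \eqref{c40}, \eqref{c41} and the $H^3$–$W^1_{\infty}$ embedding, the only genuine smallness requirement being that $\widetilde{K}_0$ be small enough to keep $1/\partial_1\widehat{\Phi}^{\pm}$ bounded and to apply \eqref{c41}.
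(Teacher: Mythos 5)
Your proof is correct and follows the route the paper itself indicates (and then omits, citing \cite{Tcpam}): compute $\mathbb{B}''$ from its explicit constant bilinear form \eqref{B"} and estimate by the Moser product rule, and for $\mathbb{L}''$ observe from the explicit formula for $\mathbb{L}'$ that every term of the second derivative is a product of a coefficient involving at most one derivative of $(\widehat{U},\hat{\varphi})$ (through $\mathcal{C}$, $\partial_1\widehat{U}/\partial_1\widehat{\Phi}$, $\partial\widehat{\Psi}$) with one derivative at most of each of $(U',\Psi')$, $(U'',\Psi'')$, then iterate \eqref{c40}, \eqref{c41} and $H^3\hookrightarrow W^1_\infty$. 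The only slight imprecision is that the bound on $1/\partial_1\widehat{\Phi}^\pm$ comes from the standing assumption $\|\hat{\varphi}\|_{L_\infty(\partial\Omega_T)}<1$ rather than from smallness of $\widetilde{K}_0$ (the paper in fact later takes $\widetilde{K}_0=2C_*$, which need not be small), but this does not affect the argument.
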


Without loss of generality we assume that the constant $\widetilde{K}_0=2C_*$, where $C_*$ is the constant from
(\ref{66}). By using (\ref{80}), (\ref{85}), (\ref{86}), and Proposition \ref{p2a}, as in \cite{Tcpam}, we obtain the following result:

\begin{lemma}
Let $\alpha \geq 4$. There exist $\delta >0$ sufficiently small, and $\theta_0 \geq 1$ sufficiently large, such that
for all $k=0,\ldots n-1$, and for all integer $s\in [3,\widetilde{\alpha}-1]$, we have the estimates
\begin{align}
 \|{e}'_k\|_{H^s(\Omega_T)}\leq & C\delta^2\theta_k^{L_1(s)-1}\Delta_k,\label{87}\\
 \|\tilde{e}'_k\|_{H^s(\partial\Omega_T)}\leq & C\delta^2\theta_k^{L_1(s)-1}\Delta_k,\label{88}
\end{align}
where $L_1(s)=\max \{ (s+1-\alpha )_+ +4-2\alpha ,s+2-2\alpha \}$.
\label{l4}
\end{lemma}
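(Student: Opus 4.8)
The plan is to insert the representations \eqref{85}--\eqref{86} for ${e}'_k$ and $\tilde{e}'_k$ into the bilinear estimates of Proposition \ref{p2a}, and then absorb every factor using the inductive hypothesis $(H_{n-1})$ together with the basic counting identities for $\theta_k$ and $\Delta_k$. Concretely, I would first verify that the ``base state'' appearing in \eqref{85}, namely ${U}^a+{U}_k+\tau\delta{U}_k$ and ${\Psi}^a+{\Psi}_k+\tau\delta{\Psi}_k$, satisfies the smallness hypothesis of Proposition \ref{p2a} in $H^3$. This follows from \eqref{66} (which gives $\|{U}^a\|_{H^3}\leq C_*$), from \eqref{80} with $s=3<\alpha$ (which gives $\|{U}_k\|_{H^3}\leq\delta$), and from point a) of $(H_{n-1})$ for $\delta{U}_k$, so that the $H^3$-norm of the argument is bounded by, say, $C_*+2\delta\leq \widetilde K_0=2C_*$ once $\delta$ is small; here it is convenient that $\tau\in[0,1]$ so the convex-combination structure costs nothing.

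Next, applying the first inequality of Proposition \ref{p2a} with $({U}',{\Psi}')=({U}'',{\Psi}'')=(\delta{U}_k,\delta{\Psi}_k)$ and integrating over $\tau$, I would obtain
\[
\|{e}'_k\|_{H^s(\Omega_T)}\leq C(\widetilde K_0)\Bigl\{\nl\widehat{U}^{(k)},\hat\varphi^{(k)}\nr_{s+1}\,\nl\delta U_k,\delta\varphi_k\nr_3^2 + \nl\delta U_k,\delta\varphi_k\nr_{s+1}\nl\delta U_k,\delta\varphi_k\nr_3\Bigr\},
\]
where $\widehat U^{(k)}={U}^a+{U}_k+\tau\delta{U}_k$. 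Now I estimate each factor by $(H_{n-1})$: the $H^3$-factor is $\leq C\delta\theta_k^{-\alpha-1}\Delta_k$ by point a) (since $3\leq\alpha$ and $\theta_k^{3-\alpha-1}\leq\theta_k^{-\alpha-1}\theta_k^{2}$—more carefully one keeps $\theta_k^{(3-\alpha-1)}$ and folds the exponent into $L_1$); the $H^{s+1}$-factor of $\delta U_k$ is $\leq C\delta\theta_k^{s+1-\alpha-1}\Delta_k$; and the $H^{s+1}$-factor of the base state is $\leq C(C_*+\delta\theta_k^{(s+1-\alpha)_+})$, using \eqref{66}, \eqref{80}, and point a). Multiplying out and taking the worst exponent yields a bound of the form $C\delta^2\theta_k^{L_1(s)-1}\Delta_k$ with $L_1(s)=\max\{(s+1-\alpha)_++4-2\alpha,\ s+2-2\alpha\}$: the first term in the max comes from the ``base-state $\times$ low $\times$ low'' contribution (where the two low factors each carry a $\theta_k^{3-\alpha-1}\Delta_k$ and one extra $\Delta_k\le\tfrac12\theta_k^{-1}$ is reinserted), the second from ``high $\times$ low''. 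The boundary error $\tilde e'_k$ is handled identically using \eqref{86}, \eqref{B"} and the second inequality of Proposition \ref{p2a}; here the explicit form \eqref{B"} shows $\mathbb{B}''$ is genuinely bilinear in $(\delta v_2,\delta H_1,\partial_2\delta\varphi)$ with no dependence on the base state, which only simplifies the bookkeeping.

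The main obstacle is purely the exponent bookkeeping: one must check that every product of $\theta_k$-powers arising from the three terms in Proposition \ref{p2a}, after reinserting factors $\Delta_k\sim\theta_k^{-1}$ to match the claimed $\theta_k^{L_1(s)-1}\Delta_k$ form, is dominated by $\theta_k^{L_1(s)-1}$ with the stated $L_1(s)$, and that this holds uniformly for integer $s\in[3,\widetilde\alpha-1]$. The condition $\alpha\geq 4$ enters precisely here, ensuring $L_1(s)-1<0$ in the relevant range so that the geometric-type series $\sum_k\theta_k^{L_1(s)-1}\Delta_k$ will later converge; one also uses $\theta_k\geq 1$ freely to replace small exponents by the maximum. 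Finally, the smallness of $\delta$ and the largeness of $\theta_0$ are used only to guarantee the hypotheses of Proposition \ref{p2a} and to keep the hyperbolicity of the base state (as in the remark after \eqref{pv2k}); no new idea beyond \cite{Tcpam} is needed, and I would simply refer to \cite{Tcpam} for the routine parts of the computation.
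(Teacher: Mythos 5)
Your proposal is correct and takes essentially the same approach as the paper: the paper simply refers to lemma~4.8 of \cite{Tcpam} for this proof, and your outline reconstructs exactly that computation --- insert the integral representations \eqref{85}--\eqref{86} into the bilinear estimates of Proposition~\ref{p2a}, then close the exponent bookkeeping using point~a) of $(H_{n-1})$, Lemma~\ref{l3}, the bound \eqref{66} on the approximate solution, and the inequality $\Delta_k\le\tfrac12\theta_k^{-1}$, arriving at $L_1(s)=\max\{(s+1-\alpha)_+ +4-2\alpha,\,s+2-2\alpha\}$ from the ``base-state $\times$ low $\times$ low'' and ``high $\times$ low'' contributions respectively. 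The only slight imprecision is attributing to $\alpha\geq 4$ the convergence of a later geometric-type series; that role is actually played by the stronger requirement $\alpha\geq 7$ in Lemmas~\ref{l8}--\ref{l9}, and at this stage $\alpha\geq 4$ is merely a working hypothesis inherited from \cite{Tcpam} that keeps the exponent count compatible with $(H_{n-1})$.
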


The proof of Lemma \ref{l4} is absolutely the same as that of lemma 4.8 in \cite{Tcpam}.

\subsection{Estimate of the first substitution errors}

The first substitution errors can be rewritten as follows:
\begin{equation}
\begin{split}
{e}''_k &= \mathcal{ L}'({U}_{k} ,{\Psi}_{k})(\delta{{U}}_k,\delta{\Psi}_{k})-
\mathcal{ L}'(S_{\theta_k}{U}_{k} ,S_{\theta_k}{\Psi}_{k})(\delta{{U}}_k,\delta{\Psi}_{k})
\\
 &
 \begin{split}
 =\int\limits_0^1 &\mathbb{L}''\bigl({U}^a+S_{\theta_k}{U}_k +\tau (I-S_{\theta_k}){U}_k, {\Psi}^a+S_{\theta_k}{\Psi}_k
 \\ &+ \tau (I -  S_{\theta_k}){\Psi}_k\bigr)\bigl((\delta{U}_k,\delta{\Psi}_k),
((I-S_{\theta_k}) {U}_k,(I-S_{\theta_k}) {\Psi}_k)\bigr) d\tau ,
\end{split}
\end{split}
\label{89}
\end{equation}
\begin{equation}
\begin{split}
\tilde{e}''_k & =
\bigl(\mathcal{ B}'({U}_{k},\varphi_{k})(\delta{{U}}_k,\delta \varphi_{k})-\mathcal{ B}'(S_{\theta_k}{U}_{k},S_{\theta_k}\varphi_{k})(\delta{{U}}_k,\delta \varphi_{k})\bigr)|_{x_1=0}\\
 & =\mathbb{B}''\bigl(
(\delta{U}_k|_{x_1=0},\delta \varphi_k),(({U}_k-S_{\theta_k}{U}_k)|_{x_1=0},\varphi_k- S_{\theta_k}\varphi_k)\bigr).
\end{split}
\label{90}
\end{equation}

Using \eqref{89} and \eqref{90} as well as (\ref{66}), ($H_{n-1}$), (\ref{82}), (\ref{83}), (\ref{84}) and Proposition \ref{p2a}, we get the following result.

\begin{lemma}
Let $\alpha \geq 4$. There exist $\delta >0$ sufficiently small, and $\theta_0 \geq 1$ sufficiently large, such that
for all $k=0,\ldots n-1$, and for all integer $s\in [6,\widetilde{\alpha}-2]$, one has
\begin{align}
& \|{e}''_k\|_{H^s(\Omega_T)}\leq C\delta^2\theta_k^{L_2(s)-1}\Delta_k,\nonumber\\
& \|\tilde{e}''_k\|_{H^s(\partial\Omega_T)}\leq C\delta^2\theta_k^{L_2(s)-1}\Delta_k,\nonumber
\end{align}
where $L_2(s)=\max \{ (s+1-\alpha )_+ +6-2\alpha ,s+5-2\alpha \}$.
\label{l5}
\end{lemma}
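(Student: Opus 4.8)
The plan is to follow exactly the template of Lemma~\ref{l4}, replacing the Taylor-remainder representation \eqref{85}--\eqref{86} of the quadratic errors by the analogous representation \eqref{89}--\eqref{90} of the first substitution errors. So first I would write out the integral formulas for $e''_k$ and $\tilde e''_k$ obtained by integrating $\mathbb{L}''$ (resp.\ using $\mathbb{B}''$) along the segment joining $S_{\theta_k}U_k$ to $U_k$; these are precisely \eqref{89} and \eqref{90}. The point is that in these formulas one of the two ``increment'' arguments is $(I-S_{\theta_k})U_k$ (resp.\ its trace and $(1-S_{\theta_k})\varphi_k$), while the base point is $U^a+S_{\theta_k}U_k+\tau(I-S_{\theta_k})U_k$. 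One must first check that the hypothesis of Proposition~\ref{p2a} is met: the base point is bounded in $H^3$ by $C_*+C\delta$ (using \eqref{66}, \eqref{82}, \eqref{83}) which, after choosing $\delta$ small, is $\leq \widetilde K_0=2C_*$; and the arguments lie in $H^{s+1}$ because $s\le\widetilde\alpha-2$ so $s+1\le\widetilde\alpha-1$ and $(H_{n-1})$(a) gives $\delta U_k\in H^{\widetilde\alpha}$, while \eqref{82} (valid for all integer $s\ge3$) controls $(I-S_{\theta_k})U_k$ in every $H^s$.

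Next I would simply feed these bounds into Proposition~\ref{p2a}. For $e''_k$ the three terms produced by the proposition are, schematically,
\[
\nl(\text{base})\nr_{s+1}\,\nl\delta U_k\nr_3\,\nl(I-S_{\theta_k})U_k\nr_3,\quad
\nl\delta U_k\nr_{s+1}\,\nl(I-S_{\theta_k})U_k\nr_3,\quad
\nl(I-S_{\theta_k})U_k\nr_{s+1}\,\nl\delta U_k\nr_3 .
\]
Now I insert the quantitative estimates: $\nl\delta U_k\nr_3\le C\delta\theta_k^{-\alpha-1}\Delta_k$ (from $(H_{n-1})$(a) with $s=3$, using $\alpha\ge4$ so $3<\alpha$); $\nl\delta U_k\nr_{s+1}\le C\delta\theta_k^{s-\alpha}\Delta_k$; $\nl(I-S_{\theta_k})U_k\nr_3\le C\delta\theta_k^{3-\alpha}$ and $\nl(I-S_{\theta_k})U_k\nr_{s+1}\le C\delta\theta_k^{s+1-\alpha}$ (from \eqref{82}); and for the base point one uses $\|U^a\|_{H^{s+1}}\le C_*$ together with \eqref{83}, i.e.\ $\nl(\text{base})\nr_{s+1}\le C_*+C\delta\theta_k^{(s+1-\alpha)_+}$. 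Multiplying out, the dominant exponent of $\theta_k$ coming out (before the $\Delta_k$ factor) is $\max\{(s+1-\alpha)_++6-2\alpha,\; s+5-2\alpha\}=L_2(s)-1+1=L_2(s)$ after one remembers that $\Delta_k\le C\theta_k^{-1}$; more precisely each product is $\le C\delta^2\theta_k^{L_2(s)-1}\Delta_k$ once a $\Delta_k$ from $\delta U_k$ is extracted. The computation for $\tilde e''_k$ is identical using the second half of Proposition~\ref{p2a} and the trace theorem to pass from $\Omega_T$-norms of $U_k-S_{\theta_k}U_k$ to $\partial\Omega_T$-norms of its trace; note the first $\mathbb{B}''$-term in Proposition~\ref{p2a} is $\|W'\|_{H^s}\|\varphi''\|_{H^3}$, etc., so the exponent structure is the same and the requirement $s\ge6$ is what is needed to absorb the trace loss (cf.\ the role of $\alpha\ge6$ in Lemma~\ref{ldv1}), which explains why the admissible range here is $s\in[6,\widetilde\alpha-2]$ rather than $[3,\widetilde\alpha-1]$.

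Finally I would collect the three (resp.\ four) bounds, take the worst exponent, and record the result as the stated inequalities with $L_2(s)=\max\{(s+1-\alpha)_++6-2\alpha,\ s+5-2\alpha\}$, choosing $\theta_0$ large and $\delta$ small so that all the smallness conditions of Propositions~\ref{p1a} and \ref{p2a} and of $(H_{n-1})$ are in force uniformly in $k$. The main obstacle, as usual in Nash--Moser bookkeeping, is not any single estimate but making sure the exponent arithmetic is done correctly: one has to verify that the ``+2'' in the smoothing loss (two derivatives gained from $S_{\theta_k}$ acting on the second argument, versus one from the increment $\delta U_k$ in the quadratic error) is exactly what upgrades $L_1(s)$ to $L_2(s)$ — i.e.\ $L_2(s)=L_1(s)+2$ in the appropriate regime — and that $L_2(\widetilde\alpha-2)-1<0$ so that the accumulated error series still converges; this is where I would be most careful and where I would simply quote that the argument is ``absolutely the same as that of lemma~4.9 in \cite{Tcpam}'' once the ingredients above are in place.
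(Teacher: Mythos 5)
Your overall plan is the same as the paper's: represent $e''_k$, $\tilde e''_k$ by the integral/difference formulas \eqref{89}--\eqref{90}, verify the hypotheses of Proposition~\ref{p2a}, feed in the bounds from $(H_{n-1})$, \eqref{82}, \eqref{83}, and refer to lemma~4.9 of \cite{Tcpam} for the bookkeeping. The paper's own ``proof'' of Lemma~\ref{l5} is just that reference, so in approach you agree with the paper.

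Two quantitative points in your sketch are off and should be corrected before this is usable. First, $(H_{n-1})$(a) with $s=3$ gives $\nl\delta U_k\nr_3\le\delta\theta_k^{2-\alpha}\Delta_k$, not $\theta_k^{-\alpha-1}\Delta_k$; with that correction the interior product term
$\nl\text{base}\nr_{s+1}\,\nl\delta U_k\nr_3\,\nl(I-S_{\theta_k})U_k\nr_3$
gives the exponent $(s+1-\alpha)_++5-2\alpha$, which is exactly the first branch of $L_2(s)-1$. The second branch $s+4-2\alpha$ of $L_2(s)-1$ does not come out of the interior terms at all (those only give $s+3-2\alpha$); it arises from the boundary estimate, because the trace theorem forces $\|\delta U_k|_{x_1=0}\|_{H^3(\partial\Omega_T)}\le C\|\delta U_k\|_{H^4(\Omega_T)}\le C\delta\theta_k^{3-\alpha}\Delta_k$, which when multiplied against $\|\varphi_k-S_{\theta_k}\varphi_k\|_{H^{s+1}(\partial\Omega_T)}\le C\delta\theta_k^{s+1-\alpha}$ gives $s+4-2\alpha$. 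Your sentence about $L_2(s)=L_1(s)+2$ ``in the appropriate regime'' and about ``remembering $\Delta_k\le C\theta_k^{-1}$'' is not the mechanism: the $\Delta_k$ is simply carried along from the $\delta U_k$ factor, and $L_2(s)$ is bigger than $L_1(s)+2$ on the second branch because of this trace loss.

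Second, the explanation you offer for the range $s\in[6,\widetilde\alpha-2]$ (``absorb the trace loss, cf.\ $\alpha\geq 6$ in Lemma~\ref{ldv1}'') is a guess and does not hold up: the trace loss only costs one extra derivative and accounts for the exponent shift, not for the lower bound $s\ge 6$; the role of $\alpha\geq 6$ in Lemma~\ref{ldv1} is unrelated. The stated interval is inherited from lemma~4.9 of \cite{Tcpam}, and for the downstream use in Lemma~\ref{l9} one only needs $s=\alpha+2\in[6,\widetilde\alpha-2]$, which holds because $\alpha\geq 7$. You would be safer simply recording the range as given rather than inventing a rationale. With these corrections your argument closes; without them a reader cannot actually verify the exponent $L_2(s)$.
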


We can again refer to \cite{Tcpam} because the proof of Lemma \ref{l5} is the same as that of lemma 4.9 in \cite{Tcpam}.

\subsection{Construction and estimate of the modified state}
Since the approximate solution satisfies the hyperbolicity conditions \eqref{5.1'}, requirement \eqref{mf.1}  and the Rayleigh-Taylor sign condition \eqref{RT1} (see Lemma \ref{l2}) and since we shall require that the smooth modified state vanishes in the past, the state $(U^a+U_{n+1/2},\varphi^a+\varphi_{n+1/2})$ will satisfy \eqref{a5}, \eqref{cdass} and \eqref{RTL} (the ``relaxed'' versions of \eqref{5.1'}, \eqref{mf.1}  and \eqref{RT1})  for a sufficiently short time $T>0$. Therefore, while constructing the modified state we may focus only on constraints \eqref{a12'} and  \eqref{jc1'}.

\begin{proposition}
Let $\alpha \geq 6$. The exist some functions $U_{n+1/2}$ and $\varphi_{n+1/2}$, that vanish in the past, and such that
$(U^a+U_{n+1/2},\varphi^a+\varphi_{n+1/2})$ satisfies \eqref{a5}--\eqref{jc1'} and \eqref{RTL} for a sufficiently short time $T$. Moreover, these functions satisfy
\begin{equation}
\varphi_{n+1/2}=S_{\theta_n}\varphi_n, \quad S^{\pm}_{n+1/2}=S_{\theta_n}S^{\pm}_n,
 \label{93}
\end{equation}
and
\begin{equation}
\|U_{n+1/2} - S_{\theta_n}U_n\|_{H^s(\Omega_T)}\leq C\delta\theta_n^{s+1-\alpha}\quad \mbox{for}\ s\in [3,\tilde{\alpha}+3]
\label{94}
\end{equation}
for sufficiently small $\delta>0$ and $T>0$, and a sufficiently large $\theta_0\geq 1$.
\label{p3}
\end{proposition}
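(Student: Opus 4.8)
The plan is to follow the scheme of \cite{Tcpam,T09,ST}: build $U_{n+1/2}$ as a small, boundary--localized correction of the smoothed iterate $S_{\theta_n}U_n$, vanishing in the past, designed so as to restore exactly the constraints \eqref{a12'} and \eqref{jc1'}. As already noted before the statement, since the approximate solution satisfies the \emph{strict} conditions \eqref{5.1'}, \eqref{mf.1}, \eqref{RT1} (Lemma \ref{l2}) and every correction we add vanishes for $t<0$, the \emph{relaxed} conditions \eqref{a5}, \eqref{cdass}, \eqref{RTL} for $(U^a+U_{n+1/2},\varphi^a+\varphi_{n+1/2})$ hold automatically for a sufficiently short time $T$, as long as the correction stays small in $L^\infty(\Omega_T)$; hence only \eqref{a12'} and \eqref{jc1'} require genuine work. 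First I would set $\varphi_{n+1/2}:=S_{\theta_n}\varphi_n$ and, exploiting the "decoupled" character of the entropy equations, $S^{\pm}_{n+1/2}:=S_{\theta_n}S^{\pm}_n$; this already gives \eqref{93}, so it remains to define the $p$, $v$, $H$ components of $U_{n+1/2}$.

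Next I would restore the zeroth--order boundary conditions. Starting from $S_{\theta_n}U_n$, point c) of $(H_{n-1})$ together with the smoothing estimates \eqref{72}--\eqref{73} shows that the jumps $[p]$, $[v]$, $[H_\tau]$ and the defect $\partial_t(\varphi^a+S_{\theta_n}\varphi_n)-v_N^+|_{x_1=0}$ of the smoothed state are $O(\delta\theta_n^{s+1-\alpha})$ in the relevant norms and vanish in the past; by Lemma \ref{ldv1} (estimate \eqref{Hnk}) the same is true for $[H_N]$. I would then add a correction of the form "a cut--off $\chi(x_1)$ near $x_1=0$ times a lifting from the boundary of these defects", modifying only the $+$ components, so that after it $[p]=[v]=[H]=0$ and $\partial_t(\varphi^a+\varphi_{n+1/2})=v_N^+|_{x_1=0}$ hold exactly; the trace theorem and the continuity of the lifting operator keep the $H^s(\Omega_T)$ norm of this first correction within $C\delta\theta_n^{s+1-\alpha}$.

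The essential step — genuinely new relative to \cite{Tcpam} — is restoring the normal--derivative jump condition \eqref{jc1'}. Here I would repeat, now at the level of the iterates, the algebra behind Lemma \ref{l4.2} and Proposition \ref{p2}: evaluating on $x_1=0$ the pressure line and the magnetic lines of $\mathbb{L}$ for the state obtained after the previous step, one expresses $[\partial_1v]$ in terms of those boundary traces of $\mathbb{L}$ (small by point b) of $(H_{n-1})$ and \eqref{72}--\eqref{73}), tangential derivatives of the now--vanishing zeroth--order jumps, and lower--order terms; in particular the identity of the type \eqref{2017}, $H_N^+[\partial_1v]=H^+[\partial_1v_N]+\cdots$, shows that $[\partial_1v]$ is controlled because the normal magnetic field of the modified state does not vanish near $x_1=0$ for short time (cf. \eqref{cdass"}). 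I would then add a second correction acting only on the $\partial_1$--traces of the velocity, of the form $x_1\chi(x_1)w(t,x_2)$, so that the function itself and all its tangential traces at $x_1=0$ — hence the zeroth--order conditions just arranged — remain untouched, with $w$ chosen to cancel the residual $[\partial_1v]$; the estimate \eqref{Hn-1dv1} of Lemma \ref{ldv1} on $[\partial_1v_k]$ is precisely what is needed to bound this correction by $C\delta\theta_n^{s+1-\alpha}$.

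Finally, taking $U_{n+1/2}$ to be $S_{\theta_n}U_n$ plus the two corrections above, one checks that \eqref{a12'} and \eqref{jc1'} hold and, for $T$ small, all of \eqref{a5}--\eqref{jc1'} and \eqref{RTL}; relation \eqref{93} holds by construction, and \eqref{94} follows by adding the bounds on the two corrections, using the Moser--type inequalities \eqref{c40}, \eqref{c41}, the trace theorem, $(H_{n-1})$, \eqref{80}--\eqref{81} and Lemma \ref{ldv1}, while the smallness of $\|U_{n+1/2}-S_{\theta_n}U_n\|_{L^\infty(\Omega_T)}$ needed for the "automatic" conditions follows from \eqref{94} with $s=3$ and Sobolev embedding. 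I expect the main obstacle to be the third step: correctly extracting the identity tying $[\partial_1v]$ to quantities already known to be small together with the non--vanishing $H_N$, and verifying that the second correction leaves the previously arranged boundary conditions intact — in essence a careful reprise, for the iterates, of the reasoning proving Proposition \ref{p2}.
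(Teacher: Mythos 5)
Your overall blueprint matches the paper's: set $\varphi_{n+1/2}=S_{\theta_n}\varphi_n$, $S^{\pm}_{n+1/2}=S_{\theta_n}S^{\pm}_n$; add boundary--localized corrections built with $\chi(x_1)$ (with $\chi(0)=1$, $\chi'(0)=0$) to kill the zeroth--order jumps $[p]$, $[v]$, $[H]$ and to enforce the kinematic condition; add a second layer built with $\zeta(x_1)=x_1\chi(x_1)$ (with $\zeta(0)=0$, $\zeta'(0)=1$) to kill $[\partial_1 v]$ without disturbing the traces; invoke Lemma~\ref{ldv1} for the bounds of $[\partial_1 v]$ and $[H_N]$; and observe that the open conditions \eqref{a5}, \eqref{cdass}, \eqref{RTL} then hold for short $T$ because the correction is small and vanishes in the past. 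That is the route the paper takes, so there is no disagreement of strategy.

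There is, however, a genuine gap exactly where you flagged the difficulty. You propose to choose $w$ so that $\zeta w$ cancels ``the residual $[\partial_1 v]$'' of the state obtained after the first correction, i.e.\ essentially $[\partial_1(S_{\theta_n}v_n)]|_{x_1=0}$, and then to bound $w$ by ``the estimate \eqref{Hn-1dv1}''. But \eqref{Hn-1dv1} controls $[\partial_1 v_k]$ only for $k=0,\ldots,n-1$; it does \emph{not} control $[\partial_1(S_{\theta_n} v_n)]$, which mixes the step--$n$ iterate with the interior smoothing operator and its failure to commute with the map $u\mapsto \partial_1 u|_{x_1=0}$. Passing from the one to the other requires (i) splitting $v_n=v_{n-1}+\delta v_{n-1}$ and using point~a) of $(H_{n-1})$ for the increment, and (ii) handling the discrepancy between $S_{\theta_n}(\partial_1 v_{n-1})|_{x_1=0}$ and $\partial_1(S_{\theta_n}v_n)|_{x_1=0}$. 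The paper packages this into the correction
\[
-\tfrac{\zeta}{2}\,S_{\theta_n}[\partial_1 v_{2,n-1}]\;+\;\tfrac{\zeta}{2}\,\mathcal{R}_n,\qquad
\mathcal{R}_n=S_{\theta_n}(\partial_1 v_{2,n-1}^+ +\partial_1 v_{2,n-1}^-)-\partial_1(S_{\theta_n}v_{2,n}^+ + S_{\theta_n}v_{2,n}^-),
\]
so that the $\zeta'(0)=1$ contributions of the two pieces combine to cancel $[\partial_1 v_{2,n+1/2}]$ \emph{exactly}, while each piece is individually estimable: the first by \eqref{Hn-1dv1} (note the index $n-1$), the second by point~a) of $(H_{n-1})$ and the smoothing estimates \eqref{72}--\eqref{73}. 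Your plan would need this decomposition spelled out; as written, the bound of $w$ by $C\delta\theta_n^{s-\alpha}$ does not follow from the lemma you cite.

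Two smaller mismatches with the paper. First, the zeroth--order corrections are applied \emph{symmetrically}, e.g.\ $p^{\pm}_{n+1/2}=S_{\theta_n}p^{\pm}_n\mp\frac{\chi}{2}S_{\theta_n}[p_n]$, not just on the $+$ side; this keeps both $\pm$ increments of the same size and is what makes the $\theta_n^{s+1-\alpha}$ bound in \eqref{94} come out cleanly. Second, for $v_1^{\pm}$ the correction is not merely ``a lifting of jump defects'': the last condition of \eqref{a12'} prescribes the whole boundary value of $v_{N}^{+}$ in terms of $\varphi_{n+1/2}$, so $v^{\pm}_{1,n+1/2}$ is built from an explicit formula that substitutes $\partial_t\varphi_{n+1/2}$ and $v_{2,n+1/2}^{\pm}$ at $x_1=0$, plus the $\zeta$--terms for the normal--derivative jump. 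You sketch the idea but the actual construction is one level more concrete than ``cut--off times lifting''.

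In short: same approach, with the genuinely new step (the $[\partial_1 v]$ constraint) correctly identified but left with a hole in the estimate; the fix is the $\mathcal{R}_n$ device, which shifts the index from $n$ to $n-1$ and isolates a smoothing--commutator term estimable from $(H_{n-1})$ rather than from Lemma~\ref{ldv1}.
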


\begin{proof}
Note that estimate \eqref{94} can be proved for every $s\geq 3$ but below we will need it only for $s\in [3,\tilde{\alpha}+3]$.
Let $\varphi_{n+1/2}$ and the entropies $S^{\pm}_{n+1/2}$ be defined by (\ref{93}). We now define the pressures $p^{\pm}_{n+1/2}$ as follows:
\[
p^{\pm}_{n+1/2}:=S_{\theta_n}p^{\pm}_{n}\mp \frac{\chi}{2} S_{\theta_n}[p_{n}],
\]
where $\chi =\chi (x_1)$ is the same $C_0^{\infty}$ function which was used in \eqref{change2}. Clearly, $[p_{n+1/2}]=0$ that is in agreement with the boundary conditions \eqref{a12'} written for $(U^a+U_{n+1/2},\varphi^a+\varphi_{n+1/2})$. To get the estimate of $p^{\pm}_{n+1/2}-S_{\theta_n}p^{\pm}_{n}$ we first estimate the jump $[p_{n}]$ by using points a) and c) of the induction assumption:
\begin{equation}
\begin{split}
\|[p_{n}]\|_{H^s(\partial\Omega_T)} &\leq \|[p_{n-1}]\|_{H^s(\partial\Omega_T)}+\|[\delta p_{n-1}]\|_{H^s(\partial\Omega_T)} \\
 & \leq  \|\mathcal{ B}(U_{n-1}|_{x_1=0},\varphi_{n-1})\|_{H^s(\partial\Omega_T)} +C \|\delta U_{n-1}\|_{H^{s+1}(\Omega_T)} \leq C\delta\theta_n^{s-\alpha -1}
\end{split}
\label{n-1}
\end{equation}
for all integer $s\in[4,\alpha ]$. Using then the properties of the smoothing operators $S_{\theta_n}$ (see Proposition \ref{p1a}), we obtain
\[
\|p^{\pm}_{n+1/2}-S_{\theta_n}p^{\pm}_{n}\|_{H^s(\Omega_T)}\leq
\left\{ \begin{array}{ll} C\theta_n^{s-\alpha }\|[p_{n}]\|_{H^{\alpha}(\partial\Omega_T)}&\quad \mbox{for}\ s\in [\alpha,\tilde{\alpha}+3],\\
C\|[p_{n}]\|_{H^{s+1}(\partial\Omega_T)}&\quad \mbox{for}\ s\in [3,\alpha -1]
\end{array}\right.  \leq C\delta\theta_n^{s-\alpha}
\]
for $s\in [3,\tilde{\alpha}+3]$.

We first define the second components of the velocities $v_{n+1/2}^{\pm}$:
\begin{equation}
v^{\pm}_{2,n+1/2}:=  S_{\theta_n}v^{\pm}_{2,n}\mp \frac{\chi}{2} S_{\theta_n}[v_{2,n}] - \frac{\zeta}{2} S_{\theta_n}[\partial_1 v_{2,n-1}] + \frac{\zeta}{2}\mathcal{R}_n,
\label{201}
\end{equation}
where
\[
\mathcal{R}_n= S_{\theta_n}(\partial_1v_{2,n-1}^+ +\partial_1v_{2,n-1}^-) -\partial_1(S_{\theta_n}v_{2,n}^+ + S_{\theta_n}v_{2,n}^-) ,
\]
$\zeta =\zeta (x_1)= x_1\chi (x_1) \in C^{\infty}_0(\mathbb{R}^+)$, and we recall that $[\partial_1 v_{2,n-1}]=\partial_1 v^+_{2,n-1}|_{x_1=0}+\partial_1 v^-_{2,n-1}|_{x_1=0}$ (see \eqref{norm_jump}). Since $\chi (0)=1$, $\chi '(0)=0$, $\zeta (0)=0$ and $\zeta ' (0)=1$, we easily check that $[v_{2,n+1/2}]=[\partial_1v_{2,n+1/2}]=0$ that is in agreement with \eqref{a12'} and \eqref{jc1'} written for $(U^a+U_{n+1/2},\varphi^a+\varphi_{n+1/2})$ (we take \eqref{12.1a} into account). Using the same arguments as above, we get
\begin{equation}
\left\|\chi S_{\theta_n}[v_{2,n}]\right\|_{H^s(\Omega_T)}\leq C\|S_{\theta_n}[v_{2,n}]\|_{H^s(\partial\Omega_T)}\leq C\delta\theta_n^{s-\alpha}
\label{202}
\end{equation}
for $s\in [3,\tilde{\alpha}+3]$. In view of estimates \eqref{72} and \eqref{Hn-1dv1}, we obtain
\begin{equation}
\begin{split}
\left\|{\zeta} S_{\theta_n}[\partial_1 v_{2,n-1}]\right\|_{H^s(\Omega_T)} &\leq C\|S_{\theta_n}[\partial_1 v_{2,n-1}]\|_{H^s(\partial\Omega_T)} \\ & \leq
\left\{ \begin{array}{ll} C\theta_n^{s-\alpha +1}\|[\partial_1 v_{2,n-1}]\|_{H^{\alpha -1}(\partial\Omega_T)}&\quad \mbox{for}\ s\in [\alpha,\tilde{\alpha}+3],\\
C\|[\partial_1 v_{2,n-1}]\|_{H^{s}(\partial\Omega_T)}&\quad \mbox{for}\ s\in [3,\alpha -1]
\end{array}\right. \\ & \leq C\delta\theta_n^{s-\alpha}
\end{split}
\label{203}
\end{equation}
for $s\in [3,\tilde{\alpha}+3]$.

For estimating $\zeta\mathcal{R}_n$ we decompose $\mathcal{R}_n$ as
\[
\mathcal{R}_n=\sum_{\pm}\left( (S_{\theta_n} -I)(\partial_1v_{2,n}^{\pm}) -\partial_1\bigl\{(S_{\theta_n} -I)v_{2,n}^{\pm}\bigr\}
-S_{\theta_n}\partial_1(\delta v_{2,n-1}^{\pm})\right).
\]
For $s\in [\alpha , \tilde{\alpha} +3]$ one has
\[
\begin{split}
\|(S_{\theta_n} & -I)(\partial_1v_{2,n}^{\pm}) -\partial_1\bigl\{(S_{\theta_n} -I)v_{2,n}^{\pm}\bigr\}\|_{H^s(\Omega_T)}\\
 &\leq C \bigl\{
\|\partial_1(S_{\theta_n}v_{2,n}^{\pm}) \|_{H^s(\Omega_T)}+\|S_{\theta_n}(\partial_1v_{2,n}^{\pm}) \|_{H^s(\Omega_T)} \bigr\}\\ &
\leq C\bigl\{
\|S_{\theta_n}v_{2,n}^{\pm} \|_{H^{s+1}(\Omega_T)}
+\theta_n^{s-\alpha}\|v_{2,n}^{\pm}\|_{H^{\alpha+1}(\Omega_T)}\bigr\}\leq C\delta\theta_n^{s+1-\alpha},
\end{split}
\]
while for $s\in [3 , {\alpha} -1]$ we obtain (recall that $\tilde{\alpha}=\alpha +4$)
\[
\begin{split}
& \bigl\|\partial_1\bigl\{(S_{\theta_n} -I)v_{2,n}^{\pm}\bigr\} \bigr\|_{H^s(\Omega_T)}\leq C\delta\theta_n^{s+1-\alpha},
\\
& \|(S_{\theta_n}  -I)(\partial_1v_{2,n}^{\pm})\|_{H^s(\Omega_T)}\leq C\theta_n^{s-\alpha}\| v_{2,n}^{\pm}\|_{H^{\alpha+1}(\Omega_T)}\leq C\delta\theta_n^{s+1-\alpha}.
\end{split}
\]
Here we have, in particular, used Lemma \ref{l3}. Using \eqref{72} and point a) of ($H_{n-1}$), we get
\[
\begin{split}
\|S_{\theta_n} &\partial_1(\delta v_{2,n-1}^{\pm})\|_{H^s(\Omega_T)} \\
& \leq C\theta_n^{s-2}\|\delta v_{2,n-1}^{\pm}\|_{H^3(\partial\Omega_T)}\leq C\theta_n^{s-2}\delta\theta_{n-1}^{2-\alpha }
\theta_{n-1}^{-1}\leq C\delta\theta_n^{s-\alpha -1}
\end{split}
\]
for $s\in [3,\tilde{\alpha} +3]$. That is, we obtain
\begin{equation}
\|{\zeta}\mathcal{R}_n\|_{H^s(\Omega_T)}\leq C\|\mathcal{R}_n\|_{H^s(\Omega_T)}\leq C\delta\theta_n^{s+1-\alpha }
\label{204}
\end{equation}
for $s\in [3,\tilde{\alpha} +3]$. It follows from \eqref{201}--\eqref{204} that
\[
\|v^{\pm}_{2,n+1/2}-S_{\theta_n}v^{\pm}_{2,n}\|_{H^s(\Omega_T)}\leq C\delta\theta_n^{s+1-\alpha }
\]
for $s\in [3,\tilde{\alpha} +3]$. Moreover, considering \eqref{201} at $x_1=0$ and using \eqref{202}, we get one more estimate
\begin{equation}
\bigl\|(v^{\pm}_{2,n+1/2}-S_{\theta_n}v^{\pm}_{2,n})|_{x_1=0}\bigr\|_{H^s(\partial\Omega_T)}\leq C\delta\theta_n^{s-\alpha }
\label{301}
\end{equation}
for $s\in [3,\tilde{\alpha} +3]$ which will be used below.

We now define the first components of the velocities $v_{n+1/2}^{\pm}$:
\begin{equation}
\begin{split}
v^{\pm}_{1,n+1/2}: = & S_{\theta_n}v^{\pm}_{1,n} -\chi (S_{\theta_n}v_{1,n}^{\pm})|_{x_1=0}
+\chi (v_2^{a\pm}+v^{\pm}_{2,n+1/2})|_{x_1=0}\,\partial_2\varphi_{n+1/2}
\\ & +\chi (v^{\pm}_{2,n+1/2})\bigr|_{x_1=0}\,\partial_2\varphi^{a} +\chi\partial_t\varphi_{n+1/2}- \frac{\zeta}{2} S_{\theta_n}[\partial_1 v_{1,n-1}] + \frac{\zeta}{2}\widetilde{\mathcal{R}}_n,
\end{split}
\label{205}
\end{equation}
where
\[
\widetilde{\mathcal{R}}_n= S_{\theta_n}(\partial_1v_{1,n-1}^+ +\partial_1v_{1,n-1}^-) -\partial_1(S_{\theta_n}v_{1,n}^+ + S_{\theta_n}v_{1,n}^-)
\]
coincides with $\mathcal{R}_n$ if $v_{1,n-1}^{\pm}$ and $v_{1,n}^{\pm}$ are replaced with $v_{2,n-1}^{\pm}$ and $v_{2,n}^{\pm}$ respectively.
We have
\[
v^{\pm}_{1,n+1/2}=(v_2^{a\pm}+v^{\pm}_{2,n+1/2})\partial_2\varphi_{n+1/2} +v^{\pm}_{2,n+1/2}\partial_2\varphi^{a} +\partial_t\varphi_{n+1/2}\quad\mbox{at }x_1=0,
\]
i.e., in view of \eqref{12.1a}, we get the last boundary condition in \eqref{a12'} written for $(U^a+U_{n+1/2},\varphi^a+\varphi_{n+1/2})$. Moreover, taking $[v_{2,n+1/2}]=0$ into account, we obtain $[v_{1,n+1/2}]=0$. It can be also easily seen that $[\partial_1v_{1,n+1/2}]=0$.

For estimating $v^{\pm}_{1,n+1/2}- S_{\theta_n}v^{\pm}_{1,n}$ we rewrite \eqref{205} as
\begin{equation}
v^{\pm}_{1,n+1/2}- S_{\theta_n}v^{\pm}_{1,n}=\mathcal{V}_{n} +\chi\mathcal{P}_{n}^\pm +\chi\mathcal{G}_n^\pm ,
\label{205'}
\end{equation}
where
\[
\begin{split}
\mathcal{V}_{n}  & = - \frac{\zeta}{2} S_{\theta_n}[\partial_1 v_{1,n-1}] + \frac{\zeta}{2}\widetilde{\mathcal{R}}_n ,\\
\mathcal{P}_{n}^\pm &=  (v^{\pm}_{2,n+1/2}-S_{\theta_n}v^{\pm}_{2,n})|_{x_1=0}\,\partial_2(\varphi^a+S_{\theta_n}\varphi_n ),\\
\mathcal{G}_{n}^\pm &=\partial_t\varphi_{n+1/2} -(S_{\theta_n}v_{1,n}^{\pm})|_{x_1=0} + (v_2^{a\pm}+S_{\theta_n}v^{\pm}_{2,n})|_{x_1=0}\,\partial_2\varphi_{n+1/2} + (S_{\theta_n}v^{\pm}_{2,n})|_{x_1=0}\,\partial_2\varphi^{a}.
\end{split}
\]
Exactly as in \eqref{203}, we get
\[
\left\|{\zeta}S_{\theta_n}[\partial_1 v_{1,n-1}]\right\|_{H^s(\Omega_T)} \leq C\delta\theta_n^{s-\alpha}
\]
for $s\in [3,\tilde{\alpha}+3]$. By virtue of the counterpart of \eqref{204} for $\widetilde{\mathcal{R}}_n$, we obtain the estimate
\begin{equation}
\|\mathcal{V}_{n}\|_{H^s(\Omega_T)}\leq C\delta\theta_n^{s+1-\alpha}
\label{206}
\end{equation}
for $s\in [3,\tilde{\alpha}+3]$.

Applying the Moser-type calculus inequalities and Sobolev's embeddings and using \eqref{66}, \eqref{83}, \eqref{84} and \eqref{301},  we estimate the second term in the right-hand side of \eqref{205'}:
\begin{equation}
\begin{split}
\|\chi\mathcal{P}_{n}^{\pm}\|_{H^s(\Omega_T)} & \leq C\|\mathcal{P}_{n}^{\pm}\|_{H^s(\partial\Omega_T)} \\
& \;
\begin{split}
\leq C \bigl\{ & \|(v^{\pm}_{2,n+1/2}-S_{\theta_n}v^{\pm}_{2,n})|_{x_1=0}\|_{H^s(\partial\Omega_T)}\|\varphi^a+S_{\theta_n}\varphi_n \|_{H^3(\partial\Omega_T)} \\
& +\|(v^{\pm}_{2,n+1/2}-S_{\theta_n}v^{\pm}_{2,n})|_{x_1=0}\|_{H^3(\partial\Omega_T)}\|\varphi^a+S_{\theta_n}\varphi_n \|_{H^{s+1}(\partial\Omega_T)}\bigr\}
\end{split}\\
& \leq C\bigl\{ \delta \theta_n^{s-\alpha} (C_*+\delta\theta_n^{(3-\alpha)_+})+ \delta \theta_n^{3-\alpha} (C_*+\delta\theta_n^{\ell (s,\alpha )})\bigr\} \\
& \leq  C \delta \theta_n^{s-\alpha}
\end{split}
\label{207}
\end{equation}
for $s\in [3,\tilde{\alpha}+3]$, where $\ell (s,\alpha )=(s+1-\alpha)_+$ for $s\neq \alpha -1$ and $\ell (\alpha -1,\alpha )=1$.
Note that the estimate $\|\varphi^a\|_{H^{s+1}(\partial\Omega_T)}$ used above for $s\in [3,\tilde{\alpha}+3]$ is in agreement with assumption \eqref{66} (recall that $\tilde{\alpha}+3=\alpha +7=m+8$).  In \eqref{207} we also took into account the assumption $\alpha \geq 6$. Regarding the terms $\mathcal{G}_n^{\pm}$ (see \eqref{205'}), up to dropping the superscripts $\pm$ by $v_{1,n}^\pm$ and $v_{2,n}^\pm$ they have exactly the same form as $\mathcal{G}$ appearing in the proof of proposition 4.10 in \cite{Tcpam}. That is, here we can just refer to \cite{Tcpam} and write down the estimate
\begin{equation}
\|\chi\mathcal{G}_{n}^{\pm}\|_{H^s(\Omega_T)} \leq C\|\mathcal{G}_{n}^{\pm}\|_{H^s(\partial\Omega_T)}\leq C \theta_n^{s+1-\alpha}
\label{208}
\end{equation}
for $s\in [3,\tilde{\alpha}+3]$. It follows from \eqref{205'}--\eqref{208} that
\[
\|v^{\pm}_{1,n+1/2}- S_{\theta_n}v^{\pm}_{1,n}\|_{H^s(\Omega_T)} \leq C\theta_n^{s+1-\alpha}
\]
for $s\in [3,\tilde{\alpha}+3]$.

At last,  we define the magnetic fields  in the same way as the pressures $p^{\pm}_{n+1/2}$ above:
\[
H^{\pm}_{n+1/2}:=S_{\theta_n}H^{\pm}_{n}\mp \frac{\chi}{2} S_{\theta_n}[H_{n}].
\]
Using then the arguments as in \eqref{n-1}, points a) and c) of the induction assumption (for estimating the ``tangential'' jump $[H_{\tau ,n-1}]$) and  estimate \eqref{Hnk} for the ``normal'' jump $[H_{N ,n-1}]$, we obtain
\[
\|H^{\pm}_{\tau ,n+1/2}-S_{\theta_n}H^{\pm}_{\tau ,n}\|_{H^s(\Omega_T)}\leq C\delta\theta_n^{s-\alpha}
\]
(exactly as for $p^{\pm}_{n+1/2}$) and
\[
\|H^{\pm}_{N ,n+1/2}-S_{\theta_n}H^{\pm}_{N ,n}\|_{H^s(\Omega_T)}
\leq
\left\{ \begin{array}{ll} C\theta_n^{s-\alpha +1}\|[H_{N,n}]\|_{H^{\alpha -1}(\partial\Omega_T)}&\ \mbox{for}\ s\in [\alpha,\tilde{\alpha}+3],\\
C\|[H_{N,n}]\|_{H^{s}(\partial\Omega_T)}&\ \mbox{for}\ s\in [3,\alpha -1]
\end{array}\right.
\leq C\delta\theta_n^{s-\alpha}
\]
for $s\in [3,\tilde{\alpha}+3]$.
\end{proof}

\subsection{Estimate of the second substitution errors}

Since the estimate \eqref{94} for the modified state is absolutely the same as the corresponding one in \cite{Tcpam}, the rest of the proof of Theorem \ref{t1} is the same as that in \cite{Tcpam}. But, for the reader's convenience we will sketch here the remaining part of the proof with references to \cite{Tcpam} for detailed arguments and technical calculations.

The second substitution errors
\[
{e}'''_k = \mathcal{ L}'(S_{\theta_k}{U}_{k} ,S_{\theta_k}{\Psi}_{k})(\delta{{U}}_k,\delta{\Psi}_{k})- \mathcal{ L}'({U}_{k+1/2} ,{\Psi}_{k+1/2})(\delta{{U}}_k,\delta{\Psi}_{k})
\]
and
\[
\tilde{e}'''_k =
\bigl(\mathcal{ B}'(S_{\theta_k}{U}_{k},S_{\theta_k}\varphi_{k})(\delta{{U}}_k,\delta \varphi_{k})-
\mathcal{ B}'({U}_{k+1/2},\varphi_{k+1/2})(\delta{{U}}_k,\delta \varphi_{k})
\bigr)|_{x_1=0}
\]
can be written as
\begin{equation}
\begin{split}
{e}'''_k =\int_0^1 \mathbb{L}''\bigl( & {U}^a+{U}_{k+1/2} +\tau (S_{\theta_k}{U}_k-{U}_{k+1/2}),{\Psi}^a
\\ &
+S_{\theta_k}{\Psi}_k) \bigl((\delta{U}_k,\delta{\Psi}_k),
(S_{\theta_k} {U}_k -{U}_{k+1/2},0) \bigr) d\tau ,
\end{split}
\label{95}
\end{equation}
\begin{equation}
\tilde{e}'''_k
 =\mathbb{B}''\bigl(
(\delta{U}_k|_{x_1=0},\delta \varphi_k),((S_{\theta_k}{U}_k-{U}_{k+1/2})|_{x_1=0},0)\bigr).
\label{96}
\end{equation}
Employing (\ref{95}) and (\ref{96}) as well as Lemma \ref{l3} and Proposition \ref{p3}, we get the following result.

\begin{lemma}
Let $\alpha \geq 6$. There exist $\delta >0$, $T>0$ sufficiently small, and $\theta_0 \geq 1$ sufficiently large, such that for all $k=0,\ldots n-1$, and for all integer $s\in [3,\widetilde{\alpha}-1]$, one has
\begin{align}
& \|{e}'''_k\|_{H^s(\Omega_T)}\leq C\delta^2\theta_k^{L_3(s)-1}\Delta_k,\label{97} \\
& \|\tilde{e}'''_k\|_{H^s(\partial\Omega_T)}\leq C\delta^2\theta_k^{L_3(s)-1}\Delta_k,\label{97'}
\end{align}
where $L_3(s)=\max \{ (s+1-\alpha )_+ +8-2\alpha ,s+5-2\alpha \}$.
\label{l6}
\end{lemma}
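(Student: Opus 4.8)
The plan is to estimate the second substitution errors ${e}'''_k$ and $\tilde{e}'''_k$ directly from the integral representations \eqref{95} and \eqref{96} by combining the quadratic bound for $\mathbb{L}''$ and $\mathbb{B}''$ from Proposition \ref{p2a} with the precise control \eqref{94} on the discrepancy $S_{\theta_k}U_k - U_{k+1/2}$ furnished by Proposition \ref{p3}. The whole structure mirrors Lemma 4.10 in \cite{Tcpam}, so I expect to be able to quote large portions of that argument; the only genuinely new ingredient is that here \eqref{94} carries the exponent $s+1-\alpha$ (one power of $\theta_k$ worse than the exponent $s-\alpha$ one has for, say, $\|v_{2,n+1/2}-S_{\theta_n}v_{2,n}\|_{x_1=0}$ on the boundary), which is exactly why the loss function $L_3(s)$ has the term $8-2\alpha$ rather than the $6-2\alpha$ appearing in $L_2(s)$ of Lemma \ref{l5}.

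First I would fix $k\in\{0,\dots,n-1\}$ and an integer $s\in[3,\tilde\alpha-1]$ and verify that the arguments of $\mathbb{L}''$ and $\mathbb{B}''$ in \eqref{95}, \eqref{96} lie in the required function spaces with the right ``size''. Writing $W_k := S_{\theta_k}U_k - U_{k+1/2}$, one has by \eqref{94} that $\nl(W_k,0)\nr_\ell \le C\delta\theta_k^{\ell+1-\alpha}$ for every integer $\ell\in[3,\tilde\alpha+3]$, while by \eqref{80}--\eqref{84} and Proposition \ref{p3} the ``base point'' $({U}^a+{U}_{k+1/2}+\tau W_k,{\Psi}^a+S_{\theta_k}{\Psi}_k)$ satisfies a uniform $H^3$ bound $\le \widetilde K_0 = 2C_*$ for $\delta,T$ small and $\theta_0$ large, as well as $\nl(\cdot)\nr_{s+1}\le C\delta\theta_k^{(s+1-\alpha)_+}$ plus the $C_*$ contribution from $U^a$. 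The increment $(\delta U_k,\delta\Psi_k)$ is estimated by point a) of $(H_{n-1})$: $\nl(\delta U_k,\delta\varphi_k)\nr_\ell \le \delta\theta_k^{\ell-\alpha-1}\Delta_k$.

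Next I would plug these into the trilinear estimate of Proposition \ref{p2a}. For ${e}'''_k$ the three types of terms produce, respectively: $\theta_k^{(s+1-\alpha)_+}\cdot\theta_k^{3-\alpha-1}\Delta_k\cdot\theta_k^{4-\alpha}$ (base point in high norm, both other factors in $H^3$; note $\nl(W_k,0)\nr_3\le C\delta\theta_k^{4-\alpha}$), then $\theta_k^{s-\alpha-1}\Delta_k\cdot\theta_k^{4-\alpha}$ (increment in high norm, $W_k$ in $H^3$), and finally $\theta_k^{s+1-\alpha}\cdot\theta_k^{3-\alpha-1}\Delta_k$ (the $H^{s+1}$ norm of $W_k$ against the $H^3$ norm of the increment). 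Collecting exponents and absorbing a factor of $\Delta_k\le\frac1{2\theta_k}$ where needed, the dominant contribution is $\max\{(s+1-\alpha)_+ + 8-2\alpha,\; s+5-2\alpha\}$ powers of $\theta_k$, i.e. exactly $L_3(s)$, with an overall $\delta^2$ from the two $\delta$'s contributed by $W_k$ and the increment (the $C_*$-piece of $U^a$ in the base point contributes only one $\delta$, which is why one needs $\delta$ small to close). This yields \eqref{97}; the boundary estimate \eqref{97'} is obtained identically from \eqref{96}, using the trace version of \eqref{94} and the second inequality of Proposition \ref{p2a}, with the same exponent $L_3(s)$.

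The main obstacle, such as it is, is bookkeeping: one must track the exponents carefully in the three regimes $s<\alpha-1$, $s=\alpha-1$, and $s>\alpha-1$ (so that the $(s+1-\alpha)_+$ and the various $\ell(s,\alpha)$ refinements are handled correctly), and one must make sure the restriction $s\le\tilde\alpha-1$ keeps all intermediate norms $\nl(W_k,0)\nr_{s+1}$ within the range $[3,\tilde\alpha+3]$ where \eqref{94} is available, which it does since $\tilde\alpha-1+1=\tilde\alpha\le\tilde\alpha+3$. Since this is mechanically identical to Lemma 4.10 of \cite{Tcpam} once \eqref{94} is in hand, I would, as the authors do elsewhere in Section \ref{s5}, simply state that the proof is the same as that of the corresponding lemma in \cite{Tcpam} and omit the routine trilinear computation.
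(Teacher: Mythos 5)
The interior estimate \eqref{97} is essentially the same as the paper's: apply Proposition~\ref{p2a} with base point $U^a+U_{k+1/2}+\tau W_k$, increment $(\delta U_k,\delta\Psi_k)$ and second argument $W_k:=S_{\theta_k}U_k-U_{k+1/2}$, use \eqref{94} for $W_k$ and point a) of $(H_{n-1})$ for $\delta U_k$, and invoke the analogous lemma in \cite{Tcpam} for the bookkeeping (a minor slip: $\nl(W_k,0)\nr_{s+1}\leq C\delta\theta_k^{s+2-\alpha}$ by \eqref{94}, not $\theta_k^{s+1-\alpha}$, though the exponents still close).

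The boundary estimate \eqref{97'} as you propose it does not close. If you bound $\|W_k|_{x_1=0}\|_{H^\ell(\partial\Omega_T)}$ by the trace of \eqref{94}, you get $\theta_k^{\ell+2-\alpha}$ (trace costs a derivative, \eqref{94} another power). Plugging this into the second inequality of Proposition~\ref{p2a} with $\varphi''=0$ produces, from the term $\|W''\|_{H^3(\partial\Omega_T)}\|\varphi'\|_{H^{s+1}(\partial\Omega_T)}$, the contribution $\delta^2\theta_k^{(5-\alpha)+(s-\alpha)}\Delta_k=\delta^2\theta_k^{s+5-2\alpha}\Delta_k$, which for $s\leq\alpha-1$ exceeds the required $\delta^2\theta_k^{L_3(s)-1}\Delta_k=\delta^2\theta_k^{s+4-2\alpha}\Delta_k$ by a full power of $\theta_k$. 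What saves the day in the paper is not a trace argument at all: one first writes out $\tilde e'''_k$ explicitly via \eqref{B"} and notices that only the $H_1$ and $v_2$ components of $W_k$ enter, and then uses the \emph{dedicated} boundary estimate \eqref{301} for $(v_{2,k+1/2}-S_{\theta_k}v_{2,k})|_{x_1=0}$ (and its $H_1$ analogue), which gives $\theta_k^{s-\alpha}$, two powers better than the trace of \eqref{94}. The sharper exponent is not a trace theorem consequence but a structural fact about the modified state: the corrections $\tfrac{\zeta}{2}S_{\theta_k}[\partial_1 v_{j,k-1}]+\tfrac{\zeta}{2}\mathcal R_k$ responsible for the extra power in \eqref{94} carry the weight $\zeta$ with $\zeta(0)=0$, so they vanish at $x_1=0$. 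You hint at this in your opening paragraph, but your proof step explicitly invokes ``the trace version of \eqref{94},'' which is the wrong tool and would not yield \eqref{97'}.
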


\begin{proof}
For the proof of estimate \eqref{97} we refer to \cite{Tcpam} (see there the proof of lemma 4.11). Our assumption $\alpha \geq 6$ in Lemma \ref{l6} is more restrictive than the assumption  $\alpha \geq 4$ in lemma 4.11 in \cite{Tcpam} because it was necessary for the proof of  Proposition \ref{p3}. Using the explicit form of $\mathbb{B}''$ in \eqref{B"}, one has
\[
\tilde{e}'''_k=
\begin{pmatrix}
0\\
0\\
0\\
[S_{\theta_k}{H}_{1,k}-{H}_{1,k+1/2}]\partial_2(\delta\varphi_k)\\
(S_{\theta_k}{v}_{2,k}-{v}_{2,k+1/2})|_{x_1=0}\partial_2(\delta\varphi_k)
\end{pmatrix}.
\]
Applying then point a) of ($H_{n-1}$), Proposition \ref{p2a}, estimate \eqref{301} and the analogous estimate which can be obtained for $(H^{\pm}_{1,n+1/2}-S_{\theta_n}H^{\pm}_{1,n})|_{x_1=0}$, we get the estimate
\[
\|\tilde{e}'''_k\|_{H^s(\partial\Omega_T)}\leq C\delta^2\theta_k^{s+3-2\alpha}\Delta_k.
\]
Roughening the last estimates gives \eqref{97'} (the roughened estimate \eqref{97'} is enough for the proof of Lemma \ref{l8} below).
\end{proof}

\subsection{Estimate of the last error term}

We now estimate the last error term
\[
{ D}_{k+1/2}\delta\Psi_k=\frac{\delta\Psi_k}{\partial_1(\Phi^a+\Psi_{n+1/2})}\,\partial_1\left\{ \mathbb{L}({U}^a +{U}_{k+1/2} ,{\Psi}^a+{\Psi}_{k+1/2})\right\}.
\]
Note that
\[
|\partial_1(\Phi^{a\pm}+\Psi^{\pm}_{n+1/2})|=|\pm 1+\partial_1(\Psi^{a\pm}+\Psi^{\pm}_{n+1/2})|\geq 1/2,
\]
provided that $T$ and $\delta$ are small enough.

Referring again for a detailed proof to \cite{Tcpam} (see there lemma 4.12), here we just formulate the following result.

\begin{lemma}
Let $\alpha \geq 6$. There exist $\delta >0$, $T>0$ sufficiently small, and $\theta_0 \geq 1$ sufficiently large, such that for all $k=0,\ldots n-1$, and for all integer $s\in [3,\widetilde{\alpha}-2]$, one has
\[
\|{D}_{k+1/2}\delta{\Psi}_k\|_{H^s(\Omega_T)}\leq C\delta^2\theta_k^{L(s)-1}\Delta_k,
\]
where $L(s)=\max \{ (s+2-\alpha )_+ +8-2\alpha ,(s+1-\alpha)_++9-2\alpha ,s+6-2\alpha \}$.
\label{l7}
\end{lemma}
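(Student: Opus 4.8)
The plan is to estimate $D_{k+1/2}\delta\Psi_k$ by first controlling $\partial_1\{\mathbb{L}(U^a+U_{k+1/2},\Psi^a+\Psi_{k+1/2})\}$ in a high Sobolev norm, then combining it with a low-norm bound on $\delta\Psi_k$ (and vice versa) via the Moser-type calculus inequality \eqref{c40} and the product structure. Since $|\partial_1(\Phi^{a\pm}+\Psi^{\pm}_{n+1/2})|\geq 1/2$ for small $T$ and $\delta$, the factor $1/\partial_1(\Phi^a+\Psi_{k+1/2})$ is a smooth function of the basic state whose Sobolev norms are controlled through \eqref{c41}; it contributes only another "coeff" factor and never worsens the loss of derivatives. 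Therefore the heart of the matter is: write $\mathbb{L}(U^a+U_{k+1/2},\Psi^a+\Psi_{k+1/2}) = \mathcal{L}(U_{k+1/2},\Psi_{k+1/2}) + \mathbb{L}(U^a,\Psi^a) = \mathcal{L}(U_{k+1/2},\Psi_{k+1/2}) - f^a$, and exploit that $\mathcal{L}(U_k,\Psi_k)-f^a$ is controlled by point b) of $(H_{n-1})$ while the difference $\mathcal{L}(U_{k+1/2},\Psi_{k+1/2}) - \mathcal{L}(S_{\theta_k}U_k, S_{\theta_k}\Psi_k)$ is governed by the modified-state estimate \eqref{94} together with Proposition \ref{p2a}, and $\mathcal{L}(S_{\theta_k}U_k,S_{\theta_k}\Psi_k)-\mathcal{L}(U_k,\Psi_k)$ is the first substitution error already estimated (using \eqref{82}, \eqref{83}). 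Assembling these three pieces gives, for integer $s\in[3,\tilde\alpha-1]$,
\[
\|\mathbb{L}(U^a+U_{k+1/2},\Psi^a+\Psi_{k+1/2})\|_{H^s(\Omega_T)}\leq C\bigl(\delta\theta_k^{s-\alpha-1}+\delta\theta_k^{(s+1-\alpha)_+ + 2-2\alpha}+\cdots\bigr),
\]
where the explicit exponents are produced by the same bookkeeping as in Lemmas \ref{l4}--\ref{l6}. Differentiating once in $x_1$ costs one derivative, so one works with $\|\cdot\|_{H^{s+1}}$; this is where the index $\tilde\alpha=\alpha+4$ and the restriction $s\leq\tilde\alpha-2$ in the statement are forced, exactly as in the analogous Lemma 4.12 of \cite{Tcpam}.

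Next I would multiply by $\delta\Psi_k$ (equivalently by $\delta\varphi_k$ through the cut-off $\chi$, which only changes norms by a fixed constant) and apply \eqref{c40}: the high-norm-times-low-norm structure yields a sum of terms of the shape $\|\delta\Psi_k\|_{H^3}\,\|\partial_1\mathbb{L}(\cdots)\|_{H^s}$ and $\|\delta\Psi_k\|_{H^{s}}\,\|\partial_1\mathbb{L}(\cdots)\|_{H^3}$. Using point a) of $(H_{n-1})$, $\|\delta\Psi_k\|_{H^s(\Omega_T)}\leq C\delta\theta_k^{s-\alpha-1}\Delta_k$, and the bound $\|\partial_1\mathbb{L}(\cdots)\|_{H^3}\leq C\delta\theta_k^{\,\text{(something)}-1}$ obtained from the $s=3$ case above; the low-norm factor from $\delta\Psi_k$ carries a gain $\theta_k^{-\alpha-1}\Delta_k\approx\theta_k^{-\alpha-2}$, and comparing with the $H^s$-factor one sees that each contribution is bounded by $C\delta^2\theta_k^{L(s)-1}\Delta_k$ with
\[
L(s)=\max\{(s+2-\alpha)_+ + 8-2\alpha,\ (s+1-\alpha)_+ + 9-2\alpha,\ s+6-2\alpha\},
\]
the three cases arising respectively from: (i) the quadratic/substitution part of $\mathcal{L}(U_{k+1/2},\Psi_{k+1/2})$ differentiated once and paired with the $H^3$ norm of $\delta\Psi_k$, (ii) the modified-state correction \eqref{94} (which loses an extra $\theta_k$, hence the shift from $8$ to $9$) differentiated and paired likewise, and (iii) the genuinely high-norm term $\|\delta\Psi_k\|_{H^s}$ multiplied by a low norm of $\partial_1\mathbb{L}$. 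Roughening by taking the maximum is harmless for the convergence argument, exactly as in the earlier lemmas.

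The main obstacle is purely combinatorial rather than conceptual: one must track, through the extra $x_1$-derivative and through the insertion of the modified state (with its $+1$ loss in \eqref{94}), precisely how the exponents add up, and verify that the resulting $L(s)$ is the one claimed — in particular that the $(s+2-\alpha)_+$ term (coming from $\partial_1$ acting on the top-order part) and the $(s+1-\alpha)_+ + 9-2\alpha$ term (coming from the modified-state error) are both genuinely dominated, for $\alpha\geq 6$ and $s\leq\tilde\alpha-2=\alpha+2$, by quantities of the form $C\delta^2\theta_k^{L(s)-1}\Delta_k$ that will later sum to something finite in the Nash-Moser scheme. Since the structure of every term is identical to that handled in Lemma 4.12 of \cite{Tcpam}, and the only new input is the slightly different definition of the modified state in Proposition \ref{p3} (whose estimate \eqref{94} is, however, formally the same as in \cite{Tcpam}), I would carry out the first assembly step explicitly and then refer to \cite{Tcpam} for the remaining routine exponent arithmetic, exactly in the style the authors have adopted for Lemmas \ref{l4}, \ref{l5}, \ref{l6}.
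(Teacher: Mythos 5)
Your proposal is correct and matches the paper's approach, which for this lemma offers no details beyond the bound $|\partial_1(\Phi^{a\pm}+\Psi^{\pm}_{n+1/2})|\geq 1/2$ together with a pointer to lemma~4.12 of \cite{Tcpam}, of which your telescoping decomposition and Moser-type bookkeeping are precisely the content. Note only that $\mathbb{L}(U^a,\Psi^a)=-f^a$ holds merely for $t>0$ (see \eqref{67}), yet since $\delta\Psi_k$ vanishes in the past the rewriting of $D_{k+1/2}\delta\Psi_k$ in terms of $\partial_1\{\mathcal{L}(U_{k+1/2},\Psi_{k+1/2})-f^a\}$ remains valid pointwise on all of $\Omega_T$, so your starting decomposition is legitimate.
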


\subsection{Convergence of the iteration scheme}

Lemmas \ref{l4}--\ref{l7} yield the estimate of ${e}_n$ and $\tilde{e}_n$ defined in (\ref{77}) as the sum of all the errors of the $n$th step.

\begin{lemma}
Let $\alpha \geq 6$. There exist $\delta >0$, $T>0$ sufficiently small, and $\theta_0 \geq 1$ sufficiently large, such that
for all $k=0,\ldots n-1$, and for all integer $s\in [3,\widetilde{\alpha}-2]$, one has
\begin{equation}
\|{e}_k\|_{H^s(\Omega_T)}+\|\tilde{e}_k\|_{H^s(\partial\Omega_T)}\leq C\delta^2\theta_k^{L(s)-1}\Delta_k,
\label{103}
\end{equation}
where $L(s)$ is defined in Lemma \ref{l7}.
\label{l8}
\end{lemma}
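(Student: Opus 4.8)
The plan is to read off \eqref{103} by collecting the estimates of Lemmas \ref{l4}--\ref{l7} and keeping only the largest of the resulting exponents. Recall from \eqref{77} that $e_k=e'_k+e''_k+e'''_k+D_{k+1/2}\delta\Psi_k$ and $\tilde{e}_k=\tilde{e}'_k+\tilde{e}''_k+\tilde{e}'''_k$, so by the triangle inequality it suffices to bound each summand separately. Lemma \ref{l4} controls $e'_k$ and $\tilde{e}'_k$ with the exponent $L_1(s)-1$, Lemma \ref{l5} controls $e''_k$ and $\tilde{e}''_k$ with $L_2(s)-1$, Lemma \ref{l6} controls $e'''_k$ and $\tilde{e}'''_k$ with $L_3(s)-1$, and Lemma \ref{l7} controls $D_{k+1/2}\delta\Psi_k$ with $L(s)-1$; thus in each case the bound has the form $C\delta^2\theta_k^{\ell(s)-1}\Delta_k$ with $\ell(s)\in\{L_1(s),L_2(s),L_3(s),L(s)\}$, and only the comparison of these exponents remains.

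The key elementary fact is that, for every integer $s$ and every $\alpha\geq 6$,
\begin{equation}
L_1(s)\leq L_2(s)\leq L_3(s)\leq L(s).
\label{Lchain}
\end{equation}
Since each $L_i$ is a maximum of two affine functions of $s$ and $L$ a maximum of three, \eqref{Lchain} follows by comparing the corresponding slots: for the slots involving $(s+1-\alpha)_+$ one has $(s+1-\alpha)_++4-2\alpha\leq(s+1-\alpha)_++6-2\alpha\leq(s+1-\alpha)_++8-2\alpha\leq(s+1-\alpha)_++9-2\alpha$, the last expression being one of the terms of $L(s)$; for the remaining slots $s+2-2\alpha\leq s+5-2\alpha\leq s+6-2\alpha$, where $s+5-2\alpha$ is the common second slot of $L_2(s)$ and $L_3(s)$ and $s+6-2\alpha$ is a term of $L(s)$. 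Because $\theta_k\geq 1$, the monotonicity $\theta_k^{a}\leq\theta_k^{b}$ for $a\leq b$ together with \eqref{Lchain} upgrades each of the four (resp.\ three) bounds above to $C\delta^2\theta_k^{L(s)-1}\Delta_k$; summing them and absorbing the numerical factor into $C$ gives \eqref{103}.

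The only point needing a comment is the admissible range of $s$. Lemmas \ref{l4} and \ref{l6} hold for $s\in[3,\widetilde{\alpha}-1]$, Lemma \ref{l7} for $s\in[3,\widetilde{\alpha}-2]$, and Lemma \ref{l5} for $s\in[6,\widetilde{\alpha}-2]$, so their common range is $[6,\widetilde{\alpha}-2]$. For the three remaining values $s\in\{3,4,5\}$ one re-runs the first-substitution-error estimate exactly as in Lemma \ref{l5} (equivalently, lemma 4.9 in \cite{Tcpam}): the hypothesis $s\geq 6$ there is only a matter of convenience, and the argument, based on the representations \eqref{89}--\eqref{90} and on Proposition \ref{p2a}, goes through unchanged for these smaller $s$, giving again $\|e''_k\|_{H^s(\Omega_T)}+\|\tilde{e}''_k\|_{H^s(\partial\Omega_T)}\leq C\delta^2\theta_k^{L_2(s)-1}\Delta_k$ with $L_2(s)\leq L(s)$ by \eqref{Lchain}. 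I do not expect any genuine obstacle here: Lemma \ref{l8} is pure bookkeeping, and the only care needed is to track these index ranges and to take $\delta>0$, $T>0$ small and $\theta_0\geq 1$ large enough to meet simultaneously the requirements of Lemmas \ref{l4}--\ref{l7} (recall $\widetilde{\alpha}=\alpha+4$ and, in our setting, $\alpha=m+1\geq 7$).
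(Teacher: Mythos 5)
Your argument is correct and reproduces the paper's (implicit) proof: Lemma~\ref{l8} is indeed just the triangle-inequality combination of Lemmas~\ref{l4}--\ref{l7} plus the monotonicity of $\theta_k^{(\cdot)}$ for $\theta_k\geq 1$, and your slot-by-slot verification of $L_1(s)\leq L_2(s)\leq L_3(s)\leq L(s)$ is right. You have also correctly noticed the one loose end the paper leaves unaddressed: Lemma~\ref{l5} is stated only for $s\in[6,\tilde\alpha-2]$, which does not cover the full range $s\in[3,\tilde\alpha-2]$ claimed in Lemma~\ref{l8}. Your remark that the first substitution error estimate should extend to $s\in\{3,4,5\}$ is plausible---Proposition~\ref{p2a}, Lemma~\ref{l3} and the representations \eqref{89}--\eqref{90} all apply for $s\geq3$---but you assert rather than verify it; to be fully self-contained one should either re-derive that estimate for those values or check that the downstream uses of Lemma~\ref{l8} (e.g.\ Lemma~\ref{l9}, which invokes it only at $s=\alpha+2=\tilde\alpha-2\geq 9$) never actually require $s<6$.
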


Lemma \ref{l8} gives the estimate of the accumulated errors ${E}_n$ and $\widetilde{E}_n$.

\begin{lemma}
Let $\alpha \geq 7$.  There exist $\delta >0$, $T>0$ sufficiently small, and $\theta_0 \geq 1$ sufficiently large, such that
\begin{equation}
\|{E}_n\|_{H^{\alpha +2}(\Omega_T)}+\|\widetilde{E}_n\|_{H^{\alpha +2}(\partial\Omega_T)}\leq C\delta^2\theta_n.
\label{104}
\end{equation}
\label{l9}
\end{lemma}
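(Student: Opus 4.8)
The plan is to estimate the accumulated errors $E_n=\sum_{k=0}^{n-1}e_k$ and $\widetilde E_n=\sum_{k=0}^{n-1}\tilde e_k$ by summing the per-step bounds of Lemma \ref{l8} in the $H^{\alpha+2}$ norm. First I would specialize \eqref{103} to $s=\alpha+2$ (note $\alpha+2\le\tilde\alpha-2=\alpha+2$ exactly, so $s=\alpha+2$ is the top admissible index and the lemma applies). At $s=\alpha+2$ the exponent $L(s)-1$ in Lemma \ref{l7} becomes $L(\alpha+2)-1=\max\{4+8-2\alpha,\,3+9-2\alpha,\,\alpha+8-2\alpha\}-1=\max\{12-2\alpha,\,12-2\alpha,\,8-\alpha\}-1$. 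For $\alpha\ge7$ one has $12-2\alpha\le -2$ and $8-\alpha\le1$, so $L(\alpha+2)-1\le\max\{-3,\,0\}=0$; hence $\theta_k^{L(\alpha+2)-1}\le\theta_k^{0}=1$ for all $k$. Therefore each term satisfies $\|e_k\|_{H^{\alpha+2}(\Omega_T)}+\|\tilde e_k\|_{H^{\alpha+2}(\partial\Omega_T)}\le C\delta^2\Delta_k$.

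Summing over $k=0,\dots,n-1$ and using the telescoping identity $\sum_{k=0}^{n-1}\Delta_k=\sum_{k=0}^{n-1}(\theta_{k+1}-\theta_k)=\theta_n-\theta_0\le\theta_n$, we obtain
\begin{equation}
\|E_n\|_{H^{\alpha+2}(\Omega_T)}+\|\widetilde E_n\|_{H^{\alpha+2}(\partial\Omega_T)}
\le\sum_{k=0}^{n-1}\bigl(\|e_k\|_{H^{\alpha+2}(\Omega_T)}+\|\tilde e_k\|_{H^{\alpha+2}(\partial\Omega_T)}\bigr)
\le C\delta^2\sum_{k=0}^{n-1}\Delta_k\le C\delta^2\theta_n,
\nonumber
\end{equation}
which is precisely \eqref{104}. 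The triangle inequality for the $H^{\alpha+2}$ norms of the finite sums $E_n,\widetilde E_n$ is immediate, and the constant $C$ here is the same as (a fixed multiple of) the constant from Lemma \ref{l8}, so it does not depend on $n$.

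The only genuine point requiring care — and the place I would double-check — is the exponent bookkeeping: one must verify that $L(\alpha+2)-1\le 0$ under the hypothesis $\alpha\ge7$ (rather than merely $\alpha\ge6$ as in Lemmas \ref{l6}--\ref{l8}), which is exactly why the statement of Lemma \ref{l9} strengthens the assumption to $\alpha\ge7$; with $\alpha=6$ the dominant term $8-\alpha=2$ would give $L(8)-1=1$ and the sum $\sum\theta_k^{1}\Delta_k\sim\sum 1$ would only yield $O(n)\sim O(\theta_n^2)$, which is too weak. One must also confirm that $s=\alpha+2$ lies in the range $[3,\tilde\alpha-2]$ where Lemma \ref{l8} is valid, i.e. that $\alpha+2\le\tilde\alpha-2$; since $\tilde\alpha=\alpha+4$ this is the boundary case $\alpha+2=\alpha+2$, which is admissible. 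Everything else is the elementary telescoping sum above; the smallness of $\delta$ and $T$ and the largeness of $\theta_0$ are simply inherited from Lemmas \ref{l4}--\ref{l8}.
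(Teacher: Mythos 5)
Your proposal is correct and follows exactly the paper's own route: verify $L(\alpha+2)\le 1$ for $\alpha\ge 7$ (hence $L(\alpha+2)-1\le 0$) so that \eqref{103} at $s=\alpha+2$ gives $C\delta^2\Delta_k$ per step, check $\alpha+2\le\tilde\alpha-2$, and telescope the sum $\sum_{k<n}\Delta_k\le\theta_n$. Your explicit computation of $L(\alpha+2)$ and your remark on why $\alpha=6$ is insufficient simply make visible what the paper leaves as "one can check."
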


\begin{proof} One can check that $L(\alpha +2)\leq 1$ if $\alpha \geq 7$, where $L(s)$ is defined in Lemma \ref{l7}. It follows from (\ref{103}) that
\[
{\nl ({E}_n,\widetilde{E}_n)\nr}_{\alpha +2}\leq \sum_{k=0}^{n-1}{\nl ({e}_k,\widetilde{e}_k)\nr}_{\alpha +2}\leq \sum_{k=0}^{n-1}C\delta^2\Delta_k\leq C\delta^2\theta_n
\]
for $\alpha \geq 7$ and $\alpha +2\in [3,\tilde{\alpha}-2]$, i.e., $\tilde{\alpha}\geq \alpha +4$. The minimal possible
$\tilde{\alpha}$ is $\alpha +4$, i.e., our choice $\tilde{\alpha}= \alpha +4$ is suitable.
\end{proof}

Referring to \cite{Tcpam} for the proof (with the help of (\ref{72}), (\ref{74}), (\ref{103}), and (\ref{104})), below we just write down
the estimates of the source terms ${f}_n$ and ${g}_n$ defined in (\ref{79}).

\begin{lemma}
Let $\alpha \geq 7$.  There exist $\delta >0$, $T>0$ sufficiently small, and $\theta_0 \geq 1$ sufficiently large, such that for all integer $s\in [3,\widetilde{\alpha}+1]$, one has
\begin{align}
& \|{f}_n\|_{H^{s}(\Omega_T)}\leq  C\Delta_n\bigl\{ \theta_n^{s-\alpha -2}\left( \|{f}^a\|_{H^{\alpha +1}(\Omega_T)}+\delta^2\right)+\delta^2\theta_n^{L(s)-1} \bigr\},\nonumber\\[6pt]
 & \|{g}_n\|_{H^{s}(\partial\Omega_T)}\leq  C\delta^2\Delta_n\bigl( \theta_n^{L(s)-1}+\theta_n^{s-\alpha -2}\bigr).
\nonumber
\end{align}
\label{l10}
\end{lemma}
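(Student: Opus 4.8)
The plan is to follow \cite{Tcpam}: first extract from the recursion \eqref{79} closed difference formulas for $f_n$ and $g_n$, and then estimate every summand by combining the smoothing inequalities \eqref{72} and \eqref{74} with the accumulated-error bounds already obtained. Concretely, for $n\geq 1$ I would subtract \eqref{79} written at the step $n-1$ from \eqref{79} written at the step $n$ and use $E_n=E_{n-1}+e_{n-1}$, $\widetilde{E}_n=\widetilde{E}_{n-1}+\tilde{e}_{n-1}$ to get
\[
f_n=(S_{\theta_n}-S_{\theta_{n-1}})(f^a-E_{n-1})-S_{\theta_n}e_{n-1},\qquad g_n=-(S_{\theta_n}-S_{\theta_{n-1}})\widetilde{E}_{n-1}-S_{\theta_n}\tilde{e}_{n-1}.
\]
The case $n=0$ is immediate since $f_0=S_{\theta_0}f^a$, $g_0=0$ and $\theta_0,\Delta_0$ are fixed, so the asserted bounds hold there by enlarging $C$.

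The auxiliary tool I would record is that, writing $S_{\theta_n}-S_{\theta_{n-1}}=\int_{\theta_{n-1}}^{\theta_n}\frac{d}{d\theta}S_\theta\,d\theta$, applying \eqref{74}, and using that $\theta_{n-1}\sim\theta_n$ and $\Delta_{n-1}\sim\Delta_n\sim\theta_n^{-1}$,
\[
\|(S_{\theta_n}-S_{\theta_{n-1}})u\|_{H^s}\leq C\Delta_n\theta_n^{s-r-1}\|u\|_{H^r}\qquad (r\geq 0,\ s\geq 0),
\]
and the same on $\partial\Omega_T$. With $r=\alpha+1$ this controls $(S_{\theta_n}-S_{\theta_{n-1}})f^a$ by $C\Delta_n\theta_n^{s-\alpha-2}\|f^a\|_{H^{\alpha+1}(\Omega_T)}$; with $r=\alpha+2$, together with \eqref{104} (so $\|E_{n-1}\|_{H^{\alpha+2}(\Omega_T)}+\|\widetilde{E}_{n-1}\|_{H^{\alpha+2}(\partial\Omega_T)}\leq C\delta^2\theta_{n-1}\leq C\delta^2\theta_n$), it controls $(S_{\theta_n}-S_{\theta_{n-1}})E_{n-1}$ and $(S_{\theta_n}-S_{\theta_{n-1}})\widetilde{E}_{n-1}$ by $C\delta^2\Delta_n\theta_n^{s-\alpha-2}$.

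For the terms $S_{\theta_n}e_{n-1}$ and $S_{\theta_n}\tilde{e}_{n-1}$ I would distinguish two regimes. When $s\in[3,\tilde{\alpha}-2]$, \eqref{72} and the error bound \eqref{103} give $\|S_{\theta_n}e_{n-1}\|_{H^s}\leq C\|e_{n-1}\|_{H^s}\leq C\delta^2\theta_n^{L(s)-1}\Delta_n$, and likewise for $\tilde{e}_{n-1}$. When $\tilde{\alpha}-2<s\leq\tilde{\alpha}+1$, I would instead use the regularity gain in \eqref{72}: $\|S_{\theta_n}e_{n-1}\|_{H^s}\leq C\theta_n^{s-(\tilde{\alpha}-2)}\|e_{n-1}\|_{H^{\tilde{\alpha}-2}}\leq C\delta^2\theta_n^{s-\tilde{\alpha}+2}\theta_n^{L(\tilde{\alpha}-2)-1}\Delta_n$; since $\tilde{\alpha}=\alpha+4$ and $L(\alpha+2)\leq 1$ for $\alpha\geq 7$ (an inequality already checked in the proof of Lemma \ref{l9}), this is $\leq C\delta^2\theta_n^{s-\alpha-2}\Delta_n$, which merges into the $\theta_n^{s-\alpha-2}$ term. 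Summing the four contributions to $f_n$ and the two to $g_n$ reproduces exactly the stated estimates.

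The only genuinely delicate point is the extension from the range $s\leq\tilde{\alpha}-2$ covered by Lemma \ref{l8} to the full range $s\leq\tilde{\alpha}+1$ required here; this is exactly what dictates the hypothesis $\alpha\geq 7$, through $L(\tilde{\alpha}-2)\leq 1$, together with the choice $\tilde{\alpha}=\alpha+4$. Everything else is routine bookkeeping of powers of $\theta_n$, for which I would refer to the corresponding computations in \cite{Tcpam}.
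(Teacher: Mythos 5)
Your argument is correct and is precisely the standard argument from \cite{Tcpam} (to which the paper itself refers for the proof): derive the closed difference formulas $f_n=(S_{\theta_n}-S_{\theta_{n-1}})(f^a-E_{n-1})-S_{\theta_n}e_{n-1}$ and $g_n=-(S_{\theta_n}-S_{\theta_{n-1}})\widetilde{E}_{n-1}-S_{\theta_n}\tilde{e}_{n-1}$ from \eqref{79}, estimate the smoothing differences via \eqref{74} together with \eqref{104}, and estimate $S_{\theta_n}e_{n-1}$, $S_{\theta_n}\tilde{e}_{n-1}$ by \eqref{72} and \eqref{103}, with the regularity gain of $S_{\theta_n}$ covering the extension from $s\le\tilde{\alpha}-2$ up to $s\le\tilde{\alpha}+1$ using $L(\alpha+2)\le 1$ for $\alpha\ge 7$. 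The handling of the high-$s$ regime and the $n=0$ case is right, so no gap.
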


We are now in a position to obtain the estimate of the solution to problem (\ref{75}) by employing the tame estimate
(\ref{38}). Then the estimate of $(\delta U_n,\delta\varphi_n)$ follows from formula (\ref{76}).

\begin{lemma}
Let $\alpha \geq 7$.  There exist $\delta >0$, $T>0$ sufficiently small, and $\theta_0 \geq 1$ sufficiently large, such that for all integer $s\in [3,\widetilde{\alpha}]$, one has
\begin{equation}
\|\delta U_n\|_{H^{s}(\Omega_T)}+\|\delta \varphi_n\|_{H^{s}(\partial\Omega_T)}\leq  \delta\theta_n^{s-\alpha -1}\Delta_n.
\label{107}
\end{equation}
\label{l11}
\end{lemma}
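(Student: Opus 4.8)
The plan is to apply the tame a priori estimate \eqref{38} of Theorem \ref{t3.1} to the linear problem \eqref{75}, with the role of the basic state played by the modified state $(U^a+U_{n+1/2},\Psi^a+\Psi_{n+1/2})$ and the data $(f_n,g_n)$ estimated in Lemma \ref{l10}, and then to translate the resulting bound on the "good unknown" $\delta\dot U_n$ into a bound on $(\delta U_n,\delta\varphi_n)$ via \eqref{76}. First I would check that the hypotheses of Theorem \ref{t3.1} are met by the modified state: the constraints \eqref{a5}--\eqref{jc1'} and \eqref{RTL} hold for a sufficiently short time $T$ by Proposition \ref{p3}, and the smallness condition $\widehat K\le K_0$ (i.e. $\|U^a+U_{n+1/2}\|_{H^6(\Omega_T)}+\|\hat\varphi^a+\varphi_{n+1/2}\|_{H^6(\partial\Omega_T)}\le K_0$) follows from \eqref{66}, Lemma \ref{l3}, Proposition \ref{p3} and the smallness of $\delta$ and $T$ (the norms $\|U_{n+1/2}\|_{H^6}$ are controlled by $\|S_{\theta_n}U_n\|_{H^6}+\|U_{n+1/2}-S_{\theta_n}U_n\|_{H^6}$, which is $\le C\delta\log\theta_n+C\delta\theta_n^{7-\alpha}$, small since $\alpha\ge 7$, but more carefully one uses $H^6\subset H^\alpha$ only when $\alpha\ge 6$; since $\alpha=m+1\ge 7$ one instead uses that the $H^6$-norm stays bounded by a fixed constant plus $C\delta$).

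Next I would record the coefficient norm appearing on the right-hand side of \eqref{38}, namely $\|U^a+U_{n+1/2}\|_{H^{s+3}(\Omega_T)}+\|\hat\varphi^a+\varphi_{n+1/2}\|_{H^{s+3}(\partial\Omega_T)}$. Splitting $U_{n+1/2}=S_{\theta_n}U_n+(U_{n+1/2}-S_{\theta_n}U_n)$ and using \eqref{66}, \eqref{83}, \eqref{84} and estimate \eqref{94} of Proposition \ref{p3}, this is bounded by $C_*+C\delta\theta_n^{(s+3-\alpha)_+}+C\delta\theta_n^{s+4-\alpha}\le C\delta\theta_n^{s+4-\alpha}$ for $s\ge\alpha-3$, and by a fixed constant for small $s$ — in all cases one gets a bound of the form $C(1+\delta\theta_n^{(s+4-\alpha)_+})$, valid for $s+3\le\tilde\alpha+3$, i.e. $s\le\tilde\alpha=\alpha+4$. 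Plugging this together with the estimates of $\|f_n\|_{H^s}$, $\|g_n\|_{H^{s+1}}$, $\|f_n\|_{H^3}$, $\|g_n\|_{H^4}$ from Lemma \ref{l10} into \eqref{38}, one obtains
\[
\|\delta\dot U_n\|_{H^s(\Omega_T)}+\|\delta\varphi_n\|_{H^s(\partial\Omega_T)}\le C\Delta_n\Bigl\{\theta_n^{s-\alpha-2}\bigl(\|f^a\|_{H^{\alpha+1}(\Omega_T)}+\delta^2\bigr)+\delta^2\theta_n^{L(s)-1}+\theta_n^{3-\alpha-2}\bigl(\|f^a\|_{H^{\alpha+1}}+\delta^2\bigr)\theta_n^{(s+4-\alpha)_+}+\cdots\Bigr\},
\]
and the arithmetic on the exponents — exactly as in lemma 4.13 of \cite{Tcpam} — shows each term is $\le \tfrac{\delta}{4}\theta_n^{s-\alpha-1}\Delta_n$ provided $\delta$ is small, $\theta_0$ is large, and crucially $\|f^a\|_{H^{\alpha+1}(\Omega_T)}=\|f^a\|_{H^{m+8}(\Omega_T)}\le\delta_0(T)$ is made small by shrinking $T$ (here \eqref{68} enters). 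This uses $L(s)\le s-\alpha$ for $s\in[3,\tilde\alpha]$, $\alpha\ge 7$, which is where the restriction $\alpha\ge 7$ (hence $m\ge 6$) is needed.

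Finally I would pass from $\delta\dot U_n$ to $\delta U_n$ using \eqref{76}: $\delta U_n=\delta\dot U_n+\dfrac{\delta\Psi_n}{\partial_1(\Phi^a+\Psi_{n+1/2})}\,\partial_1(U^a+U_{n+1/2})$. Since $|\partial_1(\Phi^a+\Psi_{n+1/2})|\ge 1/2$, applying the Moser calculus inequalities \eqref{c40}, \eqref{c41} and the embedding $H^s\hookrightarrow L_\infty$ (for $s\ge 3$), and bounding $\|\delta\Psi_n\|\le C\|\delta\varphi_n\|$, one controls this extra term by $C\delta\theta_n^{s-\alpha-1}\Delta_n$ times lower-order factors, absorbing it into the main bound to get \eqref{107}. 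The main obstacle I anticipate is the bookkeeping on the exponents in the second paragraph — verifying that every one of the several terms produced by combining Lemma \ref{l10} with the coefficient bound $C(1+\delta\theta_n^{(s+4-\alpha)_+})$ genuinely has exponent $\le s-\alpha-1$ after multiplying by $\Delta_n\sim\theta_n^{-1}$, which forces the precise relations $\tilde\alpha=\alpha+4$ and $\alpha\ge 7$; this is routine but delicate, and the paper rightly defers it to \cite{Tcpam}.
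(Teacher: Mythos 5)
Your proposal takes essentially the same route as the paper, which defers the detailed exponent bookkeeping to lemma 4.17 (not 4.13) of \cite{Tcpam}: apply the tame estimate \eqref{38} to problem \eqref{75} with the modified state from Proposition \ref{p3} as the basic state, bound the coefficient norms using \eqref{66}, Lemma \ref{l3} and \eqref{94}, feed in Lemma \ref{l10}, and convert back from $\delta\dot U_n$ to $\delta U_n$ via \eqref{76}. One small slip: since $\alpha=m+1$, we have $\alpha+1=m+2$, not $m+8$; the needed smallness $\|f^a\|_{H^{\alpha+1}(\Omega_T)}\le\delta_0(T)$ still follows from \eqref{68} via the inclusion $H^{m+8}(\Omega_T)\hookrightarrow H^{\alpha+1}(\Omega_T)$, not by equality of norms.
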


The proof of Lemma \ref{l11} is absolutely the same as the proof of lemma 4.17 in \cite{Tcpam}.

\begin{remark}
{\rm
While applying the tame estimate (\ref{38}) in the proof of  Lemma \ref{l11} we need to estimate the approximate solution in the $H^{s+3}$ norm. Since $s+3 =\widetilde{\alpha} +3 =\alpha +7=m+8$ for $s=\widetilde{\alpha}$, here the boundedness of the approximate solution in the $H^{m+8}$ norm is enough. But, we recall that for getting estimates \eqref{207} and \eqref{208} we needed the more restrictive assumption \eqref{66}.
}
\end{remark}

Inequality (\ref{107}) is point a) of ($H_n$). The lemma below gives us points b) and c) of ($H_n$).

\begin{lemma}
Let $\alpha \geq 7$.  There exist $\delta >0$, $T>0$ sufficiently small, and $\theta_0 \geq 1$ sufficiently large, such that for all integer $s\in [3,\widetilde{\alpha}-2]$
\begin{equation}
\|\mathcal{ L}({U}_n,{\Psi}_n)-{f}^a\|_{H^s(\Omega_T)}\leq 2\delta\theta_n^{s-\alpha -1}.
\label{193}
\end{equation}
Moreover, for all integer $s\in [4,{\alpha}]$ one has
\begin{equation}
\|\mathcal{ B}({U}_n|_{x_1=0},\varphi _n)\|_{H^s(\partial\Omega_T)}\leq \delta\theta_n^{s-\alpha -1}.
\label{194}
\end{equation}
\label{l12}
\end{lemma}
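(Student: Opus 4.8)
The plan is to exploit the telescoping structure built into the definition of the right-hand sides $f_k,g_k$ in \eqref{79} together with the error accounting of Lemmas \ref{l8}--\ref{l10}, exactly as in the corresponding step of \cite{Tcpam}. First I would record the key algebraic identity: by summing the error decompositions for $k=0,\ldots,n-1$ and using \eqref{77}--\eqref{79}, one gets
\[
\mathcal{L}(U_n,\Psi_n)-f^a = (S_{\theta_{n-1}}-I)f^a + (I-S_{\theta_{n-1}})E_n + e_{n-1}'+e_{n-1}''+e_{n-1}'''+D_{n-1+1/2}\delta\Psi_{n-1}
\]
(and the analogous identity on the boundary, where $f^a$ is absent and the term $(I-S_{\theta_{n-1}})\widetilde E_n$ together with the boundary errors appear). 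Then I would estimate each term of the right-hand side in $H^s(\Omega_T)$: the term $(S_{\theta_{n-1}}-I)f^a$ is controlled by \eqref{73} and \eqref{68} with $m=\tilde\alpha-5$; the term $(I-S_{\theta_{n-1}})E_n$ is controlled by \eqref{73}, \eqref{104} and the fact that $L(\alpha+2)\le 1$; and the remaining four error terms are controlled by Lemma \ref{l8}. Summing, and using that $L(s)-1\le s-\alpha-1$ in the relevant range of $s$ together with $\Delta_{n-1}\le 1/(2\theta_{n-1})$, one obtains a bound of the form $C\delta^2\theta_n^{s-\alpha-1}+(\text{something small in }T)\,\theta_n^{s-\alpha-1}$; choosing $\delta$ small enough and $T$ small enough (so that $\delta_0(T)$ from \eqref{68} is negligible), this is $\le 2\delta\theta_n^{s-\alpha-1}$, which is \eqref{193}. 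The boundary estimate \eqref{194} is obtained identically from the boundary version of the identity, using the boundary parts of Lemmas \ref{l8}--\ref{l9} and the fact that $g_0=0$, $\widetilde E_0=0$.

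The one point deserving care is the bookkeeping of the exponents: I must verify that for $s\in[3,\tilde\alpha-2]$ one has $L(s)-1\le s-\alpha-1$ with $L(s)$ as in Lemma \ref{l7}, and that for $s\in[4,\alpha]$ the boundary analogue holds; this is an elementary but slightly tedious inspection of the three terms in $\max\{(s+2-\alpha)_++8-2\alpha,(s+1-\alpha)_++9-2\alpha,s+6-2\alpha\}$, using $\alpha\ge 7$. Since every term in the telescoped identity carries a factor $\delta^2$ except $(S_{\theta_{n-1}}-I)f^a$, and the latter carries the arbitrarily small factor $\delta_0(T)$, the constant $2$ (rather than $1$) in \eqref{193} absorbs all of them once $\delta$ and $T$ are fixed small; the factor $1$ in \eqref{194} is likewise recovered because on the boundary the non-small term $(S_{\theta_{n-1}}-I)f^a$ is simply absent.

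The main obstacle — though it is really a matter of care rather than a genuine difficulty — is making sure that all the lemmas invoked are applicable at the required Sobolev indices simultaneously: Lemma \ref{l8} requires $s\in[3,\tilde\alpha-2]$ for the interior errors, Lemma \ref{l9} requires $\alpha\ge 7$ for the accumulated-error bound in $H^{\alpha+2}$, and the interpolation step implicit in controlling $(I-S_{\theta_{n-1}})E_n$ in $H^s$ from its $H^{\alpha+2}$ bound needs $s\le\alpha+2$, i.e.\ again $\tilde\alpha\ge\alpha+4$, consistent with the choice $\tilde\alpha=\alpha+4$ already fixed. Since the whole argument is structurally identical to lemma 4.18 in \cite{Tcpam} once the tame estimate \eqref{38} and the modified-state estimate \eqref{94} are in hand — and both of these have been established above — I would, as the authors do throughout Section \ref{s5}, present the proof by pointing to the corresponding step in \cite{Tcpam} and indicating only the places where $\alpha\ge 7$ (rather than a weaker bound) is needed. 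This closes the induction: $(H_{n-1})\Rightarrow(H_n)$, and after verifying $(H_0)$ directly (using $U_0=\varphi_0=0$, $f_0=S_{\theta_0}f^a$, $g_0=0$ and \eqref{68}) the convergence of the scheme and hence Theorem \ref{t1} follows.
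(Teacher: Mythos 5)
Your reconstruction is correct and matches what the paper (following \cite{Tcpam}) does: telescope $\mathcal{L}(U_n,\Psi_n)-f^a$ via \eqref{79}, then control the mollification error on $f^a$ with \eqref{73} and \eqref{68}, the accumulated error with \eqref{73} and \eqref{104}, and the last step's error with Lemma \ref{l8}, together with the exponent check $L(s)-1\le s-\alpha-1$, which indeed holds for $\alpha\ge 7$ over the stated ranges of $s$. One small index slip: since $\sum_{k=0}^{n-1}f_k=S_{\theta_{n-1}}(f^a-E_{n-1})$, the telescoped identity should read $\mathcal{L}(U_n,\Psi_n)-f^a=(S_{\theta_{n-1}}-I)f^a+(I-S_{\theta_{n-1}})E_{n-1}+e_{n-1}$, with $E_{n-1}$ rather than $E_n$ inside the mollification bracket (and likewise $\widetilde E_{n-1}$ on the boundary); this does not alter the ensuing bounds, since $E_{n-1}$ and $E_n$ satisfy the same estimate \eqref{104}.
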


Again, we can just refer to \cite{Tcpam} for the proof of Lemma \ref{l12} (see the proof of lemma 4.19 in \cite{Tcpam}). As follows from Lemmas \ref{l11} and \ref{l12}, we have proved that $(H_{n-1})$ implies $(H_{n})$, provided that
$\alpha \geq 7$, $\tilde{\alpha}=\alpha +4$, the constant $\theta_0\geq 1$ is large enough, and $T>0$, $\delta >0$ are small enough. Fixing now the constants $\alpha$, $\delta$, and $\theta_0$, exactly as in \cite{Tcpam} (see there the proof of lemma 4.20), we can prove that $(H_0)$ is true. That is, we have

\begin{lemma}
If the time $T>0$ is sufficiently small, then $(H_0)$ is true.
\label{l13}
\end{lemma}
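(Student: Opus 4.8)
The plan is to instantiate the induction scheme at $n=0$, where the accumulated errors vanish, so that $(H_0)$ reduces to estimating the single iterate $(\delta U_0,\delta\varphi_0)=(U_1,\varphi_1)$ and the first residuals $\mathcal{L}(U_1,\Psi_1)-f^a$, $\mathcal{B}(U_1|_{x_1=0},\varphi_1)$. Recall $U_0=0$, $\varphi_0=0$, so $E_0=\widetilde{E}_0=0$, $f_0=S_{\theta_0}f^a$, $g_0=0$; the modified state $(U_{1/2},\varphi_{1/2})$ is built from $(U_0,\varphi_0)=0$ by Proposition \ref{p3}, hence it also vanishes (all the correction terms in the formulas for $p^{\pm}_{1/2}$, $v^{\pm}_{j,1/2}$, $H^{\pm}_{1/2}$ involve $S_{\theta_0}U_0$, $S_{\theta_0}[U_0]$, $S_{\theta_0}[\partial_1 U_{-1}]$-type quantities which are all zero since there is no step $-1$, i.e.\ $\mathcal{R}_0=\widetilde{\mathcal{R}}_0=0$ and the jump terms vanish). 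Therefore the linear problem \eqref{75} for $(\delta\dot U_0,\delta\varphi_0)$ has coefficients equal to the approximate-solution state $(U^a,\varphi^a)$ itself, which satisfies all the constraints \eqref{a5}--\eqref{jc1'} and \eqref{RTL} by Lemma \ref{l2}, and the source is $(f_0,g_0)=(S_{\theta_0}f^a,0)$.

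First I would apply the tame estimate \eqref{38} (via Theorem \ref{t3.1}, which applies because the smallness hypothesis $\widehat K\le K_0$ on the basic state $(U^a,\varphi^a)$ holds for $T$ small thanks to \eqref{66} and the fact that $U^a$, $\varphi^a$ are controlled by $C_*$, combined with the time-shrinking inequalities $\|u\|_{H^m([0,T]\times D)}\le T\|u\|_{H^{m+1}}$) to problem \eqref{75} with $n=0$: this gives, for integer $s\in[3,\tilde\alpha]$,
\[
\|\delta\dot U_0\|_{H^s(\Omega_T)}+\|\delta\varphi_0\|_{H^s(\partial\Omega_T)}\le C(K_0)\Bigl\{\|S_{\theta_0}f^a\|_{H^s(\Omega_T)}+\|S_{\theta_0}f^a\|_{H^3(\Omega_T)}\bigl(\|U^a\|_{H^{s+3}(\Omega_T)}+\|\varphi^a\|_{H^{s+3}(\partial\Omega_T)}\bigr)\Bigr\}.
\]
Using the smoothing property \eqref{72}, $\|S_{\theta_0}f^a\|_{H^s}\le C\theta_0^{(s-(m+8))_+}\|f^a\|_{H^{m+8}}\le C\theta_0^{(s-\tilde\alpha-3)_+}\delta_0(T)$, and by \eqref{68} $\delta_0(T)\to0$ as $T\to0$; likewise $\|U^a\|_{H^{s+3}}+\|\varphi^a\|_{H^{s+3}}\le C_*$ for $s+3\le m+9$, i.e.\ $s\le\tilde\alpha$. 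Recovering $\delta U_0$ from $\delta\dot U_0$ via \eqref{76} (with $n=0$, so the ``good unknown'' correction involves only $\partial_1 U^a$, which is bounded) costs only a Moser-type estimate. Hence for $s\in[3,\tilde\alpha]$ one obtains $\|\delta U_0\|_{H^s(\Omega_T)}+\|\delta\varphi_0\|_{H^s(\partial\Omega_T)}\le C\delta_0(T)\theta_0^{(s-\tilde\alpha-3)_+}\cdot(\text{power of }\theta_0)$; since $\theta_0$ and $\alpha$ are now fixed while $\delta_0(T)\to0$, choosing $T$ small makes the right-hand side $\le\delta\theta_0^{s-\alpha-1}\Delta_0$ for every such $s$ simultaneously (there are finitely many integer $s$ in $[3,\tilde\alpha]$). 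This is point a) of $(H_0)$.

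For points b) and c) I would write $\mathcal{L}(U_1,\Psi_1)-f^a=\bigl(\mathcal{L}(U_1,\Psi_1)-\mathcal{L}(U_0,\Psi_0)\bigr)+(\mathcal{L}(0,0)-f^a)=-\,e_0+(f_0-f^a)$ using the error decomposition of Section \ref{s5} (with $\mathcal{L}(0,0)=0$ and $\sum_{k=0}^0 f_k+S_{\theta_0}E_0=S_{\theta_0}f^a$ giving $f_0=S_{\theta_0}f^a$), so that $\mathcal{L}(U_1,\Psi_1)-f^a=-e_0-(I-S_{\theta_0})f^a$; similarly $\mathcal{B}(U_1|_{x_1=0},\varphi_1)=-\tilde e_0$ since $\mathcal{B}(0,0)=0$ (the approximate solution satisfies \eqref{12.1"}) and $g_0=0$. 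Then Lemma \ref{l8} (with $k=0$) bounds $\|e_0\|_{H^s(\Omega_T)}+\|\tilde e_0\|_{H^s(\partial\Omega_T)}\le C\delta^2\theta_0^{L(s)-1}\Delta_0$, and \eqref{73} bounds $\|(I-S_{\theta_0})f^a\|_{H^s}\le C\theta_0^{s-(m+8)}\|f^a\|_{H^{m+8}}\le C\theta_0^{s-\tilde\alpha-3}\delta_0(T)$; for $\alpha\ge7$ one checks $L(s)\le1$ on the relevant range, and again fixing $\alpha,\delta,\theta_0$ and then shrinking $T$ (to absorb $\delta_0(T)$ and the $\delta^2$ versus $\delta$ gap, exactly as in the proof of lemma 4.20 of \cite{Tcpam}) yields \eqref{193} and \eqref{194}. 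I expect the main subtlety to be purely bookkeeping: making sure the order of quantifiers is respected — one first fixes $\alpha=m+1$, $\tilde\alpha=\alpha+4$, then $\delta$ and $\theta_0$ (as already done in the induction step $(H_{n-1})\Rightarrow(H_n)$), and only afterwards chooses $T$ small depending on all of these so that $\delta_0(T)$ is small enough; the genuinely hard analytic content (the tame estimate, the error estimates) is already available from Theorem \ref{t3.1} and Lemmas \ref{l4}--\ref{l8}, so no new estimate is needed, and the argument is word-for-word that of \cite{Tcpam}.
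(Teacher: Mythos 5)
Your treatment of point a) of $(H_0)$ is sound: for $n=0$ the modified state $(U_{1/2},\varphi_{1/2})$ vanishes identically (since $U_0=\varphi_0=0$, all $S_{\theta_0}$-terms vanish, and $\mathcal{R}_0=\widetilde{\mathcal{R}}_0=0$ under the convention $U_{-1}:=0$), $E_0=\widetilde E_0=0$ forces $f_0=S_{\theta_0}f^a$, $g_0=0$, and the basic state in (\ref{75}) is the approximate solution itself, which satisfies all constraints by Lemma \ref{l2}; the tame estimate (\ref{38}) together with (\ref{66}), (\ref{68}) and the smallness of $\delta_0(T)$ then gives the bound on $\delta U_0$, $\delta\varphi_0$ once $T$ is shrunk. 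The quantifier order you describe (fix $\alpha$, $\tilde\alpha$, $\delta$, $\theta_0$, then shrink $T$) is also the right one.

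However, your treatment of points b) and c) contains an off-by-one error. In $(H_{n-1})$ the range is $k=0,\ldots,n-1$, so $(H_0)$ is the single instance $k=0$: points b) and c) of $(H_0)$ ask you to bound $\|\mathcal{L}(U_0,\Psi_0)-f^a\|_{H^s}$ and $\|\mathcal{B}(U_0|_{x_1=0},\varphi_0)\|_{H^s}$, \emph{not} the same quantities at $U_1,\varphi_1$. Since $U_0=0$, $\varphi_0=0$, one has $\mathcal{L}(U_0,\Psi_0)=\mathbb{L}(U^a,\Psi^a)-\mathbb{L}(U^a,\Psi^a)=0$ and $\mathcal{B}(U_0|_{x_1=0},\varphi_0)=\mathbb{B}(U^a|_{x_1=0},\varphi^a)=0$ by (\ref{12.1"}). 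Hence c) holds trivially, and b) collapses to $\|f^a\|_{H^s(\Omega_T)}\le 2\delta\theta_0^{s-\alpha-1}$ for $s\in[3,\tilde\alpha-2]$, which follows at once from (\ref{68}) (the worst case is $s=3$, $\theta_0^{2-\alpha}$ fixed, and $\delta_0(T)\to 0$ as $T\to 0$). The estimates on $\mathcal{L}(U_1,\Psi_1)-f^a$ and $\mathcal{B}(U_1|_{x_1=0},\varphi_1)$ that you compute are what the inductive step $(H_0)\Rightarrow(H_1)$ yields, not part of the base case. There is also a logical difficulty with your route: invoking Lemma \ref{l8} with $k=0$ requires $(H_{n-1})=(H_0)$ as a standing hypothesis (the whole of Subsections 5.2--5.8 is carried out ``assuming $(H_{n-1})$ holds''), so using it to \emph{establish} $(H_0)$ would be circular. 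Finally, a small caveat: the justification you offer for $\widehat K\le K_0$ via $\|u\|_{H^m([0,T]\times D)}\le T\|u\|_{H^{m+1}}$ does not apply to $(U^a,\varphi^a)$, since these do not vanish for $t<0$; that inequality is used for the iterates $U_k$, which do vanish in the past, and smallness of the basic state over $\Omega_T$ must be handled differently.
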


\subsection{The proof of Theorem \ref{t1}}

We consider initial data $({U}_0,\varphi_0)\in H^{m+17/2}(\Omega)\times H^{m+17/2}(\partial\Omega)$ satisfying all the assumptions of Theorem \ref{t1}. In particular, they satisfy the compatibility conditions up to order $\mu=m+8$ (see Definition \ref{d1}). Then, thanks to Lemmas \ref{l4.1} and \ref{l2} we can construct an approximate solution $({U}^{a},\varphi^a)\in H^{m +9}(\Omega_T)\times H^{m +9}(\partial\Omega_T)$ that satisfies (\ref{66}). As follows from Lemmas \ref{l11}--\ref{l13}, $(H_n)$ holds for all integer $n\geq 0$, provided that $\alpha \geq 7$, $\tilde{\alpha}=\alpha +4$, the constant $\theta_0\geq 1$ is large enough, and the time $T>0$ and the constant $\delta >0$ are small enough.  In particular, ($H_n$) implies
\[
\sum_{n=0}^{\infty}\left\{ \|\delta{U}_n\|_{H^m(\Omega_T)} +\|\delta \varphi_n\|_{H^m(\partial\Omega_T)}\right\} < \infty.
\]
Hence, the sequence $({U}_n,\varphi_n)$ converges in $H^{m}(\Omega_T)\times H^{m}(\partial\Omega_T)$ to some
limit $({U} ,\varphi )$. Recall that $m=\alpha -1 \geq 6$. Passing to the limit in (\ref{193}) and (\ref{194}) with
$s=m$, we obtain (\ref{69})--(\ref{71}). Consequently, ${U} :={U} +{U}^a$, $\varphi := \varphi +\varphi^a$ is a solution of problem \eqref{11.1}--\eqref{13.1}. As was already noted in Section \ref{s1}, this solution is unique. This completes the proof of Theorem \ref{t1}.

\appendix
\section{MHS equilibria in a gravitational field}\label{appA}

If we take into account the gravitational field $\mathcal{G}\in \mathbb{R}^2$, then in the right-hand side of the second (vector) equation in \eqref{3} we have the term $\rho \mathcal{G}$ and the MHS equilibria
\[
\bar{U}^\pm (x) =(\bar{p}^\pm (x),0,\bar{H}^\pm (x), \bar{S}^\pm (x))
\]
obey the systems
\[
\nabla^\pm \bar q^\pm =(\bar H^\pm\cdot\nabla^\pm )\bar H^\pm +\bar \rho^\pm \mathcal{G}
\]
in the half-plane $\mathbb{R}^2_+$ (recall that $\nabla^\pm:= (\pm\partial_1,\partial_2)$). Let
\[
\mathcal{G}=(G,0),\quad \bar{p}^\pm =\bar{p}^\pm (x_1),\quad \bar{H}_1^\pm=\bar{H}_1^\pm (x_1),
 \]
 \[
 \bar{H}_2^\pm =\bar{H}_2^0={\rm const},\quad \mbox{and}\quad \bar{S}^\pm ={\rm const}\quad (\bar{S}^+\neq \bar{S}^-),
  \]
where
\[
\bar{p}^+(0)=\bar{p}^-(0)=\bar{p}_0,\quad \bar{H}_1^+(0)=\bar{H}_1^-(0)=\bar{H}_1^0,
\]
and $G$ denotes Newton's gravitational constant. Then the functions $\bar{H}_1^\pm (x_1)$ can be arbitrary functions satisfying \eqref{mf.1} (with $\varphi =0$), i.e., $\bar{H}_1^0\neq 0$, whereas the pressures $\bar{p}^\pm$ solve the equations
\begin{equation}
\pm\frac{d\bar{p}^\pm}{d x_1}=B^\pm (\bar{p}^\pm )^{\frac{1}{\gamma}}\qquad \mbox{for}\ x_1>0,
\label{pbar}
\end{equation}
where $B^\pm = G A \,e^{-\frac{\bar{S}^\pm}{\gamma}}>0$ (see \eqref{pg}).
The solutions of \eqref{pbar} are
\[
\bar{p}^\pm = \left(\pm \frac{\gamma -1}{\gamma}B^\pm x_1+\bar{p}_0^{\frac{\gamma -1}{\gamma}} \right)^{\frac{\gamma}{\gamma -1}}
\]
which satisfy \eqref{RT1} if $\bar{S}^+<\bar{S}^-$, i.e., $\bar{\rho}^+>\bar{\rho}^-$ meaning that the heavier plasma lies below the lighter plasma and the classical Rayleigh-Taylor instability does not happen. The consideration of these solutions on the interval $(0,a)$, with
\[
a<\frac{\gamma\, \bar{p}_0^{\frac{\gamma -1}{\gamma}}}{(\gamma -1)B^-},
\]
guarantees the strict positivity (\eqref{5.1} holds) and boundedness of the pressures $\bar{p}^\pm$.

At the right end point of the interval $(0,a)$ we should set the boundary conditions $\bar{H}_1^\pm (a)=0$.
If in \eqref{change2} we choose, without loss of generality, such a cut-off function $\chi$ that $\chi (\pm a)=0$, then before the change of variables \eqref{change2} we had the boundary conditions
\[
\bar{H}_1^+ =0\quad \mbox{at}\ x_1=a\qquad \mbox{and}\qquad \bar{H}_1^- =0\quad \mbox{at}\ x_1=-a.
\]
For the original nonstationary and nonstatic ($v\not\equiv 0$) problem \eqref{4}, \eqref{bcond}, \eqref{indat} (with the gravitational term $\rho \mathcal{G}$) this corresponds to the consideration of the contact discontinuity between the perfectly conducting rigid walls $x_1= a$ and $x_1= -a$ with the usual boundary conditions
\begin{equation}
v_1^\pm=H_1^\pm = 0\quad \mbox{on}\ [0,T]\times \{x_1=\pm a\}\times\mathbb{R}.
\label{wall}
\end{equation}
In this case the reference domains
\[
\Omega^+(t)=\{\varphi (t,x_2) <x_1<a\}\quad\mbox{and} \quad \Omega^-(t)=\{-a<x_1<\varphi (t,x_2)\}.
\]

Since gravity just contributes with the lower-order term $\rho \mathcal{G}$ in the MHD system, using the results of \cite{Sec95a,YM} for the MHD system with a perfectly conducting wall boundary condition, we can easily extend the results of \cite{Cont1} and the present paper to the case of the above reference domains and the boundary conditions \eqref{wall} on their outer boundaries $x_1=\pm a$. However, since, unlike the situation with the characteristic free boundary $x_1=\varphi (t,x_2)$ of contact discontinuity,  the loss of derivatives in the normal direction to the characteristic boundaries $x_1=\pm a$ cannot be compensated (see \cite{Sec95a,YM}), the functional setting has to be provided by the anisotropic weighted Sobolev spaces $H^m_*$ (see, e.g., \cite{Sec95a,ST,T09,YM}) but not the usual Sobolev spaces $H^m$ as in \cite{Cont1} or the present paper.


\begin{thebibliography}{100}

\bibitem{Al}
{\sc Alinhac, S.}: Existence d'ondes de rar\'{e}faction pour des syst\`{e}mes
quasi-lin\'{e}aires hyperboliques multidimensionnels. {\it Comm. Partial Differential Equations} \textbf{14}, 173--230  (1989)

\bibitem{AA}
{\sc Amari, T., Aly, J.J.}: Two-dimensional isothermal magnetostatic equilibria in a gravitational field. {\it Atron. Astrophys.} {\bf 208}, 361--373 (1989)

\bibitem{CS}
{\sc Coulombel, J.-F., Secchi, P.}: Nonlinear compressible vortex sheets in two space dimensions. \textit{Ann. Sci. Ecole Norm. Sup.} \textbf{41}, 85--139 (2008)

\bibitem{FangZhang}
{\sc Fang, J., Zhang, L.}: Two-dimensional magnetohydrodynamics simulations of young Type Ia supernova remnants. {\it Mon. Not. R. Astron. Soc.} {\bf 424},  2811--2820 (2012)

\bibitem{Goed}
{\sc Goedbloed, J.P., Keppens, R., Poedts, S.}: {\it Advanced magnetohydrodynamics: with applications to laboratory and astrophysical plasmas}. Cambridge University Press, Cambridge, 2010

\bibitem{Herm}
{\sc H\"ormander, L.}: The boundary problems of physical geodesy. {\it Arch. Ration. Mech. Anal.} {\bf 62},
1--52 (1976)

\bibitem{LL}
{\sc Landau, L.D., Lifshiz, E.M.,  Pitaevskii, L.P.}: {\it Electrodynamics of continuous media}.  Per\-gamon Press, Oxford, 1984

\bibitem{Maj}
{\sc Majda, A.}:  \textit{The existence of multi-dimensional shock fronts}. Mem. Amer. Math. Soc.,
Vol. 43, no. 281, Providence, 1983


\bibitem{Met}
{\sc M\'etivier,  G.}: Stability of multidimensional shocks. In:
{\it Advances in the theory of shock waves}. Progr. Nonlinear Differential Equations Appl. {\bf 47}, 25--103 (Eds Freist\"{u}hler H. and Szepessy A.), Birkh\"auser, Boston, 2001

\bibitem{Cont1}
{\sc Morando, A., Trakhinin, Y., Trebeschi, P.}: Well-posedness of the linearized problem for contact MHD discontinuities.
{\it J. Differential Equations} {\bf 258}, 2531--2571 (2015)

\bibitem{Sec95a}
{\sc Secchi, P.}: On an initial-boundary value problem for the equations of ideal magnetohydrodynamics. {\it Math. Methods Appl. Sci.} {\bf 18}, 841--853 (1995)

\bibitem{Sec16}
{\sc Secchi, P.}: On the Nash-Moser iteration technique. In: {\it Recent Developments of Mathematical Fluid Mechanics}.  Advances in Mathematical Fluid Mechanics, pp. 443--457 (Eds. Galdi G.P., Heywood J.G. and Rannacher R.), Birkh\"{a}user, Basel, 2016

\bibitem{ST}
{\sc Secchi, P., Trakhinin, Y.}: Well-posedness of the plasma-vacuum interface problem. {\it Nonlinearity} {\bf 27}, 105--169  (2014)

\bibitem{Sch}
{\sc Schochet, S.}: The compressible Euler equations in a bounded domain:
existence of solutions and the incompressible limit. \textit{Comm. Math. Phys.} \textbf{104}, 49--75 (1986)

\bibitem{Ticvs}
{\sc Trakhinin, Y.}: On the existence of incompressible current-vortex sheets: study of a linearized free boundary value problem. {\it Math. Methods Appl. Sci.} {\bf 28}, 917--945 (2005)

\bibitem{T09}
{\sc Trakhinin, Y.}: The existence of current-vortex sheets in ideal compressible magnetohydrodynamics. {\it Arch. Ration. Mech. Anal.} {\bf 191}, 245--310 (2009)

\bibitem{Tcpam}
{\sc Trakhinin,  Y.}: Local existence for the free boundary problem for nonrelativistic and relativistic compressible Euler equations with a vacuum boundary condition. {\it Comm. Pure Appl. Math.} {\bf 62}, 1551--1594  (2009)

\bibitem{Tjde}
{\sc Trakhinin, Y.}: On the well-posedness of a linearized plasma-vacuum interface problem in ideal compressible MHD. {\it J. Differential Equations} {\bf 249}, 2577--2599 (2010)

\bibitem{YM}
{\sc Yanagisawa, T., Matsumura, A.}: The fixed boundary value problems for the equations of ideal magnetohydrodynamics with a
perfectly conducting wall condition. {\it Comm. Math. Phys.} {\bf 136}, 119--140 (1991)



\end{thebibliography}
\end{document}